\documentclass[12pt, reqno]{amsart}
\usepackage{amssymb, mathrsfs}
\usepackage{latexsym}
\usepackage{amsmath}
\usepackage{amsthm}
\usepackage{amsfonts}
\usepackage{dsfont, upgreek}
\usepackage{mathtools}
\usepackage{epsfig}
\usepackage{amscd}
\usepackage{graphicx}

\usepackage{overpic}
\usepackage{tikz-cd }

\usepackage{tikz}
\usetikzlibrary{decorations.markings}
\usetikzlibrary{arrows.meta}
\usetikzlibrary{calc, fadings}
\usepackage{pgfplots}
\pgfplotsset{compat=1.18}

\usepackage{tikz-3dplot}

\usepackage{enumitem}
\setlist[enumerate]{itemsep=0.5ex}

\usepackage{stmaryrd}

\usepackage{color}

\usepackage[left=1.4in,right=1.4in,top=1.2in,bottom=1.2in]{geometry}

\usetikzlibrary{decorations.markings}
\usetikzlibrary{arrows.meta}

\usepackage{extarrows}

\usepackage{adjustbox}
\usepackage{slashed}

\usepackage[colorlinks=true,linkcolor=blue,citecolor=blue]{hyperref}

\usepackage[colorinlistoftodos,prependcaption,textsize=tiny]{todonotes}  

\usepackage[all,cmtip]{xy}
\theoremstyle{plain}
\newtheorem{theorem}{Theorem}[section]
\newtheorem{proposition}[theorem]{Proposition}
\newtheorem{lemma}[theorem]{Lemma}
\newtheorem{corollary}[theorem]{Corollary}

\newtheorem{conjecture}[theorem]{Conjecture}

\theoremstyle{definition} 
\newtheorem{definition}[theorem]{Definition}
\newtheorem{example}[theorem]{Example}
\newtheorem*{claim*}{Claim}
\newtheorem{claim}[theorem]{Claim}

\theoremstyle{remark} 
\newtheorem{remark}[theorem]{Remark}

\numberwithin{equation}{section}

\newcommand{\Sc}{\mathrm{Sc}}

\newcommand{\Bigwedge}{\mathord{\adjustbox{raise=.4ex, totalheight=.7\baselineskip}{$\bigwedge$}}}

\newcommand{\fiber}{\mathbb F}
\newcommand{\link}{\mathbb L}

\newcommand{\ind}{\textup{Ind}}
\newcommand{\id}{\mathrm{id}}

\newcommand{\dist}{\mathrm{dist}}

\newcommand{\R}{\mathbb{R}}
\newcommand{\tr}{\mathrm{tr}}

\newcommand{\Z}{\mathbb{Z}}

\newcommand{\dR}{\mathrm{dR}}
\newcommand{\dom}{\mathrm{dom}}
\newcommand{\sph}{\mathbb{S}}
\newcommand{\normal}{\mathbf{n}}

\newcommand{\tancone}{\mathcal C}
\newcommand{\normalcone}{\mathcal C^\perp}
\newcommand{\domain}{\overbar M}
\newcommand{\target}{M}
\newcommand{\hsurface}{\Sigma}
\newcommand{\sector}{\mathbb G}

\newcommand{\interior}[1]{%
	{\kern0pt#1}^{\mathrm{\,o}}%
}

\makeatletter
\let\save@mathaccent\mathaccent
\newcommand*\if@single[3]{%
	\setbox0\hbox{${\mathaccent"0362{#1}}^H$}%
	\setbox2\hbox{${\mathaccent"0362{\kern0pt#1}}^H$}%
	\ifdim\ht0=\ht2 #3\else #2\fi
}
\newcommand*\rel@kern[1]{\kern#1\dimexpr\macc@kerna}
\newcommand*\wideaccent[2]{\@ifnextchar^{{\wide@accent{#1}{#2}{0}}}{\wide@accent{#1}{#2}{1}}}
\newcommand*\wide@accent[3]{\if@single{#2}{\wide@accent@{#1}{#2}{#3}{1}}{\wide@accent@{#1}{#2}{#3}{2}}}
\newcommand*\wide@accent@[4]{%
	\begingroup
	\def\mathaccent##1##2{%
		\let\mathaccent\save@mathaccent
		\if#42 \let\macc@nucleus\first@char \fi
		\setbox\z@\hbox{$\macc@style{\macc@nucleus}_{}$}%
		\setbox\tw@\hbox{$\macc@style{\macc@nucleus}{}_{}$}%
		\dimen@\wd\tw@
		\advance\dimen@-\wd\z@
		\divide\dimen@ 3
		\@tempdima\wd\tw@
		\advance\@tempdima-\scriptspace
		\divide\@tempdima 10
		\advance\dimen@-\@tempdima
		\ifdim\dimen@>\z@ \dimen@0pt\fi
		\rel@kern{0.6}\kern-\dimen@
		\if#41
		#1{\rel@kern{-0.6}\kern\dimen@\macc@nucleus\rel@kern{0.4}\kern\dimen@}%
		\advance\dimen@0.4\dimexpr\macc@kerna
		\let\final@kern#3%
		\ifdim\dimen@<\z@ \let\final@kern1\fi
		\if\final@kern1 \kern-\dimen@\fi
		\else
		#1{\rel@kern{-0.6}\kern\dimen@#2}%
		\fi
	}%
	\macc@depth\@ne
	\let\math@bgroup\@empty \let\math@egroup\macc@set@skewchar
	\mathsurround\z@ \frozen@everymath{\mathgroup\macc@group\relax}%
	\macc@set@skewchar\relax
	\let\mathaccentV\macc@nested@a
	\if#41
	\macc@nested@a\relax111{#2}%
	\else
	\def\gobble@till@marker##1\endmarker{}%
	\futurelet\first@char\gobble@till@marker#2\endmarker
	\ifcat\noexpand\first@char A\else
	\def\first@char{}%
	\fi
	\macc@nested@a\relax111{\first@char}%
	\fi
	\endgroup
}
\makeatother

\newcommand\overbar{\wideaccent\overline}

\makeatletter
\newcommand*{\transpose}{%
	{\mathpalette\@transpose{}}%
}
\newcommand*{\@transpose}[2]{%
	\raisebox{\depth}{$\m@th#1\intercal$}%
}
\makeatother

\begin{document}		
	
	\title{Gromov's dihedral rigidity conjecture in  dimension three}
	
	\author{Jinmin Wang}
	\address[Jinmin Wang]{State Key Laboratory of Mathematical Sciences, Academy of
		Mathematics and Systems Science, Chinese Academy of Sciences}
	\email{jinmin@amss.ac.cn}
	\thanks{}
	\author{Zhizhang Xie}
	\address[Zhizhang Xie]{ Department of Mathematics, Texas A\&M University }
	\email{xie@tamu.edu}
	\thanks{}
	\author{Guoliang Yu}
	\address[Guoliang Yu]{ Department of
		Mathematics, Texas A\&M University}
	\email{guoliangyu@tamu.edu}
	\thanks{}
	
	\begin{abstract}
		
 In this article, we present a self-contained proof of Gromov’s dihedral rigidity conjecture on scalar curvature in the three-dimensional case. The proof avoids many of the technical complications that arise in higher dimensions, while still illustrating the essential ideas of the general approach developed in \cite{Wang:2021tq} and \cite{Wang:2022vf}. It is significantly shorter than the proof of the general case and is intended to be more accessible.
		
%

	\end{abstract}
	\maketitle

\section{Introduction}\label{sec:intro}
Gromov's dihedral rigidity conjecture states that \begin{conjecture}[{\cite{GromovDiracandPlateau, MR3822551}}]\label{conj:dihedralrigidity}
	Let $P$ be a convex polyhedron in $\R^n$ and $g_0$ the Euclidean metric on $P$. If $g$ is a smooth Riemannian metric on $P$ such that the scalar curvature, mean curvature, and dihedral angles satisfy
	\begin{enumerate}[label=$(\arabic*)$]
		\item $\Sc(g)\geq \Sc(g_0) = 0$,
		\item $H_g(F_i)\geq H_{g_0}(F_i) = 0$ for each codimension one face $F_i$ of $P$, and
		\item $\theta_{ij}(g)\leq \theta_{ij}(g_0)$ on $F_{ij} = F_i\cap F_j$ for each pair of adjacent  codimension one faces $F_i$ and $F_j$,	
	\end{enumerate}
	then we have 
	\[ \Sc(g)=0, H_g(F_i) = 0 \textup{ and }  \theta_{ij}(g) =  \theta_{ij}(g_0)\]
	for all $i$ and all $j\neq i$. Moreover, $(P, g)$ is flat.
\end{conjecture}	
Here a pair of codimension one faces of $P$ are called adjacent if their intersection is a nonempty codimension two face of $P$.

In dimension three, Li  proved the dihedral rigidity conjecture for cone-type or prism type polyhedra under additional angle restrictions \cite{Lichaocomparison}. We also would like to point out  Lott  established an analogous rigidity theorem for even-dimensional manifolds with smooth boundaries (in which case dihedral angles do not appear) \cite{Lottboundary}.

In \cite{Wang:2021tq} and \cite{Wang:2022vf}, the authors proved the above conjecture--and in fact, established stronger generalizations of the conjecture--in all dimensions. The proof in the higher-dimensional case is rather technical. Following Gromov's suggestions, we present in this paper a self-contained and significantly shorter proof of Gromov's dihedral rigidity conjecture in the three-dimensional case. By restricting our attention to the three-dimensional case, we bypass many of the technical complications arising in the general setting, while still illustrating the essential ideas of the general approach.

%
	
More precisely, in this paper we give a self-contained proof of  the following strengthened version of Gromov's dihedral rigidity conjecture in dimension $3$. 
	
\begin{theorem}\label{thm:main2}
		Let $(\target, g)$ be a convex polyhedron in the Euclidean space $\R^3$, where $g$ is the Euclidean metric. Let $(\domain,\overbar g)$ be a connected spin polyhedral manifold of dimension three and $f\colon \domain \to \target$ be a polyhedral map with non-zero degree, such that the scalar curvature, mean curvature, and dihedral angles satisfy 
		\begin{enumerate}[label=$(\arabic*)$]
			\item $\Sc(\overbar g)\geq \Sc(g) = 0$,
			\item $H_{\overbar g}(\overbar F_i)\geq  0$ for each codimension one face $\overbar F_i$ of $\domain$, and
			\item $\theta_{ij}(\overbar g)\leq f^\ast\theta_{ij}(g)$ on $\overbar F_{ij} = \overbar F_i\cap \overbar F_j$ for each pair of adjacent  codimension one faces $\overbar F_i$ and $\overbar F_j$,	
		\end{enumerate}  then $\Sc_{\overbar g}=0$, $H_{\overbar g}=0$, and $\theta_{\overbar g}=f^*\theta_{g}$. Moreover, $(\domain,\overbar g)$ is flat.
\end{theorem}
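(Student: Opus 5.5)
We argue by contradiction with a Dirac-operator index argument on the polyhedral manifold $\domain$, following the strategy of \cite{Wang:2021tq, Wang:2022vf} but exploiting the simplifications available in dimension three.

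\textbf{Twisted Dirac operator with local boundary condition.} Let $S$ be the spinor bundle of $\domain$ and $E = f^\ast(S_{\R^3})$ the pullback of the (trivial, flat) spinor bundle of $\target\subset \R^3$. On $S\otimes E$ (or on $S\otimes f^*S\otimes\mathbb C^2$, to get a $\Z_2$-grading in odd dimension) we consider the twisted Dirac operator $D$. On each face $\overbar F_i$ we impose the local boundary condition
\[
\overbar c(\overbar\nu_i)\otimes c(\nu_i)\,\varphi = -\varphi ,
\]
where $\overbar\nu_i$ is the unit outer normal of $\overbar F_i$ and $\nu_i$ that of the corresponding face $F_i$ of $\target$, pulled back by $f$. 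Since $E$ is flat and the faces of $\target$ are totally geodesic, the Lichnerowicz formula with boundary gives, for $\varphi$ satisfying the boundary condition,
\[
\|D\varphi\|^2=\|\nabla\varphi\|^2+\int_{\domain}\tfrac{\Sc}{4}|\varphi|^2+\sum_i\int_{\overbar F_i}\tfrac{H}{2}|\varphi|^2 .
\]
Both correction terms are nonnegative by hypotheses (1) and (2).

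\textbf{Index and the role of the dihedral angles.} The key analytic point is at the edges $\overbar F_{ij}$. There the two boundary involutions must be compatible, meaning they anticommute up to the angle defect. The comparison $\theta_{ij}(\overbar g)\le f^\ast\theta_{ij}(g)$ is exactly what makes the edge contribution nonnegative and the operator essentially self-adjoint and Fredholm on a suitable domain. In three dimensions the singular stratum consists only of edges, which are two-dimensional wedge regions, and vertices. The local model near an edge is a 2D sector times a line, so the domain can be analysed by separation of variables in polar coordinates.

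We then compute the index via deformation. Homotope $\overbar g$ and $f$ (for instance, smooth the angles) to a model situation where the index equals $\deg(f)\cdot\mathrm{ind}$ of the model operator on $\target$ itself. On the Euclidean polyhedron $\target$ the kernel consists of constant parallel sections satisfying the boundary condition, which is a one-dimensional space. Hence the index is nonzero, and $\ker D\neq 0$ on $\domain$. The edge analysis and the invariance of the index under this deformation are the main obstacle. Here we would prove regularity ($H^1$) of domain elements near edges with angle at most $\pi$, using convexity of $\target$, and then use the homotopy invariance of Fredholm index.

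\textbf{Rigidity.} Take $\varphi\in\ker D$. The integral identity forces:
\begin{itemize}
\item $\nabla\varphi=0$, so $\varphi$ is parallel, $|\varphi|$ is constant and nonzero;
\item $\Sc\,|\varphi|^2=0$, hence $\Sc=0$;
\item $H=0$ on every face;
\item the edge terms vanish, forcing $\theta_{ij}(\overbar g)=f^\ast\theta_{ij}(g)$.
\end{itemize}
A nonzero parallel section of $S\otimes E$ with $E$ flat forces the curvature of $S$ to annihilate it. In dimension three, Ricci-flatness is then obtained from $\sum_j \overbar c(e_j)R(e_i,e_j)\varphi=0$, and Ricci-flat in dimension three means flat.
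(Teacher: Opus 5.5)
Your outline has the right architecture (twisted Dirac operator, local boundary condition, Lichnerowicz identity, nonzero index, parallel spinor, rigidity). However, the two steps you defer are where the proof actually lives, and the first cannot be done the way you propose.

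\textbf{Fredholmness under the non-strict comparison.} You use $\theta_{ij}(\overbar g)\le f^*\theta_{ij}(g)$ directly to get a self-adjoint Fredholm realization of $D_B$ with $H^1$ domain. Along a single edge, however, the comparison may switch between strict inequality and equality, and the local model changes type:
\begin{itemize}
\item where it is strict, the $(-1)$-eigenspaces $V_i,V_j$ of the two boundary involutions meet trivially, and the sector link operator has $|P_B|>1/2$;
\item where equality holds, $\dim(V_i\cap V_j)=2$ and the spectral gap is exactly $1/2$.
\end{itemize}
So there is no uniform reduction to a constant-coefficient model for a Kato--Rellich argument. ``$H^1$ regularity near edges from convexity of $\target$'' does not address this, and you also ignore vertices, where one needs the link gap $|P_{B_\link}|\ge\sqrt2/2$ (Lemma~\ref{lemma:spectrumSphericalLink}).

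The paper never computes an index for $B$ itself. It replaces the target normals by $\nu_{k,t}$ with $\langle\overbar\nu_i,\overbar\nu_j\rangle<\langle\nu_{i,t},\nu_{j,t}\rangle$ for $t>0$ (Proposition~\ref{thm:algebraicDeformation}). It gets harmonic $\sigma_t\neq0$ for these strict problems, then passes to the limit via Lemma~\ref{lemma:approximation}. Extremality of $B_0=B$, $\mathscr A_t\to\mathscr A_0$ in $L^\infty$, and the trace inequality give $\|\nabla\sigma_t\|^2\le C\alpha_t\to0$. Rellich compactness then yields a nonzero parallel $\sigma_0\in H^1(\domain,E;B)$. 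This limiting step, which needs no self-adjointness at $t=0$, is the missing idea.

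\textbf{The index computation.} This step is also not an argument. In general there is no deformation of $(\domain,f)$ to $(\target,\mathrm{id})$. Any deformation of metric or faces must keep the angle comparison valid at every time for homotopy invariance to apply; this is Gromov's $\angle$-shrinking problem. ``Smoothing the angles'' destroys the very boundary problem whose index you are tracking.

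The paper's device is that the $\nu_k$ need not be normals of any actual target. It deforms them all to the pulled-back field $\normal$ pointing to an interior point of $\target$, for which $\langle\overbar\nu_i,\overbar\nu_j\rangle<1=\langle\normal,\normal\rangle$ holds automatically, so corners can be flattened freely. A gluing formula along hypersurfaces meeting the faces orthogonally then cuts off vertex and edge neighborhoods. Each cut-off piece has index $0$, by invertibility of a mixed absolute/relative problem on a convex model together with Corollary~\ref{coro:approximation}. This ends at smooth boundary with $\ind=\deg(-\normal)=\deg f$.

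\textbf{Two smaller corrections.}
\begin{itemize}
\item As written, your involution $\overbar c(\overbar\nu)\otimes c(\nu)$ on the ungraded $S\otimes E$ commutes with $\overbar c(\overbar\nu)$, so $D$ is not symmetric. You need a grading factor, e.g.\ $\mathscr E\,\overbar c(\overbar\nu_k)c(\nu_k)$ on $S(T\domain\oplus f^*T\target)$.
\item Your identity has no edge terms, so angle equality cannot come from ``edge terms vanishing.'' It comes from the parallel (hence smooth) spinor satisfying both conditions at an edge point. This gives $\overbar c(a\overbar\nu_i+b\overbar\nu_j)\sigma=-\mathscr E\,c(a\nu_i+b\nu_j)\sigma$ for all $a,b$, and comparing norms yields $\langle\overbar\nu_i,\overbar\nu_j\rangle=\langle\nu_i,\nu_j\rangle$.
\end{itemize}
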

The precise definitions of polyhedral manifolds and polyhedral maps are given in Section \ref{sec:polymanifold}. In this setting, the map $f$ induces a homomorphism between the top relative homology groups:$$f_* \colon H_n(\domain,  \partial \domain; \mathbb{Z}) \to H_n(\target, \partial \target; \mathbb{Z}).$$The degree of $f$ is defined as the unique integer $\deg(f)$ such that$$f_*([\domain, \partial \domain]) = \deg(f)[\target, \partial \target].$$

Our strategy for proving Theorem \ref{thm:main2} utilizes twisted Dirac operators equipped with suitable elliptic boundary conditions. A key step is to construct a solution to an appropriate twisted Dirac operator, subject to a natural  boundary condition determined by the geometric setup, so that the classical Bochner-Lichnerowicz-Weitzenb\"ock formula can be applied to obtain estimates for the scalar curvature and mean curvature (cf. Proposition \ref{prop:smooth>=}). A further computation using this solution also yields estimates for the  dihedral angles (cf. Lemma \ref{lemma:angleRigidity}).
However, the presence of dihedral angles and higher-codimensional singularities introduces substantial analytical difficulties. In particular, these singularities make it considerably more challenging to formulate the relevant index theory rigorously and to carry out the associated index computations. Overcoming these difficulties constitutes the main contribution of our work.

   Let us give a brief overview of the paper. Since $f\colon \domain \to \target$ is a spin map, the bundle $T\domain \oplus f^\ast T\target$ admits a spinor bundle $E = S(T\domain \oplus f^\ast T\target)$. Let $D$ be the (twisted) Dirac operator associated with the bundle $E$. There is a natural boundary condition $B$ (cf. Definition \ref{def:boundaryCondition}) determined by the unit inner normal vectors of the codimension-one faces of $\domain$ and their corresponding codimension-one faces in $\target$. 

A main ingredient of our proof is developing an index theory for this type of Dirac operator $D_B$ subject to these boundary conditions.

\textbf{(1).} First, we prove that the Dirac operator under consideration is Fredholm, provided that the dihedral angles of $\domain$ are strictly less than $\pi$ and satisfy the prescribed comparison condition (Theorem~\ref{thm:ess-saVector}). We begin by establishing essential self-adjointness. Because $\domain$ has polyhedral corners, we study $D_B$ within Cheeger's framework for conic-type operators (cf.~\cite{MR530173,MR573430,JC83}). A standard partition-of-unity argument shows that it is enough to prove that $D_B$ is locally essentially self-adjoint near every point of $\domain$ (cf.~Definition~\ref{def:ess-saLocal}). At points in the interior of $\domain$ or in the  interior of a codimension-one face, local essential self-adjointness follows from classical elliptic regularity theory.
	
	At points on higher-codimension faces, we establish a key reduction theorem. Under the relevant comparison condition on the dihedral angles, $D_B$ is locally essentially self-adjoint near a point $x \in \domain$ if and only if the corresponding model operator on the tangent cone at $x$, equipped with the induced boundary condition, is essentially self-adjoint (Theorem~\ref{thm:reduction}). Thus, the general problem reduces to model problems on Euclidean polyhedral cones.
	
	 If $x$ lies in the  interior of a codimension-two face (that is, an edge) of $\domain$, Theorem~\ref{thm:reduction} reduces the analysis to two-dimensional sectors in $\mathbb R^2$ whose angles satisfy the required inequality. The relevant essential self-adjointness results are established in Lemmas~\ref{lemma:essensa-jumpanglewithf} and~\ref{lemma:ess-sa-innerproductcomparison}. Similarly, if $x$ lies in a codimension-three face (that is, a vertex) of $\domain$, the problem reduces to three-dimensional polyhedral cones in $\mathbb R^3$. This case is treated in Lemmas~\ref{lemma:ess-saSphericalLink} and~\ref{lemma:spectrumSphericalLink} and Corollary~\ref{corollary:ess-saThreeDimension}.
	
	Once $D_B$ is known to be essentially self-adjoint, its domain is the Sobolev space $H^1(\domain, E; B)$ of $H^1$ sections of $E$ over $\domain$ satisfying the boundary condition $B$. By the Rellich lemma, the inclusion $H^1(\domain, E; B) \hookrightarrow L^2(\domain, E)$ is compact. A standard argument then shows that $D_B$ is Fredholm.
	
\textbf{(2).} Second, we compute the Fredholm index $\ind(D_B)$ (Theorem~\ref{thm:indexTheoremVector}). A direct computation on $\domain$ is difficult because of its singularities. Instead, we use a cutting-and-pasting argument to reduce the problem to manifolds with smooth boundary, where the index computation is classical~\cite{AtiyahBott}; see also~\cite{Lottboundary}.

	\begin{figure}[h]
		\centering	
		\begin{tikzpicture}
			\node[anchor=south west,inner sep=0] (image) at (0,0) {\includegraphics[width=.9\textwidth]{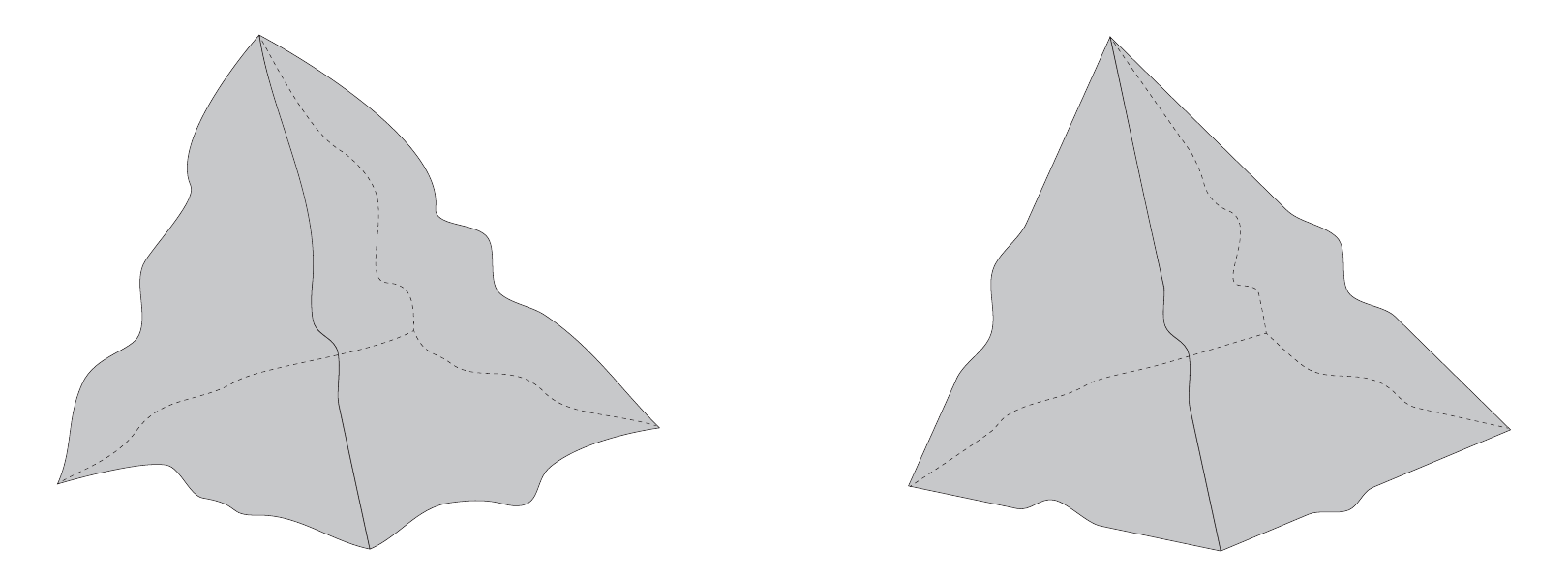}};
			\begin{scope}[x={(image.south east)},y={(image.north west)}]
				\draw[
				thick,
				decorate,
				decoration={snake, amplitude=0.6mm, segment length=2mm}
				]
				(0.45,0.62) -- (0.51,0.62);
				
				\draw[
				thick,
				-{Stealth[length=3mm,width=2mm]}
				]
				(0.51,0.62) -- (0.53,0.62);
				
				
			\end{scope}
		\end{tikzpicture}		
		\caption{Deformation of small neighborhoods of the vertices of $\protect\domain$ that makes both the metric and the adjacent faces flat.}
		\label{fig:deformation-near-vertices}
	\end{figure}

	We first deform the boundary conditions (Proposition~\ref{thm:algebraicDeformation}) and the metric in small neighborhoods of the vertices of $\domain$ so that both the metric and the faces become flat near each vertex (see Figure~\ref{fig:deformation-near-vertices}), without changing the Fredholm index. For technical reasons, we work with boundary conditions more general than those naturally induced by the  unit inner normal vector fields on the codimension-one faces of $\domain$ and $\target$ (cf.~Lemma~\ref{lemma:algAngle} and Proposition~\ref{thm:algebraicDeformation}). We then apply the gluing formula of Theorem~\ref{thm:gluing}. Suppose that $\domain$ is cut into two pieces, $\domain_1$ and $\domain_2$, along a hypersurface $\hsurface$ that meets every codimension-one face it intersects orthogonally, and that the corresponding boundary condition is imposed along $\hsurface$. The formula states that the Fredholm index of $D_B$ on $\domain$ is the sum of the Fredholm indices of the corresponding Dirac operators on $\domain_1$ and $\domain_2$. It therefore allows us to cut off a small polyhedral neighborhood of each vertex along a spherical link (see Figure~\ref{fig:cutting-and-pasting-vertices}).\footnote{More precisely, we cut along a convex hypersurface that is a slight modification of the spherical link; see Example~\ref{example:hypersurface}.} A crucial point is to show that the Fredholm index on each cut-off piece vanishes. We prove this by combining a deformation argument with the approximation result in Lemma~\ref{lemma:approximation} and Corollary~\ref{coro:approximation}.
	
%
		\begin{figure}[h]
		\centering	
		\begin{tikzpicture}
			\node[anchor=south west,inner sep=0] (image) at (0,0) {\includegraphics[width=.9\textwidth]{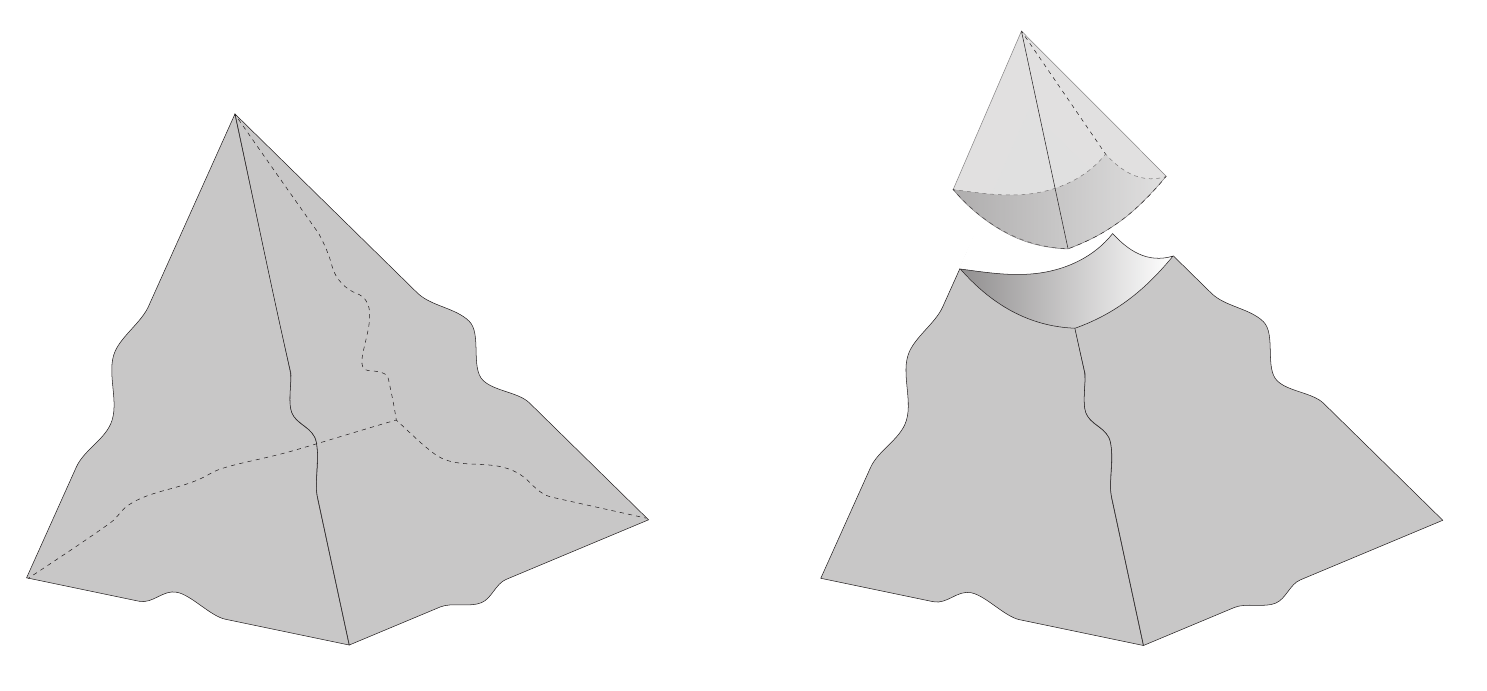}};
			\begin{scope}[x={(image.south east)},y={(image.north west)}]
				\draw[
				thick,
				decorate,
				decoration={snake, amplitude=0.6mm, segment length=2mm}
				]
				(0.45,0.62) -- (0.51,0.62);
				
				\draw[
				thick,
				-{Stealth[length=3mm,width=2mm]}
				]
				(0.51,0.62) -- (0.53,0.62);
				
				
			\end{scope}
		\end{tikzpicture}		
		\caption{Cutting off a small neighborhood of each vertex using the gluing formula (Theorem~\ref{thm:gluing}). In the right-hand figure, some hidden dashed edges are omitted.}
		\label{fig:cutting-and-pasting-vertices}
	\end{figure} 
	
	Applying this procedure at every vertex of $\domain$ yields a new polyhedral manifold $\domain'$ (see Figure~\ref{fig:cutting-and-pasting-vertices-2}). Removing a small neighborhood of a vertex $v \in \domain$ creates new vertices and edges. To distinguish the original edges from the newly created ones, let $\mathcal E_{\domain}$ denote the set of edges of the original manifold $\domain$. By a slight abuse of notation, after the truncation we continue to use $\mathcal E_{\domain}$ for the remaining segments of those edges. Thus, the new edges of $\domain'$ created by the cutting procedure are not included in $\mathcal E_{\domain}$.
	
	Let $\Gamma$ be an edge of $\domain'$ so that $\Gamma \in \mathcal E_{\domain}$. We deform the boundary conditions and the metric near $\Gamma$ so that the metric and the adjacent faces become flat (see Figure~\ref{fig:deform-near-edge}), and then excise a small neighborhood by cutting along a surface $\Sigma_\Gamma$ (see Figure~\ref{fig:cutting-and-pasting-edge}). Here $\Sigma_\Gamma$ is the product of $\Gamma$ with a circular arc of small radius. Geometrically, the cut-off region is the product $\mathbb G \times I$ of a sector $\mathbb G \subset \mathbb R^2$ and an interval $I$ (see the left-hand figure of Figure~\ref{fig:edgecut}). We show that the Fredholm index of the associated Dirac operator on this region vanishes by combining a deformation argument with the approximation result in Lemma~\ref{lemma:approximation} and Corollary~\ref{coro:approximation}.

	\begin{figure}[h]		
		\centering	
\begin{tikzpicture}
	\node[anchor=south west,inner sep=0] (image) at (0,0) {\includegraphics[width=1\textwidth]{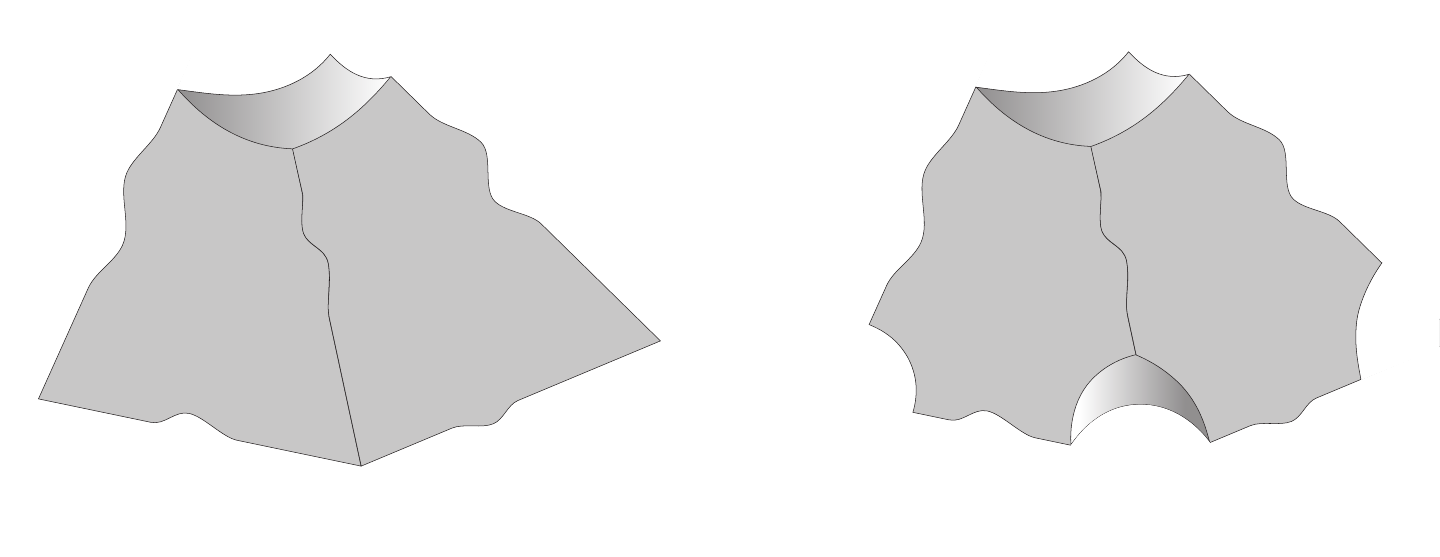}};
	\begin{scope}[x={(image.south east)},y={(image.north west)}]
		\draw[
		thick,
		decorate,
		decoration={snake, amplitude=0.6mm, segment length=2mm}
		]
		(0.48,0.62) -- (0.54,0.62);
		
		\draw[
		thick,
		-{Stealth[length=3mm,width=2mm]}
		]
		(0.54,0.62) -- (0.56,0.62);
		
		
		\node at (0.75,0.1) {$\domain'$};
	\end{scope}
\end{tikzpicture}		
		\caption{The manifold $\protect\domain'$ obtained by applying the vertex-cutting procedure at every vertex of $\protect\domain$.}
		\label{fig:cutting-and-pasting-vertices-2}
	\end{figure}

		\begin{figure}[h]		
		\centering	
		\begin{tikzpicture}
			\node[anchor=south west,inner sep=0] (image) at (0,0) {\includegraphics[width=1\textwidth]{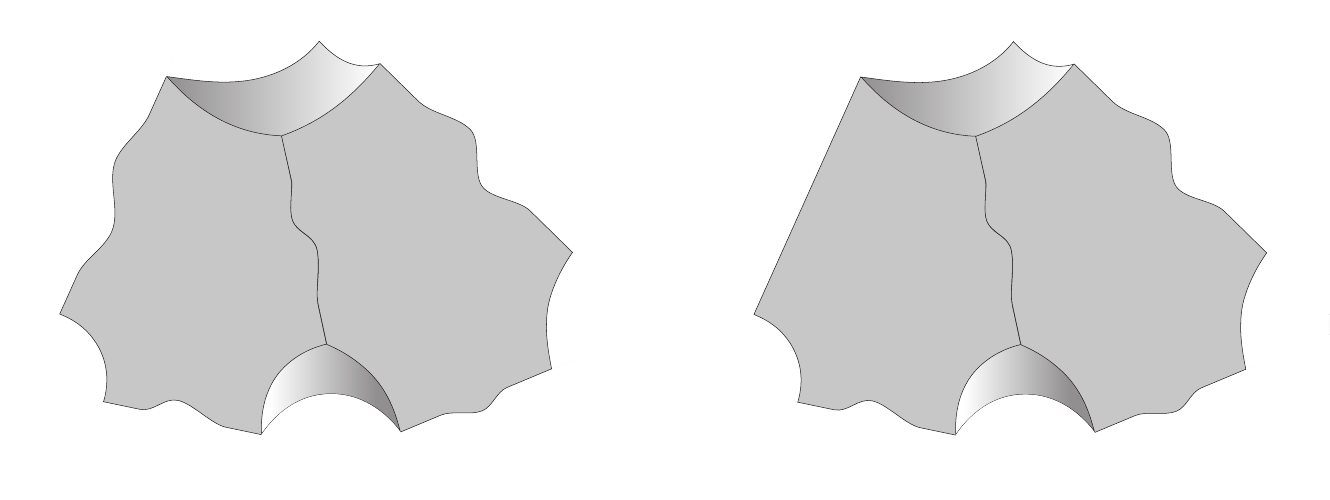}};
			\begin{scope}[x={(image.south east)},y={(image.north west)}]
				\draw[
				thick,
				decorate,
				decoration={snake, amplitude=0.6mm, segment length=2mm}
				]
				(0.47,0.54) -- (0.53,0.54);
				
				\draw[
				thick,
				-{Stealth[length=3mm,width=2mm]}
				]
				(0.53,0.54) -- (0.55,0.54);
				
				\node at (0.06, 0.59) {$\Gamma$};
				
				\node at (0.59,0.59) {$\Gamma$};
			\end{scope}
		\end{tikzpicture}		
	\caption{Deformation near $\Gamma$ that makes both the metric and the adjacent faces flat.}
		\label{fig:deform-near-edge}
	\end{figure}
	\begin{figure}[h]		
	\centering	
	\begin{tikzpicture}
		\node[anchor=south west,inner sep=0] (image) at (0,0) {\includegraphics[width=1\textwidth]{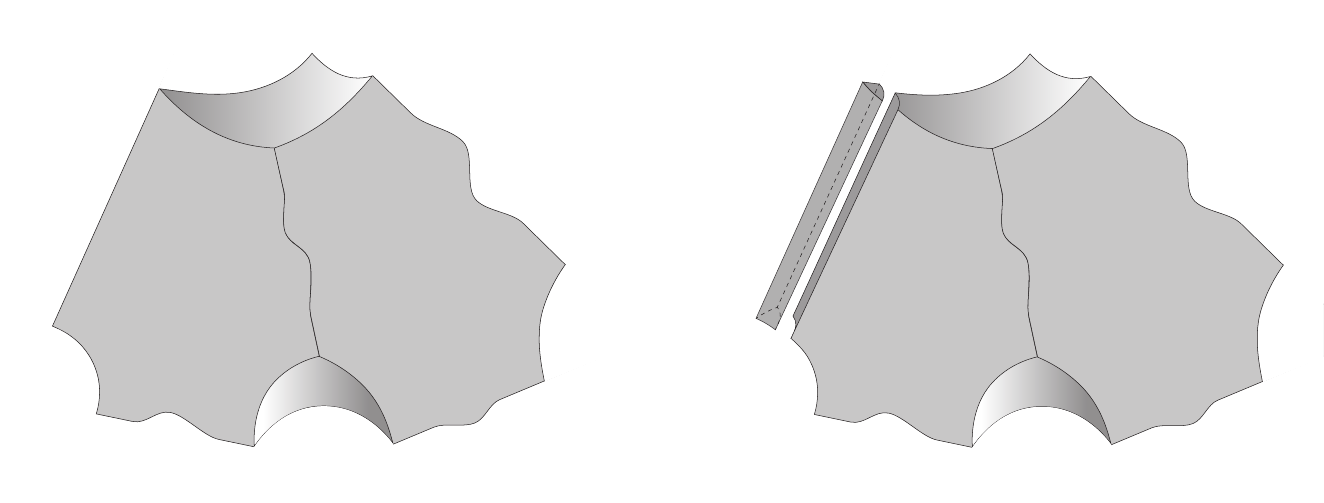}};
		\begin{scope}[x={(image.south east)},y={(image.north west)}]
			\draw[
			thick,
			decorate,
			decoration={snake, amplitude=0.6mm, segment length=2mm}
			]
			(0.47,0.5) -- (0.53,0.5);
			
			\draw[
			thick,
			-{Stealth[length=3mm,width=2mm]}
			]
			(0.53,0.5) -- (0.55,0.5);
			
%
		\end{scope}
	\end{tikzpicture}		
	\caption{Cutting off a small neighborhood of the edge $\Gamma$.}
	
	\label{fig:cutting-and-pasting-edge}
\end{figure}

	\begin{figure}[h]		
		\centering	
		\begin{tikzpicture}
			\node[anchor=south west,inner sep=0] (image) at (0,0) {\includegraphics[width=0.9\textwidth]{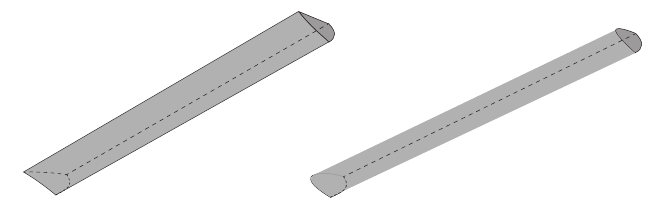}};
			\begin{scope}[x={(image.south east)},y={(image.north west)}]
				\draw[
				thick,
				decorate,
				decoration={snake, amplitude=0.6mm, segment length=2mm}
				]
				(0.47,0.5) -- (0.53,0.5);
				
				\draw[
				thick,
				-{Stealth[length=3mm,width=2mm]}
				]
				(0.53,0.5) -- (0.55,0.5);
				
				%
			\end{scope}
		\end{tikzpicture}		
	\caption{Replacing the neighborhood $\protect\mathbb G \times I$ of $\Gamma$ by $U \times I$.}
		\label{fig:edgecut}
	\end{figure}
	\begin{figure}[h]
		\centering
		\begin{tikzpicture}
			
			\begin{scope}
				\def\r{3}
				
				\fill[gray!50]
				(0,0) -- (\r,0)
				arc[start angle=0,end angle=45,radius=\r]
				-- cycle;
				
				\draw[thick]
				(0,0) -- (\r,0)
				arc[start angle=0,end angle=45,radius=\r]
				-- cycle;
				
				
				\node at (1.5,-0.7) {$\mathbb G$};
			\end{scope}
			
			\draw[
			thick,
			decorate,
			decoration={snake, amplitude=0.6mm, segment length=2mm}
			]
			(4,1) -- (4.9,1);
			
			\draw[
			thick,
			-{Stealth[length=3mm,width=2mm]}
			]
			(4.9,1) -- (5.2,1);
			
			\begin{scope}[xshift=5cm]
				
				\def\r{3}
				
				\fill[gray!50]
				({\r*cos(45)},{\r*sin(45)})
				arc[start angle=45,end angle=0,radius=\r]
				.. controls (1,0.1) and (0.6,2.0) ..
				({\r*cos(45)},{\r*sin(45)});
				
				\draw[thick]
				({\r*cos(45)},{\r*sin(45)})
				arc[start angle=45,end angle=0,radius=\r];
				
				\draw[thick]
				(\r,0)
				.. controls (1,0.1) and (0.6,2.0) ..
				({\r*cos(45)},{\r*sin(45)});
				
				\node at (2.5,-0.7) {$U$};

			\end{scope}
			
		\end{tikzpicture}
\caption{Replacing the sector $\mathbb G$ by the smooth convex region $U$.}
		\label{fig:GtoU}
	\end{figure}
	The boundary of the sector $\mathbb G \subset \mathbb R^2$ consists of three pieces: two radial line segments emanating from the origin and a circular arc of small radius $\varepsilon$ (see the left-hand figure of Figure~\ref{fig:GtoU}). We replace the two radial segments by a smooth convex curve that meets the circular arc orthogonally at both endpoints, thereby obtaining a convex region $U \subset \mathbb R^2$ (see the right-hand figure  of Figure~\ref{fig:GtoU}). We then form the product $U \times I$ (see the right-hand figure of Figure~\ref{fig:edgecut}). We likewise show that the Fredholm index of the associated Dirac operator on $U \times I$, equipped with the corresponding boundary condition, is zero. Gluing $U \times I$ to $\domain'$ along $\Sigma_\Gamma$ preserves the Fredholm index by Theorem~\ref{thm:gluing} (see Figure~\ref{fig:smoothedge}). Geometrically, this operation smooths the edge $\Gamma$; in particular, it simultaneously smooths the vertices at the two endpoints of $\Gamma$.

	  	\begin{figure}[h]		
	  	\centering	
	  	\begin{tikzpicture}
	  		\node[anchor=south west,inner sep=0] (image) at (0,0) {\includegraphics[width=1\textwidth]{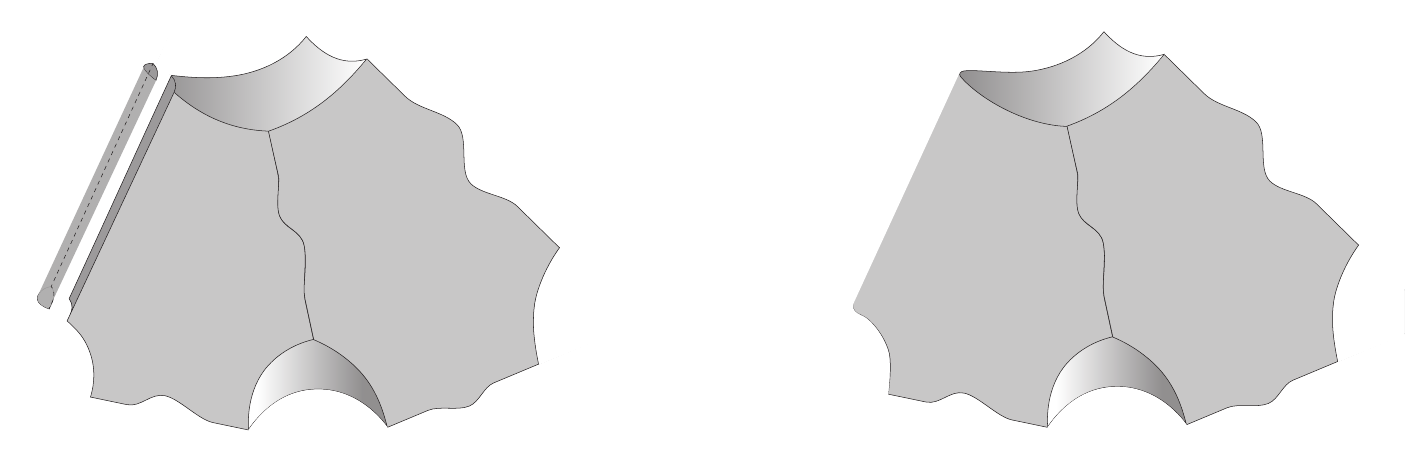}};
	  		\begin{scope}[x={(image.south east)},y={(image.north west)}]
	  			\draw[
	  			thick,
	  			decorate,
	  			decoration={snake, amplitude=0.6mm, segment length=2mm}
	  			]
	  			(0.47,0.5) -- (0.53,0.5);
	  			
	  			\draw[
	  			thick,
	  			-{Stealth[length=3mm,width=2mm]}
	  			]
	  			(0.53,0.5) -- (0.55,0.5);
	  			
	  			%
	  		\end{scope}
	  	\end{tikzpicture}		
	   	\caption{Gluing $U \times I$ to $\protect\domain'$ along $\Sigma_\Gamma$.}
	  
	  	\label{fig:smoothedge}
	  \end{figure}
	  
	  Repeating this operation for every edge in $\mathcal E_{\domain}$ produces a polyhedral manifold $\domain''$ with no vertices (see Figure~\ref{fig:smoothedge2}), although its still has codimension-two faces consists of closed smooth curves. Applying the same cutting-and-pasting construction to small neighborhoods of these curves removes the remaining codimension-two singularities and yields a manifold with smooth boundary (see Figure~\ref{fig:smooth}), while preserving the Fredholm index. Thus, the index problem on a polyhedral manifold is reduced to the classical smooth boundary case~\cite{AtiyahBott}; see also~\cite{Lottboundary}.
	
	\begin{figure}[h]		
		\centering	
		\begin{tikzpicture}
			\node[anchor=south west,inner sep=0] (image) at (0,0) {\includegraphics[width=1\textwidth]{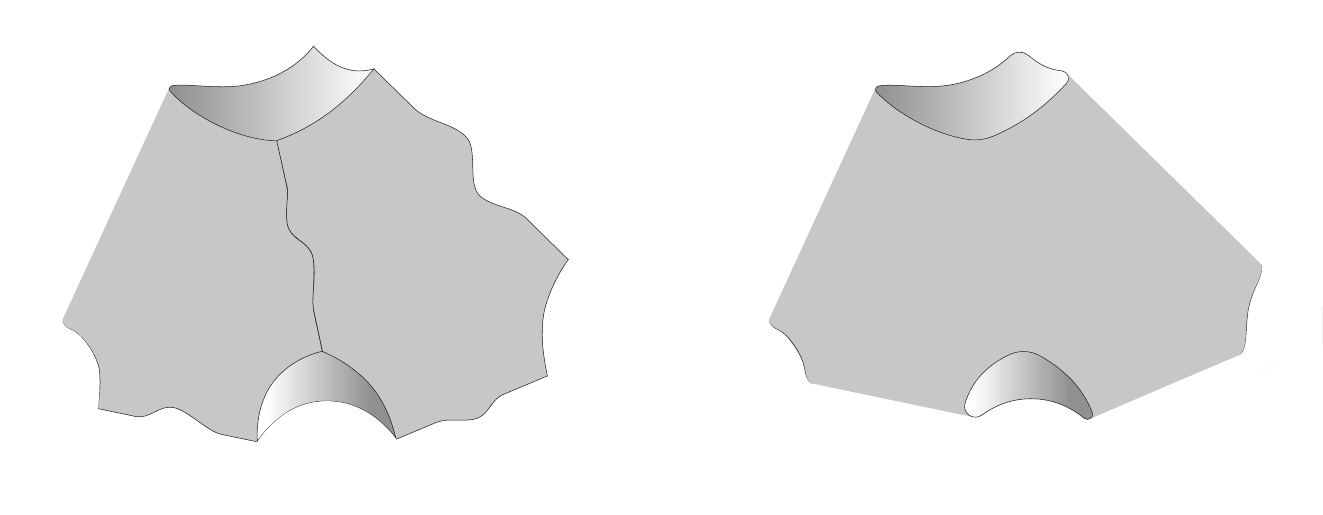}};
			\begin{scope}[x={(image.south east)},y={(image.north west)}]
				\draw[
				thick,
				decorate,
				decoration={snake, amplitude=0.6mm, segment length=2mm}
				]
				(0.47,0.5) -- (0.53,0.5);
				
				\draw[
				thick,
				-{Stealth[length=3mm,width=2mm]}
				]
				(0.53,0.5) -- (0.55,0.5);
				
				%
				
				\node at (0.75,0.1) {$\domain''$};
			\end{scope}
		\end{tikzpicture}				\caption{The manifold $\protect\domain''$ obtained after smoothing every edge in $\mathcal E_{\protect \domain}$.}
		
		\label{fig:smoothedge2}
	\end{figure}
	
	\begin{figure}
		\centering
		\includegraphics[scale=0.3]{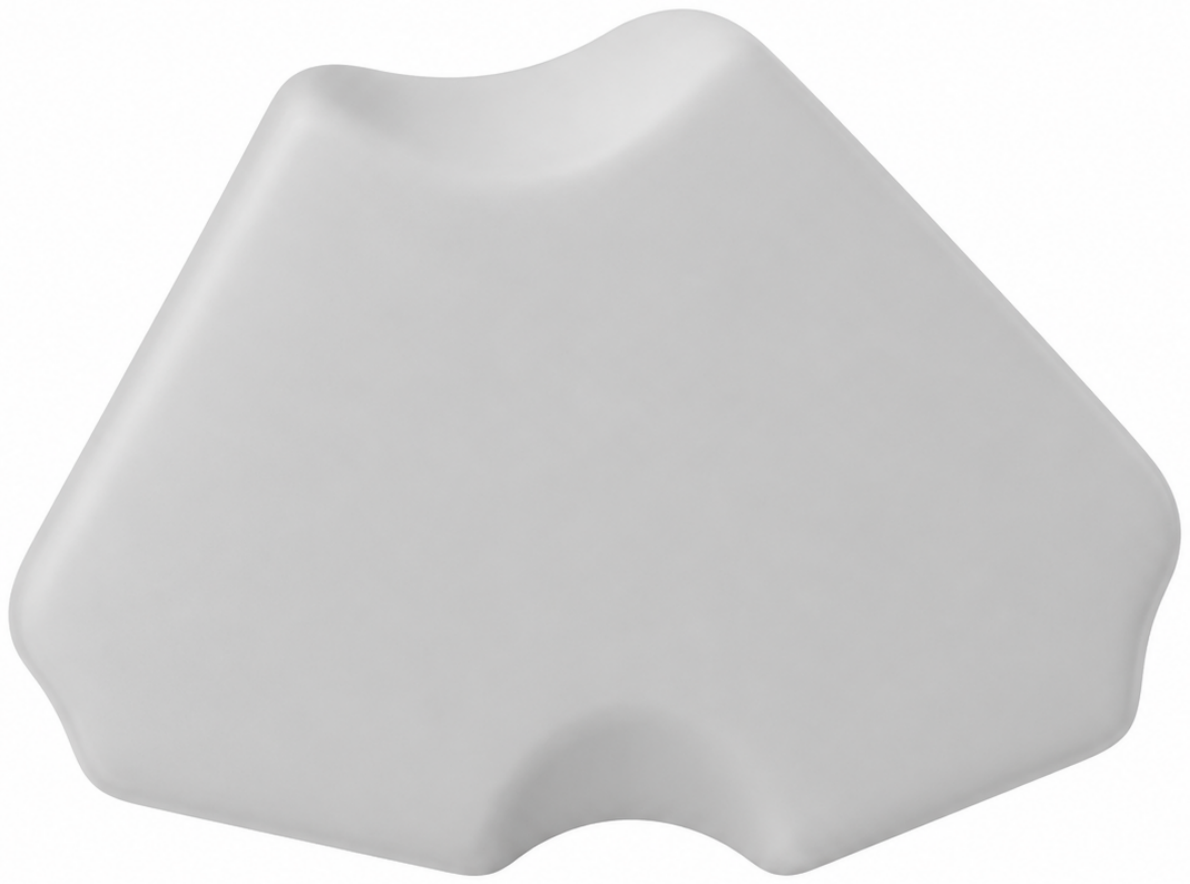}
\caption{The resulting manifold with smooth boundary.}
		\label{fig:smooth}
	\end{figure}
	
\textbf{(3).} Finally, we remove the strict comparison hypothesis. Theorem~\ref{thm:indexTheoremVector} computes the Fredholm index only under the stronger hypothesis that the comparison inequalities for the dihedral angles---or, equivalently, for the inner products of adjacent vectors---are strict (see condition~\ref{eq:strictInnerProductIndex}). It therefore does not directly establish Gromov's dihedral rigidity conjecture. To pass to the nonstrict case, we approximate the geometric data in Theorem~\ref{thm:main2} by a sequence of geometric data and associated boundary conditions satisfying the strict comparison condition~\ref{eq:strictInnerProductIndex}. For each approximation, Theorem~\ref{thm:indexTheoremVector} gives a nonzero index; hence, the corresponding Dirac operator has a nontrivial solution $\varphi_n$. The approximation lemma (Lemma~\ref{lemma:approximation}) then yields a nontrivial solution $\varphi \in H^1(\domain, E; B)$ for the Dirac operator associated with the original geometric data and boundary condition. Applying the Bochner--Lichnerowicz--Weitzenb\"ock formula to $\varphi$ gives the required equalities for the scalar and mean curvatures (see Lemma~\ref{lemma:angleRigidity}). Equality of the dihedral angles follows from a separate elementary computation using the boundary condition; see also~\cite{Wang:2022vf}.

After our papers \cite{Wang:2021tq, Wang:2022vf} appeared on arXiv, Brendle proved a case of Gromov's dihedral rigidity conjecture via a different approach under the additional assumption that all corresponding angles (not only dihedral angles) are equal \cite{MR4689374}.
Subsequently, Brendle and Wang proved another  case of Gromov’s dihedral rigidity conjecture under the  assumption that all angles are acute, that is, all  angles are $\leq \pi/2$  \cite{MR4978418}.

The authors thank  Gromov for many stimulating discussions and for suggesting that we write up a self-contained proof of the dihedral rigidity conjecture in the three-dimensional case.

\section{Geometric setup of an index problem}\label{sec:prelim}

In this section, we establish the geometric setup for our index problem. This includes defining the relevant vector bundles, elliptic operators, and boundary conditions, as well as clarifying their relationship to Theorem \ref{thm:main2}. Our focus will primarily be on the three-dimensional case.

\subsection{Polyhedral manifolds and twisted Dirac operators}\label{sec:polymanifold}
In this subsection, we describe the basic setup of the index problem and provide some key geometric estimates.

The geometric objects investigated in this paper are manifolds with polyhedral boundaries, which we will henceforth refer to as \emph{polyhedral manifolds}. As the current paper  is concerned with the three-dimensional case of Gromov's dihedral rigidity conjecture, we shall focus on the  dimension three case for simplicity.

First, we introduce the following notion of polyhedral cones. A one-dimensional polyhedral cone in $\mathbb R^1$ is simply the ray $[0, \infty)\subset \R^1$. 

\begin{definition}
A two dimensional polyhedral cone $C$ in $\mathbb R^2$ is a cone (i.e., for any $v \in C$ and $\lambda \geq 0$, $\lambda v \in C$) such that its link, which is defined to be $C\cap \mathbb S^1$, is a connected arc. 
\end{definition} 

To define three-dimensional polyhedral cones in $\mathbb R^3$, we first recall the definition of spherical polygons. 

\begin{definition}
Let $\mathbb S^2$ be the unit sphere in $\mathbb R^3$. A spherical polygon $P \subset \mathbb{S}^2$ is a two-dimensional submanifold with boundary such that its boundary $\partial P$ is piecewise smooth and consists of a finite union of geodesic arcs.
\end{definition}

\begin{definition}\label{def:polyhedralcone}
 A three-dimensional polyhedral cone $C$ in $\mathbb{R}^3$ is a cone (i.e., for any $v\in C$ and $\lambda\ge 0$, we have $\lambda v\in C$) whose link, defined by $C\cap \mathbb{S}^2$, is a connected two-dimensional spherical polygon. In this case, the codimension-one faces of $C$ are precisely the intersections of $C$ with its bounding planes.
\end{definition}

Let $(X, h)$ be an open, oriented, $3$-dimensional Riemannian manifold and let $M \subset X$ be an $3$-dimensional topological manifold with boundary.  

\begin{definition}\label{def:tangentcone}
	The tangent cone $\tancone_x M \subset T_x X$ at a point $x \in M$ is defined as the set of vectors $v \in T_x X$ such that there exists a smooth curve $\gamma: [0, \epsilon) \to M$ with $\gamma(0)=x$ and $\dot{\gamma}(0)=v$.
\end{definition} 

We adopt the following definition of polyhedral manifolds.

\begin{definition}\label{def:polymanifold}
	Let $(X, h)$ be an open, oriented, $3$-dimensional Riemannian manifold and let $M \subset X$ be a compact region. We say that $(M, g)$, with $g \coloneqq h|_M$, is a \emph{polyhedral manifold} if $M$ is a $3$-dimensional topological manifold with boundary and there exists a finite collection of pairwise transversal smooth open hypersurfaces $\mathcal{H} = \{H_1, \dots, H_p\}$ in $X$ such that:
	\begin{enumerate}
		\item The boundary $\partial M$ is contained in the union of these hypersurfaces:
		\[ \partial M \subset \bigcup_{i=1}^p H_i \]
		\item For any point $x \in \partial M$, the tangent cone $\tancone_x M \subset T_x X$ is a three dimensional polyhedral cone (identifying $T_x X$ with $\mathbb{R}^3$) in the sense of Definition \ref{def:polyhedralcone}. 
		\item Furthermore, the set of bounding planes of the polyhedral cone $\tancone_x M$ is exactly the tangent planes $\{T_x H_j \mid H_j \in \mathcal{H} \text{ and } x \in H_j\}$.
	\end{enumerate}
\end{definition}

We say a function on $M$ is \emph{smooth} on $M$ if it is the restriction to $M$ of some smooth function on $X$. Similarly, smooth vector bundles on $M$ are restrictions to $M$ of smooth vector bundles on $X$, and smooth sections of a smooth vector bundle over $M$ are restrictions of smooth sections of smooth vector bundles on $X$.

Let $M$ be a $3$-dimensional polyhedral manifold. Let $\mathcal{S}_k$ be the set of points $x \in M$ for which $k$ is the largest integer such that the tangent cone $\tancone_x M$ splits as $T_x M \cong \mathbb{R}^k \times C'_x$, where $C'_x$ is a polyhedral cone in $\mathbb{R}^{3-k}$.  The definition of a polyhedral manifold ensures that $\mathcal{S}_k$ is an open manifold of dimension $k$.


\begin{definition}\label{def:codimface}
	Let $\mathring{F}$ be a connected component of $\mathcal{S}_k$, and let $F$ denote its closure in $M$. We call $F$ a codimension $(3-k)$ face (or equivalently, a dimension $k$ face) of $M$. 
\end{definition}

This collection of faces, ordered by inclusion, endows $M$ with a natural stratification.

\begin{definition}
	Let $F$ be a codimension $(3-k)$ face of $M$, as introduced in Definition \ref{def:codimface}. For each $x\in \mathring{F}$, we define the normal cone $\normalcone_xF$ of $F$ at $x$ to be 
	\[  \normalcone_xF = \tancone_xM \cap (T_xF)^\perp,  \]
	where $(T_x F)^\perp$ denotes the orthogonal complement of the tangent space $T_x F$ within the ambient tangent space $T_x X$.
\end{definition}

We now discuss the types of maps between three dimensional polyhedral manifolds that are relevant for this paper. 

\begin{definition}\label{def:polyhedralmap}
	Let $(\domain,\overbar g)$ and $(\target,g)$ be three dimensional polyhedral manifolds. A  map $f\colon \domain \to \target$ is called a polyhedral map, if 
	\begin{enumerate}
		\item $f$ is Lipschitz; 
		\item $f$ is smooth away from the vertices;
		\item $f$ maps  each codimension $k$ face $\overbar F^k_i$ in $\domain$  to a codimension $k$ face  $F^k_{\varphi(i)}$ of $\target$, where $\varphi$ is a map  from the collection of faces of $\domain$ to the collection of faces of $\target$ such that each pair of adjacent\footnote{A pair of codimension one faces of $\domain $ are called adjacent if their intersection is a nonempty codimension two face of $\domain$. Since $\domain$ has dimension three, this simply means a pair of faces of $\domain$ are   adjacent if their intersection is an edge of $\domain$.} codimension one faces of $\domain $  are mapped to a pair of adjacent codimension one faces of $\target$.
    \end{enumerate} 
\end{definition}
\begin{remark}
	Condition (3) requires the existence of a specific assignment $i \mapsto \varphi(i)$ that maps each face of $\domain$ to a face of $\target$ of the same codimension. This data is crucial for the subsequent analysis of dihedral angle comparisons.
	
	It is important to note that $f$ is allowed to be ``degenerate'' on some faces, meaning the image $f(\overline{F}^k_i)$ may lie in a subset of $F^k_{\varphi(i)}$ with strictly lower dimension. For instance, if $f$ maps a 2-dimensional face $\overline{F}$ of $\domain$ to a 2-dimensional face $F$ of $\target$, the image $f(\overline{F})$ might be contained entirely within an edge $e \subset F$, or even a vertex. In such a case, the  image $f(\overline{F})$ is contained in more than one 2-dimensional faces of $\target$ (i.e. those $2$-dimensional  faces containing  $e$). Since the dihedral angles are defined between two specific adjacent faces, the  map $\varphi$ resolves this ambiguity by explicitly specifying which target faces are being compared.
\end{remark}

We emphasize that a polyhedral map $f\colon \domain \to \target$ between $3$-dimensional polyhedral manifolds is not required to be smooth at the vertices. For example, for two polyhedra $P_1$ and $P_2$ in $\mathbb{R}^3$ of the same combinatorial type, one can  construct a polyhedral map from $P_1$ to $P_2$ by defining a smooth $f$ on the interior, the $2$-dimensional faces, and the edges, but excluding small neighborhoods of the vertices, and then extending $f$ radially toward the vertices within these neighborhoods. If there are four or more 2-dimensional faces meeting at a vertex, such a map generally is not smooth at that vertex. 

\begin{definition}
	Let $f\colon (\domain,\overbar g)\to (\target,g)$ be a polyhedral map between  Riemannian polyhedral manifolds of dimension three. Denote by $T\domain$ and $T\target$ the  tangent bundles of $\domain$ and $\target$, respectively. The map $f$  is called spin if the bundle $T\domain\oplus f^\ast T\target$ admits a spin structure.  
\end{definition}

Now suppose $f\colon (\domain, \overbar g)\to (\target,g)$ is a spin polyhedral map between  Riemannian polyhedral manifolds of dimension three. We consider the spinor bundle associated with the vector bundle $T\domain\oplus T\target$: 
\[ E\coloneqq S(T\domain\oplus f^*T\target) \]  equipped with the spinorial connection $\nabla$  induced by the Levi-Civita connections $\nabla^{\domain}$ and $\nabla^{\target}$ on $T\domain$ and $T\target$, respectively. Denote by $\overbar c$ the Clifford action of $T\domain$ on $E$ and $\hat c$ the Clifford action of $f^\ast T\target$ of $E$. Since $T\domain\oplus f^\ast T\target$ has even rank,  there exists a natural $\mathbb Z/2$ grading operator $\mathscr E = i^{3} \overbar c(\overbar e_1) \overbar c(\overbar e_2)  \overbar c(\overbar e_3) \hat c(e_1) \hat c(e_2)  \hat c(e_3)$ on $E$, where $\{\overbar e_i\}$ and $\{e_i\}$ are oriented local orthonormal basis of $T\domain$ and $f^\ast T\target$. Let us define 
\[ c\coloneqq i\mathscr E\hat c. \]
Note that $\overbar c(\overbar u)$ and $c(u)$ commute with each other for all $\overbar u\in T\domain$ and  $u\in f^\ast T\target$. We extend these to Clifford actions of $2$-forms by setting 
$$\overbar c(\overbar u\wedge \overbar v)=\overbar c(\overbar u)\overbar c(\overbar v) ~ \mathrm{ and } ~ c(u\wedge v)=c(u)c(v),$$
whenever $\overbar u\perp \overbar v$ and $u\perp v$,  for $\overbar u,\overbar v\in T\domain$ and $u, v\in T\target$.

\begin{definition}
We define 
$C^\infty_{00}(\domain,E)$ to be the space  of smooth sections of $E = S(T\domain\oplus f^*T\target)$ over $\domain$ that vanish near all codimension two faces.
\end{definition} 

Consider the Dirac operator
$$D=\sum_{i=1}^n \overbar c(\overbar e_i)\nabla_{\overbar e_i}$$
acting on $C^\infty_{00}(\domain,E)$. For any $\sigma\in C^\infty_{00}(\domain,E)$, the Stokes formula reads:
\begin{equation}\label{eq:Stokes1}
		\int_{\domain}|D\sigma|^2=\int_{\domain}|\nabla\sigma|^2+\int_{\domain}\langle\mathscr R\sigma,\sigma\rangle\\
		+\sum_{\overbar F_k}\int_{\overbar F_k}\langle D^\partial\sigma,\sigma\rangle,
\end{equation}
where $\{\overbar F_k\}$ is the collection of codimension one faces of $\domain$, and the curvature term $\mathscr R$ is given by the Lichnerowicz formula (see for example \cite[Theorem II.8.17]{spingeometry})
\begin{equation}\label{eq:lichnerowicz}
	\mathscr R=\frac{\Sc_{\overbar g}}{4}-\frac 1 4\sum_{i\ne j} \overbar c(\overbar e_i)\overbar c(\overbar e_j)c(R^{g}(f_*\overbar e_i\wedge f_*\overbar e_j)),
\end{equation}
with $\Sc_{\overbar g}$ the scalar curvature of $\overbar g$ and  $R^{g}$ the curvature operator\footnote{Our convention is that the curvature operator of the unit round sphere is the identity operator.} acting on $\Bigwedge^2T\target$. The boundary operator is given as
\begin{equation}
	D^\partial=\sum_{\mu=1}^{n-1}\overbar c(\overbar \nu_k)\overbar c(\overbar e_\mu)\nabla_{\overbar e_\mu},
\end{equation}
where $\overbar \nu_k$ is the unit inner normal vector of $\overbar F_k$, and $\{\overbar e_\mu\}$ is a local orthonormal frame of $T\overbar F_k$.

We have the following lemma estimating the curvature term $\mathscr R$; see \cite[Section 1.1]{GoetteSemmelmann}. 
\begin{lemma}\label{lemma:curvature>=}
	If the curvature operator of $(\target,g)$ is non-negative, then
	\begin{equation}\label{eq:curvature>=}
		\mathscr R\geq\frac{\Sc_{\overbar g}}{4}-\|\wedge^2f_*\|\frac{f^*\Sc_{g}}{4}.
	\end{equation}
\end{lemma}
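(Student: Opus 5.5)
The plan is to prove \eqref{eq:curvature>=} pointwise, since both sides are endomorphisms of $E_x$ at a single point $x\in\domain$; this is the Goette--Semmelmann argument. Fix $x$ and apply the singular value decomposition to $f_*\colon T_x\domain\to T_{f(x)}\target$. Choose orthonormal bases $\{\overbar e_i\}$ of $T_x\domain$ and $\{e_i\}$ of $T_{f(x)}\target$ with $f_*\overbar e_i=\mu_i e_i$ and $\mu_i\ge 0$. Then $f_*\overbar e_i\wedge f_*\overbar e_j=\mu_i\mu_j\, e_i\wedge e_j$, and the operator norm satisfies $\|\wedge^2 f_*\|=\max_{i<j}\mu_i\mu_j$. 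Substituting into \eqref{eq:lichnerowicz}, and using the convention $c(u\wedge v)=c(u)c(v)$, reduces the task to bounding
\[
-\frac14\sum_{i\neq j}\mu_i\mu_j\,\overbar c(\overbar e_i)\overbar c(\overbar e_j)\,c\big(R^g(e_i\wedge e_j)\big)
\]
from below by $-\|\wedge^2 f_*\|\,\Sc_g/4$.

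Since the curvature operator $R^g$ is symmetric and non-negative on $\Bigwedge^2T\target$, diagonalize it. Write $R^g=\sum_\alpha \lambda_\alpha\, \omega_\alpha\otimes\omega_\alpha$ with $\lambda_\alpha\ge 0$ and $\{\omega_\alpha\}$ orthonormal 2-forms; equivalently $R^g=\sum_\alpha \theta_\alpha\otimes\theta_\alpha$ with $\theta_\alpha=\sqrt{\lambda_\alpha}\,\omega_\alpha=\sum_{i<j}\theta_\alpha^{ij}e_i\wedge e_j$. Define the skew-adjoint operators
\[
\overbar\Theta_\alpha=\sum_{i<j}\mu_i\mu_j\theta^{ij}_\alpha\,\overbar c(\overbar e_i)\overbar c(\overbar e_j),\qquad \Theta_\alpha=\sum_{k<l}\theta^{kl}_\alpha\, c(e_k)c(e_l).
\]
The curvature term then equals $-\tfrac12\sum_\alpha\overbar\Theta_\alpha\Theta_\alpha$; the constant is to be checked carefully. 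Because $\overbar c$ and $c$ commute, $\overbar\Theta_\alpha$ and $\Theta_\alpha$ commute and are both skew-adjoint. This gives the elementary inequality
\[
\pm 2\,\overbar\Theta_\alpha\Theta_\alpha\le -\overbar\Theta_\alpha^{\,2}-\Theta_\alpha^{2},
\]
which follows from $-(\overbar\Theta_\alpha\pm\Theta_\alpha)^2\ge 0$. To make this balanced, I would first apply it with the rescaling $\overbar\Theta_\alpha/\sqrt{t}$ and $\sqrt t\,\Theta_\alpha$.

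The remaining task is to evaluate $-\sum_\alpha\overbar\Theta_\alpha^{\,2}$ and $-\sum_\alpha\Theta_\alpha^{2}$. Expanding the squares, the cross terms involving four distinct indices combine into Bianchi-identity terms, and these vanish after summing over $\alpha$. The terms with one shared index are anticommutator pieces, which cancel by symmetry. What remains is the diagonal part, giving
\[
-\sum_\alpha\Theta_\alpha^{2}=\sum_{k<l}R_{klkl}=\tfrac12\Sc_g,
\]
and
\[
-\sum_\alpha\overbar\Theta_\alpha^{\,2}=\sum_{i<j}\mu_i^2\mu_j^2R_{ijij}\le\|\wedge^2f_*\|^2\,\tfrac12\Sc_g,
\]
where the last step uses $R_{ijij}\ge 0$, which holds by non-negativity of the curvature operator. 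Choosing $t=\|\wedge^2 f_*\|$ optimizes the AM--GM step and yields the curvature contribution $\ge -\|\wedge^2f_*\|\,\Sc_g/4$, evaluated at $f(x)$.

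The main obstacle is the bookkeeping in this last step. One must track signs and factors of $2$ when converting between $\sum_{i\neq j}$ and $\sum_{i<j}$, and one must verify that the off-diagonal terms in $\Theta_\alpha^2$ genuinely cancel via the first Bianchi identity. In dimension three this is simpler: $\Bigwedge^2$ is three-dimensional, so terms with four distinct indices cannot occur, and only the mixed terms need to be checked, which cancel by the symmetry $R_{ijkl}=R_{klij}$.
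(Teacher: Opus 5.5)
Your argument is correct, but it differs from the paper's.

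\textbf{What the paper does.} It applies the singular value decomposition not to $f_*$ but to the composite $R^{g}\circ\wedge^2 f_*\colon\Bigwedge^2T_x\domain\to\Bigwedge^2T_{f(x)}\target$. This writes the curvature term as $-\frac12\sum_i\lambda_i\,\overbar c(\alpha_i)c(\beta_i)$, with $\lambda_i\geq 0$ and $\alpha_i,\beta_i$ unit $2$-forms. It then bounds each $\overbar c(\alpha_i)c(\beta_i)$ below by $-1$. It finishes with the trace-norm H\"older inequality $\|R^g\circ\wedge^2f_*\|_1\leq\|R^g\|_1\|\wedge^2f_*\|$ and the identity $\|R^g\|_1=\tr(R^g)=\Sc_g/2$.

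\textbf{Trade-offs.} The paper's route is considerably shorter. It needs no square-root factorization of $R^g$, no AM--GM with an optimized parameter $t$, and no expansion of $\sum_\alpha\Theta_\alpha^2$. The price is that the step $\overbar c(\alpha_i)c(\beta_i)\geq -1$ tacitly uses that, in dimension three, every unit $2$-form is $u\wedge v$ with $u,v$ orthonormal. Then $\overbar c(\alpha_i)$ and $c(\beta_i)$ are commuting skew-adjoint unitaries, so their product is a self-adjoint involution. In dimension $\geq 4$ this step fails; for instance $c\bigl(\tfrac{1}{\sqrt2}(e_1\wedge e_2+e_3\wedge e_4)\bigr)$ has norm $\sqrt2$.

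Your Goette--Semmelmann-style argument avoids this and works in every dimension. The four-index terms cancel because $\sum_\alpha\overline{\Theta}_\alpha^{\,2}$ is the Clifford contraction of the pulled-back tensor $f^*R^g$, which is still an algebraic curvature tensor and in particular satisfies Bianchi. Your constants check out: the curvature term is $-\tfrac12\sum_\alpha\overline{\Theta}_\alpha\Theta_\alpha$, you have $-\sum_\alpha\Theta_\alpha^2=\Sc_g/2$, and the choice $t=\|\wedge^2f_*\|$ gives exactly $-\|\wedge^2f_*\|\,f^*\Sc_g/4$.

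\textbf{One loose end.} At a point where $\|\wedge^2 f_*\|=0$ you cannot rescale by $1/\sqrt t$. There, however, every $\overline{\Theta}_\alpha$ vanishes, so the curvature term is zero and the inequality is trivial.
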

\begin{proof}
	At any point $x\in \domain$, consider the singular value decomposition of the map
	$$R^{g}\circ \wedge^2f_*\colon \Bigwedge^2T_x\domain\to \Bigwedge^2 T_{f(x)}\target,$$
	that is,   
	$$R^{g}\circ \wedge^2f_*(\alpha_i)=\lambda_i\beta_i,~\forall 1\leq i\leq n(n-1)/2,$$
	for some orthonormal bases $\{\alpha_i\}$ of $\Bigwedge^2T_x\domain$ and $\{\beta_i\}$ of $\Bigwedge^2T_{f(x)}\target$, and $\lambda_i\geq 0$.
	Then
	\begin{equation}
		\begin{split}
			&-\frac 1 4\sum_{i\ne j} \overbar c(\overbar e_i)\overbar c(\overbar e_j)c(R^{g}(f_*\overbar e_i\wedge f_*\overbar e_j))=-\frac 1 2\sum_i \overbar c(\alpha_i) c(\beta_i)\lambda_i\\
			\geq& -\frac 1 2\sum_i\lambda_i=-\frac 1 2\|R^{g}\circ \wedge^2f_*\|_1\\
			\geq& -\frac 1 2\|R^{g}\|_1\cdot \|\wedge^2f_*\|=-\frac 1 2 \|\wedge^2f_*\|\cdot \tr(R^{g})=-\|\wedge^2f_*\|\frac{f^*\Sc_{g}}{4}.
		\end{split}
	\end{equation}
\end{proof}

\subsection{Local boundary conditions}

In this subsection, we describe a local boundary condition for sections of $E = S(T\domain\oplus f^\ast T\target)$ over $\domain$ and discuss the relationship between the associated boundary value problem and Theorem \ref{thm:main2}.

\begin{definition}\label{def:boundaryCondition} Let $B$ denote the following boundary condition for sections of $E$ over $\domain$: a smooth section $\sigma\in C^\infty_{00}(\domain,E)$ is said to satisfy the boundary condition if, on every codimension-one face $\overbar F_k$ of $\domain$,
	\[ \mathscr E\overbar c(\overbar \nu_k)c(f^*\nu_k)\sigma = -\sigma, \]
where $\overbar \nu_k$ is the unit inner normal vector of $\overbar F_k$ and $\nu_k$ is the unit inner normal vector of its corresponding\footnote{See Definition \ref{def:polyhedralmap} for the precise definition of the correspondence between codimension one faces of $\domain$ and $\target$ under the map $f$.} codimension one face $F_k$ of $\target$. The space of all such sections is denoted by $C^\infty_{00}(\domain,E;B)$. For convenience, we define the operator \begin{equation}\label{eq:involution}
\gamma_k \coloneqq \mathscr E\overbar c(\overbar \nu_k)c(f^*\nu_k).
\end{equation} When no ambiguity arises, we will omit $f^*$ from the notation and simply write 
\[ \gamma_k = \mathscr E\overbar c(\overbar \nu_k)c(\nu_k). \]
\end{definition}

It is straightforward to verify that the Dirac operator $D$ is symmetric with respect to the boundary condition $B$, that is, for all smooth sections $\sigma, \tau \in C^\infty_{00}(\domain, E; B)$, the operator $D$ satisfies:
$$\int_{\domain} \langle D\sigma, \tau \rangle = \int_{\domain}\langle \sigma, D\tau \rangle. $$

\begin{example}\label{example:deRham}
	Consider the special case where  $\domain = \target$ and $f$ is the identity map. In this case, the bundle  $E = S(T\domain \oplus T\domain)$ is naturally identified with the exterior algebra bundle $\Lambda^* T\domain$. Under this identification, the Dirac operator $D$ on $E$ becomes the de Rham operator of $\domain$. The restriction of a section $\sigma \in C^\infty_{00}(\domain, \Lambda^* T\domain)$ to a codimension one face $\overbar F_k$ admits a decomposition $\sigma = \alpha + \nu_k^\ast \wedge \beta$, where $\nu_k^\ast$ is the 1-form dual to the unit normal vector $\nu_k$, and $\alpha, \beta$ are tangential forms (i.e., their contraction with $\nu_k$ vanishes). In this special case, the section $\sigma$ satisfies the boundary condition $B$ on $\overbar F_k$ if and only if $\beta = 0$ on $\overbar F_k$. This is referred to as the \emph{absolute boundary condition} for the de Rham operator in the literature.
\end{example}

Let $\gamma_k$ be the  self-adjoint involution from line \eqref{eq:involution}.  The  super-commutator
$$\mathscr A\coloneqq -\frac 1 2(D^\partial\gamma_k+\gamma_kD^\partial)$$
defines a bundle endomorphism of $E$ over $\overbar F_k$. Therefore, if $\sigma\in C^\infty_{00}(\domain,E;B)$, then it follows from  $\gamma_k\sigma = -\sigma$ that line \eqref{eq:Stokes1} becomes
\begin{equation}\label{eq:Stokes2}
	\int_{\domain}|D\sigma|^2=\int_{\domain}|\nabla\sigma|^2+ \int_{\domain}\langle\mathscr R\sigma,\sigma\rangle\\
	+\sum_{\overbar F_k}\int_{\overbar F_k}\langle \mathscr A\sigma,\sigma\rangle,
\end{equation}
A direct computation yields
\begin{equation}
	\begin{split}
		\mathscr A=&\frac{H_{\overbar g}}{2}+\frac 1 2 \sum_{\mu=1}^{n-1}\overbar c(\overbar \nu_k)\overbar c(\overbar e_\mu) c(\nu_k)c(\nabla^{\target}_{f_*\overbar e_\mu}\nu_k)\\
		=&\frac{H_{\overbar g}}{2}-\frac 1 2 \sum_{\mu=1}^{n-1}\overbar c(\overbar \nu_k)\overbar c(\overbar e_\mu) c(\nu_k)c(A^{g}f_*\overbar e_\mu),
	\end{split}
\end{equation}
where $H_{\overbar g}$ is the mean curvature of $\overbar F_k$ and $A^{g}$ denotes the second fundamental form\footnote{Our convention is that the second fundamental form of the unit sphere as the boundary of the unit ball is the identity operator.} of $F_k$.
We now estimate the endomorphism $\mathscr A$ (cf. \cite[Lemma 2.1]{Lottboundary}).
\begin{lemma}\label{lemma:secondff>=}
	If the second fundamental form $A^{g}$ of each codimension one face $F_k\subset \target$ is non-negative, then
	\begin{equation}\label{eq:secondff>=}
		\mathscr A\geq \frac{H_{\overbar g}}{2}-\|f_*^\partial\|\cdot\frac{f^*H_{g}}{2},
	\end{equation}
	where $f^\partial$ denotes the restriction of $f$ on $\partial \domain$.
\end{lemma}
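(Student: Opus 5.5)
The estimate reduces to a pointwise algebraic bound, just as in Lemma \ref{lemma:curvature>=}. By the formula for $\mathscr A$ derived above,
\[
\mathscr A=\frac{H_{\overbar g}}{2}-\frac 1 2 \sum_{\mu=1}^{2}\overbar c(\overbar \nu_k)\overbar c(\overbar e_\mu)\, c(\nu_k)c(A^{g}f_*\overbar e_\mu).
\]
It therefore suffices to show that, at each point $x\in \overbar F_k$, the operator
\[
T\coloneqq\sum_{\mu}\overbar c(\overbar \nu_k)\overbar c(\overbar e_\mu)\, c(\nu_k)c(A^{g}f_*\overbar e_\mu)
\]
has operator norm at most $\|f_*^\partial\|\cdot \tr(A^g)=\|f^\partial_*\|\, f^*H_g$. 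Here $H_g=\tr A^g$ is the mean curvature of $F_k$, with the paper's sign conventions.

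To prove this, I will take the singular value decomposition of the composite linear map
\[
A^{g}\circ f^\partial_*\colon T_x\overbar F_k\to T_{f(x)}F_k .
\]
This gives orthonormal bases $\{\overbar u_\mu\}$ of $T_x\overbar F_k$ and $\{u_\mu\}$ of $T_{f(x)}F_k$, and numbers $s_\mu\ge 0$, with $A^g f_*\overbar u_\mu=s_\mu u_\mu$. The expression for $T$ is independent of the choice of orthonormal frame $\{\overbar e_\mu\}$, since it is linear in $\overbar e_\mu$ paired with itself. So I may take $\overbar e_\mu=\overbar u_\mu$, which gives
\[
T=\sum_\mu s_\mu\, \overbar c(\overbar \nu_k)\overbar c(\overbar u_\mu)\,c(\nu_k)c(u_\mu).
\]

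Each summand is a product of two commuting operators, $\overbar c(\overbar \nu_k)\overbar c(\overbar u_\mu)$ and $c(\nu_k)c(u_\mu)$. Each is a Clifford product of two orthonormal vectors, since $\overbar u_\mu\perp\overbar\nu_k$ and $u_\mu\perp \nu_k$, so each is a unitary (indeed skew-adjoint) operator. Their product is therefore unitary, in fact a self-adjoint involution, and has norm $1$. The triangle inequality then gives $\|T\|\le\sum_\mu s_\mu=\|A^g\circ f^\partial_*\|_1$, the trace norm.

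Finally, because $A^g\ge 0$, the inequality $\|A^g\circ f^\partial_*\|_1\le \|A^g\|_1\,\|f^\partial_*\|$ holds, and $\|A^g\|_1=\tr A^g=H_g$ evaluated at $f(x)$. This step uses the trace-norm H\"older inequality $\|PQ\|_1\le \|P\|_1\|Q\|_{\mathrm{op}}$ with $P=A^g$ and $Q=f^\partial_*$. Combining the bounds gives
\[
\mathscr A\ge \frac{H_{\overbar g}}{2}-\frac12\|f^\partial_*\|\, f^*H_g,
\]
which is \eqref{eq:secondff>=}.

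The only point needing care is a minor obstacle: checking that the sign and self-adjointness conventions make $T$ self-adjoint, so that the norm bound translates into the operator inequality. $T$ is self-adjoint because it is a sum of commuting products of two skew-adjoint factors, each of which is self-adjoint. The estimate then holds pointwise on the interior of each face, which is all that \eqref{eq:Stokes2} requires.
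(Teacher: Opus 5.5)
Your proposal is correct and follows the paper's proof essentially step for step. You take a singular value decomposition of $A^{g}\circ f^\partial_*$, note that each term $\overbar c(\overbar\nu_k)\overbar c(\overbar u_\mu)c(\nu_k)c(u_\mu)$ has norm one so the sum is bounded by $\|A^g\circ f^\partial_*\|_1$, and then use $\|A^g\circ f^\partial_*\|_1\le \|A^g\|_1\|f^\partial_*\|$ with $\|A^g\|_1=\tr A^g$. You also spell out the frame independence and self-adjointness that the paper leaves implicit, though positivity of $A^g$ is needed only for $\|A^g\|_1=\tr A^g$, not for the H\"older step.
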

\begin{proof}
	The argument is similar to the proof of Lemma \ref{lemma:curvature>=}. 
	For any point $x\in \overbar F_k$, consider the singular value decomposition of the map
	$$A^{g}\circ f^\partial_*\colon T_p\overbar F_k\to T_{f(p)}F_k,$$
that is,  
	$$A^{g}\circ f^\partial_*(\alpha_\mu)=\lambda_\mu\beta_\mu,~\forall 1\leq \mu\leq n-1, $$
for some  orthonormal bases $\{\alpha_\mu\}$ of $T_p\overbar F_k$ and $\{\beta_\mu\}$ of $T_{f(p)}F_k$, and $\lambda_\mu\geq 0$. 
	Then we have 
	\begin{equation}
		\begin{split}
			-\frac 1 2 \sum_{\mu=1}^{n-1}\overbar c(\overbar \nu_k)\overbar c(\overbar e_\mu) c(\nu_k)c(A^{g}f_*\overbar e_\mu)= &-\frac 1 2\sum_{\mu=1}^{n-1}\overbar c(\overbar \nu_k)\overbar c(\alpha_\mu) c(\nu_k)c(\beta_\mu)\lambda_\mu\\
			\geq& -\frac 1 2\sum_{\mu=1}^{n-1}\lambda_\mu=-\frac 1 2\|A^{g}\circ f^\partial_*\|_1\\
			\geq& -\frac 1 2\|A^{g}\|_1\cdot \|f^\partial_*\|\\
			=&-\frac 1 2 \|f^\partial_*\|\cdot \tr(A^{g})
			= -\|f^\partial_*\|\frac{f^*H_{g}}{2}.
		\end{split}
	\end{equation}
\end{proof}

Since $f\colon \domain\to \target$ is a Lipschitz map, the pullback bundle $f^*T\target$ is a Lipschitz vector bundle. Consequently, the Sobolev space of sections $H^1(\domain, E)$ is well-defined. By  \eqref{eq:Stokes2} and the Sobolev trace theorem, there exist constants $C_1, C_2 > 0$ such that$$ C_1 \|\sigma\|_1 \le \|\sigma\|_D \le C_2 \|\sigma\|_1 $$for all $\sigma \in C^\infty_{00}(\domain, E; B)$, where $\|\cdot\|_1$ denotes the standard $H^1$-norm \[ \|\sigma\|_1^2=\|\sigma\|^2+\|\nabla\sigma\|^2\]and $\|\cdot\|_D$ denotes the graph norm $\|\sigma\|_D^2 = \|\sigma\|^2 + \|D\sigma\|^2$. Thus, the two norms are equivalent on this subspace. 

We define $H^1(\domain, E; B)$ to be the closure of $C^\infty_{00}(\domain, E; B)$ with respect to the $H^1$-norm. It is a standard fact that subsets of codimension two have vanishing $H^1$-capacity; therefore,  $H^1(\domain, E; B)$ coincides with the closed subspace of $H^1(\domain, E)$ consisting of sections whose traces on the codimension-one faces satisfy the boundary condition $B$. In particular, the identity \eqref{eq:Stokes2} extends  to all $\sigma \in H^1(\domain, E; B)$.

To summarize the preceding discussion, we obtain the following proposition.

\begin{proposition}\label{prop:smooth>=}
	With the notation as above, assume that $\target$ has a non-negative curvature operator and that each codimension-one face of $\target$ has a non-negative second fundamental form. Then, for any $\sigma\in C^\infty_{00}(\domain,E;B)$, the following inequality holds:
	\begin{equation}\label{eq:Stokes3}
	\begin{split}
		\int_{\domain}|D\sigma|^2\geq &\int_{\domain}|\nabla\sigma|^2+ \int_{\domain}\frac{\Sc_{\overbar g}-\|\wedge^2f_*\|\cdot f^*\Sc_{g}}{4}|\sigma|^2\\
		&+\sum_{\overbar F_k}\int_{\overbar F_k}\frac{H_{\overbar g}-\|f^\partial_*\|\cdot f^*H_{g}}{2}|\sigma|^2,
	\end{split}
	\end{equation}
where the sum runs over all codimension-one faces $\overbar F_k$ of $\domain$. Moreover, the inequality \eqref{eq:Stokes3} extends to any $\sigma\in H^1(\domain,E;B)$.
\end{proposition}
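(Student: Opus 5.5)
The inequality will follow by combining the integrated identity \eqref{eq:Stokes2} with the two pointwise estimates, Lemma \ref{lemma:curvature>=} for the interior term and Lemma \ref{lemma:secondff>=} for the boundary term. After that, a density argument carries the result from smooth sections to $H^1(\domain,E;B)$.

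\textbf{Smooth sections.} Fix $\sigma\in C^\infty_{00}(\domain,E;B)$.

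1. Start from \eqref{eq:Stokes1}. On each face $\overbar F_k$ the boundary condition gives $\gamma_k\sigma=-\sigma$, and $\gamma_k$ is a self-adjoint involution. Hence
\[
\langle D^\partial\sigma,\sigma\rangle=-\langle D^\partial\gamma_k\sigma,\sigma\rangle .
\]
Symmetrizing with $\langle \gamma_kD^\partial\sigma,\sigma\rangle=\langle D^\partial\sigma,\gamma_k\sigma\rangle=-\langle D^\partial\sigma,\sigma\rangle$, we get $\langle D^\partial\sigma,\sigma\rangle=\langle\mathscr A\sigma,\sigma\rangle$. This is exactly \eqref{eq:Stokes2}.

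2. Check that $\mathscr A$ is a zeroth-order operator, i.e.\ that the derivative terms cancel. The tangential Clifford factors $\overbar c(\overbar\nu_k)\overbar c(\overbar e_\mu)$ anticommute with $\overbar c(\overbar\nu_k)$. They commute with $c(\nu_k)$ and with $\mathscr E$ up to the sign bookkeeping built into $\gamma_k$. As a result $D^\partial$ anticommutes with $\gamma_k$ modulo the derivative of $\gamma_k$ itself. Computing $\nabla_{\overbar e_\mu}\gamma_k$ yields the mean curvature term $H_{\overbar g}/2$ from $\nabla\overbar\nu_k$ and the term with $A^g f_*\overbar e_\mu$ from $\nabla^{\target}\nu_k$. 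This reproduces the displayed formula for $\mathscr A$.

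3. Insert the pointwise bounds. Lemma \ref{lemma:curvature>=} bounds the interior term $\langle\mathscr R\sigma,\sigma\rangle$, using that $\target$ has non-negative curvature operator. Lemma \ref{lemma:secondff>=} bounds the boundary term $\langle\mathscr A\sigma,\sigma\rangle$, using that each face $F_k$ has non-negative second fundamental form. Both lemmas are operator inequalities, so they integrate directly. This gives \eqref{eq:Stokes3}.

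4. All integrals are finite because $\sigma$ vanishes near the codimension-two faces. The Lipschitz behaviour of $f$ at the vertices is also harmless there: $f_*$ is bounded a.e., so the coefficients $\|\wedge^2 f_*\|$ and $\|f^\partial_*\|$ are $L^\infty$.

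\textbf{Extension to $H^1(\domain,E;B)$.} By definition, $C^\infty_{00}(\domain,E;B)$ is $H^1$-dense in $H^1(\domain,E;B)$. Take $\sigma_n\to\sigma$ in $H^1$ and pass to the limit term by term:

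\begin{itemize}
\item The terms $\int|D\sigma_n|^2$ and $\int|\nabla\sigma_n|^2$ converge because they are continuous in $H^1$.
\item The interior curvature term converges because its coefficient is bounded: $\Sc_{\overbar g}$ is smooth on compact $\domain$, and $\|\wedge^2f_*\|$ is $L^\infty$ since $f$ is Lipschitz.
\item The boundary terms converge by the Sobolev trace theorem on each face. The map $H^1\to L^2(\overbar F_k)$ is continuous, and the coefficient $H_{\overbar g}-\|f^\partial_*\|f^*H_g$ is bounded.
\end{itemize}

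The main obstacle is the trace step near the edges and vertices, where the faces are not smooth closed manifolds. I would handle it by localizing with a partition of unity. Near an edge or vertex, bi-Lipschitz flatten the region to a polyhedral cone and apply the trace theorem for Lipschitz domains, which holds uniformly. The inequality then passes to the limit.
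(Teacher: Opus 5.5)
Your proposal is correct and is essentially the paper's argument. You use the boundary condition to turn \eqref{eq:Stokes1} into \eqref{eq:Stokes2}, insert the pointwise bounds of Lemma \ref{lemma:curvature>=} and Lemma \ref{lemma:secondff>=}, and then pass to $H^1(\domain,E;B)$ by density, using the trace theorem and the boundedness of the coefficients, exactly as the paper does in the discussion preceding the proposition.
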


The following lemma shows how the boundary value problem for the Dirac operator $D$ is applied to resolve Gromov's dihedral rigidity conjecture in dimension three.
\begin{lemma}\label{lemma:angleRigidity}
	Suppose  $(\target,g)$ is a three dimensional convex Euclidean polyhedron, where $g$ is the Euclidean metric. Let $(\domain, \overbar g)$ be  a  three dimensional connected polyhedral manifold such that 
	$\Sc_{\overbar g}\geq0$ and $H_{\overbar g}\geq 0.$ Let $f\colon (\domain,\overbar g)\to (\target,g)$ be a spin polyhedral map and $E = S(T\domain \oplus f^\ast T\target)$. 
	If
	\begin{itemize}
		\item there exists a non-zero $\sigma\in H^1(\domain,E;B)$ satisfying $D\sigma=0$,
	\end{itemize}
	then $\Sc_{\overbar g}=0$, $H_{\overbar g}=0$, and $\theta_{\overbar g}=f^*\theta_g$. Moreover, $(\domain ,\overbar g)$ is flat.
\end{lemma}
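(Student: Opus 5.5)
Since $\target$ is a Euclidean polyhedron, its curvature operator vanishes and its faces are totally geodesic, so $\Sc_g = 0$ and $A^g = 0$. Proposition \ref{prop:smooth>=} therefore applies. For the nonzero $\sigma\in H^1(\domain,E;B)$ with $D\sigma=0$ it gives
\[ 0 \geq \int_{\domain}|\nabla\sigma|^2+\int_{\domain}\frac{\Sc_{\overbar g}}{4}|\sigma|^2+\sum_{\overbar F_k}\int_{\overbar F_k}\frac{H_{\overbar g}}{2}|\sigma|^2 . \]
All three terms are non-negative, so each vanishes.

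\textbf{Step 1: $\sigma$ is parallel and has constant norm.} The first term gives $\nabla\sigma=0$. The connection on $E$ is metric, so $|\sigma|$ is constant on the connected manifold $\domain$, and the constant is nonzero. The remaining two terms then force $\Sc_{\overbar g}\equiv 0$ and $H_{\overbar g}\equiv 0$ on every face.

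\textbf{Step 2: flatness.} A parallel section is annihilated by curvature, so $R^E(X,Y)\sigma=0$. Because $g$ is flat, $R^E$ is the spinorial curvature of $\overbar g$ alone, namely $\frac14\sum \overbar R_{ijkl}\,\overbar c(\overbar e_k)\overbar c(\overbar e_l)$. This acts on $E\cong S(T\domain)\otimes S(f^*T\target)$ through the first factor only. In dimension three, $\Sc=0$ makes the Ricci identity give $\sum_j \overbar c(\overbar e_j) R(\overbar e_j,\cdot)\sigma=-\frac12\overbar c(\mathrm{Ric}(\cdot))\sigma=0$. Since $|\sigma|\neq 0$ and Clifford multiplication by a nonzero vector is invertible, $\mathrm{Ric}=0$. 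In dimension three the Ricci tensor determines the full curvature, so $\overbar g$ is flat.

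\textbf{Step 3: dihedral angles (main obstacle).} The difficulty is that $\sigma$ is only in $H^1$ and the boundary condition has to be evaluated at edges. Since $\sigma$ is parallel, it is in fact smooth and extends continuously to the closure, so on each codimension-one face the condition $\gamma_k\sigma=-\sigma$ holds pointwise. It therefore continues to hold at points of an edge $\overbar F_{ij}$ for both adjacent faces, so $\gamma_i\sigma=\gamma_j\sigma=-\sigma$ there. A direct computation gives
\[ \gamma_i\gamma_j=\overbar c(\overbar\nu_i)\overbar c(\overbar\nu_j)\,c(\nu_i)c(\nu_j), \]
so $\sigma$ is a $+1$ eigenvector of $\gamma_i\gamma_j$. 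Write $\overbar c(\overbar\nu_i)\overbar c(\overbar\nu_j)=-\cos\overbar\alpha+\sin\overbar\alpha\,\overbar c(\overbar e\wedge\overbar e')$, and similarly on the target side with angle $\alpha$, where the angle between the normals is $\overbar\alpha=\pi-\overbar\theta$. The two bivector actions commute and square to $-1$, so on a joint eigenspace with eigenvalues $i\epsilon$ and $i\epsilon'$ the product acts as the scalar $(-\cos\overbar\alpha+i\epsilon\sin\overbar\alpha)(-\cos\alpha+i\epsilon'\sin\alpha)=e^{i(\pm\overbar\alpha\pm\alpha)}$. Having eigenvalue $1$ forces $\overbar\alpha\equiv\pm\alpha \pmod{2\pi}$. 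Since both angles lie in $(0,\pi)$, this gives $\overbar\alpha=\alpha$, hence $\theta_{\overbar g}=f^*\theta_g$. The sign bookkeeping is where I expect the work to lie: one must check that the $+1$ eigenspace is nonzero exactly when the angles agree.

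Degenerate points of $f$, where $f_*$ collapses an edge, do not affect this, because $\nu_i$ and $\nu_j$ are the normals of the assigned target faces. I would confirm the computation against \cite{Wang:2022vf}.
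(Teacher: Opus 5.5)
Your proof is correct. Steps 1 and 2 coincide with the paper's argument.

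\begin{itemize}
\item In Step 1, Proposition \ref{prop:smooth>=} with $\Sc_g=0$ and $A^g=0$ forces $\nabla\sigma=0$, then $\Sc_{\overbar g}=0$ and $H_{\overbar g}=0$.
\item In Step 2, the spinorial Ricci identity gives $\overbar c(\mathrm{Ric}(X))\sigma=0$, hence flatness. One small correction: that identity holds in every dimension and does not use $\Sc_{\overbar g}=0$. Dimension three enters only in ``Ricci-flat $\Rightarrow$ flat''.
\end{itemize}

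You diverge from the paper in Step 3.

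\textbf{Your route.} You diagonalize $\gamma_i\gamma_j=\overbar c(\overbar\nu_i)\overbar c(\overbar\nu_j)c(\nu_i)c(\nu_j)$. This is exactly the computation in Lemma \ref{lemma:boundaryedge}, whose part (b) already says $V_i\cap V_j=0$ unless $\langle\overbar\nu_i,\overbar\nu_j\rangle=\langle\nu_i,\nu_j\rangle$. You could simply cite it.

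\textbf{The paper's route.} It is shorter and avoids any spectral decomposition:
\begin{itemize}
\item Write the two boundary conditions at $x$ as $\mathscr E c(\nu_i)\sigma_x=-\overbar c(\overbar\nu_i)\sigma_x$ and $\mathscr E c(\nu_j)\sigma_x=-\overbar c(\overbar\nu_j)\sigma_x$.
\item Take the linear combination with coefficients $a,b$.
\item Take norms, using that Clifford multiplication by a vector $w$ is $|w|$ times a unitary. This gives $|a\nu_i+b\nu_j|=|a\overbar\nu_i+b\overbar\nu_j|$ for all $a,b$, hence equal inner products.
\end{itemize}

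\textbf{Comparison.} Your approach yields more: the full spectrum of $\gamma_i\gamma_j$ and the dimension of $V_i\cap V_j$. For this lemma, however, you only need the implication ``$1$ is an eigenvalue $\Rightarrow$ equal inner products''. Your computation already gives it, since $\overbar\alpha\pm\alpha\in 2\pi\mathbb Z$ implies $\cos\overbar\alpha=\cos\alpha$. So the ``sign bookkeeping'' you anticipate (showing the $+1$ eigenspace is nonzero when the angles agree) is unnecessary. So is the restriction of both angles to $(0,\pi)$.
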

\begin{proof}
Let $\sigma \in H^1(\domain, E; B)$ be a non-zero section satisfying $D\sigma = 0$. By hypothesis, the scalar curvature $\Sc_{\overbar g}$ and the mean curvature $H_{\overbar g}$ are non-negative. Applying the inequality \eqref{eq:Stokes3} from Proposition \ref{prop:smooth>=}, the vanishing of $D\sigma$ combined with the non-negativity of the geometric terms forces the inequality to become an equality. In particular, we deduce that $\nabla\sigma = 0$; that is, $\sigma$ is a parallel section. On a connected manifold, a non-zero parallel section is nowhere vanishing.\footnote{In the current setting, since $(\target,g)$ is Euclidean, the bundle $T\target$ is a trivial bundle equipped with a flat connection. Consequently, $E = S(T\domain \oplus f^*T\target) \cong S(T\domain\oplus \underline{\mathbb{R}}^3)$ is a smooth vector bundle over $\domain$, where $\underline{\mathbb{R}}^3=f^*TM$ is a trivial flat bundle. Standard regularity theory implies that a parallel section is smooth.} Consequently, the inequality  \eqref{eq:Stokes3} implies that $\Sc_{\overbar g}=0$ and $H_{\overbar g}=0$.

Since $\sigma$ is parallel, the Ricci identity for spinors implies that$$ 0 = \sum_{j=1}^3 \overbar c(\overbar e_j)\left(\nabla_{\overbar e_i}\nabla_{\overbar e_j} - \nabla_{\overbar e_j}\nabla_{\overbar e_i} - \nabla_{[\overbar e_i,\overbar e_j]}\right)\sigma = -\frac{1}{2} \overbar c(\textup{Ric}_{\overbar g}(\overbar e_i))\sigma $$for each vector $\overbar e_i$, where $\{\overbar e_j\}$ is a local orthonormal frame of $(\domain,\overbar g)$. This implies that $(\domain,\overbar g)$ is Ricci-flat. Since $\dim \domain= 3$, the Riemann curvature tensor is determined by the Ricci tensor; thus, $(\domain, \overbar g)$ is flat.
	
	Finally, consider two adjacent codimension one faces $\overbar F_i$ and $\overbar F_j$, and suppose $x\in \overbar F_i\cap \overbar F_j$. Since $\sigma$ satisfies the boundary condition $B$ (see Definition \ref{def:boundaryCondition}) on each face, we have:
	\[ \mathscr E c(\nu_{i, f(x)})\sigma_x = -\overbar c(\overbar \nu_{i,x})\sigma_x \quad \text{and} \quad \mathscr E c(\nu_{j, f(x)})\sigma_x = -\overbar c(\overbar \nu_{j, x})\sigma_x. \]
	where $\sigma_x$ is the value of $\sigma$ at $x$ and $\overbar \nu_{i,x}$ is the value of $\overbar \nu_i$ at $x$. 
	By linearity, for any $a,b \in \mathbb{R}$,
	\[ \mathscr E c\big(a\nu_{i, f(x)} + b\nu_{j, f(x)}\big)\sigma_x = -\overbar c\big(a\overbar \nu_{i, x} + b\overbar \nu_{j, x}\big)\sigma_x. \]
     Taking the squared norm of both sides and dividing by $|\sigma_x|^2$ (which is non-zero), we obtain
	\[ |a\nu_{i, f(x)}+ b\nu_{j, f(x)}|^2 = |a\overbar \nu_{i, x} + b\overbar \nu_{j, x}|^2. \]
	Expanding the squares and using the fact that the normal vectors are chosen to have unit length, we conclude that
	\[ \langle \overbar \nu_{i, x}, \overbar \nu_{j, x} \rangle = \langle \nu_{i, f(x)}, \nu_{j, f(x)} \rangle. \]
	It follows that the dihedral angles satisfy $\theta_{\overbar g}(x) = \theta_g(f(x))$.
\end{proof}

\section{Essential self-adjointness of twisted Dirac  operators on polyhedral manifolds}\label{sec:essentiallySefladjoint}
In this section, we show that the twisted Dirac operator $D$ associated with the bundle $E = S(T\domain\oplus f^*T\target)$ (introduced in Section \ref{sec:prelim}) is essentially self-adjoint subject to the boundary condition $B$ described in Definition \ref{def:boundaryCondition}, under suitable assumptions on the comparison of dihedral angles. As a consequence, we conclude that such an operator $D_B$ is Fredholm. 

We begin by recalling the notion of essential self-adjointness for unbounded symmetric operators.

\begin{definition}\label{def:ess-sa}Let $D$ be the Dirac operator with initial domain $C^\infty_{00}(\domain,E;B)$ as in Section \ref{sec:prelim}. The minimal domain of $D$ is the closure of $C^\infty_{00}(\domain,E;B)$ with respect to the Sobolev  norm $\|\cdot\|_1$, which is $H^1(\domain,E;B)$.  
	We define the maximal domain $\mathcal{D}_{\max}(D)$ of $D$ as the space of sections $\xi\in L^2(\domain,E)$ such that the linear functional
	\[ \sigma \in C^\infty_{00}(\domain,E;B) \mapsto \int_{\domain}\langle \xi, D\sigma \rangle \in \mathbb{C} \]
	extends to a bounded linear functional on $L^2(\domain,E)$.
	
	We say that $D$ is \emph{essentially self-adjoint} if the minimal and maximal domains coincide, i.e., $\mathcal{D}_{\max}(D) = H^1(\domain,E;B)$.

\end{definition}

If a section lies in $\mathcal{D}_{\max}(D)$ but not in $H^1(\domain,E;B)$, it must fail to be an $H^1$-section near some point of $\domain$.  Therefore, by a standard partition-of-unity argument, the question of essential self-adjointness can be studied locally in arbitrarily small neighborhoods of each point.

\begin{definition}\label{def:ess-saLocal}
	Let $D$ be the twisted Dirac operator associated with the bundle $E = S(T\domain\oplus f^*T\target)$, subject to the boundary condition $B$, as above. We say that $D$ is \emph{locally essentially self-adjoint} at a point $x \in \domain$ if there exists an open neighborhood $U$ of $x$  such that every section $\xi \in \mathcal{D}_{\max}(D)$ with support contained in $U$ belongs to $H^1(\domain, E; B)$.
\end{definition}
	
Consequently, $D$ is (globally) essentially self-adjoint if and only if it is locally essentially self-adjoint at every point $x \in \domain$.

\subsection{Reduction to standard model cases}
In this subsection, we discuss how to reduce the verification of the essential self-adjointness of $D_B$ to certain standard model cases.

We begin by describing two typical geometric situations that will play a central role in the  reduction process.

\begin{example}\label{ex:codimtwo}
	Let $\Omega \subset \mathbb{R}^3$ be a region whose boundary, near the origin, consists of two smooth surfaces $\Sigma_1$ and $\Sigma_2$ meeting transversely along a curve $\Gamma$. Assume that $\Gamma$ passes through the origin and is tangent to the $x$-axis at the origin. Let $\theta \in (0,\pi)$ denote the  dihedral angle between $\Sigma_1$ and $\Sigma_2$ at the origin. 
	
	Suppose the tangent planes  $T_0 \Sigma_1$ and $T_0 \Sigma_2$ of $\Sigma_1$ and $\Sigma_2$ at the origin. We shall construct a local diffeomorphism near the origin of $\mathbb R^3$ such that it maps $\Sigma_i$ to  $T_0\Sigma_i$. 
	
	By performing a rotation if necessary, we assume without loss of generality that $T_0 \Sigma_1$ is $\Pi_1 = \{ (x,y,z) \mid z = 0 \}$ and $T_0 \Sigma_2$ is $\Pi_2 = \{ (x,y,z) \mid y \sin \theta - z \cos \theta = 0 \}$. 
	
	Consider smooth defining functions $g_1, g_2: U \to \mathbb{R}$ such that $\Sigma_i \cap U = \{ (x, y, z) \in U \mid g_i(x, y, z) = 0 \}$, where $U$ is a sufficiently small neighborhood of the origin. We normalize these functions such that their gradients at the origin align with the unit inner normal vectors of $\Pi_1$ and $\Pi_2$:$$\nabla g_1(0) = (0, 0, 1), \quad \nabla g_2(0) = (0, \sin \theta, -\cos \theta).$$
	We define the diffeomorphism $\Phi\colon U \to \mathbb{R}^3$ explicitly as:
	$$\Phi(x, y, z) = \left( x, \frac{g_2(x, y, z) + g_1(x, y, z) \cos \theta}{\sin \theta}, g_1(x, y, z) \right)$$
	Now a straightforward computation shows that $\Phi$ is a local diffeomorphism and its derivative at the origin is the identity map, that is, $\Phi'(0) = I$. By construction, $\Phi$ maps $\Sigma_i$ to $T_0\Sigma_i$. 
\end{example}

\begin{example}\label{ex:codimthree}
Suppose $N$ is a three-dimensional polyhedral manifold in $\mathbb R^3$ and the origin $0$ is a vertex of $N$. The boundary $\partial N$ near the origin consists of the collection of smooth surfaces  $\mathcal{S} = \{\Sigma_1, \dots, \Sigma_n\}$  passing through the origin. The tangent planes $\Pi_i = T_0 \Sigma_i$ enclose a polyhedral cone  $\fiber$ (see Definition \ref{def:polyhedralcone}) in the tangent space $T_0 \mathbb{R}^3 \cong \mathbb{R}^3$. We shall construct  a local $C^1$-diffeomorphism near the origin of $\R^3$ such that it simultaneously maps each $\Sigma_i$ to its tangent plane $\Pi_i$.
	
	For each pair of adjacent $\Sigma_i$ and $\Sigma_j$, let $\Phi_{ij}\colon  U \to \mathbb{R}^3$ be the diffeomorphism constructed as in Example \ref{ex:codimtwo} that maps $\Sigma_i$ and $\Sigma_j$ to its corresponding tangent plane at the origin and $\Phi'_{ij}(0) = I$. To obtain a single map $\Phi\colon U \to \mathbb{R}^3$ that simultaneously maps each $\Sigma_i$ to its tangent plane $\Pi_i$, we pass to polar coordinates. 
	The link $\link = \fiber \cap \sph^2$ of  the polyhedral cone $\fiber$ is a spherical polygon in $\mathbb{S}^2$. Let $\{W_\alpha\}$ be an open cover of $\mathbb{S}^2$ such that each $W_\alpha$ contains at most one vertex of the polygon $\link$, and, if $W_\alpha$ does not contain a vertex of $\link$, then it intersects at most one edge of $\link$. Let $\{\rho_\alpha\}$ be a partition of unity subordinate to this cover. For each $W_\alpha$, choose $\Phi_\alpha \colon U\to \mathbb R^3$ as follows: 
	\begin{enumerate}
		\item If $W_\alpha$ contains a vertex of $\link$ formed by the intersection $\Pi_i \cap \Pi_j\cap \sph^2$, we set $\Phi_\alpha = \Phi_{ij}$.
		
		\item If $W_\alpha$ intersects an edge $\Pi_i \cap \mathbb{S}^2$ but contains no vertices of $\link$, consider the endpoints of this edge, which are the vertices of $\link$ given by  $ \Pi_i\cap \Pi_j \cap \mathbb{S}^2$ and $\Pi_i\cap \Pi_k \cap \mathbb{S}^2$. Both associated maps, $\Phi_{ij}$ and $\Phi_{ik}$, map $\Sigma_i$ to $\Pi_i$; thus, we set $\Phi_\alpha$ to be either one.
		\item If $W_\alpha$ is disjoint from the edges and vertices of $\link$, then we  set $\Phi_\alpha$ to be the identity map.
	\end{enumerate}
	We define the map $\Phi\colon  U\to \mathbb R^3$ by:
	\begin{equation}     	
		\Phi(x) = \sum_\alpha \rho_\alpha(\sigma) \Phi_\alpha(x),
	\end{equation}
	where $x \in U \setminus \{0\}$ is expressed in polar coordinates as $x = r\sigma$ with $r = \|x\|$ and $\sigma = x/\|x\| \in \mathbb{S}^2$. We extend this map to the origin by setting $\Phi(0)=0$. 
	
	We claim that  $\Phi\colon U \to \mathbb{R}^3$ is a local $C^1$-diffeomorphism near the origin.  It is clear that  $\Phi$ is smooth away from the origin. Now let us consider the differentiability of $\Phi$ at the origin. Since each $\Phi_\alpha$ satisfies $\Phi_\alpha(0)=0$ and $\Phi'_\alpha(0)=I$, we have the Taylor expansion: 
	\begin{equation*} 
		\Phi_\alpha(x) = x + R_\alpha(x),
	\end{equation*}
	where the remainder term $R_\alpha(x)$ satisfies
	\[ 
	\lim_{x \to 0} \frac{\|R_\alpha(x)\|}{\|x\|} = 0.
	\] 
	It follows that 	
	\begin{equation}
		\begin{split}
			\Phi(x) &= \sum_\alpha \rho_\alpha(\sigma) \left( x + R_\alpha(x) \right)  \\
			&= \Big(\sum_\alpha \rho_\alpha(\sigma) \Big) x + \sum_\alpha \rho_\alpha(\sigma) R_\alpha(x)\\
			& = x +  \sum_\alpha \rho_\alpha(\sigma) R_\alpha(x)
		\end{split}
		\label{eq:taylor}
	\end{equation}
	where we have used $\sum_\alpha \rho_\alpha(\sigma) = 1$ for all $\sigma \in \mathbb{S}^2$. This implies that $\Phi$ is differentiable at the origin with derivative $\Phi'(0) = I$. A similar computation shows that in fact the derivative $\Phi'$ is continuous at the origin. Therefore $\Phi$ is a local  $C^1$-diffeomorphism. By construction, $\Phi$  maps each surface $\Sigma_i$ to its corresponding tangent plane $\Pi_i$ at the origin. 
\end{example}

The following theorem (Theorem~\ref{thm:reduction}) is a key result that allows us to reduce the verification of the essential self-adjointness of $D_B$ to certain standard model cases. We emphasize that Theorem~\ref{thm:reduction} does not itself establish the essential self-adjointness of $D_B$; rather, it serves purely as a reduction principle. In particular, the hypotheses involving the comparison of dihedral angles are slightly more general than those that will be imposed in the actual essential self-adjointness result (Theorem \ref{thm:ess-saVector}).

\begin{theorem}\label{thm:reduction}
	Let $f \colon \domain \to \target$ be a spin polyhedral map between two polyhedral manifolds of dimension $3$.  Let $D$ be the Dirac operator associated with the bundle  $E= S(T\domain\oplus T\target)$. Let $\overbar \nu_k$ be the unit inner normal vector of each codimension one  face $\overbar F_k$ of $\domain$, and let $\nu_k$ be a smooth unit-length section of the bundle $f^\ast T\target$ on $\overbar F_k$. Let $B$ be the local boundary condition\footnote{Note that $\overbar \nu_k$ denotes the unit inward normal vector to the face $\overbar F_k$ of $\domain$, whereas $\nu_k$ is not necessarily the unit inward normal vector to the corresponding face $F_k$ of $\target$. Thus the boundary condition $B$ considered here is slightly more general than those in Definition~\ref{def:boundaryCondition}. This additional flexibility will be needed for our index theorem (Theorem \ref{thm:indexTheoremVector}). Nevertheless, it is still straightforward to verify that the Dirac operator $D$ is symmetric with respect to this more general boundary condition $B$. } on $E $ given by
	$$\mathscr E \overbar c(\overbar \nu_k)c(\nu_k)\sigma=-\sigma\textup{ on }\overbar F_k.$$  
	Assume that all dihedral angles of $\domain$ are less than $\pi$. If for each pair of adjacent codimension one faces $\overbar F_i$ and $ \overbar F_j$ of $\domain$, we have  either
	\begin{equation}
		\langle\overbar \nu_i,\overbar \nu_j\rangle \neq \langle\nu_i,\nu_j\rangle \textup{ along each connected component of } \overbar F_i\cap \overbar F_j, 
	\end{equation}
	or
	\begin{equation}
		\langle\overbar \nu_i,\overbar \nu_j\rangle = \langle\nu_i,\nu_j\rangle  \textup{ along each connected component of } \overbar F_i\cap \overbar F_j,
	\end{equation} 
	then for any point $x\in \partial \domain$,  there exist a neighborhood $ W$ of $x$ in $\domain$, a neighborhood $U$ of the origin in the tangent cone $\tancone_x\domain$, and a Lipschitz bundle isomorphism $\Psi$ from the trivial spinor bundle $S(\mathbb R^3\oplus \mathbb R^3)$ over $U$ to the spinor bundle $E = S(T\domain \oplus f^\ast T\target)$ over $W$, satisfying the following properties:
	\begin{enumerate}[label*=$(\arabic*)$]
		\item $\Psi$ maps the constant boundary condition determined by the
		constant vector fields
		\[
		\overbar\omega_i \equiv \overbar\nu_i(x) \textup{ and } 
		\omega_i \equiv \nu_i(x)
		\]
		on the codimension-one faces \(U\cap \Pi_i\) of \(U\)  (see line \eqref{eq:modelbd}),  where $\Pi_i= T_x\overbar F_i$ is the tangent plane of $\overbar F_i$ at $x$,  to  the boundary condition along codimension one faces $W\cap \overbar F_i$ of $W$.  
		\item  The  operator $ \Psi^* D \Psi$ on $U$ takes the form
		\[ \Psi^* D \Psi = D^\dR + \mathcal{A} + \mathcal{B}, \]
		where $D^\dR$ is the standard de Rham on the Euclidean tangent cone $\tancone_x\domain$, $\mathcal{A}$ is a first-order differential operator whose coefficients are continuous and vanish at the origin, and   $\mathcal{B}$ is a zeroth-order operator given by a uniformly bounded matrix-valued function over $U$. Consequently, for any $\delta > 0$, the neighborhood $U$ can be chosen sufficiently small such that the coefficients of $\mathcal{A}$ are bounded by $\delta$ in the supremum norm.
	\end{enumerate}
\end{theorem}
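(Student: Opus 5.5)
The plan is to build $\Psi$ in two stages: first a geometric straightening of the faces near $x$, then an algebraic gauge transformation of the spinor bundle that carries the constant boundary involutions to the actual ones.

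\textbf{Step 1: straighten the faces.} Work in a chart of $X$ around $x$, with coordinates that are Riemannian normal coordinates at $x$, so that $\overbar g(x)$ is Euclidean and $T_xX\cong\R^3$. If $x$ lies in the interior of a codimension-one face, straighten that single face by a diffeomorphism with derivative $I$ at $x$. If $x$ lies in the interior of an edge, use the map $\Phi$ of Example~\ref{ex:codimtwo}. If $x$ is a vertex, use the $C^1$ map $\Phi$ of Example~\ref{ex:codimthree}. In every case $\Phi$ sends $W\cap\overbar F_i$ onto $U\cap\Pi_i$ and has $\Phi'(0)=I$. Moreover $\Phi$ is smooth away from $x$, and $|\Phi''|\lesssim 1/r$ near a vertex. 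Pulling back $\overbar g$ gives a metric on $U\subset\tancone_x\domain$ whose coefficients are continuous, equal the Euclidean ones at $0$, and have first derivatives $O(1/r)$ at worst.

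\textbf{Step 2: frames and the gauge $\Psi$.} Take a Gram--Schmidt orthonormalization of the coordinate frame to get a Lipschitz orthonormal frame $\{\overbar e_i\}$ of $T\domain$ over $W$. Take a Lipschitz orthonormal frame $\{e_i\}$ of $f^*T\target$ with $e_i(x)$ standard. These identify $E$ with $S(\R^3\oplus\R^3)$ over $U$, and this is a first candidate $\Psi_0$.

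Under $\Psi_0$ the boundary involution $\gamma_k=\mathscr E\overbar c(\overbar\nu_k)c(\nu_k)$ on $\Pi_k$ becomes $\mathscr E\overbar c(\overbar\nu_k(y))c(\nu_k(y))$, written in frame coordinates. This differs from the constant involution $\mathscr E\overbar c(\overbar\omega_k)c(\omega_k)$, so Step 3 must correct it by a rotation. The key observation is that we need pointwise rotations $(\overbar R(y),R(y))\in SO(3)\times SO(3)$, depending Lipschitz-continuously on $y$, equal to the identity at $0$, with $\overbar R(y)\overbar\omega_k=\overbar\nu_k(y)$ and $R(y)\omega_k=\nu_k(y)$ for every face $k$ through $y$. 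Lifting to $\mathrm{Spin}$ then gives $\Psi=\Psi_0\circ\widetilde{(\overbar R,R)}$, which conjugates the constant involutions to the true ones. This proves (1).

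\textbf{Step 3: existence of the rotations (the main obstacle).} The faces through a point near an edge are at most two, and at a vertex there are several. Along an edge, with faces $i$ and $j$, a rotation matching the pair $(\omega_i,\omega_j)$ to $(\nu_i(y),\nu_j(y))$ exists exactly when the inner products agree, i.e.\ $\langle\nu_i(y),\nu_j(y)\rangle=\langle\omega_i,\omega_j\rangle=\langle\nu_i(x),\nu_j(x)\rangle$. The same holds for the $\overbar\nu$'s.

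Here the dichotomy hypothesis is used. In the equality case, $\langle\overbar\nu_i,\overbar\nu_j\rangle-\langle\nu_i,\nu_j\rangle\equiv0$ along the edge component, and I would rotate only the barred and unbarred pairs jointly. The point is that it suffices to find rotations with $\overbar R$ and $R$ acting compatibly so that $\mathscr E\overbar c(\overbar R\,\cdot)c(R\,\cdot)$ is preserved. Near a vertex this forces the straightening map to also align angles. So my first attempt would be to absorb the variation of the angle $\langle\overbar\nu_i,\overbar\nu_j\rangle$ into the straightening: choose $\Phi$, or a further Lipschitz cone-preserving map, so that the pulled-back normals have constant mutual inner products along each edge. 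Then for $\nu$ use a parallel-transport-type family.

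In the inequality case, no exact rotation is needed. Instead, the two involutions $\gamma_i$ and $\gamma_j$ generate a representation of a dihedral group whose conjugacy class is locally constant, because the angle between $\overbar\nu_i\otimes\nu_i$ and $\overbar\nu_j\otimes\nu_j$ varies continuously without hitting the degenerate value. One then finds a Lipschitz unitary $U(y)$ with $U(0)=I$ and $U\gamma_k^{\rm const}U^{-1}=\gamma_k(y)$, using the spectral projection of $\gamma_i\gamma_j+\gamma_j\gamma_i$. At a vertex, patch these edge-wise unitaries using the radial partition of unity of Example~\ref{ex:codimthree}, and project back onto the involution condition on each face. I expect this patching at vertices, where $\Psi$ is only Lipschitz with a bounded but non-vanishing derivative, to be the delicate point.

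\textbf{Step 4: form of the operator.} Finally, compute $\Psi^*D\Psi$. Its principal part is $\sum \overbar c(\overbar e_i)\partial_i$ with coefficients $(\Phi^{-1})'$ and frame matrices. These are continuous and equal to the de Rham symbol at $0$, giving $D^{\dR}+\mathcal A$ with $\mathcal A$ vanishing at $0$.

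The zeroth-order terms come from the connection forms and from $\Psi^{-1}d\Psi$. These are bounded, since $\Psi$ is Lipschitz and the connection coefficients are smooth, so they give $\mathcal B$. Shrinking $U$ makes $\|\mathcal A\|_\infty<\delta$, which gives (2).
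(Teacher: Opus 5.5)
Your Steps 1, 2 and 4 are essentially what the paper does. Step 3, however, has a genuine gap: the gauge you are looking for does not exist in general.

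You want $\Psi$ to be a spin lift of rotations $(\overline R(y),R(y))$, or at least a unitary $U(y)$, that \emph{conjugates} the constant involutions into the actual ones, $U\gamma_k^{\rm const}U^{-1}=\gamma_k(y)$, for both faces through a point of an edge. Any invertible map doing this also conjugates $\gamma_i^{\rm const}\gamma_j^{\rm const}$ to $\gamma_i(y)\gamma_j(y)=\overline c(\overline\nu_i)\overline c(\overline\nu_j)c(\nu_i)c(\nu_j)$. By the computation in Lemma~\ref{lemma:boundaryedge}, the eigenvalues of this product are $e^{\pm i(\alpha(y)+\beta(y))}$ and $e^{\pm i(\alpha(y)-\beta(y))}$, where $\cos\alpha=\langle\overline\nu_i,\overline\nu_j\rangle$ and $\cos\beta=\langle\nu_i,\nu_j\rangle$. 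Neither angle is assumed constant along the edge.

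\begin{itemize}
\item In the equality case, $\alpha(y)$ is the dihedral angle of $(\overline M,\overline g)$ at $y$. This is an intrinsic invariant that no straightening map or further cone-preserving reparametrization can change, so your proposed fix cannot work.
\item In the inequality case, the isomorphism class of the dihedral-group representation generated by $\gamma_i,\gamma_j$ is not locally constant; it moves with $\alpha\pm\beta$. So the unitary $U(y)$ you describe does not exist either.
\end{itemize}

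What the dichotomy hypothesis actually controls is $\dim\bigl(V_i(y)\cap V_j(y)\bigr)$ for the $(-1)$-eigenspaces $V_k$. By Lemma~\ref{lemma:boundaryedge} this dimension is $2$ in the equality case and $0$ otherwise, hence locally constant along each edge component.

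The missing idea is that you never need to conjugate the involutions, only to carry one family of eigenspaces onto the other. It suffices to have an invertible $\Theta(y)$, not necessarily unitary or Clifford-compatible, with $\Theta(0)=I$ and $\Theta(y)V_k(y)=V_k(0)$ for $y\in\Pi_k$. Pointwise, such a map exists exactly when the intersection dimensions agree. The paper writes it down continuously:
\begin{itemize}
\item in the transversal case, $\Theta=\bigl(p_1(0)p_1+p_2(0)p_2\bigr)(p_1+p_2)^{-1}$;
\item in the equality case, the analogous formula built from projections onto $\widetilde V_1\cap\widetilde V_2$ and its complements.
\end{itemize}
Both constructions are carried out on $S^\pm$ separately, so $\Theta$ commutes with $\mathscr E$.

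Part (2) survives this relaxation. The principal symbol of the gauged operator is a $\Theta(y)$-conjugate of a continuous symbol that agrees with the de Rham symbol at $0$, and $\Theta(0)=I$. Derivatives of $\Theta$ only enter $\mathcal B$.

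The relaxation also removes the vertex difficulty you anticipated. The condition that $\Theta$ maps $V_k(y)$ into $V_k(0)$ is linear in $\Theta$. Hence the angular partition-of-unity combination $\sum_\alpha\rho_\alpha(\sigma)\Theta_\alpha$ of the edge gauges satisfies it automatically and is invertible near $0$. It is Lipschitz because $\sum_\alpha\nabla\rho_\alpha=0$, $|\nabla\rho_\alpha|\lesssim r^{-1}$ and $|\Theta_\alpha-I|\lesssim r$. By contrast, a convex combination of unitaries is not unitary, which is why your approach needed an unspecified ``projection back''.

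A smaller point concerns Step 4. The connection coefficients in your Gram--Schmidt frame of the $C^1$ chart are not smooth at a vertex. With your own estimate $|\Phi''|\lesssim 1/r$, they would not even be bounded, which conflicts with your claim that $\mathcal B$ is bounded. In fact $\Phi''$ is bounded for the construction of Example~\ref{ex:codimthree}. Alternatively, trivialize $E$ smoothly on $W$ first, as the paper does via radial parallel transport, so that only $d\Phi$ enters.
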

\begin{proof} 
	The case where $x$ lies in the interior of a codimension one face is classical and  elementary. We shall focus the proof on the case where $x$ lies in the interior of a codimension two face (i.e. an edge) or $x$ lies in a codimension three face (i.e. a vertex).

	\textbf{Codimension two case.} In this case, $x$ lies in the interior of an edge, say, the intersection of codimension one faces $\overbar F_1$ and $\overbar F_2$.  Recall that $\domain$ is contained in an open manifold $X$.  Let $\exp_x \colon T_x X \to X$ be the exponential map at $x$, which  a diffeomorphism from a small neighborhood $\widetilde{\mathcal W} \subset T_x X$ of the origin onto a neighborhood $\mathcal W \subset X$ of $x$. The  hypersurfaces $\overbar F_1\cap \mathcal W$ and $\overbar F_2\cap \mathcal W$ pull back to two smooth hypersurfaces $\Sigma_1, \Sigma_2 \subset \widetilde{\mathcal W}$. Let $\widetilde E = \exp_x^*(E)$ be the pullback bundle over $\widetilde{\mathcal W}$. We identify $\widetilde E$ with the trivial bundle $\widetilde{\mathcal W}\times E_x$ via parallel transportation along radial geodesics of $\mathcal W$. Note that, under this identification, the unit inner normal vector $\overbar \nu_i(x)$ of $\overbar F_i$ at $x$ coincides with the unit inner normal vector of $\Sigma_i$. Let $\Pi_i$ be the tangent plane of  $\overbar F_i$ at $x$. The tangent cone $\tancone_x \domain$ at $x$ is the polyhedral cone in $T_xX$ enclosed by $\Pi_1$ and $\Pi_2$. Again, under the above identification, the unit inner normal vector of $\Pi_i$ coincides with $\overbar \nu_i(x)$. 
	
	Let $\Phi \colon \widetilde{\mathcal W} \subset \R^3 \to \mathbb{R}^3$ be the local diffeomorphism constructed in Example \ref{ex:codimtwo}. This map satisfies:
	\begin{itemize}
		\item $\Phi(0) = 0$ and $\Phi'(0) = I$.
		\item $\Phi$ maps $\Sigma_i$ to $\Pi_i$.
	\end{itemize}
	By choosing a smaller neighborhood of the origin if necessary, we may assume without loss of generality that $\Phi$ is a diffeomorphism $\Phi \colon \widetilde{\mathcal W} \to \mathcal U$, where $\mathcal U = \Phi(\widetilde {\mathcal W})$.	Define  $\Lambda\coloneqq \exp_x \circ \Phi^{-1} \colon \mathcal U  \to \mathcal W$. By construction,  $\Lambda$ is a diffeomorphism such that $\Lambda(0)=x$ and $\Lambda'(0) = I$, and maps $\Pi_i\cap \mathcal U$  to $\overbar F_i\cap \mathcal W$. Moreover, we have the following identification of bundles. 
	
	\begin{center}
		\begin{tikzcd}[row sep=large, column sep= 3.5cm]
			\mathcal U \times E_x 
			\arrow[r, "\Phi^{-1} \times \operatorname{id}_{E_x}"] 
			\arrow[d, "\pi_{\mathcal U}"'] 
			& 
			\widetilde{\mathcal W} \times E_x 
			\arrow[r, "\text{Parallel Transport}"] 
			\arrow[d, "\pi_{\widetilde{\mathcal W}}"] 
			& 
			E|_{\mathcal W} 
			\arrow[d, "\pi_{\mathcal W}"] 
			\\
			\mathcal U 
			\arrow[r, "\Phi^{-1}"] 
			& 
			\widetilde{\mathcal W} 
			\arrow[r, "\exp_x"] 
			& 
			\mathcal W
		\end{tikzcd}
	\end{center}
	We denote this bundle identification
	 by $\mathcal J$. 
	Now set $U = \mathcal U \cap \tancone_x\domain$ and $W = \mathcal W\cap \domain$. 	Recall that, at the codimension one face $\overbar F_j$, the boundary condition for a section $\sigma$ of $E$  is defined by the operator $B_j = \mathscr{E} \bar{c}(\bar{\nu}_j) c(\nu_j)$:
	$$\mathscr{E} \bar{c}(\bar{\nu}_j) c(\nu_j) \sigma = -\sigma.$$ 
	we define a \textit{constant boundary condition} on the codimension one faces $\Pi_1$ and $\Pi_2$ of $\tancone_x\domain$ by
	\begin{equation}\label{eq:modelbd}
		\mathscr{E} \bar{c}(\bar{\omega}_i) c(\omega_i) \sigma = -\sigma.
	\end{equation} 
	where  $\bar{\omega}_i \equiv \bar{\nu}_i(x)$ and $\omega_i \equiv \nu_i(x)$  are constant vector fields along $\Pi_i$.

	With the above bundle identification,  $\overbar \nu_i$ and $\nu_i$ become unit-length vector fields\footnote{Although $\overbar \nu_i$ coincides with the unit inner normal vector of $\Pi_i$ at the origin of $U$, but $\overbar \nu_i$ is not necessarily the unit inner normal vector  of $\Pi_i$ at other points of $\Pi_i$.} of the trivial vector bundle $\R^3\oplus \R^3$ along $\Pi_i$.   	For each  $y\in \Pi_i\cap U$, let $V_i(y)\subset  S(\mathbb R^3\oplus \mathbb R^3)$  be the $(-1)$-eigenspace of the operator 
	\[ B_i = \mathscr{E} \bar{c}(\bar{\nu}_i) c(\nu_i) \colon S(\mathbb R^3\oplus \mathbb R^3) \to S(\mathbb R^3\oplus \mathbb R^3).  \] Note that $\bar{c}(\bar{\nu}_i) B_i = - B_i\bar{c}(\bar{\nu}_i)$, which forces  $V_i(y)$ to be of  dimension $8/2 = 4$.  In other words, the above bundle identification pulls back the boundary conditions for codimension one faces $\overbar F_i\cap W$ of $W$ to boundary conditions for codimension one faces $\Pi_i\cap U$ of $U$. 
	
	This pull-back boundary condition $B_i$ coincides with the constant boundary condition for $\Pi_i\cap  U$ at $x$,  but it generally does not coincide with the constant boundary condition at nearby points $y\in \Pi_i\cap U$. To remedy this, we shall construct a bundle isomorphism that maps the boundary conditions for  $ U$ to the boundary conditions for $ W$. 
	
	The bundle isomorphism $\Theta\colon  U\times S(\mathbb R^3\oplus \mathbb R^3)  \to  U\times  S(\mathbb R^3\oplus \mathbb R^3)$ is constructed as follows. Importantly, because the subbundles $V_i$ are preserved by the grading operator $\mathscr{E}$ (Lemma \ref{lemma:boundaryface} and \ref{lemma:boundaryedge}), the following construction can and should be carried out individually on each subspace $S^\pm(\mathbb{R}^3 \oplus \mathbb{R}^3)$, where $S^\pm(\mathbb{R}^3 \oplus \mathbb{R}^3)$ is the $(\pm 1)$-eigenspace of the grading operator $\mathscr E$. This ensures that the resulting isomorphism $\Theta$ commutes with $\mathscr{E}$, hence preserves the grading on $S(\mathbb{R}^3 \oplus \mathbb{R}^3)$. For notational simplicity, we shall omit the superscript $\pm$ in the following construction.
	\begin{enumerate}
		\item If $\langle\overbar \nu_1,\overbar \nu_2\rangle \neq \langle\nu_1,\nu_2\rangle$ along $\overbar F_1\cap \overbar F_2$, then it follows from  Lemma \ref{lemma:boundaryedge} that  the intersection $V_1(y) \cap V_2(y) = \{0\}$ for all $y \in \Pi_1\cap \Pi_2\cap U$.  Let $p_i(y)$ be the orthogonal projection  $S(\mathbb R^3\oplus \mathbb R^3) \to V_i(y)$. Since $V_1(y) \cap V_2(y) = \{0\}$ and $\dim V_1(y) = \dim V_2(y) = \frac{1}{2} \dim  S(\mathbb R^3\oplus \mathbb R^3) = 4$, it follows that $p_1(y) + p_2(y)$ is invertible.   As of now,  $p_i$ is a projection-valued function only defined on $\Pi_i\cap U$. We extend it smoothly to a projection-valued function on $U$, for example, consider an extension\footnote{Such an extension of, say $p_1$,  is only used to help us define the bundle isomorphism on $U$, and is \emph{not} meant as introducing an auxiliary boundary condition on the other codimension one face $\Pi_2$.} that is constant along the normal direction of $\Pi_i$.  Since $\langle\overbar \nu_1,\overbar \nu_2\rangle \neq \langle\nu_1,\nu_2\rangle$ is an open condition, by choosing a smaller neighborhood of the origin of $\R^3$ if necessary,  we define  $\Theta$ by setting 
		\[ \Theta(z) = \left( p_1(0) p_1(z) + p_2(0) p_2(z) \right) \left( p_1(z) + p_2(z) \right)^{-1} \]
		for all $z\in U$. By construction, $\Theta$ maps $V_i(y)$ to $V_i(0)$ for each $y\in \Pi_i\cap U$ in a small neighborhood of the origin.  
		\item The case where $\langle\overbar \nu_1,\overbar \nu_2\rangle =  \langle\nu_1,\nu_2\rangle$ is similar,  only slightly more involved. In this case, it follows from  Lemma \ref{lemma:boundaryedge} that  the intersection $\dim(V_1(y) \cap V_2(y)) = 2$ for all $y \in \Pi_1\cap \Pi_2\cap U$. Without loss of generality, assume that 
		\begin{align*}
			\Pi_1 &= \{ (z_1, z_2, z_3) \in \mathbb{R}^3 \mid z_3 = 0 \}, \\
			\Pi_2 &= \{ (z_1, z_2, z_3) \in \mathbb{R}^3 \mid z_2 \sin \alpha -	 z_3 \cos \alpha = 0 \},
		\end{align*}
		where  $\alpha \in (0, \pi)$ is the dihedral angle between $\Pi_1$ and $\Pi_2$.

		Define the linear coordinates $(u, v)$ on the $(z_2, z_3)$-plane by:
		\begin{equation}
			u(z_2, z_3) = z_3, \qquad v(z_2, z_3) = z_2 \sin \alpha - z_3 \cos \alpha.
		\end{equation}
		Under this transformation, $\Pi_1$ corresponds exactly to $u = 0$, and $\Pi_2$ corresponds exactly to $v = 0$.
		
		Let $\{e_i(z_1, 0, 0)\}_{i=1, 2}$ be a smooth orthonormal basis for the intersection $V_1 \cap V_2$ along the edge $\Pi_1 \cap \Pi_2 \cap U$. Because $V_1$ is a smooth bundle over $\Pi_1$, we can smoothly extend $e_1$ and $e_2$ to orthonormal sections $\varphi_1$ and $\varphi_2$ of $V_1 \subset S(\mathbb{R}^3 \oplus \mathbb{R}^3)$ over $\Pi_1$, and then extend $\varphi_1$ and $\varphi_2$ to smooth sections of $S(\mathbb{R}^3 \oplus \mathbb{R}^3)$ over $U$ by extending them constantly along the lines where $v$ and $z_1$ are constant and $u$ varies. We still denote these  extended sections over $U$ by $\varphi_1$ and $ \varphi_2$. 
		
		Similarly, we can smoothly extend $e_1$ and $e_2$ to orthonormal sections $\psi_1$ and $\psi_2$ of $V_2 \subset S(\mathbb{R}^3 \oplus \mathbb{R}^3)$ over $\Pi_2$, and then extend $\psi_1$ and $\psi_2$ to smooth sections of $S(\mathbb{R}^3 \oplus \mathbb{R}^3)$ over $U$ by extending them constantly along the lines where $u$ and $z_1$ are constant and $v$ varies. We still denote these extended sections over $U$ by $\psi_1$ and $\psi_2$.
		
		We define a section $\tilde{e}_i$ of $S(\mathbb{R}^3 \oplus \mathbb{R}^3)$ over $U$ by 
		\begin{equation} \label{eq:extension}
			\tilde{e}_i(z_1, z_2, z_3) = \varphi_i(z_1, z_2, z_3) + \psi_i(z_1, z_2, z_3) - e_i(z_1, 0, 0).
		\end{equation}
		By construction, we have 
		$  \tilde{e}_i(z_1, 0,0 ) = e_i(z_1, 0, 0)$
		and $\tilde{e}_i|_{\Pi_1} \in V_1$ and $\tilde{e}_i|_{\Pi_2} \in V_2$. 
		
		Now let $\tilde w_1$ and $\tilde w_2$ be an orthonormal basis of the orthogonal complement of $\{\tilde e_1, \tilde e_2\}$ in $V_1$ along $\Pi_1$.  We extend them to smooth sections of $S(\mathbb{R}^3 \oplus \mathbb{R}^3)$ over $U$ by extending them constantly along the lines where $v$ and $z_1$ are constant and $u$ varies. We still denote these  extended sections over $U$ by $\tilde w_1$ and $\tilde w_2$.
		
		Now let $\tilde w'_1$ and $\tilde w'_2$ be an orthonormal basis of the orthogonal complement of $\{\tilde e_1, \tilde e_2\}$ in $V_2$ along $\Pi_2$.  We extend them to smooth sections of $S(\mathbb{R}^3 \oplus \mathbb{R}^3)$ over $U$ by extending them constantly along the lines where $u$ and $z_1$ are constant and $v$ varies. We still denote these  extended sections over $U$ by $\tilde w'_1$ and $\tilde w'_2$.
		
		Let $\widetilde{V}_1(z)$ be the linear span of $\{\tilde{e}_1(z), \tilde{e}_2(z), \tilde{w}_1(z), \tilde{w}_2(z)\}$ and $\widetilde{V}_2(z)$ be the linear span of $\{\tilde{e}_1(z), \tilde{e}_2(z), \tilde{w}'_1(z), \tilde{w}'_2(z)\}$.
		Since $\tilde e_1(z), \tilde e_2(z), \tilde w_1(z)$ and $\tilde w_2(z)$  are linearly independent at $z = (z_1, 0, 0)$, by choosing a smaller neighborhood of the origin if necessary, we may assume without loss of generality that $\tilde e_1(z), \tilde e_2(z), \tilde w_1(z)$ and $\tilde w_2(z)$  are linearly independent at every $z = (z_1, z_2, z_3)\in U$.  By the same reasoning, we may assume without loss of generality that $\tilde e_1(z), \tilde e_2(z), \tilde w'_1(z)$ and $\tilde w'_2(z)$  are linearly independent at every $z = (z_1, z_2, z_3)\in U$, and $\dim \widetilde{V}_1(z)\cap \widetilde V_2(z)  = 2$ at every $z\in U$. 
		
		Let $p_1(z)$ be the projection  $S(\mathbb{R}^3 \oplus \mathbb{R}^3)\to \widetilde V_1(z)$ and   $p_2(z)$ be the projection  $S(\mathbb{R}^3 \oplus \mathbb{R}^3)\to \widetilde V_2(z)$.   Let $q_3(z)$ be the projection  $S(\mathbb{R}^3 \oplus \mathbb{R}^3)\to \widetilde V_1(z) \cap \widetilde V_2(z)$.  Define $q_1(z) = p_1(z) - q_3(z)$, $q_2(z) = p_2(z) - q_3(z)$, and $q_4(z)$ as the projection onto $(V_1(z) + V_2(z))^\perp$. We define the bundle isomorphism $\Theta$ by 
		\[ \Theta(z) = \Big( \sum_{j=1}^4 q_j(0) q_j(z) \Big) \Big( \sum_{j=1}^4 q_j(z) \Big)^{-1}. \]
		By construction, $\Theta$ maps $V_i(y)$ to $V_i(0)$ for each $y\in \Pi_i\cap U$. 
	\end{enumerate}
	
	We emphasize again that the construction of $\Theta$ should be interpreted as being carried out on each subspace $S^\pm(\mathbb{R}^3 \oplus \mathbb{R}^3)$ independently (cf. Lemmas \ref{lemma:boundaryface} and \ref{lemma:boundaryedge}). In particular, $\Theta$ commutes with the grading operator $\mathscr{E}$ by construction. We define $\Psi \coloneqq \mathcal J\circ \Theta^{-1}$. Recall that $\mathcal J$ is the bundle identification obtained from \(\Phi^{-1}\), \(\exp_x\), and parallel
	transport. By construction, $\Psi$ maps sections satisfying the constant boundary conditions on $U$ to sections satisfying the boundary conditions on $W$.

	Let us consider the  difference $\Psi^* D \Psi- D^\dR$. Let $\{f_j = \frac{\partial}{\partial y_j}\}_{1\leq j \leq 3}$ be the standard orthonormal  basis of $TU = \mathbb{R}^3$. We choose a local orthonormal frame $\{e_j\}$ for $TW$ such that $e_j(x) = d\Lambda_0 (f_j)$, where $d\Lambda_0$ is the differential map of $\Lambda$ at the origin $0\in U$. 
	
	Let $\overbar g$ denote the Riemannian metric on $\domain$. The Riemannian volume form on $W$ pulls back to a volume form on $U$ given by $$\sqrt{\det(\Lambda^* \overbar g)_y} \, dy_1 \wedge dy_2 \wedge dy_3,$$ where the entries of the matrix  $(\Lambda^\ast \overbar g)_y$ are defined as 
	\[ ((\Lambda^\ast \overbar g)_y)_{ij} = \overbar g_{\Lambda(y)}(d\Lambda_y(f_i), d\Lambda_y(f_j)) \]
	for each $y\in U$. 
	
	The difference $\Psi^* D \Psi- D^\dR$ can be written as
	$$\Psi^* D \Psi - D^\dR = \sum_{k=1}^3 A_k(y) \frac{\partial }{\partial y_k}	 + \mathcal B(y),$$   where $A_k$ and $\mathcal B$ are matrix-valued smooth functions on $U$.
	The coefficients $A_k(y)$ are given by  $$A_k(y) = - \overbar c(f_k) + \sqrt{\det(\Lambda^* \overbar g)_y} \sum_{j=1}^3 a_{jk}(y) \, (\Theta^{-1})^*_y \circ \overbar c(e_j(\Lambda(y)))\circ \Theta^{-1}_y,$$
	where $\overbar c(e_j(\Lambda(y)))$ is the Clifford multiplication by the vector $e_j(\Lambda(y))$ and the continuous functions $a_{jk}(y)$ are the coefficients of $d\Lambda^{-1}(e_j) $ in the standard basis  $\{f_j = \frac{\partial}{\partial y_j} : 1\leq j \leq 3\}$: 
	\[d\Lambda^{-1}(e_j) = \sum_{k=1}^3 a_{jk}(y) \frac{\partial}{\partial y_k}.\] 
	At the origin $y=0$, we have $\Lambda(0)=x$, $d\Lambda_0=I$, and $\Theta_0=I$. It follows that  $\sqrt{\det(\Lambda^* \overbar g)_0} = 1$. Furthermore, since $e_j(x) = d\Lambda_0 (f_j)$, we have $a_{jk}(0) = \delta_{jk}$. Therefore, we have $$A_k(0) = - \overbar c(f_k)  + \overbar c(f_k) = 0$$
	It follows that for any $\delta > 0$, there exists a sufficiently small neighborhood where $\|A_k(y)\| < \delta$.	
	
	\textbf{Codimension three case.} In this case, $x$ lies at a vertex, the intersection of codimension one faces $\overbar F_1, \dots, \overbar F_n$. Recall that $\domain$ is a subspace inside some open manifold $X$. Let $\exp_x \colon T_x X \to X$ be the exponential map at $x$, which is a diffeomorphism from a small neighborhood $\widetilde{\mathcal W} \subset T_x X$ of the origin onto a neighborhood $\mathcal W \subset X$ of $x$. The hypersurfaces $\overbar F_1\cap \mathcal W, \dots, \overbar F_n\cap \mathcal W$ pull back to smooth surfaces $\Sigma_1, \dots, \Sigma_n \subset \widetilde{\mathcal W}$. 
	
	Let $\widetilde E = \exp_x^*(E)$ be the pullback bundle over $\widetilde{\mathcal W}$. We identify $\widetilde E$ with the trivial bundle $\widetilde{\mathcal W}\times E_x$ via parallel transportation along radial geodesics of $\mathcal W$. Note that, under this identification, the unit inner normal vector $\overbar \nu_i(x)$ of $\overbar F_i$ at $x$ coincides with the unit inner normal vector of $\Sigma_i$. Let $\Pi_i$ be the tangent plane of $\overbar F_i$ at $x$. The tangent cone $\tancone_x \domain$ at $x$ is the polyhedral cone in $T_xX$ enclosed by $\Pi_1, \dots, \Pi_n$. Again, under the above identification, the unit inner normal vector of $\Pi_i$ coincides with $\overbar \nu_i$.

	Let $\Phi \colon \widetilde{\mathcal W} \subset \R^3 \to \mathbb{R}^3$ be the local $C^1$-diffeomorphism constructed in Example \ref{ex:codimthree}. This map satisfies:
	\begin{itemize}
		\item $\Phi(0) = 0$ and $\Phi'(0) = I$.
		\item $\Phi$ maps $\Sigma_i$ to $\Pi_i$.
	\end{itemize}
	By choosing a smaller neighborhood of the origin if necessary, we may assume without loss of generality that $\Phi$ is a $C^1$-diffeomorphism $\Phi \colon \widetilde{\mathcal W} \to \mathcal U$, where $\mathcal U = \Phi(\widetilde {\mathcal W})$.	Define  $\Gamma \coloneqq \exp_x \circ \Phi^{-1} \colon \mathcal U  \to \mathcal W$. By construction, we have $\Gamma$ is a $C^1$-diffeomorphism such that $\Gamma(0)=x$ and $\Gamma'(0) = I$, and maps $\Pi_i\cap \mathcal U$  to $\overbar F_i\cap \mathcal W$. Similar to the codimension two case, let $\mathcal J\colon \mathcal U\times E_x\to E|_{\mathcal W}$ be the the bundle identification obtained from \(\Phi^{-1}\), \(\exp_x\), and parallel
	transport.

	Now set $U = \mathcal U \cap \tancone_x\domain$ and $W = \mathcal W\cap \domain$. 
	We consider a \textit{constant boundary condition} on the codimension one faces $\Pi_i$ of $\tancone_x\domain$ by
	\begin{equation}\label{eq:modelbd_codim3}
		\mathscr{E} \bar{c}(\bar{\omega}_i) c(\omega_i) \sigma = -\sigma.
	\end{equation}
	where $\bar{\omega}_i \equiv \bar{\nu}_i(x)$ and $\omega_i \equiv \nu_i(x)$ are constant vector fields along $\Pi_i$.  
	
	With the above bundle identification, $\overbar \nu_i$ and $\nu_i$ become unit-length vector fields of the trivial vector bundle $\mathbb{R}^3\oplus \mathbb{R}^3$ along $\Pi_j$. For each $y \in \Pi_i$, let $V_i(y)\subset S(\mathbb R^3\oplus \mathbb R^3)$  be the $(-1)$-eigenspace of the operator 
	\[
	B_i = \mathscr{E} \bar{c}(\bar{\nu}_i) c(\nu_i) \colon S(\mathbb R^3\oplus \mathbb R^3) \to S(\mathbb R^3\oplus \mathbb R^3).
	\]
	In other words, the bundle identification pulls back the boundary conditions for codimension one faces $\overbar F_i\cap W$ of $W$ to boundary conditions for codimension one faces $\Pi_i\cap U$ of $U$.
	
	This pull-back boundary condition $B_i$ coincides with the constant boundary condition for $\Pi_i\cap U$ at $x$, but it generally does not coincide with the constant boundary condition at nearby points $y\in \Pi_i\cap U$. To remedy this, we construct a bundle isomorphism $\Theta\colon U\times S(\mathbb R^3\oplus \mathbb R^3) \to U\times S(\mathbb R^3\oplus \mathbb R^3)$ using a partition of unity approach analogous to the construction of $\Phi$.
	
	Let $\Theta_{ij}$ be the bundle isomorphism near the origin constructed in the codimension two case for the intersection of faces $\Pi_i$ and $\Pi_j$. Using the same open cover $\{W_\alpha\}$ of $\mathbb{S}^2$ and partition of unity $\{\rho_\alpha\}$ as in the construction of the map $\Phi$ of Example \ref{ex:codimthree}, we select local bundle isomorphisms $\Theta_\alpha$ as follows:
	\begin{enumerate}
		\item If $W_\alpha$ contains a vertex of $\link$ formed by $\Pi_i \cap \Pi_j \cap \sph^2$, we set $\Theta_\alpha = \Theta_{ij}$.
		\item If $W_\alpha$ intersects an edge $\Pi_i \cap \mathbb{S}^2$ but no vertices, its endpoints correspond to intersections $\Pi_i \cap \Pi_j$ and $\Pi_i \cap \Pi_k$. Both $\Theta_{ij}$ and $\Theta_{ik}$ map the constant boundary condition on $\Pi_i$ to the boundary condition  pulled back from $\Sigma_i$; thus, we set $\Theta_\alpha$ to be either one.
		\item If $W_\alpha$ is disjoint from the edges and vertices of $\link$, we set $\Theta_\alpha$ to be the identity map.
	\end{enumerate}
	
	We define the global bundle isomorphism $\Theta$ on $U$ by:
	\begin{equation}\label{eq:bundlegluemap}
	\Theta(z) = \sum_\alpha \rho_\alpha(\sigma) \Theta_\alpha(z),
	\end{equation}
	where $z \in U \setminus \{0\}$ is expressed in polar coordinates as $z = r\sigma$. Because $\Theta_\alpha(0) = I$ for all $\alpha$, it is clear that $\Theta(0) = I$. By choosing a smaller neighborhood of the origin if necessary, $\Theta(z)$ remains  invertible on $U$, thus defining a bundle isomorphism. Clearly, by construction,  $\Theta$ is smooth on $U\setminus \{0\}$.  A computation similar to the line \eqref{eq:taylor} shows that $\Theta$ is Lipschitz on $U$.  
	
	We define $\Psi \coloneqq \mathcal J \circ \Theta^{-1}$, where $\mathcal J\colon U\times E_x\to E|_W$ is the bundle identification  obtained from \(\Phi^{-1}\), \(\exp_x\), and parallel
	transport. By construction, $\Psi$ maps sections satisfying the constant boundary conditions on $U$ to sections satisfying the boundary conditions on $W$.

	The difference $\Psi^* D \Psi- D^\dR$ can be written as
	$$\Psi^* D \Psi - D^\dR = \sum_{k=1}^3 A_k(y) \frac{\partial }{\partial y_k} + \mathcal B(y),$$   where $A_k$ are matrix-valued continuous functions and $\mathcal B$ is a matrix-valued \emph{uniformly bounded}\footnote{$\mathcal B$ is continuous on $U\setminus \{0\}$, but not necessarily continuous at the origin. This is because $\mathcal B$ involves the derivative of $\Theta$, where $\Theta$ is Lipschitz but its derivative is generally not continuous.} function on $U$. By the same argument for the coefficients $A_k(y)$ used in the \textbf{codimension two case}, we have $A_k(0) = 0.$
	In particular, it follows that for any $\delta > 0$, there exists a sufficiently small neighborhood where $\|A_k(y)\| < \delta$.	
\end{proof}

\begin{lemma}\label{lemma:boundaryface}
With the same notation as Theorem \ref{thm:reduction}, at a point $y$ in a codimension one face $\overbar F_k$ of $\domain$, let $V = V(y)$ be $(-1)$-eigenspace of the operator 
\[ B = \mathscr{E} \overbar{c}(\overbar{\nu}_k) c(\nu_k) \colon E_y \to E_y.  \]
We denote by $E_y^\pm$ the $(\pm 1)$-eigenspace of the grading operator $\mathscr E$, then
\[  \dim (V\cap E^+_y) = \dim( V \cap E_y^-) = \frac{1}{2}\dim V. \]
\end{lemma}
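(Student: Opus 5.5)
The plan is to show that $B$ commutes with $\mathscr E$, and then to exhibit an operator that anticommutes with $\mathscr E$, commutes with $B$, and is invertible. That operator will swap $E^+_y$ and $E^-_y$ while preserving $V$, which gives the equality of dimensions.

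\textbf{Step 1: $B$ commutes with $\mathscr E$.} Recall that $\mathscr E$ anticommutes with every $\overbar c(\overbar u)$ and every $\hat c(u)$, since each is an odd element in the even-rank Clifford algebra. Since $c = i\mathscr E\hat c$, we get
\[
\mathscr E c(u) = i\hat c(u) = -c(u)\mathscr E,
\]
so $\mathscr E$ also anticommutes with $c(u)$. Hence $\mathscr E\,\overbar c(\overbar\nu_k)c(\nu_k) = \overbar c(\overbar\nu_k)c(\nu_k)\,\mathscr E$, and therefore $B = \mathscr E\overbar c(\overbar\nu_k)c(\nu_k)$ commutes with $\mathscr E$. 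Consequently $V = (V\cap E^+_y)\oplus(V\cap E^-_y)$.

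\textbf{Step 2: build the odd operator $T$.} Choose a unit vector $\overbar u\in T_y\domain$ orthogonal to $\overbar\nu_k$, and put $T=\overbar c(\overbar u)$. This is invertible, since $T^2=-1$, and it anticommutes with $\mathscr E$, so it maps $E^\pm_y$ to $E^\mp_y$. We check that $T$ commutes with $B$. First, $\overbar c(\overbar u)$ anticommutes with $\mathscr E$ and with $\overbar c(\overbar\nu_k)$, because $\overbar u\perp\overbar\nu_k$. Second, $\overbar c(\overbar u)$ commutes with $c(\nu_k)$, as the paper notes that $\overbar c$ and $c$ commute. The two sign changes cancel, so $TB=BT$. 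Thus $T$ preserves $V$, and it restricts to an isomorphism $V\cap E^+_y\to V\cap E^-_y$. This yields $\dim(V\cap E^+_y)=\dim(V\cap E^-_y)$. Combined with Step 1, each equals $\tfrac12\dim V$.

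\textbf{Main obstacle.} The only delicate point is the sign bookkeeping in Step 1: one must verify that $c=i\mathscr E\hat c$ still anticommutes with $\mathscr E$, and that $c$ commutes with $\overbar c$. These both follow from $\mathscr E$ anticommuting with the odd generators and from $\mathscr E^2=1$.

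As a check, $B^2 = \overbar c(\overbar\nu_k)^2c(\nu_k)^2 = 1$, using the commutations above and $c(\nu_k)^2=-1$. So $B$ is indeed an involution. This is consistent with $\dim V = 4$, which the paper obtains from the fact that $\overbar c(\overbar\nu_k)$ anticommutes with $B$.
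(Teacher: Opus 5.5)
Your proposal is correct and follows the paper's proof essentially step for step: the paper also decomposes $V = V^+\oplus V^-$ using $[B,\mathscr E]=0$, and then uses $\overbar c(\overbar e)$ with $\overbar e\perp\overbar\nu_k$ as an odd operator commuting with $B$ to identify $V^+\cong V^-$. Your extra sign bookkeeping, checking that $c=i\mathscr E\hat c$ anticommutes with $\mathscr E$ and commutes with $\overbar c$, fills in exactly the verifications the paper leaves implicit, and those checks are correct.
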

\begin{proof}
	Note that $B$ commutes with $\mathscr E$, that is $B \mathscr E = \mathscr E B$. It follows that $V$ has an orthogonal decomposition $V = V^+ \oplus V^-$, where $V^\pm = V\cap E^\pm_y$.
	Let $\overbar e \in T_y\domain $ be a unit vector that is orthogonal to $\overbar \nu_k$. Since $\overbar c(\overbar e)$ anticommutes with $\mathscr E$ and commutes with $B$, it follows that  $\overbar c(\overbar e)$ maps $V^\pm$  isomorphically to $V^\mp$. Consequently, $\dim V^+ = \dim V^- = \frac{1}{2}\dim V$. 
\end{proof}

\begin{lemma}\label{lemma:boundaryedge}
	With the same notation as Theorem \ref{thm:reduction}, at a point $y\in \overbar F_1\cap \overbar F_2$, let $V_1  = V_1(y)$ and $V_2 = V_2(y)$ be the $(-1)$-eigenspace of the operator 
	\[ B_j = \mathscr{E} \bar{c}(\bar{\nu}_j) c(\nu_j) \colon E_y \to E_y.  \]
	Then the dimension of  $V_1 \cap V_2$ satisfies the following: 
	\begin{enumerate}[label=$(\alph*)$]
		\item if $\langle \overbar{\nu}_1, \overbar{\nu}_2 \rangle = \langle \nu_1, \nu_2 \rangle$, then $\dim(V_1 \cap V_2) = 2$;
		\item if $\langle \overbar{\nu}_1, \overbar{\nu}_2 \rangle \neq \langle \nu_1, \nu_2 \rangle$, then $\dim(V_1 \cap V_2) = 0$.
	\end{enumerate} 
	Moreover, we have 
	\[ \dim(V_1 \cap V_2 \cap E^+_y) =  \dim(V_1 \cap V_2 \cap E^-_y) = \frac{1}{2} \dim(V_1 \cap V_2).\] 
\end{lemma}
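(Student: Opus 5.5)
The plan is to reduce everything to linear algebra in the Clifford module $E_y \cong S(\mathbb R^3\oplus\mathbb R^3)$, which has dimension $8$.

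First, record the basic relations. Since $\overbar c$ and $c$ commute, and $\mathscr E$ anticommutes with both $\overbar c(\overbar u)$ and $c(u)$, each $B_j=\mathscr E\overbar c(\overbar\nu_j)c(\nu_j)$ is a self-adjoint involution. Hence $V_j=\ker(B_j+1)$ has dimension $4$.

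\textbf{Step 1: reduce to a standard position.} Write
\[\overbar\nu_2=\cos\bar\theta\,\overbar\nu_1+\sin\bar\theta\,\overbar e,\qquad \nu_2=\cos\theta\,\nu_1+\sin\theta\, e,\]
with $\overbar e\perp\overbar\nu_1$ and $e\perp\nu_1$ unit vectors. Here $\bar\theta\in(0,\pi)$, since the dihedral angle is less than $\pi$ and the faces are transversal; the angle $\theta\in[0,\pi]$ is arbitrary. Set $P=\frac12(1-B_1)$ and $Q=\frac12(1-B_2)$, so that $V_1\cap V_2=\ker(B_1+1)\cap\ker(B_2+1)$.

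\textbf{Step 2: compute the product of the involutions.} I will compute $B_1B_2$ explicitly. Since $\mathscr E$ commutes with $\overbar c(\overbar u)c(u)$ and $\mathscr E^2=1$, we get
\[B_1B_2=\overbar c(\overbar\nu_1)\overbar c(\overbar\nu_2)\,c(\nu_1)c(\nu_2)=(-\cos\bar\theta+\sin\bar\theta\, X)(-\cos\theta+\sin\theta\, Y).\]
Here $X=\overbar c(\overbar\nu_1)\overbar c(\overbar e)$ and $Y=c(\nu_1)c(e)$. These are commuting skew-adjoint operators with $X^2=Y^2=-1$.

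A vector $v$ lies in $V_1\cap V_2$ if and only if $B_1v=B_2v=-v$, which forces $B_1B_2v=v$. Conversely, on the $(+1)$-eigenspace of the orthogonal operator $B_1B_2$, the involution $B_1$ acts and commutes with $B_1B_2$. So $V_1\cap V_2$ is the $(-1)$-eigenspace of $B_1$ restricted to $\ker(B_1B_2-1)$.

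Simultaneously diagonalize $X=i\epsilon_1$ and $Y=i\epsilon_2$, with $\epsilon_1,\epsilon_2\in\{\pm1\}$. On the joint eigenspace we get
\[B_1B_2=e^{i(\pi-\bar\theta)\epsilon_1'}\,e^{i(\pi-\theta)\epsilon_2'}=e^{i(\pm\bar\theta\pm\theta)}.\]
This equals $1$ exactly when $\bar\theta=\theta$ and the sign combination cancels the two angles. If $\theta=\bar\theta=0$ or $\pi$ more eigenspaces would contribute, but $\bar\theta\in(0,\pi)$ excludes this.

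\textbf{Step 3: count the dimension.} Each joint eigenspace of $(X,Y)$ has dimension $2$. This holds because $X$ and $Y$ generate a Clifford-type algebra acting on the $8$-dimensional module, with each sign pair $(\epsilon_1,\epsilon_2)$ occurring with equal multiplicity. To see this, note that $\overbar c(\overbar e)$ anticommutes with $X$ but commutes with $Y$, so it flips $\epsilon_1$ while preserving $\epsilon_2$; similarly $c(e)$ flips $\epsilon_2$ only.

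Hence if $\langle\overbar\nu_1,\overbar\nu_2\rangle\neq\langle\nu_1,\nu_2\rangle$, that is $\cos\bar\theta\ne\cos\theta$, then $\ker(B_1B_2-1)=0$ and so $V_1\cap V_2=0$.

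If the inner products are equal, $\ker(B_1B_2-1)$ is the sum of the two joint eigenspaces with $\epsilon_1=-\epsilon_2$, which has dimension $4$. On this space $B_1$ is an involution. The operator $X$ preserves each joint eigenspace and anticommutes with $B_1$, since $\overbar c(\overbar\nu_1)$ anticommutes with $X$ through the factor $\overbar c(\overbar e)$. So $X$ exchanges the $\pm1$ eigenspaces of $B_1$ there, and $\dim(V_1\cap V_2)=2$.

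\textbf{Step 4: the grading statement.} The operators $B_1$ and $B_2$ commute with $\mathscr E$, so $V_1\cap V_2$ splits into parts in $E^\pm_y$. I need an odd operator preserving $V_1\cap V_2$. A candidate is $\overbar c(\overbar t)$, where $\overbar t$ is the unit tangent to the edge, orthogonal to both $\overbar\nu_1$ and $\overbar\nu_2$. It anticommutes with $\mathscr E$ and commutes with $B_1$ and $B_2$: it anticommutes with both $\mathscr E$ and $\overbar c(\overbar\nu_j)$, and commutes with $c(\nu_j)$. Thus it is an isomorphism $V_1\cap V_2\cap E^\pm_y\to V_1\cap V_2\cap E^\mp_y$, which gives equal halves.

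\textbf{Main obstacle.} The main obstacle is the multiplicity count in Step 3: verifying that each joint eigenspace of $(X,Y)$ is exactly $2$-dimensional, and handling the sign bookkeeping. I would check it concretely by identifying $E\cong\Lambda^*\mathbb R^3$ (Example \ref{example:deRham}) in an adapted frame, if the abstract argument becomes messy.
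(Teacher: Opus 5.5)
Your argument is correct and follows essentially the paper's route. Both proofs write $B_1B_2=\overline c(\overline\nu_1)\overline c(\overline\nu_2)\,c(\nu_1)c(\nu_2)$ as a product of two commuting ``rotations'' with eigenvalues $-e^{\pm i\bar\theta}$ and $-e^{\pm i\theta}$, and get eigenvalue $1$ exactly when $\bar\theta=\theta$. Both then halve $\ker(B_1B_2-1)$ by an operator that swaps the $\pm1$ eigenspaces of $B_1$, and use $\overline c$ of the edge direction for the grading. The differences are bookkeeping: you use the joint eigenspaces of $X,Y$ in $E_y$ and halve with $X$, while the paper uses a tensor factorization and the lines $Q_\pm$.

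One justification needs correcting. $B_1$ does not commute with $B_1B_2$ in general; instead $B_1(B_1B_2)B_1=(B_1B_2)^{-1}$, which still shows that $B_1$ preserves $\ker(B_1B_2-1)$. Equivalently, $B_1$ anticommutes with both $X$ and $Y$, so it maps the $(\epsilon_1,\epsilon_2)$ joint eigenspace to the $(-\epsilon_1,-\epsilon_2)$ one.
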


\begin{proof}
	The operators $B_j$ are involutions (i.e., $B_j^2 = 1$) on $E_y$. A spinor $\sigma \in V_1 \cap V_2$ must satisfy $B_1 \sigma = -\sigma$ and $B_2 \sigma = -\sigma$. Consequently, $\sigma$ must be an eigenvector of the product operator $A = B_1 B_2$ with eigenvalue $1$.
	
	Consider the orthogonal decomposition $T_y \domain = \mathfrak{V} \oplus \mathfrak{V}^\perp$, where $\mathfrak{V} = \operatorname{span}(\overbar{\nu}_1, \overbar{\nu}_2)$ is the linear subspace spanned by $\overbar{\nu}_1$ and $ \overbar{\nu}_2$. Note that $\mathfrak V$ is two dimensional, since $\overbar{\nu}_1$ and $ \overbar{\nu}_2$ are linearly independent by our geometric assumption on $\domain$. We also choose a two-dimensional subspace
    $ \mathfrak W\subset (f^*T\target)_y $	containing \(\nu_1\) and \(\nu_2\). Of course, if \(\nu_1\) and \(\nu_2\) are linearly
	independent, then $
	\mathfrak W=\operatorname{span}(\nu_1,\nu_2).$  Similarly, decompose $(f^* T\target)_y = \mathfrak{W} \oplus \mathfrak{W}^\perp$. So  we have $E_y = S(\mathfrak V)\otimes  S(\mathfrak W) \otimes S(\mathfrak V^\perp \oplus \mathfrak W^\perp)$. The operators $B_1$ and $B_2$ act as the identity on the factor $S(\mathfrak V^\perp \oplus \mathfrak W^\perp)$; therefore, it suffices to analyze their action on the factor $S(\mathfrak V)\otimes  S(\mathfrak W)$.
	
	A direct computation shows that
	\[
	A = B_1 B_2 = \overline{c}(\overline{\nu}_1) \overline{c}(\overline{\nu}_2) \, c(\nu_1) c(\nu_2).
	\]
	Let $\alpha \in (0, \pi)$ be the angle such that $\langle \overline{\nu}_1, \overline{\nu}_2 \rangle = \cos \alpha$. The operator $L \coloneqq  \overline{c}(\overline{\nu}_1)\overline{c}(\overline{\nu}_2)$ acts on $S(\mathfrak V)$ with eigenvalues $\{ -e^{i\alpha}, -e^{-i\alpha} \}$ and acts by identity on $S(\mathfrak W)$.   
	Similarly, let $\beta \in [0, \pi]$ be the angle such that $\langle \nu_1, \nu_2 \rangle = \cos \beta$. The operator $R \coloneqq c(\nu_1)c(\nu_2)$ acts on $S(\mathfrak W)$ with eigenvalues $\{ -e^{i\beta}, -e^{-i\beta} \}$ and  acts by identity on $S(\mathfrak V)$.   
	
	It follows that the eigenvalues of  $A\colon S(\mathfrak V)\otimes  S(\mathfrak W) \to S(\mathfrak V)\otimes  S(\mathfrak W) $ are
	\[
	\{ e^{i(\alpha+\beta)}, e^{-i(\alpha+\beta)}, e^{i(\alpha-\beta)}, e^{-i(\alpha-\beta)} \}.
	\]
	Now, consider the two cases:
	\begin{enumerate}[label=$(\alph*)$]
		\item If $\langle \overbar{\nu}_1, \overbar{\nu}_2 \rangle = \langle \nu_1, \nu_2 \rangle$, then $\alpha = \beta$. Then $1$ occurs as an eigenvalue of $A$  with multiplicity $2$. Let $Q$ denote
		this two-dimensional eigenspace. Let $u_+, u_-$ be  eigenvectors of
		$ L = \bar c(\bar\nu_1)\bar c(\bar\nu_2)$ with eigenvalues
		$-e^{i\alpha}, -e^{-i\alpha}$, respectively;  and 
		$w_+, w_-$  eigenvectors of $ R = c(\nu_1)c(\nu_2)$ with eigenvalues
		$-e^{i\alpha}, -e^{-i\alpha}$, respectively. Then
		\[
		Q = \operatorname{span}\{u_+\otimes w_-,\,u_-\otimes w_+\}.
		\]
		 It remains to impose the condition
		\(B_1\sigma=-\sigma\).
		A direct computation shows that $ A B_1 = B_1 A^{-1}$, which implies that $B_1$ maps $Q$ to $Q$. 
	    Moreover, we have  
	    \[
	    B_1 L B_1= L^{-1} \textup{ and }
	    B_1RB_1=R^{-1}. 
	    \] 
	    This implies that $B_1(Q_+) = Q_-$ and $B_1(Q_-) = Q_+$, where  $Q_+ = \mathbb C(u_+\otimes w_-)$ and $Q_- = \mathbb C(u_-\otimes w_+)$. Therefore, with respect to this basis,
	    \[
	    B_1|_Q
	    =
	    \begin{pmatrix}
	    	0 & \lambda \\
	    	\lambda^{-1} & 0
	    \end{pmatrix}
	    \]
	    for some  $\lambda\neq 0$. It follows that the subspace
	    \[
	    J \coloneqq \{ \sigma \in Q \mid B_1 \sigma = -\sigma \}
	    \]
	    has dimension $1$.
		The intersection $V_1\cap V_2$ is then given by
		\[
		V_1 \cap V_2 = J \otimes S(\mathfrak V^\perp \oplus \mathfrak W^\perp).
		\]
		Since $\dim(J)=1$ and $\dim(S(\mathfrak V^\perp \oplus \mathfrak W^\perp)) = 2$, we have $\dim(V_1 \cap V_2) = 2$.
		
		\item If $\langle \overbar{\nu}_1, \overbar{\nu}_2 \rangle \neq \langle \nu_1, \nu_2 \rangle$, then $\alpha \neq \beta$. Consequently, $1$ is not an eigenvalue of $A$. Thus $V_1 \cap V_2 = 0$.
	\end{enumerate}
	To prove the final statement, let $\overbar e\in T_y\domain$ be a unit vector  that is orthogonal to both $\overbar \nu_1$ and $\overbar \nu_2$. Since $\overbar c(\overbar e)$ anticommutes with $\mathscr E$ and commutes with $B_j$, it follows that  $\overbar c(\overbar e)$ preserves the subspace $V_1\cap V_2$. Moreover, $\overbar c(\overbar e)$ maps $V_1 \cap V_2 \cap E^\pm_y$  isomorphically to $V_1 \cap V_2 \cap E^\mp_y$. Consequently, 	\[ \dim(V_1 \cap V_2 \cap E^+_y) =  \dim(V_1 \cap V_2 \cap E^-_y) = \frac{1}{2} \dim(V_1 \cap V_2).\] 
	This finishes the proof.
\end{proof}

\subsection{Coordinate change and conical operators}
Theorem \ref{thm:reduction} allows us to reduce the verification of the essential self-adjointness of $D_B$ to the following standard  model cases. Let $f\colon \mathbb{F}\to\mathbb{F}'$ be a polyhedral map between two polyhedral cones in $\mathbb{R}^k$ that are bounded by hyperplanes passing through the origin. Since these cones and their faces are flat, their tangent bundles are trivial, the normal vectors to the codimension-one faces are constant vector fields, canonically identified with vectors in $\mathbb{R}^k$. In this setting, the precise details of the map $f$ become irrelevant; its only role is to prescribe the correspondence between the faces of $\mathbb{F}$ and the faces of $\mathbb{F}'$.

Under the trivializations of $T\fiber$ and $f^*T\fiber'$, the spinor bundle  $S(T\fiber\oplus f^\ast T\fiber')$ is  naturally identified with $\Bigwedge^*\fiber \coloneqq \Bigwedge^*\R^k$ equipped with the trivial connection. Under this identification,  the Clifford actions $\overbar c$ and $c$ are identified with the usual Clifford actions on forms given by
$$\overbar c(v)=v\wedge -\iota_{v,L},\textup{ and }c(v)=\wedge v -\iota_{v,R},$$
where $\iota_{v, L}$ and $\iota_{v,R}$ denote the corresponding left and right contraction by $v$. See \cite[I.3.9]{spingeometry}. With these identifications, the twisted Dirac operator $D$ on $S(T \fiber \oplus f^\ast T\fiber')$ coincides with the standard de Rham operator $D^{\dR} = d + d^*$ acting on $\Bigwedge^*\R^k$. The boundary condition $B$ is determined simply by the inner normal vectors of $\mathbb{F}$ and $\mathbb{F}'$. Since the faces are flat, these normal vectors are constant along each codimension-one face of $\mathbb{F}$. Consequently, the boundary condition $B$  is constant along each codimension one face. 

The Euclidean metric on $\fiber$ can be written in polar coordinates as
$$ g^\fiber = dr^2 + r^2 g^\link, $$
where $r$ is the radial distance from the origin and $\link = \fiber \cap \mathbb{S}^{k-1}$ is the link of $\fiber$.

	There is a natural unitary that identifies the de Rham operator $D^\dR$ on $\fiber$ with an elliptic operator on the cylinder $(0, \infty) \times \link$ equipped with the cylindrical metric (i.e., product metric), see \cite[Section 5]{LeschTopology}. Denote by $\Omega^\ast (\fiber)$ the space of differential forms on $\fiber$. The unitary $\Psi$ is given by
	\begin{equation}\label{eq:coordinateChange}
		\Psi=(\Psi_{\mathrm{even}},\Psi_{\mathrm{odd}})\colon 
		C^\infty((0,\infty),\Omega^*\link)\oplus C^\infty((0,\infty),\Omega^*\link)
		\longrightarrow \Omega^\ast(\fiber),
	\end{equation}
	where  
	\begin{equation}\label{eq:transformeven}
		\Psi_{\mathrm{even}}\colon C^\infty((0,\infty),\Omega^* \link)\longrightarrow \Omega^{\mathrm{even}} \fiber,~
		\omega_p\longmapsto\begin{cases}
			r^{p-\frac{k-1}{2}}\omega_p,&\text{ if $p$ is even}\\
			r^{p-\frac{k-1}{2}}\omega_p\wedge dr,&\text{ if $p$ is odd}
		\end{cases}
	\end{equation}
	and
	\begin{equation}\label{eq:transformodd}
		\Psi_{\mathrm{odd}}\colon C^\infty((0,\infty),\Omega^* \link)\longrightarrow \Omega^{\mathrm{odd}} \fiber,~
		\omega_p\longmapsto\begin{cases}
			r^{p-\frac{k-1}{2}}\omega_p,&\text{ if $p$ is odd}\\
			r^{p-\frac{k-1}{2}}\omega_p\wedge dr,&\text{ if $p$ is even}
		\end{cases}
	\end{equation}
	The Clifford actions in the cylindrical coordinates are given by
	\begin{equation}\label{eq:cylindricalClifford1}
		\Psi^{-1}\mathscr E\Psi=\begin{pmatrix}
			1&0\\0&-1
		\end{pmatrix},~\Psi^{-1}\overbar c(\partial_r)\Psi=\begin{pmatrix}
		0&-1\\1&0
		\end{pmatrix},~
		\Psi^{-1}c(\partial_r)\Psi=\begin{pmatrix}
			0&-\mathscr E_\link\\\mathscr E_\link&0
		\end{pmatrix},
	\end{equation}
	where $\mathscr E_\link$ is the grading operator on $\Omega^*\link$, and
	\begin{equation}\label{eq:cylindricalClifford2}
		\Psi^{-1}\overbar c(v)\Psi=\begin{pmatrix}
			0&\overbar c(v)\\\overbar c(v)&0
		\end{pmatrix},~
		\Psi^{-1}c(v)\Psi=\begin{pmatrix}
			0&\mathscr E_\link c(v)\\-\mathscr E_\link c(v)&0
		\end{pmatrix}
	\end{equation}
	if  $v$ is orthogonal to $\partial_r$.
	
	With respect to the even/odd grading of differential forms, we have 
	$$D^\dR=\begin{pmatrix}
		&D^{\dR,-}\\D^{\dR,+}&
	\end{pmatrix},$$
	where $D^{\dR,-}\colon \Omega^{\mathrm{odd}} \mathbb F \to \Omega^{\mathrm{even}} \mathbb F$ and 
	$D^{\dR,+}\colon \Omega^{\mathrm{even}} \mathbb F \to \Omega^{\mathrm{odd}} \mathbb F$. Let us define
	\begin{equation}\label{eq:linkOperator}
		P\coloneqq \begin{pmatrix}
			c_0&d^*&&&\\
			d&c_1&&&\\
			&&\ddots&&\\
			&&& c_{k-2}&d^*\\
			&&& d&c_{k-1}
		\end{pmatrix}
	\end{equation}
	where $d$ is the de Rham differential on $\Omega^\ast \link$, $d^\ast$ is the adjoint of $d$, and  \[ c_p=(-1)^p\big(p-\frac{k-1}{2}\big). \] 

	The  operator $P$ above  differs from the de Rham operator of $\link$ by a bounded endomorphism. A direct  computation shows that 
	\begin{equation}\label{eq:fiberdeRham1}
		\Psi_{\mathrm{odd}}^{-1}D^{\dR,+}\Psi_{\mathrm{even}}=\frac{\partial}{\partial r}+\frac{1}{r}P\colon C^\infty((0,\infty),\Omega^* \link)\to C^\infty((0,\infty),\Omega^* \link),
	\end{equation}
	and
	\begin{equation}\label{eq:fiberdeRham2}
		\Psi_{\mathrm{even}}^{-1}D^{\dR,-}\Psi_{\mathrm{odd}}=-\frac{\partial}{\partial r}+\frac{1}{r}P\colon C^\infty((0,\infty),\Omega^* \link)\to C^\infty((0,\infty),\Omega^* \link).
	\end{equation}
Equivalently,
\begin{equation}\label{eq:deRhamAfterConj}
	\Psi^*D^{\dR}\Psi=\begin{pmatrix}
		0&-\frac{\partial}{\partial r}\\
		\frac{\partial}{\partial r}&0
	\end{pmatrix}+\frac 1 r\begin{pmatrix}
	0&P\\P&0
\end{pmatrix}.
\end{equation}
See \cite[Section 5]{BruningSeeley} and \cite[Proposition 5.3]{LeschTopology}.

	We now turn to the boundary condition $B$. On a codimension one face $F_j$ of $\mathbb{F}$, the boundary condition $B$ is defined by
	$$ \gamma_j \sigma = -\sigma, \quad \text{where} \quad \gamma_j \coloneqq \mathscr E\overbar c(\nu_j)c(\nu_j'). $$
	Here, $\nu_j$ is the unit inner normal to $F_j$, $\nu_j'$ is the unit inner normal to the corresponding face $F_j'$ of $\mathbb{F}'$, and $\mathscr E$ is the  even-odd grading operator on $\Bigwedge^*T\mathbb{F}$.
	
	Note that the operator $\gamma_j$ preserves the even-odd grading of differential forms, that is, $\gamma_j$ commutes with $\mathscr E$. Under the unitary transformation $\Psi$, the boundary condition $B$ induces a boundary condition $B_{\mathbb{L}}$ on the bundle $(\Bigwedge^*T\mathbb{L})^{\oplus 2}$ over the link $\mathbb{L} = \mathbb{F} \cap \mathbb{S}^{k-1}$. The link $\mathbb{L}$ is itself a polyhedral manifold of dimension $k-1$. Observe that, for a codimension one face $F_j$ of $\fiber$,  it unit inner normal vector $\nu_j$ is orthogonal to the radial vector $\partial_r$. Consequently, the  operator $\gamma_j$ commutes with $\overbar c(\partial_r)$. Together with the fact that $\gamma_j$ commutes with $\mathscr E$,  it follows from the matrix representation in \eqref{eq:cylindricalClifford1} that the boundary condition $B$ splits into two identical boundary conditions on the two copies of $\Bigwedge^*T\mathbb{L}$. We denote this induced boundary condition on the faces of the link by $B_{\mathbb{L}}$.
	
	 Although  $\nu_j$ is orthogonal to $\partial_r$, $\nu_j’$ need not be orthogonal to $\partial_r$. Consequently, the induced boundary condition $B_{\mathbb{L}}$ may not respect the  even-odd grading $\mathscr E_{\mathbb{L}}$ on $\Bigwedge^*T\mathbb{L}$.

	Let us denote by $D^\dR_{B}$ the de Rham operator of $\fiber$  with initial domain consisting of smooth differential forms that satisfy the boundary condition $B$ and are supported away from the vertex of $\fiber$ (i.e., forms in $C^\infty_c(\fiber \setminus \{0\}, \Bigwedge^*T\fiber; B)$). Similarly, let $P_B$ be the operator $P$ on the link $\link$ (see \eqref{eq:deRhamAfterConj}) subject to the boundary condition $B_\link$.

	The following lemma characterizes when $D^\dR_{B}$ is essentially self-adjoint in terms of the spectrum of the operator $P_B$.  
	\begin{lemma}[{cf. \cite[Theorem 3.1]{BruningSeeley}}]\label{lemm:>=1/2}
		Assume that  $P_B$ is essentially self-adjoint.  Then  $D^\dR_{B}$ is essentially self-adjoint if and only if $|P_B|\geq 1/2$.
	\end{lemma}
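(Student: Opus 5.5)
We plan to reduce the question to an ordinary differential equation in the radial variable by diagonalizing $P_B$ on the link. Since $P_B$ is assumed essentially self-adjoint on the compact polyhedral link $\link$, and its domain embeds compactly into $L^2$ by the Rellich lemma, $P_B$ has discrete spectrum with an orthonormal eigenbasis $\{\phi_\lambda\}$. The conjugated operator \eqref{eq:deRhamAfterConj} is block-diagonal with respect to the two copies of $\Bigwedge^*T\link$ and commutes with $P$. Moreover, the boundary condition $B$ splits into two identical copies of $B_\link$. Hence every section in the cylindrical picture expands as
\[ \sigma = \sum_\lambda \big(a_\lambda(r)\phi_\lambda, b_\lambda(r)\phi_\lambda\big), \]
and $\Psi^*D^\dR_B\Psi$ acts on each coefficient pair by
\[ (a,b)\mapsto\big(-b'+\lambda b/r,\ a'+\lambda a/r\big). \]
The unitary $\Psi$ in \eqref{eq:coordinateChange} carries $L^2(\fiber)$ to $L^2((0,\infty)\times\link, dr)$, because the powers $r^{p-(k-1)/2}$ absorb the volume factor $r^{k-1}$. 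The problem therefore becomes a family of one-dimensional first-order systems on the half-line, and only the behaviour near $r=0$ matters. Near infinity the question is handled by cut-offs, since compactly supported elements of the maximal domain away from the origin are in $H^1$ by local elliptic regularity.

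For each $\lambda$ we determine the deficiency on a neighbourhood of $r=0$. The solutions of $a'+\lambda a/r=0$ are $a=r^{-\lambda}$, and the solutions of $-b'+\lambda b/r=0$ are $b=r^{\lambda}$. The function $r^{-\lambda}$ is square integrable near $0$ iff $\lambda<1/2$, and $r^{\lambda}$ is square integrable near $0$ iff $\lambda>-1/2$. Suppose $|\lambda|<1/2$. Then both kernel solutions, cut off near infinity, lie in the maximal domain, since applying the operator gives only a compactly supported error from the cut-off. They are not in the minimal domain, because the graph norm is equivalent to the $H^1$-type norm. Concretely, for $a=\chi r^{-\lambda}$ the boundary pairing
\[ \langle D u,v\rangle-\langle u,Dv\rangle = \lim_{r\to0}\big(\text{boundary term at } r\big) \]
is nonzero against the corresponding $b$-solution, because the product $r^{-\lambda}r^{\lambda}=1$ does not vanish as $r\to 0$. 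Since the boundary form vanishes identically on the minimal domain, this shows that $|P_B|\geq 1/2$ is necessary.

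For sufficiency we assume $|\lambda|\geq 1/2$ for all $\lambda$. We will show that any $\xi$ in the maximal domain satisfies the boundary-term vanishing, and hence is approximable by compactly supported elements away from $0$. For each mode we solve $a'+\lambda a/r=g$ with $g\in L^2$ by variation of constants. The homogeneous part is excluded by $L^2$-integrability when $\lambda\geq1/2$, or it decays like $r^{|\lambda|}$ when $\lambda\le -1/2$. A Hardy-type inequality then gives the bound
\[ \|a/r\|_{L^2}\le C_\lambda\|g\|_{L^2}, \]
with $C_\lambda$ uniform for $|\lambda|\geq 1/2$, except that at $|\lambda|=1/2$ the Hardy constant degenerates. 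This borderline is the main obstacle. At $|\lambda|=1/2$ the bad solution $r^{-1/2}$ just fails to be $L^2$, so one instead shows directly that $|a(r)|^2\cdot r^{0}\to0$ along a sequence, which kills the boundary pairing. This is the standard Brüning--Seeley argument, and we would follow \cite[Theorem 3.1]{BruningSeeley} there. Uniformity in $\lambda$ lets us sum the modes. Finally, we approximate by $\chi_\epsilon\xi$ with logarithmic cut-offs $\chi_\epsilon$ near $r=0$, whose commutator $[D,\chi_\epsilon]$ tends to zero in $L^2$ thanks to the estimate on $a/r$ (or the logarithmic trick in the borderline case). This shows that the maximal domain equals the closure, that is, $D^\dR_B$ is essentially self-adjoint.
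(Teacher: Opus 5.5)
Your outline is correct, but it takes a different route from the paper.

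\textbf{The paper's route.} The paper applies the von Neumann criterion directly. It expands an element of $\ker((D^\dR_B)^*\mp i)$ in the eigenbasis of $P_B$ and solves the resulting ODE by $\sqrt r\,K_{\lambda\pm1/2}(r)$ and $\sqrt r\,I_{\lambda\pm1/2}(r)$. It discards $I$ by integrability at infinity, and reads off from the small-$r$ asymptotics of $K_\nu$ that a nonzero $L^2$ deficiency element exists iff some eigenvalue lies in $(-1/2,1/2)$. This one computation gives both implications and handles $r\to0$ and $r\to\infty$ together. It also covers the borderline eigenvalues $\pm1/2$ for free, since $\sqrt r\,K_{\pm1}(r)\sim r^{-1/2}\notin L^2$.

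\textbf{Your route.} You work instead with the zero-energy solutions $r^{\mp\lambda}$ and the boundary form of $D_{\max}$.

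Your necessity argument is clean and avoids special functions. For $|\lambda|<1/2$ the sections $(\chi r^{-\lambda}\phi_\lambda,0)$ and $(0,\chi r^{\lambda}\phi_\lambda)$ lie in the maximal domain, and their boundary form equals $\int_0^\infty(\chi^2)'\,dr=-1\neq0$, so $D_{\min}\neq D_{\max}$.

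Sufficiency is where your route costs more.
\begin{enumerate}
\item The Hardy constant on the unfavourable component behaves like $(|\lambda|-1/2)^{-1}$. It blows up as $|\lambda|\downarrow 1/2$, not only at $|\lambda|=1/2$, so your claim of uniformity for $|\lambda|\ge 1/2$ is not right as stated. Uniform summation over modes needs the discreteness of $\operatorname{spec}(P_B)$, which leaves only finitely many modes near $\pm1/2$, together with cut-offs at $0$ and at $\infty$.
\item You need a separate logarithmic cut-off argument for the modes with $|\lambda|=1/2$. You only cite this case, but it is not a corner case for this paper: the equality case of the angle comparison produces the eigenvalue $-1/2$ (take $\alpha=\beta$ in Lemma~\ref{lemma:essensa-jumpanglewithf}). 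So the argument must actually be carried out. It does go through: in that mode $a=O(r^{1/2}\sqrt{\log(1/r)})$ and $b=O(r^{1/2})$, so the boundary term tends to $0$.
\end{enumerate}

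\textbf{What each approach buys.} The paper's computation is shorter and needs no separate borderline analysis. Your approach yields more information: when $|P_B|>1/2$ it shows that every $\xi$ in the domain of the closure satisfies $\xi/r\in L^2$ near the vertex. This is a weighted $H^1$-type description of the domain that the deficiency-index computation does not give.

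\textbf{Minor point.} The operator in \eqref{eq:deRhamAfterConj} is off-diagonal, not block-diagonal, with respect to the two copies of $\Bigwedge^*T\link$. What you actually use is that it preserves each eigenmode of $P$.
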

	\begin{proof}
		Set $E_\pm(D^\dR_{B} )\coloneqq \ker((D^\dR_{B})^*\mp i)$, where $(D^\dR_{B})^*$ is the adjoint of $D^\dR_{B}$ (as an unbounded operator). The von Neumann deficiency indices theorem states that $D^\dR_{B}$ is essentially self-adjoint if and only if 
		\[ E_+(D^\dR_{B} )=E_-(D^\dR_{B} )=0. \]
		
		Since $\link$ is compact and $P_B$ is an essentially self-adjoint elliptic operator with local boundary condition, its closure has compact resolvent. Hence $L^2(\link,\bigwedge^*T^*\link)$ admits an orthonormal basis $\{\phi_\lambda\}$, where each $\phi_\lambda$ is the eigenvector of $P_B$ with eigenvalue $\lambda$.
		
		Suppose that $\varphi=\varphi_{\text{even}}\oplus \varphi_{odd}$ lies in the kernel of $(D^\dR_{B})^*- i$. Let us denote  $\psi_0=\Psi_{\mathrm{even}}^{-1}(\varphi_{\text{even}})$ and 
		$\psi_1=\Psi_{\mathrm{odd}}^{-1}(\varphi_{\text{odd}})$. We have
		\begin{equation}\label{eq:deficiency}
			\Big(\begin{psmallmatrix}
				0&-\frac{\partial}{\partial r}\\
				\frac{\partial}{\partial r}&0
			\end{psmallmatrix}+\frac{1}{r}\begin{pmatrix}
				0&P\\P&0
			\end{pmatrix}- i\Big)\begin{pmatrix}\psi_0\\\psi_1
			\end{pmatrix}=0.
		\end{equation}
	If we write
		$$\psi_0=\sum_\lambda\psi_{0,\lambda}(r)\phi_\lambda \textup{ and } \psi_1=\sum_\lambda\psi_{1,\lambda}(r)\phi_\lambda, $$
then Equation \eqref{eq:deficiency} then splits into the following system of
ordinary differential equations for each eigenvector  $\phi_\lambda$:  
		\begin{equation}\label{eq:psi0,1def}
			\begin{cases}
				\displaystyle	\frac{\partial}{\partial r} \psi_{0,\lambda}+\frac \lambda r\psi_{0,\lambda}=i\psi_{1,\lambda},\\
				\displaystyle	-\frac{\partial}{\partial r}\psi_{1,\lambda}+\frac \lambda r\psi_{1,\lambda}=i\psi_{0,\lambda}.
			\end{cases}
		\end{equation}
		It follows that 
		$$\begin{cases}
			\displaystyle -\frac{\partial^2}{\partial r^2}\psi_{0,\lambda}+\frac{\lambda}{r^2}\psi_{0,\lambda}+\frac{\lambda^2}{r^2}\psi_{0,\lambda}+\psi_{0,\lambda}=0, 	\vspace{.3cm}\\
			\displaystyle i\psi_{1,\lambda}=\frac{\partial}{\partial r}\psi_{0,\lambda}+\frac{\lambda} {r}\psi_{0,\lambda}, 
		\end{cases}$$
		where the solution to the first differential equation consists of modified Bessel functions. More precisely, we have
		\begin{equation}\label{eq:solutionKI}
			\begin{cases}
				\psi_{0,\lambda}=c_1\sqrt r\cdot K_{\lambda+ 1/2}(r)+c_2\sqrt r\cdot I_{\lambda+ 1/2}(r)\\
				\psi_{1,\lambda} = i c_1\sqrt{r}\cdot K_{\lambda-1/2}(r) - i c_2\sqrt{r}\cdot I_{\lambda-1/2}(r),
			\end{cases}	
		\end{equation}
		where $I_\nu$ and $K_\nu$ are modified Bessel functions of the first and the second kind, respectively. Since $I_\nu$ grows exponentially as $r\to \infty$ and $K_\nu$ decays exponentially as $r\to \infty$,  in order for $\psi_{0,\lambda}\oplus \psi_{1,\lambda}$ to be in  $L^2$, we have 
		\[
			\begin{cases}
				\psi_{0,\lambda}=c_1\sqrt r\cdot K_{\lambda+ 1/2}(r)\phi_\lambda\\
				\psi_{1,\lambda} = i  c_1\sqrt{r}\cdot K_{\lambda-1/2}(r)\phi_\lambda. 
			\end{cases}	
		\]
		 Moreover, since 
		 \[ K_{\nu}(r) \sim \begin{cases}
		 	\ln r & \textup{ if } \nu = 0, \\
		 	r^{-|\nu|}  & \textup{ if } \nu \neq  0, 
		 \end{cases}\] 
		 it follows that $\psi_{0,\lambda}\oplus \psi_{1,\lambda}$ is  an $L^2$ solution if and only if we have  $-1/2<\lambda<1/2$ (cf. \cite[Lemma 4.2]{LeschTopology}). This shows that $E_+(D^\dR_{B} ) = 0$ if and only if $|P_B|\geq 1/2$. The same argument shows that $E_-(D^\dR_{B} ) = 0$ if and only if $|P_B|\geq 1/2$.  This finishes the proof. 
	\end{proof}

\subsection{Essential self-adjointness of $D^\dR_{B}$  for two dimensional model spaces}
	
In this subsection, we investigate the essential self-adjointness of $D^\dR_{B}$  on $\fiber$ when $\dim \fiber=2$.

	\begin{lemma}\label{lemma:essensa-jumpanglewithf}
		Let $\fiber $ and $\fiber'$ be two sectors in $\R^2$ with angles $\alpha$ and $\beta$. Let $B$ be the boundary condition on $\Bigwedge^*T\fiber = \Bigwedge^\ast \mathbb R^2$ along each edge of $\fiber$  given by
		$$\mathscr E\overbar c(\nu_k)c(\nu_k')\omega=-\omega$$
		for $k=1,2$ as in Definition \ref{def:boundaryCondition}, where $\nu_k$ are unit inner normal vectors of the edges of $\fiber$ and  $\nu'_k$ are unit inner normal vectors of the corresponding edges of $\fiber'$ (see Figure \ref{fig:SNfSM}).	
		Let $D^\dR_{B}$ be the de Rham operator acting on $\Bigwedge^*T\fiber$ with the boundary condition $B$. Suppose that the angle $\alpha$ of $\fiber$ is  less than or equal to $\pi$. Then $D^\dR_{B}$ is essentially self-adjoint if and only if $\alpha\leq\beta$ and $\alpha+ \beta \leq 2\pi$.
	\end{lemma}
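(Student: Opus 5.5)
The plan is to apply Lemma \ref{lemm:>=1/2} with $k=2$. The link $\link$ is then an arc of length $\alpha$, parametrized by $\theta\in[0,\alpha]$. We compute the spectrum of $P_B$ explicitly and check when it avoids the interval $(-1/2,1/2)$.

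\textbf{The operator on the link.} With $k=2$ we have $c_0=\tfrac12$ and $c_1=\tfrac12$, since $c_p=(-1)^p(p-\tfrac12)$. So on $\Omega^0\link\oplus\Omega^1\link\cong C^\infty([0,\alpha],\mathbb C^2)$,
\[ P=\begin{pmatrix}\tfrac12 & -\partial_\theta\\ \partial_\theta & \tfrac12\end{pmatrix} = \tfrac12 + Q, \qquad Q=\begin{pmatrix}0&-\partial_\theta\\ \partial_\theta&0\end{pmatrix}, \]
up to sign conventions for $d^*$. The spectrum of $P_B$ is therefore $\tfrac12+\operatorname{spec}(Q_B)$.

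\textbf{The boundary conditions.} Next I make $B_\link$ explicit at the two endpoints $\theta=0$ and $\theta=\alpha$. Rotate so that $\nu_1=\nu_1'$ at $\theta=0$; the normals of $\fiber'$ then differ from those of $\fiber$ by a rotation encoding $\beta$. At each endpoint, $\gamma_k=\mathscr E\overbar c(\nu_k)c(\nu_k')$ restricted to the $\link$-factor (via \eqref{eq:cylindricalClifford2}) is a reflection-type involution on $\mathbb C^2$. Its $(-1)$-eigenline is spanned by $(\cos\phi_k,\sin\phi_k)$, where $\phi_k$ is determined by the angle between $\partial_r$ and $\nu_k'$. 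Since $\nu_k'$ need not be orthogonal to $\partial_r$, this line mixes the $0$-form and $1$-form components, as the paper warns.

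\textbf{Solving the eigenvalue problem.} Write $\psi=(u,v)$. The equation $Q\psi=\mu\psi$ gives $u'=\mu v$ and $v'=-\mu u$, so $(u,v)=(\cos(\mu\theta+c),-\sin(\mu\theta+c))$, a rotating vector. Imposing the eigenline constraints at both ends yields a quantization condition $\mu\alpha\equiv\Delta \pmod \pi$. The offset $\Delta$ should come out as $\pm(\beta-\alpha)$ or $\pm(\pi-\beta)$-type expressions, determined by the relative directions of $\nu_k'$ and $\partial_r$ at the two ends. This bookkeeping of angles and signs is the main obstacle. I would first check it against $\alpha=\beta$ with $f$ the identity, where the absolute boundary condition must give an essentially self-adjoint operator, so the spectrum of $P_B$ must avoid $(-1/2,1/2)$. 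Concretely I expect
\[ \operatorname{spec}(P_B)=\Big\{\tfrac12+\tfrac{m\pi+(\beta-\alpha)}{\alpha}\Big\}\cup\Big\{\tfrac12+\tfrac{m\pi-(\beta-\alpha)}{\alpha}\Big\}, \quad m\in\Z, \]
possibly with $\beta$ replaced by $2\pi-\beta$ in one family to reflect orientation. Essential self-adjointness of $P_B$ is immediate on the arc, since the boundary conditions are local and self-adjoint at the endpoints.

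\textbf{Conclusion.} The condition $|P_B|\ge1/2$ means that no $\mu$ lies in $(-1,0)$. The relevant eigenvalues are $\mu=(\beta-\alpha)/\alpha-\pi/\alpha$-type and $(2\pi-\beta-\alpha)/\alpha$-type values, together with the smallest nonnegative one. Keeping them out of $(-1,0)$ translates into $\beta-\alpha\ge0$ and $2\pi-\alpha-\beta\ge0$. The hypothesis $\alpha\le\pi$ guarantees that the other shifts $m\pi/\alpha$ with $m\neq0$ already have modulus at least $1$, so they impose no further constraint. Finally, Lemma \ref{lemm:>=1/2} gives the equivalence asserted in the lemma.
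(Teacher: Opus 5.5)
Your overall route matches the paper's: apply Lemma~\ref{lemm:>=1/2} with $k=2$, reduce to a $2\times 2$ first-order system on the arc $[0,\alpha]$, and decide when $\operatorname{spec}(P_B)$ avoids $(-\tfrac12,\tfrac12)$. There is a genuine gap, however. The spectrum of $P_B$, which is the entire content of the lemma, is guessed rather than computed, and both ingredients of the guess are wrong.

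\textbf{The constant term.} From $c_p=(-1)^p(p-\tfrac12)$ one gets $c_0=c_1=-\tfrac12$, not $+\tfrac12$. So $P=Q-\tfrac12$, and the criterion is $\operatorname{spec}(Q_B)\cap(0,1)=\emptyset$, not $\operatorname{spec}(Q_B)\cap(-1,0)=\emptyset$. Because $\operatorname{spec}(Q_B)$ is not symmetric when $\beta\neq\alpha$, this sign is decisive. With $+\tfrac12$ and the true spectrum of $Q_B$ (given below), the case $\alpha=\pi/2$, $\beta=3\pi/4$ would come out non-essentially self-adjoint, although it satisfies $\alpha\le\beta$ and $\alpha+\beta\le 2\pi$.

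\textbf{The offset.} Your own quantization $\mu\alpha\equiv\Delta \pmod\pi$ gives a single progression of simple eigenvalues, not a union of two families. Moreover $\Delta=-\tfrac{\beta-\alpha}{2}$ is half the angle between $\nu_2$ and $\nu_2'$, not $\pm(\beta-\alpha)$. Your displayed spectrum actually contradicts the statement: for $\alpha=\pi/2$, $\beta=3\pi/4$, its second family with $m=0$ gives the eigenvalue $0\in(-\tfrac12,\tfrac12)$.

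\textbf{Consequences.} Your concluding paragraph, which extracts $\beta\ge\alpha$ and $\alpha+\beta\le2\pi$, does not follow from the displayed spectrum; it is matched to the desired answer. Your proposed sanity check $\alpha=\beta$ cannot detect either error, since both $\pm\tfrac12+\tfrac{\pi}{\alpha}\Z$ avoid $(-\tfrac12,\tfrac12)$ when $\alpha\le\pi$.

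\textbf{The missing computation.} Rotate so that $\nu_1'=\nu_1$. Then the condition at $\theta=0$ is the absolute one, $\varphi_1(0)=0$.

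At $\theta=\alpha$, write $\nu_2'=\cos(\beta-\alpha)\,\nu_2\pm\sin(\beta-\alpha)\,\partial_r$. By \eqref{eq:cylindricalClifford1}--\eqref{eq:cylindricalClifford2}, $\gamma_2$ restricts on each copy of $\Omega^*\link$ to
\[
\cos(\beta-\alpha)\,\mathscr E_\link\overbar c(\nu_2)c(\nu_2)\mp\sin(\beta-\alpha)\,\mathscr E_\link\overbar c(\nu_2).
\]
This is a reflection of $\mathbb C^2$ whose entries are trigonometric in the full angle, so its $(-1)$-eigenline sits at the half angle. The condition is
\[
-\varphi_0(\alpha)\sin\tfrac{\beta-\alpha}{2}+\varphi_1(\alpha)\cos\tfrac{\beta-\alpha}{2}=0.
\]
The eigenfunctions $\phi_1=C\sin\lambda\theta$, $\phi_0=-C\cos\lambda\theta$ of $Q$ satisfy the condition at $0$. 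The condition at $\alpha$ then reads $\sin(\lambda\alpha+\tfrac{\beta-\alpha}{2})=0$, so
\[
\operatorname{spec}(P_B)=-\tfrac{\beta}{2\alpha}+\tfrac{\pi}{\alpha}\Z .
\]
The two eigenvalues adjacent to $0$ are $-\tfrac12-\tfrac{\beta-\alpha}{2\alpha}$ and $\tfrac12+\tfrac{2\pi-\alpha-\beta}{2\alpha}$. They lie outside $(-\tfrac12,\tfrac12)$ exactly when $\beta\ge\alpha$ and $\alpha+\beta\le2\pi$; the converse direction uses $0<\beta<2\pi$. Monotonicity in $m$ disposes of all other eigenvalues, so your appeal to $\alpha\le\pi$ for ``the other shifts'' is not where the work lies.
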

	\begin{proof}
		
		\begin{figure}[h]
			\begin{tikzpicture}[scale=1]
				\draw[very thick,black] (0,0) -- (3,0);
				\draw[very thick,black] (0,0) -- ({3*sin(50)},{3*cos(50)});
				\draw [-stealth] (2,0) --  (2,0.5) node[anchor=north west] {$\scriptstyle \nu_1=\nu_1'$};
				\draw [-stealth] ({2*sin(50)},{2*cos(50)}) --  ({2*sin(50)+0.5*cos(40)},{2*cos(50)-0.5*sin(50)}) 
				node[anchor=east] {$\scriptstyle \nu_2$};
				\draw [blue,-stealth] ({2*sin(50)},{2*cos(50)}) --  ({2*sin(50)+0.5*cos(10)},{2*cos(50)-0.5*sin(20)})
				node[anchor=south] {$\scriptstyle \nu_2'$};
				\filldraw (1.5,-0.5) node {$\fiber$};
			\end{tikzpicture}
			\caption{The boundary conditions at the two edges of $\fiber$.}
			\label{fig:SNfSM}
		\end{figure}
		By applying a rotation on $\fiber'$ if necessary, we may assume $\nu_1=\nu_1'$.  Then the vector $\nu_2'$ differs from $\nu_2$ by a counterclockwise rotation through the angle $(\beta - \alpha)$.
		
		By line \eqref{eq:deRhamAfterConj}, the de Rham operator $D^\dR$ is conjugate to
				\begin{equation}
		 	\Psi^*D^{\dR}\Psi = 	\begin{psmallmatrix}
				0&-\frac{\partial}{\partial r}\\
				\frac{\partial}{\partial r}&0
			\end{psmallmatrix}+\frac{1}{r}\begin{pmatrix}
				0&P\\P&0
			\end{pmatrix}.
		\end{equation}
		Here, the operator $P$ is given by 
		$$P=\begin{pmatrix}
			-1/2&-\frac{\partial}{\partial \theta}\\\frac{\partial}{\partial \theta}&-1/2
		\end{pmatrix}=D^\dR_{\link}-\frac 1 2$$
		  with respect to the splitting $\Omega^*\link \cong \Omega^0\link \oplus \Omega^1\link$,  where  $D^\dR_{\link}$ is the de Rham operator on $\link = [0, \alpha]$. See line \eqref{eq:linkOperator}.

		By a direct computation, the boundary condition $B_\link$ induced by $B$ is as follows. If 
		$$\Psi^{-1} w=\begin{pmatrix}
			\varphi_0\\\varphi_1
		\end{pmatrix}\textup{, where }\varphi_i=\varphi_{i,0}+\varphi_{i,1}d\theta,$$ then the boundary condition $B_{\link}$ requires
		\begin{equation}\label{eq:bdryConditionBL}
			\varphi_{i,1}(0)=0,\textup{ and }
						-\varphi_{i,0}(\alpha)\sin\frac{\beta - \alpha}{2}+\varphi_{i,1}(\alpha)\cos\frac{\beta - \alpha}{2}=0,~\forall i=0,1.
		\end{equation}
	
	Suppose that $\phi=\phi_0+\phi_1d\theta$ satisfying $B_\link$ above is an eigenvector of $P$ with eigenvalue $\lambda-\frac 1 2$. Therefore, we have $D_{\link}^\dR\phi=\lambda\phi$, namely
				\begin{equation}
						-\phi'_1=\lambda \phi_0,\text{ and }\phi'_0=\lambda \phi_1.
					\end{equation}
				Hence $\phi''_1=-\lambda^2\phi_1$. By  the boundary condition $\phi_1(0)=0$, we see that  $\phi_1(\theta)=C\cdot \sin(\lambda\theta)$ for some constant $C$. It follows that $\phi_0(\theta)=- C\cdot \cos(\lambda\theta)$. The boundary condition at $\theta=\alpha$ implies that
				\begin{equation}
						\sin(\lambda\alpha)\cos\frac{\beta - \alpha}{2}+\cos(\lambda\alpha)\sin\frac{\beta - \alpha}{2}=0,
					\end{equation}
				that is, $\sin(\lambda\alpha+\frac{\beta - \alpha}{2})=0$. Therefore, the spectrum of the operator $D_{\link}^\dR$ with respect to $B_\link$ is 
				$$\Big\{-\frac{\beta-\alpha}{2\alpha}+\frac{m\pi}{\alpha}\Big\}_{m\in\Z}. $$
				Hence the spectrum of $P_B=-1/2+D_{\link}^\dR$ with the boundary condition $B_\link$ is given by
				$$ \Big\{-\frac{\beta}{2\alpha}+\frac{k\pi}{\alpha}\Big\}_{k\in\Z} =\left\{\cdots,-\frac 1 2-\frac{\beta-\alpha}{2\alpha},\frac 1 2+\frac{2\pi-\beta-\alpha}{2\alpha},\cdots\right\}.$$
		In particular, $P_B$ has a spectral gap $\geq 1/2$ if and only if $\beta\geq\alpha$ and $\alpha+ \beta\leq 2\pi$.  By Lemma~\ref{lemm:>=1/2}, this is equivalent to the essential
		self-adjointness of $D_B^{\dR}$. This finishes the proof. 
	\end{proof}

A direct computation shows that the angle comparison condition:
\[ \alpha \leq \pi, \alpha\leq \beta \textup{ and } \alpha + \beta\leq 2\pi \]
in Lemma \ref{lemma:essensa-jumpanglewithf} is equivalent to the following inner product comparison condition:
\[ \alpha \leq \pi \textup{ and } 	\langle \nu_1,\nu_2\rangle\leq \langle \nu'_1,\nu'_2\rangle.\]
This shows that the key input from the sector  $\fiber'$ is the unit inner normal vectors along its edges, rather than the geometric angle itself.  
In fact,  boundary condition $B$ along the edges of $\fiber$:
$$\mathscr E\overbar c(\nu_k)c(\nu_k')\omega=-\omega$$
is well-defined as long as we specify some unit vectors
$\nu'_k$. Here the vectors $\nu_k$ remain the unit inner normal vector of the edges of $\fiber$, but $\nu'_k$ need not be the unit inner normal vectors of an actual sector $\fiber'$. In fact, we may omit $\fiber'$ from the discussion entirely and focus solely on $\fiber$, its unit inner normal vectors, and a chosen auxiliary unit vector along each edge.

The above observation will play a crucial role in the computation of the Fredholm index of the twisted Dirac operator $D_B$ later. Let us summarize the above discussion by the following lemma. 

\begin{lemma}\label{lemma:ess-sa-innerproductcomparison}
			Let $\fiber $ be a sectors in $\R^2$ with angle $0<\alpha\leq\pi$.
			Suppose that $\nu_1$ and $\nu_2$ are the inner normal vectors of $\fiber$, and $\nu_1'$ and $\nu_2'$ are two unit vectors in $\mathbb R^2$.
			 Let $B$ be the boundary condition on $\Bigwedge^*T\fiber = \Bigwedge^\ast \mathbb R^2$ over each edge given by
	$$\mathscr E\overbar c(\nu_k)c(\nu_k')\omega=-\omega$$
	for $k=1,2$.	
	Let $D^\dR_{B}$ be the de Rham operator acting on $\Bigwedge^*T\fiber$ with the boundary condition $B$. Then $D^\dR_{B}$ is essentially self-adjoint if and only if 
 \[			\langle \nu_1,\nu_2\rangle\leq \langle \nu_1',\nu_2'\rangle. \]
 Moreover, $|P_B| > 1/2$ if and only if $\langle \nu_1,\nu_2\rangle < \langle \nu_1',\nu_2'\rangle$, where $P_B$ is the operator $P$ along the link (subject to the boundary condition $B_\link$) as in the proof of Lemma \ref{lemma:essensa-jumpanglewithf}.  
\end{lemma}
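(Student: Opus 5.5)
The plan is to reduce Lemma~\ref{lemma:ess-sa-innerproductcomparison} to the computation already carried out in the proof of Lemma~\ref{lemma:essensa-jumpanglewithf}. That proof never used that $\nu_k'$ are inner normals of an actual sector. It only used the relative rotation between $\nu_2'$ and $\nu_2$ once $\nu_1'$ has been rotated to coincide with $\nu_1$.

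First, normalize. A simultaneous rotation $R$ of the second $\R^2$ factor acts on $\Bigwedge^*\R^2$ by a unitary commuting with $\mathscr E$, $\overbar c$ and $D^\dR$. It conjugates $c(v)$ to $c(Rv)$, so we may assume $\nu_1'=\nu_1$. Write $\nu_2'$ as $\nu_2$ rotated counterclockwise by an angle $\gamma\in(-\pi,\pi]$, using the orientation convention of Figure~\ref{fig:SNfSM}. Note that $\langle\nu_1',\nu_2'\rangle$ is rotation invariant.

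Next, redo the computation of Lemma~\ref{lemma:essensa-jumpanglewithf} with $\beta-\alpha$ replaced by $\gamma$, i.e.\ set $\beta:=\alpha+\gamma$. The boundary condition \eqref{eq:bdryConditionBL} becomes
\[
\varphi_{i,1}(0)=0,\qquad -\varphi_{i,0}(\alpha)\sin\tfrac{\gamma}{2}+\varphi_{i,1}(\alpha)\cos\tfrac{\gamma}{2}=0 .
\]
This is a local self-adjoint boundary condition for a first-order operator on the interval $[0,\alpha]$, so $P_B$ is essentially self-adjoint. Then Lemma~\ref{lemm:>=1/2} applies. The same ODE argument gives
\[
\operatorname{spec}(P_B)=\Big\{-\tfrac{\alpha+\gamma}{2\alpha}+\tfrac{k\pi}{\alpha}\Big\}_{k\in\Z}.
\]
The eigenvalues closest to $0$ are $-\tfrac12-\tfrac{\gamma}{2\alpha}$ and $\tfrac12+\tfrac{2\pi-2\alpha-\gamma}{2\alpha}$. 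Hence $|P_B|\ge 1/2$ if and only if $0\le\gamma\le 2\pi-2\alpha$, and $|P_B|>1/2$ if and only if $0<\gamma<2\pi-2\alpha$.

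The final step is the elementary translation into inner products, and this is the step requiring care. We have $\langle\nu_1,\nu_2\rangle=\cos(\pi-\alpha)$, since the inner normals make angle $\pi-\alpha$. Moreover $\nu_2'$ makes angle $\pi-\alpha+\gamma$ with $\nu_1$ measured in the same rotational direction, so $\langle\nu_1',\nu_2'\rangle=\cos(\pi-\alpha+\gamma)$. Set $t=\pi-\alpha+\gamma$ and let $t$ range over an interval of length $2\pi$ centered at $\pi$. The condition $0\le\gamma\le 2\pi-2\alpha$ is exactly $\pi-\alpha\le t\le \pi+\alpha$, using $0<\alpha\le\pi$. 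By monotonicity and symmetry of cosine about $\pi$, this is equivalent to $\cos t\ge \cos(\pi-\alpha)$, i.e.\ $\langle\nu_1,\nu_2\rangle\le\langle\nu_1',\nu_2'\rangle$. The strict version follows in the same way.

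The degenerate endpoint $\alpha=\pi$ needs a separate check. There the window collapses to $\gamma=0$, and the condition reads $\langle\nu_1',\nu_2'\rangle\ge 1$, which holds only when $\nu_2'=\nu_1'$. This is consistent, and the strict inequality is then impossible, matching the fact that $|P_B|>1/2$ fails.
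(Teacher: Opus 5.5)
Your overall route is the paper's: the paper derives this lemma by observing that the computation in Lemma~\ref{lemma:essensa-jumpanglewithf} uses only the vectors $\nu_k'$, so one may rotate the second factor to get $\nu_1'=\nu_1$. It then asserts that the spectral condition $\alpha\le\beta$, $\alpha+\beta\le 2\pi$ is equivalent to $\langle\nu_1,\nu_2\rangle\le\langle\nu_1',\nu_2'\rangle$. Your normalization, the essential self-adjointness of $P_B$ on the interval, the spectrum $\{-\frac{\alpha+\gamma}{2\alpha}+\frac{k\pi}{\alpha}\}_{k\in\Z}$, and the $\alpha=\pi$ check are all fine.

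The gap is in the final translation into inner products. This is the only new content of the lemma, and as written it is wrong in three places.
\begin{enumerate}
\item Place the first edge along the positive $x$-axis as in Figure~\ref{fig:SNfSM}. Then $\nu_1$ has direction $\pi/2$, $\nu_2$ has direction $\alpha-\pi/2$, and $\nu_2'$ has direction $\alpha-\pi/2+\gamma$. Hence $\langle\nu_1',\nu_2'\rangle=\cos(\pi-\alpha-\gamma)=-\cos(\alpha+\gamma)$, not $\cos(\pi-\alpha+\gamma)$. For $\alpha=\pi/3$, $\gamma=\pi/2$ the true value is $\sqrt3/2$, while your formula gives $-\sqrt3/2$.
\item The range $0\le\gamma\le 2\pi-2\alpha$ corresponds to $\pi-\alpha\le t\le 3\pi-3\alpha$. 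This is not $[\pi-\alpha,\pi+\alpha]$ except when $\alpha=\pi/2$.
\item Cosine is minimal at $\pi$. So on $[0,2\pi]$ the condition $|t-\pi|\le\alpha$ is equivalent to $\cos t\le\cos(\pi-\alpha)$, the opposite inequality.
\end{enumerate}
At $\alpha=\pi/2$ the first and third errors cancel, which may be why the chain looked right. At $\alpha=\pi/3$, $\gamma=\pi/2$ it asserts $-\sqrt3/2\ge -1/2$.

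There is also a normalization slip. With $\gamma\in(-\pi,\pi]$, the two eigenvalues you single out straddle $0$ only when $\gamma\ge-\alpha$. For example, $\alpha=\pi/4$, $\gamma=-3\pi/4$ gives spectrum $1+4\Z$, so $|P_B|\ge 1$ although $\gamma<0$. Both the boundary condition and the spectrum are $2\pi$-periodic in $\gamma$, so the correct criterion is $\gamma\in[0,2\pi-2\alpha]+2\pi\Z$.

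The repair is short. Let $\beta\in[0,2\pi)$ be $\alpha+\gamma$ reduced mod $2\pi$. Then $\langle\nu_1',\nu_2'\rangle=-\cos\beta$ and $\langle\nu_1,\nu_2\rangle=-\cos\alpha$. The eigenvalues of $P_B$ adjacent to $0$ are the consecutive values $-\frac{\beta}{2\alpha}\le 0<\frac{2\pi-\beta}{2\alpha}$. Hence $|P_B|\ge\frac12$ if and only if $\alpha\le\beta\le 2\pi-\alpha$, and $|P_B|>\frac12$ if and only if both inequalities are strict. Cosine decreases on $[0,\pi]$, increases on $[\pi,2\pi]$, and satisfies $\cos(2\pi-\alpha)=\cos\alpha$. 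So for $0<\alpha\le\pi$ this is exactly $\cos\beta\le\cos\alpha$ (respectively $<$), that is, $\langle\nu_1,\nu_2\rangle\le\langle\nu_1',\nu_2'\rangle$ (respectively $<$). Lemma~\ref{lemm:>=1/2} then finishes the proof.
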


\subsection{Essential self-adjointness for three dimensional model spaces}
 In this subsection, we investigate the essential self-adjointness of
$D_B^{\dR}$ on $\fiber$ in the case $\dim\fiber=3$.

\begin{figure}[h]
	\begin{tikzpicture}[line cap=round, line join=round, scale=0.7]
		
		\coordinate (V) at (0,0);
		
		\coordinate (TL) at (2.0, 4.2);   
		\coordinate (BL) at (1.8, 2.4);   
		\coordinate (TR) at (4.5, 3.2);   
		\coordinate (BR) at (4.0, 1.6);   

		\coordinate (Center1) at (3.1, 2.85);
		
		\def\ext{1.15}
		\coordinate (ExtTL) at ($ (V)!\ext!(TL) $);
		\coordinate (ExtTR) at ($ (V)!\ext!(TR) $);
		\coordinate (ExtBL) at ($ (V)!\ext!(BL) $);
		\coordinate (ExtBR) at ($ (V)!\ext!(BR) $);
		
		\draw[thick, densely dashed, gray!80] (V) -- (TR);
		\draw[thick, densely dashed, gray!80] (TR) -- (ExtTR); 
		
		\fill[gray!30]
		(TL) to[bend left=8] (TR)
		-- (BR)
		to[bend right=8] (BL) 
		-- cycle;
		
		\draw[thick, black] (TL) to[bend left=8] (TR);
		\draw[thick, black] (BL) to[bend left=8] (BR);
		\draw[thick, black] (TL) -- (BL);
		\draw[thick, black] (TR) -- (BR);
		
			\node at (Center1) {$\link$}; 
		
		\draw[thick, black] (V) -- (TL);
		\draw[thick, black] (V) -- (BL);
		\draw[thick, black] (V) -- (BR);
		
		\draw[thick, black] (TL) -- (ExtTL);
		\draw[thick, black] (BL) -- (ExtBL);
		\draw[thick, black] (BR) -- (ExtBR);
		
	\end{tikzpicture}
	\caption{A two dimensional link $\link$ of a three dimensional cone.}
	\label{fig:link2}
\end{figure}

Let $\fiber$ be a convex polyhedral cone in $\R^3$, bounded by planes through the
origin. The boundary condition $B$ on $\Bigwedge^*T\fiber$ over $\fiber$ is given as
$$\mathscr E \overbar c(\nu_k)c(\nu_k')w=-w \textup{ on each face }F_k,$$
where $\nu_k$ is the inner normal vector on $F_k$, and $\nu_k'$ is some constant unit vector.
The metric on $\fiber$ is conical, and the link $\link = \fiber \cap \sph^2$ is a convex spherical polygon. See Figure \ref{fig:link2}. The boundary condition $B$ induces a boundary condition $B_\link$ on $\link$.
The dihedral angles of $\link$ are precisely the corresponding  dihedral angles of $\fiber$.

\
\begin{lemma}\label{lemma:ess-saSphericalLink}
With the above notation, let $D^\dR_\link$ be the de Rham operator  on $\Bigwedge^*T\link$, and $B_\link$ the boundary condition on $\link$ induced by $B$. Assume that dihedral angles of $\link$ are less than $\pi$. If 
	$$\langle \nu_i,\nu_j\rangle\leq \langle\nu_i',\nu_j'\rangle$$
	for any adjacent pair of faces $F_i,F_j$, then $D^\dR_\link$ subject to the boundary condition $B_\link$ is essentially self-adjoint.
\end{lemma}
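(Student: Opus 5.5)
The plan is to run the same localization scheme used for $D_B$ on $\domain$, now on the two-dimensional spherical polygon $\link$. As in Definition~\ref{def:ess-saLocal}, essential self-adjointness of $D^\dR_\link$ with boundary condition $B_\link$ reduces, by a partition of unity, to local essential self-adjointness at every point of $\link$. Since $\link$ is compact, it has finitely many points of each type, and there are three cases.

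At interior points of $\link$, and at interior points of an edge of $\link$ (a geodesic arc lying on a face $F_k\cap\sph^2$), the situation is classical. The operator $D^\dR_\link$ is a first-order elliptic operator. After the unitary splitting induced by \eqref{eq:cylindricalClifford1}, the boundary condition $B_\link$ on each copy of $\Bigwedge^*T\link$ is a smooth local boundary condition. Its $(-1)$-eigenspace is half-dimensional and is anticommuted by Clifford multiplication by the normal: the involution $\gamma_k$ anticommutes with $\overbar c(\nu_k)$, and $\nu_k$ is tangent to $\sph^2$ and normal to the edge. Hence the condition is elliptic in the Lopatinski--Shapiro sense and symmetric. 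Standard elliptic regularity then shows that maximal-domain sections supported near such points are $H^1$.

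The essential case is a vertex $y$ of $\link$, where two edges $F_i\cap\sph^2$ and $F_j\cap\sph^2$ meet at the angle $\alpha_{ij}<\pi$, which is the dihedral angle of $\fiber$ along $F_i\cap F_j$.

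\begin{enumerate}
\item Apply the reduction of Theorem~\ref{thm:reduction}, in its codimension-two version adapted to the two-dimensional manifold $\link\subset\sph^2$. Using the exponential map of $\sph^2$ at $y$, the analogue of Example~\ref{ex:codimtwo} that straightens the two geodesic edges to their tangent lines, and the projection construction of $\Theta$ (in the equality case $\langle\nu_i,\nu_j\rangle=\langle\nu_i',\nu_j'\rangle$, following the matching-intersection construction based on Lemma~\ref{lemma:boundaryedge}), we conjugate the operator to $D^\dR+\mathcal A+\mathcal B$ on a neighborhood of the origin in the planar sector of angle $\alpha_{ij}$. Here the coefficients of $\mathcal A$ are small and $\mathcal B$ is bounded. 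The boundary condition becomes constant, determined by the tangent-plane normals at $y$, which are exactly $\nu_i,\nu_j$, together with the constant vectors $\nu_i',\nu_j'$. The restriction to one of the two copies of $\Bigwedge^*T\link$ is harmless, since $B$ splits identically on both.

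\item Apply Lemma~\ref{lemma:ess-sa-innerproductcomparison} to the model sector. The hypothesis $\langle\nu_i,\nu_j\rangle\le\langle\nu_i',\nu_j'\rangle$ gives essential self-adjointness of the model operator. The proof there goes through Lemma~\ref{lemm:>=1/2}, which also yields $|P_B|\ge 1/2$ for the one-dimensional link operator on $[0,\alpha_{ij}]$.

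\item Pass from the model to the perturbed operator. A zeroth-order bounded perturbation $\mathcal B$ does not change the maximal or minimal domains on a bounded neighborhood. For the small first-order term $\mathcal A$, the aim is a uniform estimate $\|\nabla\sigma\|\le C(\|D^\dR\sigma\|+\|\sigma\|)$ for all $\sigma$ in the model maximal domain supported near $0$. With such an estimate, $\|\mathcal A\sigma\|\le\delta C(\|D^\dR\sigma\|+\|\sigma\|)$ with $\delta$ small, and a Neumann-series or Kato--Rellich style argument transfers $\mathcal D_{\max}=H^1$ from $D^\dR$ to $D^\dR+\mathcal A+\mathcal B$.
\end{enumerate}

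I expect step 3 to be the main obstacle. In the borderline equality case the spectral gap is exactly $1/2$, so the graph norm controls $H^1$ only in the weak sense supplied by Lemma~\ref{lemm:>=1/2} (no $L^2$ deficiency solutions), and a quantitative estimate is not immediate. My first attempt is to expand in eigenfunctions of $P_B$ and use Hardy-type inequalities. The relevant fact is that $\|\partial_r\psi\pm\tfrac{\lambda}{r}\psi\|^2\ge c\,\|\partial_r\psi\|^2+c'\|\psi/r\|^2$, with the constants controlled as long as $|\lambda|\ge1/2$. Since the admissible $\lambda$ are uniformly bounded away from the critical interval, this should give the uniform $H^1$ estimate, possibly after shrinking the neighborhood. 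Finally, the local results at all points are combined by the partition of unity to conclude that $D^\dR_\link$ with $B_\link$ is essentially self-adjoint.
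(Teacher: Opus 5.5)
Your architecture matches the paper's: localize; treat interior and edge points classically; at a vertex of $\link$ pass to a flat sector with constant boundary condition, apply Lemma~\ref{lemma:ess-sa-innerproductcomparison}, and perturb by Kato--Rellich. There is, however, a genuine gap in Step~3, and it sits exactly in the case you flag.

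\textbf{Why the Hardy-type estimate fails.} Under equality $\langle\nu_i,\nu_j\rangle=\langle\nu_i',\nu_j'\rangle$, Lemma~\ref{lemma:essensa-jumpanglewithf} with $\beta=\alpha$ gives $\operatorname{spec}(P_B)=\{-\frac12+\frac{k\pi}{\alpha}\}_{k\in\Z}$. So $\lambda=-\frac12$ occurs, and the admissible $\lambda$ are \emph{not} bounded away from the critical interval. On that mode the even component acts as $\partial_r-\frac{1}{2r}$ (see \eqref{eq:fiberdeRham1}). Integrating by parts in $L^2(dr)$ gives $\|\partial_r\psi-\frac{1}{2r}\psi\|^2=\|\partial_r\psi\|^2-\frac14\|\psi/r\|^2$. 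Take $\psi=\sqrt r\,\eta(r)$ with a logarithmic cutoff $\eta$; upstairs this is just a cut-off parallel section. Then the left side becomes arbitrarily small relative to both $\|\partial_r\psi\|^2$ and $\|\psi/r\|^2$. Hence no bound of the form $\ge c\|\partial_r\psi\|^2+c'\|\psi/r\|^2$ with $c,c'>0$ exists. This is the sharpness of the one-dimensional Hardy constant, i.e.\ the failure of Hardy's inequality in dimension two (Lemma~\ref{lemma:1/rEmbedding} needs $n\ge 3$). In the strict case your constants do exist, but they degenerate as the spectral gap shrinks to $\frac12$.

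\textbf{The missing idea.} No spectral estimate is needed; the bound you want is an identity. Kato--Rellich only requires the relative bound on a core of the model operator. That operator is already essentially self-adjoint by Lemma~\ref{lemma:ess-sa-innerproductcomparison}, so you may take the core $C^\infty_{00}(\mathbb G;B)$. On a flat sector with straight edges and constant $\nu_k,\nu_k'$, the Stokes formula \eqref{eq:Stokes2} has $\mathscr R=0$ and $\mathscr A=0$, since both $H$ and $\nabla\nu_k'$ vanish. Hence $\|D^\dR_{\mathbb G}u\|=\|\nabla u\|$ on the core, and $\|\mathcal Au\|\le C\delta\|D^\dR_{\mathbb G}u\|$ with $\delta$ the sup norm of the cut-off coefficients. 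This is how the paper concludes.

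\textbf{Two smaller omissions.}
\begin{enumerate}
\item Lemma~\ref{lemma:ess-sa-innerproductcomparison} needs $\nu_i',\nu_j'$ to lie in the plane of the sector. On $\link$, however, the induced condition is $\mathscr E_\link\overbar c(\nu_k)(-\alpha_k+c(\widehat\nu_k))$ with $\alpha_k=\langle\nu_k',\partial_r\rangle$, which may be nonzero at the vertex. The paper first rotates the second $\R^3$ factor so that $\nu_i'=\nu_i$ and $\nu_j'$ is tangent at the vertex; this rotation is a symmetry of $D^\dR$ and preserves $\langle\nu_i',\nu_j'\rangle$. It then uses the chart $\Phi(r,z)=r\exp_x(z)$ into the cone, under which the boundary condition is literally constant. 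So neither $\Theta$ nor a link analogue of Lemma~\ref{lemma:boundaryedge} is needed.
\item At edge points, anticommuting with $\overbar c(\nu_k)$ gives symmetry but not ellipticity. You also need $\gamma_k$ to commute with Clifford multiplication by the edge tangent, which holds here. Alternatively, as the paper does, rotate so that $\nu_k'=\nu_k$ and recognize the absolute boundary condition.
\end{enumerate}
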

\begin{proof}
	By the discussion after Definition 
	\ref{def:ess-sa},   it suffices to show that the de Rham operator $D^\dR_\link$ subject to the boundary condition $B_\link$ is locally essentially self-adjoint at every point $x \in \link$.
	
	\textbf{Interior of $\link$.} If $x$ lies in the interior of $\link$, local essential self-adjointness follows from classical elliptic regularity theory.
	
		\textbf{Edge of $\link$.} Suppose $x$ lies in the interior of a codimension-one face of $\link$, say $x\in F_i \cap \link$ for some fixed $i$.  Recall that $\Bigwedge^*T\fiber = S(\R^3 \oplus \R^3)$. A rotation on the second copy of  $\mathbb{R}^3$ induces a unitary operator $U$ on $S(\R^3 \oplus \R^3)$ such that $U^* D^\dR U = D^\dR$. Moreover, by fixing an orthogonal identification between the two copies of  $\mathbb{R}^3$ in $S(\mathbb{R}^3 \oplus \mathbb{R}^3)$, we can naturally view both $\nu'_i$ and $\nu_i$ as elements of the same ambient space $\mathbb{R}^3$. Now after applying a unitary transformation on the second copy of $\mathbb R^3$, we may assume without loss of generality that $\nu'_i=\nu_i$. In this case, the induced boundary condition $B_\link$ preserves the even-odd grading of the differential forms $\Bigwedge^*T\link$ of $\link$. In fact, $B_\link$ becomes the standard absolute boundary condition (see Example \ref{example:deRham}) for the de Rham operator $D^\dR_\link$. It is a  classical result that $D^\dR_\link$ is locally essentially self-adjoint at $x$ in this case.

	\textbf{Vertex of $\link$.}	Suppose $x$ is a vertex of $\link$, say $x = F_i \cap F_j \cap \link$ for some fixed $i$ and $j$. By applying a unitary transformation on the second copy of $\mathbb R^3$	 if necessary, we may assume without loss of generality that $\nu'_i = \nu_i$ and that $\nu_j, \nu'_j$ are tangential to $\link$ at the vertex $x$.
	
	Let $\link_r = \fiber \cap \mathbb S^2_r$ denote the link at radius $r$. Let $\mathbb G_r \coloneqq  \tancone_{rx}\link_r$ be the tangent cone of $\link_r$ at $rx$, which is a sector in $T_{rx}\link_r$. Let $U\subset \mathbb G = \mathbb G_1$ be a sufficiently small neighborhood of the origin in $\tancone_x\link$. The exponential map $\exp_x \colon U \to \link $ takes  $U$ to the corresponding spherical sector in $\link$. Define 
	\[  \Phi\colon (1-\varepsilon, 1+\varepsilon) \times U \to \fiber, 
	\qquad \Phi(r, z) = r\exp_{x}(z).\]
	Under the canonical identification of the tangent space $T_{rx}\fiber$ with $\R^3$, the tangent space $T_{rx} \mathbb S^2_r$ can be naturally viewed as the plane passing through $rx$ and orthogonal to the edge $F_i \cap F_j$ . In particular, $(1-\varepsilon, 1+\varepsilon) \times U$ can be naturally viewed as subspace of $\fiber$. Therefore, $\Phi$ is a smooth map from a subspace of  $\fiber$ to $\fiber$.

	Because the bundle $S(\R^3 \oplus \R^3)$ is trivial on $\fiber$, its pullback bundle under the map $\Phi$ is canonically identified with the trivial bundle $S(\R^3 \oplus \R^3)$ itself. Consequently, the boundary condition $B$ along codimension one faces $F_i$ and $F_j$ of  $\fiber$ pulls back exactly to the same constant boundary condition. In particular, the induced boundary condition $B_{\mathbb G}$ on the boundary rays of $\mathbb{G}$ is tangential to $\mathbb{G}$, since the vectors $\nu_i$, $\nu'_i$, $\nu_j$, and $\nu'_j$ all lie within the plane of $\mathbb{G}$.
	
	The operator $D^\dR_\link$ on the spherical link $\link$ pulls back to a differential operator on the flat sector $\mathbb G$. For sufficiently small $\rho$, an explicit computation in polar coordinates $(\rho, \theta)$ for the sector  yields:
	$$\begin{aligned} \Phi^\ast D^\dR_\link \Phi &= \overbar c(\partial_\rho) \Big(\frac{\partial}{\partial \rho} + \frac{1}{2}\left(\cot \rho - \rho^{-1}\right) \Big) + \frac{1}{\sin \rho} \overbar c(\partial_\theta) \frac{\partial}{\partial \theta} \\ &= D^\dR_{\mathbb G} + \frac{1}{2} \left(\cot \rho - \rho^{-1}\right) \overbar c(\partial_\rho) + \Big(\frac{1}{\sin \rho} - \frac{1}{\rho}\Big) \overbar c(\partial_\theta) \frac{\partial}{\partial \theta} \end{aligned}$$
	where $D^\dR_{\mathbb G}$
	is the standard de Rham operator on the flat sector $\mathbb{G}$. 
	
	By assumption,  dihedral angles of $\link$ are less than $\pi$ and  $\langle \nu_i, \nu_j \rangle \leq \langle\nu_i', \nu_j'\rangle$ holds. The canonical identification above preserves these properties for $\mathbb{G}$.  Therefore, by Lemma \ref{lemma:ess-sa-innerproductcomparison}, the operator $D^\dR_{\mathbb G}$ is essentially self-adjoint on the flat sector $\mathbb{G}$.

	 To conclude that the operator $\Phi^\ast D^\dR_\link \Phi$ is locally  essentially self-adjoint at the vertex $x\in \link$, we apply the Kato--Rellich perturbation theorem (see for example \cite[Chapter 33, Section 4]{MR1892228}).\footnote{Recall the Kato--Rellich theorem: Suppose $\mathbf{D}$ is essentially self-adjoint acting on a Hilbert space $H$, and let $\mathbf{T}$ be a symmetric operator whose domain contains $\dom(\mathbf{D})$. If there exist constants $a \geq 0$ and $b < 1$ such that for all $u \in \dom(\mathbf{D})$, $\|\mathbf{T}u\|^2 \leq a\|u\|^2 + b\|\mathbf{D}u\|^2$, then $\mathbf{D} + \mathbf{T}$ is also essentially self-adjoint. Furthermore, the domain of the closure $\overbar{\mathbf{T}}$ contains $\dom(\overbar{\mathbf{D}})$, and $\overbar{\mathbf{D}} + \overbar{\mathbf{T}}$ is self-adjoint on $\dom(\overbar{\mathbf{D}})$.} Note that the  coefficients $\left(\cot \rho - \rho^{-1}\right)$ and $\left(\frac{1}{\sin \rho} - \frac{1}{\rho}\right)$ vanish  as $\rho \to 0$. Therefore, by restricting to a sufficiently small neighborhood of the origin of $\mathbb G$, we have  
	 \[ \big|\cot \rho - \rho^{-1}\big| \leq \delta \textup{ and } \big|\frac{1}{\sin \rho} - \frac{1}{\rho}\big| \leq \delta, \] for an arbitrarily small $\delta > 0$. We can smoothly extend these coefficients to compactly supported functions $\varphi_1$ and $\varphi_2$ defined on all of $\mathbb{G}$,  satisfying the uniform bounds $|\varphi_1| \leq 2\delta$ and $|\varphi_2| \leq 2\delta$. Let us define the  operator:
	 $$\mathcal{D} = D^\dR_{\mathbb G} + \frac{1}{2} \varphi_1 \overbar c(\partial_\rho) + \varphi_2 \overbar c(\partial_\theta) \frac{\partial}{\partial \theta}.$$ 
	 As long as $\delta$ is chosen to be sufficiently small, the assumptions of the Kato-Rellich theorem are satisfied for $\mathcal{D}$ relative to $D^\dR_{\mathbb G}$. It immediately follows that $\mathcal{D}$ is essentially self-adjoint on $\mathbb{G}$. In particular, $\mathcal{D}$ is locally essentially self-adjoint at the origin of $\mathbb{G}$.
	 
	  Because $\Phi^\ast D^\dR_\link \Phi = \mathcal{D}$ in a neighborhood of the origin, $\Phi^\ast D^\dR_\link \Phi$ is also locally essentially self-adjoint at the origin. Finally, since the exponential map $\Phi$ is smooth and naturally maps $H^1(\link, S(\R^3 \oplus \R^3); B)$ to $H^1(\mathbb{G}, S(\R^3 \oplus \R^3); B)$, we conclude that the original operator $D^\dR_\link$ is locally essentially self-adjoint at the vertex $x$. This completes the proof. 
\end{proof}

Let $P$ be the link operator as in \eqref{eq:linkOperator}. Since $P$ differs from the $D^\dR_\link$ by a bounded order zero operator. The essentially self-adjointness of $P$ (subject to the boundary condition $B_\link$) is equivalent to that of $D^\dR_\link $ (subject to the boundary condition $B_\link$).
We now estimate the spectral gap of $P_{B_\link}$. 

\begin{lemma}\label{lemma:spectrumSphericalLink}
	Let $\fiber\subset\mathbb R^3$ be a convex polyhedral cone. The boundary condition $B$ on $\Bigwedge^*T\fiber$ over $\fiber$ is given as
	$$\mathscr E \overbar c(\nu_k)c(\nu_k')w=-w \textup{ on each face }  F_k,$$
	where $\nu_k$ is the inner normal vector on $F_k$, and $\nu_k'$ is some constant unit vector. Let $P$ be the link operator in
	\eqref{eq:linkOperator}, and let $B_\link$ be the boundary condition on
	the link $\link$ induced by the boundary condition $B$ on $\fiber$. Assume that
	$P_{B_\link}$ is essentially self-adjoint. Then $P_{B_\link}$ is invertible
	and satisfies $|P_{B_\link}|\geq \sqrt 2/2$.	
\end{lemma}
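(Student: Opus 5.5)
The plan is to prove the spectral bound through an integrated Bochner--Lichnerowicz identity on the link $\link$. The comparison of normals will enter only through boundary terms, which should turn out to be nonnegative. Throughout, $k=3$, so in \eqref{eq:linkOperator} we have $c_0=-1$, $c_1=0$, $c_2=-1$. Write $P=D^\dR_\link+C$ with $C=\mathrm{diag}(-1,0,-1)$ on $\Omega^0\link\oplus\Omega^1\link\oplus\Omega^2\link$. A degree-by-degree check gives $D^\dR_\link C+CD^\dR_\link=-D^\dR_\link$ and $C^2=\mathrm{diag}(1,0,1)$. Hence, formally,
\[
P^2=(D^\dR_\link)^2-D^\dR_\link+C^2=\Big(D^\dR_\link-\tfrac12\Big)^2-\tfrac14+C^2 .
\]
Since $P_{B_\link}$ is assumed essentially self-adjoint, its domain is $H^1(\link;B_\link)$, exactly as in Definition \ref{def:ess-sa}. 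It therefore suffices to show $\|P\phi\|^2\ge\frac12\|\phi\|^2$ for $\phi$ in the core $C^\infty_{00}(\link;B_\link)$; invertibility then follows from self-adjointness together with the gap.

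First, I would compute $\|P\phi\|^2$ using Green's formula on the spherical polygon $\link$. The edges of $\link$ are geodesic arcs, and the boundary vectors $\nu_k$, $\nu_k'$ are constant in $\R^3$, so the analogue of $\mathscr A$ from \eqref{eq:Stokes2} should vanish identically on each edge; this is the same computation as Lemma \ref{lemma:secondff>=} with $A^g=0$. Because $\phi$ vanishes near the vertices, no corner contributions appear. The resulting identity should read
\[
\|P\phi\|^2=\|\nabla\phi\|^2+\langle\mathcal K\phi,\phi\rangle+\text{(lower-order cross terms from }C\text{ and }c(\partial_r)\text{)},
\]
where $\mathcal K$ is the Weitzenb\"ock curvature term on the round $\sph^2$; it equals $1$ on $1$-forms and $0$ on functions and $2$-forms.

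Second, the cleaner route is to lift the problem back to the cone. By \eqref{eq:deRhamAfterConj}, an eigenform $\phi$ of $P_{B_\link}$ with eigenvalue $\lambda$ yields a homogeneous solution $\omega=\Psi(r^{-\lambda}\phi,0)$ of $D^\dR_B\omega=0$ on $\fiber$. I would apply the flat Stokes identity \eqref{eq:Stokes2} on the truncated region $\{a\le r\le b\}\cap\fiber$. On the planar faces there is no scalar curvature and the boundary term is $\mathscr A=0$. The only contributions are therefore $\int|\nabla\omega|^2$ and the two spherical caps; the caps are explicit in $\lambda$ by homogeneity. Next I would bound $\int|\nabla\omega|^2$ from below by its radial part, $|\nabla_{\partial_r}\omega|^2$ (also explicit in $\lambda$), plus its tangential part. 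For the tangential part I would use the refined Kato/twistor inequality $|\nabla\omega|^2\ge\frac1{n}|D\omega|^2$, with the twistor component absorbed. Comparing the two sides should force $\lambda^2\ge\frac12$.

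The main obstacle is controlling the tangential part sharply enough to get the constant $\sqrt2/2$ rather than merely $1/2$. In particular, it must be justified that the edge terms really vanish, or at least have a sign, even though $B_\link$ need not respect $\mathscr E_\link$ because $\nu'_k$ can have a radial component. For that step I would first try to imitate Lemma \ref{lemma:boundaryedge}: decompose $\nu_k'$ into a part along $\partial_r$ and a part tangent to $\link$. I would then check directly that the boundary quadratic form $\langle D^\partial\phi,\phi\rangle$ reduces, under $\gamma_k\phi=-\phi$, to a term involving only derivatives of the constant vectors. Those derivatives are zero, so the term vanishes.
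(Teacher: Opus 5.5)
The gap is in the cone route, at the step that should turn the Stokes identity into a spectral bound. The setup is sound. Let $P\phi=\lambda\phi$ and $\omega=\Psi(r^{-\lambda}\phi,0)$. Then $\omega$ has flat coefficients homogeneous of degree $m=-\lambda-1$, so $\nabla_{\partial_r}\omega=\frac mr\omega$. The planar faces contribute nothing: there $\gamma_k$ is constant and anticommutes with $D^\partial$, so no splitting of $\nu_k'$ is needed at this level. Using $D\omega=0$, the caps contribute $-\frac mb\int_{r=b}|\omega|^2+\frac ma\int_{r=a}|\omega|^2$. Carrying out Stokes on $\{a\le r\le b\}\cap\fiber$ then gives exactly
\[
\int_{\link}|\nabla^{\mathrm{tan}}\omega|^2=m(m+1)\int_{\link}|\omega|^2=\lambda(\lambda+1)\|\phi\|^2 .
\]
Dropping the tangential part yields only $\lambda(\lambda+1)\ge0$, which does not bound small positive $\lambda$; so the crude bound is not ``$1/2$'', as you expected. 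The twistor inequality $|\nabla\omega|^2\ge\frac13|D\omega|^2$ is vacuous here, since $D\omega=0$. So ``comparing the two sides'' does not force $\lambda^2\ge\frac12$.

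The missing step is to use harmonicity to identify the tangential Dirac part, $\sum_{\mu=1}^2\overbar c(e_\mu)\nabla_{e_\mu}\omega=-\overbar c(\partial_r)\nabla_{\partial_r}\omega$. Cauchy--Schwarz over the two tangential directions then gives $|\nabla^{\mathrm{tan}}\omega|^2\ge\frac{m^2}{2r^2}|\omega|^2$. Hence $m(m+1)\ge\frac{m^2}{2}$, i.e.\ $(\lambda-1)(\lambda+1)\ge0$, so $|\lambda|\ge1$, which is stronger than needed. Alternatively, add the odd lift $\Psi(0,r^{\lambda}\phi)$, which gives $\lambda(\lambda-1)\ge0$, and exclude $\lambda=0$ directly: then $r\omega$ would be parallel and $D\omega=-r^{-2}\overbar c(\partial_r)(r\omega)\neq0$. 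For rigor, note that $\phi\in H^1(\link;B_\link)$ by the essential self-adjointness hypothesis. Hence $\omega$ is $H^1$ on the shell, and the Stokes identity holds by density.

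Your first, link-level route has concrete errors. For $k=3$, $c_2=(-1)^2(2-1)=+1$, so $C=\mathrm{diag}(-1,0,1)$ and $D^{\dR}_\link C+CD^{\dR}_\link\neq-D^{\dR}_\link$. The ``lower-order cross terms'' you leave open are the actual content of the lemma. On the link, the edge terms also do not vanish for the Levi-Civita connection. Writing $\nu_k'=\alpha_k\partial_r+\widehat\nu_k$, one has $X(\alpha_k)=\langle X,\widehat\nu_k\rangle$ and $\nabla^\link_X\widehat\nu_k=-\alpha_kX$ along the edge, so these derivatives are not zero.

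The paper's device is the modified connection $\widehat\nabla_X=\nabla_X+\frac12c(X)$. Then $P=\sum_i\overbar c(e_i)\widehat\nabla_{e_i}$. The extra curvature $\frac12c(e_i)c(e_j)$ cancels the non-scalar Weitzenb\"ock term, so $P^2=\widehat\nabla^*\widehat\nabla+\frac12$. Moreover $[\widehat\nabla_X,\gamma_k]=0$ along each edge kills the boundary terms, giving $\|P\varphi\|^2=\|\widehat\nabla\varphi\|^2+\frac12\|\varphi\|^2$. Your cone computation is the flat-space counterpart of this, as the paper's remark after the proof indicates. Applying the same two-direction Cauchy--Schwarz, $|P\varphi|^2\le2|\widehat\nabla\varphi|^2$, to the paper's identity likewise upgrades $\sqrt2/2$ to $1$.
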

\begin{proof}
	By Lemma \ref{lemma:ess-saSphericalLink}, the operator $P$ on the spherical link $\link$ with boundary condition $B_\link$ is essentially self-adjoint with domain $H^1(\link,\Bigwedge^*T\link;B_\link)$, provided that the inner product comparison holds. 
	Since $P_{B_\link}$ is essentially self-adjoint, it suffices to estimate its spectrum on  $C^\infty_{00}(\link,\Bigwedge^*T\link;B_\link)$ consisting of smooth forms satisfying
	the boundary condition $B_\link$, supported away from the vertices of
	$\link$.

	Let $D = D^{\dR}_\link$ the de Rham operator acting on $\Bigwedge^*\mathbb L$ over the link $\link$.  The operator $P$ differs from $D$ by the diagonal zeroth-order term
	$(-1)^p(p-1)$ on $p$-forms. See \eqref{eq:linkOperator}. Let $\overbar c$ and $c$ be the  left and right Clifford multiplications  on  $\Bigwedge^*\mathbb L$. A direct computation gives 
		\begin{equation}
			P=D+\frac 1 2 \sum_i\overbar c(e_i)c(e_i),
		\end{equation}
		where $\{e_i\}$ is a local orthonormal frame of $T\link$. Define a new connection $\widehat\nabla$ on $\Bigwedge^*\mathbb L$ by
		\begin{equation}\label{eq:modconnection}
			\widehat\nabla_X\coloneqq \nabla_X+\frac 1 2 c(X),
		\end{equation}
		for $X\in T\link$, where $\nabla$ is the canonical connection on $\Bigwedge^*T\mathbb L$ induced by the Levi-Civita connection of $\link$. Then 
		$$P=\sum_{i} \overbar c(e_i)\widehat\nabla_{e_i}.$$

		Now we derive a Lichnerowicz formula for  $P^2$. At a point $x\in \link$, choose a local orthonormal frame $\{e_i\}$ of $T\link$ such that $[e_i,e_j]=0$ at $x$. Since the Clifford multiplications $\overbar c$ and $c$ commute, we have
		$$P^2=-\sum_{i}\widehat\nabla_{e_i}\widehat\nabla_{e_i}+\sum_{i<j}\overbar c(e_i) \overbar c(e_j)\widehat R_{e_i,e_j},$$
		where $\widehat R$ is the curvature operator of $\widehat\nabla$:
		$$\widehat R_{e_i,e_j}=\widehat \nabla_{e_i}\widehat\nabla_{e_j}-\widehat \nabla_{e_j}\widehat\nabla_{e_i}.$$
		Since the Levi-Civita connection on $\link$ is torsion free, we obtain that
		\begin{align*}
			\widehat R_{e_i,e_j}=&~(\nabla_{e_i}+\frac 1 2 c(e_i))(\nabla_{e_j}+\frac 1 2 c(e_j))-(\nabla_{e_j}+\frac 1 2 c(e_j))(\nabla_{e_i}+\frac 1 2 c(e_i))\\
			=&~\nabla_{e_i}\nabla_{e_j}-\nabla_{e_j}\nabla_{e_i}+\frac 1 2 c(e_i)c(e_j).
		\end{align*}
		Since $\link$ has constant sectional curvature $1$ with scalar curvature $2$, we obtain that
		$$\sum_{i<j}\overbar c(e_i) \overbar c(e_j)(\nabla_{e_i}\nabla_{e_j}-\nabla_{e_j}\nabla_{e_i})=\frac{1}{2}-\frac 1 2\sum_{i<j} \overbar c(e_i) \overbar c(e_j) c(e_i)c(e_j), $$
		cf. the curvature identity \eqref{eq:lichnerowicz}, applied with $f=\id \colon \link \to \link$. 
		Hence the Lichnerowicz formula for $P^2$ reads 
		\begin{align*}
			P^2=&~\widehat\nabla^*\widehat\nabla+\sum_{i<j}\overbar c(e_i) \overbar c(e_j)\widehat R_{e_i,e_j}\\
			=&~\widehat\nabla^*\widehat\nabla+\frac{1}{2}-\frac 1 2 \sum_{i<j} \overbar c(e_i) \overbar c(e_j) c(e_i)c(e_j)   +\frac 1 2 \sum_{i<j} \overbar c(e_i) \overbar c(e_j) c(e_i)c(e_j)\\
			=&~\widehat\nabla^*\widehat\nabla+\frac{1}{2}
		\end{align*}
		
		
		Let $
		\link_k=F_k\cap\link$
		be an edge of $\link$. For a differential form $\varphi$ in $C_{00}^\infty(\mathbb L,\Bigwedge^*\mathbb L;B_\link)$, 
		 we have
		\begin{equation}\label{eq:deRhamestimate}
			\begin{split}
				&\int_{\mathbb L}\langle P \varphi,  P \varphi\rangle \\
				=&
				\int_{\mathbb L}\langle P^2\varphi,\varphi\rangle+\sum_k\int_{\link_k}\langle \overbar c(\nu_k)P\varphi,\varphi\rangle\\ 
				=&\int_{\mathbb L}\langle \big(\widehat\nabla^*\widehat\nabla+\frac{1}{2}\big)\varphi,\varphi\rangle+\sum_k\int_{\link_k}\langle \overbar c(\nu_k)P\varphi,\varphi\rangle\\
				=&\int_{\mathbb L}|\widehat\nabla\varphi|^2+ 
				\frac{1}{2}|\varphi|^2+\sum_{k}\int_{\link_k}\langle \overbar c(\nu_k)\overbar c(e^k)\widehat\nabla_{e^k}\varphi,\varphi\rangle,
			\end{split}
		\end{equation}
		where $e^k$ is a unit vector field of $T\link_k$. Note that $e^k$ is an orthonormal basis of $T\link_k$, since the edge $\link_k$ is one dimensional. 
		
	Define
		\begin{equation}\label{eq:hatDpartial}
			\widehat D^\partial_k= \overbar c(\nu_k) \overbar c(e^k)\widehat\nabla_{e^k}=\overbar c(\nu_k)\left(
			\overbar c(e^k)\nabla_{e^k}+\frac 1 2\overbar c(e^k)c(e^k).			
			\right) 
		\end{equation}
	
	It remains to show that the boundary integrals  in 
	\eqref{eq:deRhamestimate} vanish. The boundary condition $B_{\link}$ on the edge $\link_k  $ of $\link$ is induced by the boundary condition 
	$$\mathscr E \overbar c(\nu_k)c(\nu_k')w=-w,$$ where $\nu_k$ is the inner unit normal vector field of $F_k$ and $\nu_k'$ is a constant unit vector. Let $\gamma_k= \mathscr E \overbar c(\nu_k)c(\nu_k').$
		\begin{claim*} $\widehat D^\partial_k\gamma_k+\gamma_k\widehat D^\partial_k=0.$
	\end{claim*}
	Since $\gamma_k  \overbar c(\nu_k) \overbar c(e^k) = -  \overbar c(\nu_k) \overbar c(e^k) \gamma_k$, it suffices to show that $	[\widehat\nabla_{e^k},\gamma_k] = 0$. 
	In general, the unit vector $\nu_k'$ may not be tangent to the link $\link$, so we write its decomposition along $\link_k$ as
		\begin{equation}\label{eq:decompositionOfVi}
			\nu_k'=\alpha_k\partial_r+ \widehat\nu_k,
		\end{equation}
	where $\widehat\nu_k$ is orthogonal to $\partial_r$. Under the identification from  \eqref{eq:cylindricalClifford1} and \eqref{eq:cylindricalClifford2}, we have 
	\[   \gamma_k  = \begin{pmatrix}
		\mathscr E_\link \overbar c(\nu_k)\left(-\alpha_k+ c(\widehat\nu_k) \right)  & 0 \\
		0 & \mathscr E_\link \overbar c(\nu_k)\left(-\alpha_k+ c(\widehat\nu_k) \right)
	\end{pmatrix}\]
	on two copies of $\Omega^\ast\link$, 	where $\mathscr E_\link$  is the even-odd grading operator on $\Bigwedge^*T\link$. Therefore, the induced 
	the boundary condition $B_\link$ on each copy of  $\Omega^\ast\link$ along $\link_k$ is given by 
		\begin{equation}
			\mathscr E_\link \overbar c(\nu_k)\left(-\alpha_k+ c(\widehat\nu_k) \right)\varphi  = - \varphi.
		\end{equation}
		
		
	To simplify notation, we omit the subscript $k$ in $\gamma_k, \nu_k$, etc., since no ambiguity is likely to arise.   Let $\overbar \nabla$ be the Euclidean flat connection on $T\fiber$ over $\fiber$, and $\nabla^\link$ the Levi-Civita connection on $T\link$ over $\link$. Then, for any tangent vector fields $X$ and $Y$ along $\link$, 
		$$\begin{cases}\displaystyle
			\overbar\nabla_X Y=\nabla^\link_X Y-\langle X,Y\rangle\partial_r,\\
			\displaystyle\overbar\nabla_X \partial_r=X.
		\end{cases}$$  
		Since $\nu'$ is constant in $\mathbb R^3$, applying $\overbar\nabla$ to \eqref{eq:decompositionOfVi} gives 
		$$0=\overbar\nabla_X \nu' = X(\alpha )\partial_r+\alpha  X+ \nabla^\link_X\widehat\nu - \langle X,\widehat\nu \rangle\partial_r.$$
		Separating radial  and tangential components yields
		\begin{equation}\label{eq:alphaBeta}
			\begin{cases}
				X(\alpha )=\langle X,\widehat\nu \rangle\\
				-\alpha  X= \nabla^\link_X\widehat\nu
			\end{cases}
		\end{equation}
		 Since every edge of $\link$ is totally geodesic and $\nu$ is constant in $\mathbb R^3$, it follows that $c(\nu)$ is parallel along the edge $\link_k$. Equivalently, $c(\nu)$ commutes with the connection $\nabla$ on $\Bigwedge^*\mathbb L$.  Recall that $\gamma =\mathscr E_\link \overbar c(\nu )(-\alpha +  c(\widehat{\nu }))$. Applying \eqref{eq:alphaBeta}, we have 	
		\begin{align*}
			[\nabla_X,\gamma]=&~ \mathscr E_\link \overbar c(\nu) \left(-X(\alpha)+ c(\nabla^\link_X\widehat \nu)\right)\\
			=&-\mathscr E_\link \overbar c(\nu) \left( \langle X,\widehat \nu\rangle+\alpha c(X)\right).
		\end{align*}
		On the other hand, we have 
		\begin{align*}
			\frac 1 2[c(X),\gamma]=&-\frac 1 2\mathscr E_\link \overbar c(\nu)\left(
			c(X)(-\alpha +  c(\widehat{\nu }))+(-\alpha + c(\widehat{\nu}))c(X)\right)\\
			=&~\mathscr E_\link \overbar c(\nu)\left(\alpha c(X)+ \langle X,\widehat \nu\rangle\right).
		\end{align*}
		Therefore
		\[
		[\widehat\nabla_X,\gamma]
		=
		[\nabla_X,\gamma]+\frac12[c(X),\gamma]
		=
		0.
		\]
		This proves the claim. 
		
		 It follows from $\widehat D^\partial_k\gamma_k+\gamma_k\widehat D^\partial_k=0$ that $\langle \widehat D^\partial_k\varphi,\varphi\rangle=0$ if $\varphi$ satisfies the boundary condition $B_\link$ on every face $\link_k$. Consequently, by \eqref{eq:deRhamestimate}, we obtain that
		$$|P_{B_\link}|\geq \frac{\sqrt{2}}{2}.$$
	\end{proof}
	
	\begin{remark}
In the proof of Lemma \ref{lemma:spectrumSphericalLink}, the computation involving  the modified connection $\widehat\nabla$ is carried out explicitly. Here is a more conceptual viewpoint. The connection $\widehat\nabla$ can be interpreted as tensor product connection of the spinorial connection on $T\link$ over  $\link$, with the spinorial flat connection on $\R^3$ over $\link$. Under this interpretation,  $\widehat D_k^\partial\gamma_k+\gamma_k\widehat D_k^\partial$ is precisely determined by two contributions:  the mean curvature of the
face $\link_k\subset\link$ and the  derivative of the  vector
$\nu'_k$. The first contribution vanishes because $\link_k$ is totally geodesic
in $\link$, and the second vanishes because $\nu'_k$ is a constant vector in
the ambient Euclidean space. This gives a conceptual proof of the vanishing of the boundary integrals in \eqref{eq:deRhamestimate}.
	\end{remark}

	By Lemma \ref{lemm:>=1/2}, Lemma \ref{lemma:ess-sa-innerproductcomparison} and Lemma \ref{lemma:spectrumSphericalLink}, we conclude this subsection by the following result.
	\begin{corollary}\label{corollary:ess-saThreeDimension}
		Let $\fiber$ be a polyhedral corner in $\R^3$, and $B$ the boundary condition on $\Bigwedge^*T\fiber$ over $\fiber$ given by
		$$\mathscr E \overbar c(\nu_k)c(\nu_k')w=-w\textup{ on each face }\overbar F_k,$$
 where $\nu_k$ is the inner normal vector on $F_k$, and $\nu_k'$ is a constant unit vector.
		Assume the dihedral angles of $\fiber$ are less than $\pi$. If
		$$\langle\nu_i,\nu_j\rangle\leq \langle \nu_i',\nu_j'\rangle$$
		on every edge made by adjacent faces $F_i$ and $F_j$, then the de Rham operator subject to the boundary condition $B$ is essentially self-adjoint.
	\end{corollary}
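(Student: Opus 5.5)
The plan is to chain the three cited lemmas through the cone--cylinder picture of \eqref{eq:deRhamAfterConj}. Under the unitary $\Psi$ of \eqref{eq:coordinateChange}, the de Rham operator on $\fiber$ becomes
\[ \begin{pmatrix} 0&-\partial_r\\ \partial_r&0\end{pmatrix}+\frac1r\begin{pmatrix}0&P\\P&0\end{pmatrix} \]
acting on $(0,\infty)\times\link$. The boundary condition $B$ splits into two identical copies of $B_\link$, because each $\gamma_k$ commutes with $\mathscr E$ and with $\overbar c(\partial_r)$. So the essential self-adjointness of $D^\dR_B$ on $\fiber$ is reduced, via Lemma~\ref{lemm:>=1/2}, to two statements about the link operator: $P_{B_\link}$ is essentially self-adjoint, and $|P_{B_\link}|\geq 1/2$.

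\textbf{Step 1.} Check that $P_{B_\link}$ is essentially self-adjoint. The link $\link=\fiber\cap\sph^2$ is a spherical polygon whose vertex angles are the dihedral angles of $\fiber$, all less than $\pi$. The hypothesis $\langle\nu_i,\nu_j\rangle\leq\langle\nu_i',\nu_j'\rangle$ holds on every edge. Lemma~\ref{lemma:ess-saSphericalLink} then says that $D^\dR_\link$ with $B_\link$ is essentially self-adjoint. Since $P$ differs from $D^\dR_\link$ by the bounded zeroth-order diagonal term $c_p$, the Kato--Rellich theorem (with $b=0$) transfers essential self-adjointness to $P_{B_\link}$, with the same domain $H^1(\link,\Bigwedge^*T\link;B_\link)$. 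Internally, this step relies on Lemma~\ref{lemma:ess-sa-innerproductcomparison} at the vertices of $\link$, where the local model is a flat sector with the comparison condition.

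\textbf{Step 2.} Get the spectral gap. With Step 1 in hand, Lemma~\ref{lemma:spectrumSphericalLink} applies and gives $|P_{B_\link}|\geq\sqrt2/2>1/2$. In particular $P_{B_\link}$ has compact resolvent and discrete spectrum avoiding $(-1/2,1/2)$. Lemma~\ref{lemm:>=1/2} then applies: the deficiency spaces $\ker((D^\dR_B)^*\mp i)$ decompose along the eigenbasis $\{\phi_\lambda\}$ into Bessel-type ODE solutions, and such a solution is $L^2$ only when $|\lambda|<1/2$. Hence both deficiency spaces vanish, and $D^\dR_B$ is essentially self-adjoint.

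\textbf{Main obstacle.} The delicate point is making the domains line up. Lemma~\ref{lemm:>=1/2} is stated for the operator with initial domain supported away from the cone vertex, while the corollary concerns the de Rham operator on $\fiber$ with boundary condition $B$. I would first argue, as in Definition~\ref{def:ess-sa}, that the origin has zero $H^1$-capacity in dimension three, so excluding a neighbourhood of it does not change the minimal domain. I would also need the vanishing of capacity along the edges, and the fact that the separation of variables respects the boundary condition on each slice $\{r\}\times\link$; the latter uses the splitting of $B$ into $B_\link\oplus B_\link$ noted before Lemma~\ref{lemm:>=1/2}. Once these identifications are made, the corollary is a formal consequence of the three lemmas.
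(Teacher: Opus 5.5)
Your proposal is correct and is exactly the paper's argument: the paper obtains the corollary in one line by chaining Lemma~\ref{lemma:ess-saSphericalLink} (which relies on Lemma~\ref{lemma:ess-sa-innerproductcomparison} at the link's vertices) together with the bounded difference between $P$ and $D^\dR_\link$ to get essential self-adjointness of $P_{B_\link}$, then Lemma~\ref{lemma:spectrumSphericalLink} for $|P_{B_\link}|\geq\sqrt2/2>1/2$, and finally Lemma~\ref{lemm:>=1/2}. Your extra care about matching initial domains (zero capacity of the vertex and edges) does no harm, but it is not needed: the paper defines $D^\dR_B$ with initial domain already supported away from the vertex, so Lemma~\ref{lemm:>=1/2} applies directly to the operator in the statement.
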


	\subsection{Essential self-adjointness for the  general case}\label{subsec:ess-selfadj-general}
	In this subsection, we establish the essential self-adjointness of the twisted Dirac operator $D_B$ subject to suitable conditions on dihedral angles. 
	
	\begin{theorem}\label{thm:ess-saVector}
		Let $f \colon \domain \to \target$ be a spin polyhedral map between two compact polyhedral manifolds of dimension $3$. For each codimension-one face $\overbar F_k$ of $\domain$, let
		$\overbar\nu_k$ denote its inward unit normal, and let $\nu_k$ be a smooth
		unit-length section of $f^*T\target$ over $\overbar F_k$. Let $B$ be the local boundary condition on $E = S(T\domain \oplus f^\ast T\target)$ given by
		$$\mathscr E \overbar c(\overbar \nu_k)c(\nu_k)\sigma=-\sigma\textup{ on }\overbar F_k.$$
		Assume that all dihedral angles of $\domain$ are less than $\pi$. If for each pair of adjacent codimension one faces $\overbar F_i$ and $ \overbar F_j$ of $\domain$, on each connected component of $\overbar F_i\cap \overbar F_j$, we have  either
		\begin{equation}\label{eq:angleStrict}
			\langle\overbar \nu_i,\overbar \nu_j\rangle < \langle\nu_i,\nu_j\rangle  
		\end{equation}
		or
		\begin{equation}\label{eq:angleEqual}
			\langle\overbar \nu_i,\overbar \nu_j\rangle = \langle\nu_i,\nu_j\rangle,  
		\end{equation}
		then   $D$ is essentially  self-adjoint subject to the boundary condition $B$. 
		Furthermore, its self-adjoint closure ${D}_B$ is Fredholm  with domain   $H^1(\domain,E;B)$.
	\end{theorem}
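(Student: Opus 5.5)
The plan is to localize and then verify local essential self-adjointness point by point. By the remark after Definition~\ref{def:ess-saLocal}, $D_B$ is essentially self-adjoint if and only if it is locally essentially self-adjoint at every $x\in\domain$. At interior points, and at interior points of codimension-one faces, this is classical elliptic regularity for a first-order elliptic operator with a local elliptic boundary condition. Condition $\gamma_k\sigma=-\sigma$ is of this type: $\gamma_k$ anticommutes with $\overbar c(\overbar\nu_k)$, so by Lemma~\ref{lemma:boundaryface} its $(-1)$-eigenspace is half-dimensional and Lagrangian for the boundary symplectic form.

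At an edge point or a vertex $x$, I will apply Theorem~\ref{thm:reduction}. Its hypotheses hold: on each component of $\overbar F_i\cap\overbar F_j$ we have either~\eqref{eq:angleStrict}, which gives $\neq$, or~\eqref{eq:angleEqual}. The theorem provides a Lipschitz bundle isomorphism $\Psi$ with $\Psi^*D\Psi=D^\dR+\mathcal A+\mathcal B$ on a neighborhood $U$ of the tip of $\tancone_x\domain$. This model operator carries the constant boundary condition with $\overbar\omega_i=\overbar\nu_i(x)$ and $\omega_i=\nu_i(x)$. These constant vectors still satisfy $\langle\overbar\omega_i,\overbar\omega_j\rangle\le\langle\omega_i,\omega_j\rangle$ at $x$.

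\textbf{Edge case.} The tangent cone is $\R\times\mathbb G$ with $\mathbb G$ a sector of angle $<\pi$. Lemma~\ref{lemma:ess-sa-innerproductcomparison} gives essential self-adjointness of $D^\dR_B$ on $\mathbb G$, and the product with the $\R$ factor keeps it, for example via a Fourier transform in the $\R$ variable.

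\textbf{Vertex case.} Corollary~\ref{corollary:ess-saThreeDimension} applies directly to the cone.

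In both cases the closure of the model operator has domain $H^1(\cdot;B)$. I then pass from the model to $\Psi^*D\Psi$ using Kato--Rellich, after cutting the coefficients off to be supported near the origin as in the proof of Lemma~\ref{lemma:ess-saSphericalLink}.
\begin{itemize}
\item $\mathcal B$ is bounded, so it is a relatively bounded perturbation with $b=0$.
\item $\mathcal A$ has first-order coefficients of sup norm $<\delta$, so $\|\mathcal A u\|\le C\delta\|\nabla u\|$.
\end{itemize}
To turn the second bound into a relative bound I need the model estimate $\|\nabla u\|\le C'(\|D^\dR u\|+\|u\|)$ on $H^1(\cdot;B)$. This is where I expect the main obstacle.

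The estimate is not the Stokes identity~\eqref{eq:Stokes2} alone, since the boundary and edge terms must be controlled. I would get it from the fact that the self-adjoint closure has domain exactly $H^1$, so the graph norm and the $H^1$ norm are equivalent by the closed graph theorem. I also need this constant to be scale-invariant on the cone, and the Euclidean dilation invariance of $D^\dR$ gives that. Choosing $\delta$ small then makes $b<1$.

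One further point needs care: $\Psi$ is only Lipschitz at a vertex. It still maps $H^1$ to $H^1$ and preserves $L^2$ and the maximal domain, so local essential self-adjointness transfers back to $D$ near $x$.

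Finally, for Fredholmness, $D_B$ is self-adjoint with domain $H^1(\domain,E;B)$, and the Rellich lemma makes $H^1(\domain,E;B)\hookrightarrow L^2$ compact on the compact Lipschitz domain $\domain$. Hence $(D_B\pm i)^{-1}$ is compact, $D_B$ has compact resolvent, and it is Fredholm. Equivalently, the norm equivalence $\|\sigma\|_1\sim\|\sigma\|_D$ together with compactness yields finite-dimensional kernel and closed range, and self-adjointness identifies the cokernel with the kernel.
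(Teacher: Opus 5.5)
This is essentially the paper's proof. You localize, reduce edges and vertices via Theorem~\ref{thm:reduction} to the flat models, apply Kato--Rellich, and get Fredholmness from Rellich compactness. The flat models are Lemma~\ref{lemma:ess-sa-innerproductcomparison} crossed with $\mathbb R$ along edges and Corollary~\ref{corollary:ess-saThreeDimension} at vertices. In the edge case you should first rotate the second copy of $\mathbb R^3$ so that $\omega_i,\omega_j$ lie in the plane of $\mathbb G$, as the paper does, before invoking the two-dimensional lemma.

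The step you flag as the main obstacle is actually immediate, so the closed-graph and scaling arguments are unnecessary. On the flat model with flat faces and constant $\overbar\omega_i,\omega_i$, both $\mathscr R$ and $\mathscr A$ in \eqref{eq:Stokes2} vanish. Hence $\|D^{\dR}u\|=\|\nabla u\|$ exactly on the initial domain, which is precisely the relative bound Kato--Rellich needs.
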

\begin{remark}
	The assumption that the comparison of inner products is either strict or an equality  along each connected component of a codimension-two
	face in Theorem \ref{thm:ess-saVector} is a technical hypothesis used to  control the perturbation of boundary conditions along edges. In this form, Theorem \ref{thm:ess-saVector} does not
	directly cover the geometric setting of Gromov's dihedral rigidity conjecture,
	where the dihedral angle comparison is only non-strict and may vary between
	strict inequality and equality along the same edge.  This difficulty will be resolved via an approximation lemma from Section \ref{sec:approximation}.
\end{remark}
\begin{proof}[Proof of Theorem \ref{thm:ess-saVector}]
		By the discussion after Definition 
	\ref{def:ess-sa}, it suffices to show that $D$ (subject to the boundary condition $B$) is locally essentially self-adjoint at every point $x \in \domain$.
	
   \textbf{Interior of $\domain$.} If $x \in \domain$ lies in the interior of $\domain$, the classical elliptic regularity theory implies that $D$ is locally essentially self-adjoint at $x$.

	\textbf{Codimension one face of $\domain$.} If $x$ lies in the interior of a codimension-one face of $\domain$, then $D$ is locally modeled on a Dirac-type operator on a smooth manifold with
	boundary, subject to a local elliptic boundary condition.  Alternatively, by Theorem \ref{thm:reduction}, we can reduce this case to the standard de Rham operator on manifolds with smooth boundary subject to the absolute boundary condition (see Example \ref{example:deRham}). Therefore, $D$ is locally essentially self-adjoint at $x$.

		\textbf{Codimension two face of $\domain$.} Suppose that $x$ lies in the interior of an edge
		$\overbar F_i\cap\overbar F_j$. Let
		\[
		\overbar\omega_i=\overbar\nu_i(x),
		\qquad
		\overbar\omega_j=\overbar\nu_j(x),
		\qquad
		\omega_i=\nu_i(x),
		\qquad
		\omega_j=\nu_j(x).
		\]
		By Theorem~\ref{thm:reduction}, after passing to a neighborhood of $x$ and
		conjugating by the Lipschitz bundle isomorphism $\Psi$ constructed there,  we have 
		\[
		\Psi^*D\Psi
		=
		D^{\dR}+\mathcal A+\mathcal B
		\]
		on a neighborhood $U$ of the origin in the tangent cone
		$\tancone_x\domain$. Here $D^{\dR}$ is the standard de Rham operator on the
		flat tangent cone, $\mathcal A$ is a first-order operator whose coefficient
		matrices are continuous and vanish at the origin, and $\mathcal B$ is a
		bounded zeroth-order operator. Moreover, $\Psi$ maps the original
		boundary condition $B$ to the constant model boundary condition determined by
		$\overbar\omega_i,\overbar\omega_j,\omega_i,\omega_j$.
		
		The tangent cone $\tancone_x\domain$ is isometric to
		$\mathbb R\times \mathbb G$, where $\mathbb G$ is a sector in $\mathbb R^2$.  After fixing an orthogonal identification of the two copies of
		$\mathbb R^3$ in $S(\mathbb R^3\oplus \mathbb R^3)$, and  after applying an orthogonal transformation on the
		second copy of $\mathbb R^3$ if necessary, we may assume that
		\[
		\overbar\omega_i,\quad \overbar\omega_j,\quad
		\omega_i,\quad \omega_j
		\]
		all lie in the plane of $\mathbb G$. This  transformation commutes with
		the de Rham operator and preserves essential self-adjointness.
		
		Since $\overbar \omega_i, \omega_i, \overbar \omega_j$ and $\omega_j$ are all tangential to $\mathbb G$,  the operator $D^\dR$ subject to the constant  boundary condition $B_{cst}$ is 
		\[ D^\dR = D^\dR_{\mathbb R} \otimes 1 + 1 \otimes D^\dR_{\mathbb G} \]
		where $D^\dR_{\mathbb G}$ is the standard de Rham subject to the induced boundary condition and $D^\dR_{\mathbb R}$ is the standard de Rham operator on $\mathbb R$.  Since either \eqref{eq:angleStrict} or
		\eqref{eq:angleEqual} holds along the edge, we have
		\[
		\langle \overbar\omega_i,\overbar\omega_j\rangle
		\leq
		\langle \omega_i,\omega_j\rangle .
		\]
		By Lemma~\ref{lemma:ess-sa-innerproductcomparison}, the de Rham operator on
		the sector $\mathbb G$ with this boundary condition is essentially
		self-adjoint. Hence $D^\dR$  on $\mathbb R\times\mathbb G$ subject to the constant  boundary condition 
		is essentially self-adjoint.
		
	    For any 
		$\delta>0$, Theorem~\ref{thm:reduction} allows us to shrink $U$  so that
		the coefficient matrices of $\mathcal A$ have supremum norm at most
		$\delta$. After multiplying by a smooth cutoff function, we may extend $\mathcal A$ and $\mathcal B$ to operators on $\mathbb R\times \mathbb G$ with compactly supported coefficients such that $\mathcal A+ \mathcal B$ is formally symmetric, $\mathcal B$ is bounded and  $\mathcal A$ satisfies
		\[
		\|\mathcal A \varphi\|
		\leq
		C_1\delta\|D^{\dR}\varphi\|
		+
		C_2\|\varphi \|
		\]
		for $\varphi$ in the domain of $D^{\dR}$, where $C_1$ and $C_2$ are independent of $\delta$. Choosing $\delta$ sufficiently small gives bound $C_1\delta <1$. The Kato--Rellich perturbation theorem (see for example \cite[Chapter 33, Section 4]{MR1892228}) therefore implies that
		\[
		D^{\dR}+\mathcal A+\mathcal B
		\]
		is locally essentially self-adjoint. Since $\Psi$ is a Lipschitz bundle
		isomorphism with Lipschitz inverse and maps the constant model boundary
		condition to $B$, it follows that $D$ is locally essential self-adjoint at
		$x$.

   \textbf{Codimension three face of $\domain$.} Now suppose that $x$ is a vertex of $\domain$.  The argument is completely similar to the codimension two case above. By using the Lipschitz bundle isomorphism $\Psi$ constructed in Theorem \ref{thm:reduction} for the codimension-three case, the local geometry reduces to a flat polyhedral cone in $\mathbb{R}^3$.  The operator $D$ is then analyzed via a perturbation of the standard de Rham operator $D^\dR$ defined on this flat cone. The same argument as in the codimension-two case above allows us to conclude that the original Dirac operator $D$ on $\domain$ is locally essentially self-adjoint at the vertex $x$. This completes the proof that $D$ (subject to boundary condition $B$) is essentially self-adjoint. 
   
  It remains to prove  the closure $\overbar{D}_B$ of $D_B$ is Fredholm. Since $D_B$ is essentially self-adjoint, $\overbar{D}_B$ is self-adjoint and its domain  is the minimal domain $H^1(\domain, E; B)$. Consider the resolvent operator \[ (\overbar{D}_B+i)^{-1} \colon L^2(\domain, E) \to H^1(\domain, E; B). \] By Proposition \ref{prop:smooth>=} and the  discussion preceding it, this map is norm-continuous. Furthermore, by Rellich's lemma, the inclusion mapping $H^1(\domain, E; B) \hookrightarrow L^2(\domain, E)$ is compact.
  By composing these maps, it follows that $(\overbar{D}_B+i)^{-1} \colon L^2(\domain, E) \to L^2(\domain, E)$ is a compact operator. As a result, the spectrum of $(\overbar{D}_B+i)^{-1}$, and therefore that of $\overbar{D}_B$, is  discrete. The kernel of $\overbar{D}_B$ corresponds exactly to the eigenspace of $(\overbar{D}_B+i)^{-1}$ associated with the eigenvalue $-i$. Because the eigenspaces of a compact operator corresponding to non-zero eigenvalues are finite-dimensional, it follows that the kernel of $\overbar{D}_B$ is finite-dimensional.
  
  Moreover, the discreteness of the spectrum of $\overbar{D}_B$ ensures a spectral gap around zero, which implies that the image of $\overbar{D}_B$ is closed. Because $\overbar{D}_B$ is self-adjoint, its cokernel is isomorphic to its kernel and is thus also finite-dimensional. This completes the proof that $\overbar{D}_B$ is Fredholm.
\end{proof}


\subsection{Boundedness of multiplication of $1/r$ on $\mathbb R^3$}
In this subsection, we recall Hardy's inequality in the form needed later for
the proof of the index theorem for polyhedral manifolds
(Theorem~\ref{thm:indexTheoremVector}). For the reader's convenience, we
include a proof. 

		\begin{lemma}\label{lemma:1/rEmbedding}
		Let $n\geq3$, and let $r=|x|$ be the Euclidean radial function on
		$\mathbb R^n$.  Then multiplying by $r^{-1}$ defines a bounded linear operator
		$$H^1(\R^n)\to L^2(\R^n),~\varphi\mapsto r^{-1} \varphi,$$
		where $H^1(\R^n)$ is the Sobolev $H^1$-space on $\R^n$. 
	\end{lemma}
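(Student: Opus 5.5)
The plan is the classical Hardy inequality argument: prove
\[
\int_{\mathbb R^n}\frac{|\varphi|^2}{r^2}\,dx\;\leq\;\frac{4}{(n-2)^2}\int_{\mathbb R^n}|\nabla\varphi|^2\,dx
\]
for all $\varphi\in C^\infty_c(\mathbb R^n)$, and then extend by density. Since $C^\infty_c(\mathbb R^n)$ is dense in $H^1(\mathbb R^n)$, this gives $\|r^{-1}\varphi\|_{L^2}\leq \frac{2}{n-2}\|\varphi\|_{H^1}$ on a dense subspace. For a general $\varphi\in H^1$, take $\varphi_m\to\varphi$ in $H^1$. Then $r^{-1}\varphi_m$ is Cauchy in $L^2$. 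Passing to an a.e.\ convergent subsequence identifies its limit with $r^{-1}\varphi$, so the multiplication map is well defined and bounded with the same constant. Vector-valued $\varphi$ (sections of a trivial bundle) are handled componentwise, and $|\nabla|\varphi||\leq|\nabla\varphi|$ lets one reduce to real scalar functions if desired.

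For the inequality itself, I would use the divergence identity
\[
\operatorname{div}\Big(\frac{x}{r^2}\Big)=\frac{n-2}{r^2}\quad\text{on }\mathbb R^n\setminus\{0\}.
\]
For real $\varphi\in C^\infty_c$, integrate $\operatorname{div}(\varphi^2 x/r^2)$ over $\{r>\epsilon\}$. The boundary term on the sphere $\{r=\epsilon\}$ is bounded by $C\epsilon^{n-2}\|\varphi\|_\infty^2$, which tends to $0$ because $n\geq3$. The integrand $\varphi^2/r^2$ is integrable near $0$ since $n\ge3$. Together these give
\[
(n-2)\int\frac{\varphi^2}{r^2}=-2\int\varphi\,\frac{x\cdot\nabla\varphi}{r^2}\leq 2\Big(\int\frac{\varphi^2}{r^2}\Big)^{1/2}\Big(\int|\nabla\varphi|^2\Big)^{1/2}
\]
by Cauchy--Schwarz, using $|x\cdot\nabla\varphi|/r^2\leq|\nabla\varphi|/r$. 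Dividing by the finite quantity $(\int\varphi^2/r^2)^{1/2}$ (the case where it vanishes is trivial) yields the claim. An alternative route is to write the integral in polar coordinates and apply the one-dimensional Hardy inequality along rays; either works.

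The only delicate point is the justification near the origin, namely that the boundary term vanishes and that no singular contribution is lost. This is exactly where $n\geq3$ enters: for $n=2$ the flux through small spheres does not vanish and the inequality fails. I would make the $\epsilon\to0$ limit explicit as above. Later applications presumably use the lemma on cones or on $\domain$ near vertices. There one first extends $H^1$ sections across the codimension-one faces by a bounded extension operator (reflection on flat cones) and then applies the $\mathbb R^3$ statement, but that is outside the present lemma.
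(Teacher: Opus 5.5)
Your proof is correct and is essentially the paper's argument. The paper writes $\|r^{-1}\varphi\|_{L^2}^2$ in polar coordinates and integrates by parts in $r$ along each ray (the alternative you mention), which is the same computation as your integration of $\operatorname{div}(\varphi^2 x/r^2)$ over $\{r>\epsilon\}$, followed by the same Cauchy--Schwarz step and density extension. Your constant $2/(n-2)$ is the correct one (the paper's last display keeps $4/(n-2)^2$ without taking the square root), and your subsequence argument identifying the $L^2$ limit with $r^{-1}\varphi$ makes the density step more explicit than the paper does.
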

	\begin{proof}
		By the density of $C_c^\infty(\mathbb R^n)$ in $H^1(\mathbb R^n)$, it
		suffices to prove the estimate for $\varphi\in C_c^\infty(\mathbb R^n)$, where $C_c^\infty(\mathbb R^n)$ is the space of compactly supported smooth functions on $\mathbb R^n$.
		Using polar coordinates $x=r\theta$, with
		$\theta\in\mathbb S^{n-1}$ and $r\in(0,\infty)$, we have
		\[
		\left\|r^{-1}\varphi\right\|_{L^2}^2
		=
		\int_{\mathbb S^{n-1}}\int_0^\infty
		|\varphi(r,\theta)|^2 r^{n-3}\,dr\,d\theta .
		\]
		For fixed $\theta$, integration by parts gives
		\[
		\begin{aligned}
			\int_0^\infty |\varphi(r,\theta)|^2 r^{n-3}\,dr
			&=
			-\frac{2}{n-2}
			\operatorname{Re}\int_0^\infty
			\overline{\varphi(r,\theta)}
			\,\frac{\partial\varphi}{\partial r}(r,\theta)
			r^{n-2}\,dr                                      \\
			&\leq
			\frac{2}{n-2}
			\left(
			\int_0^\infty |\varphi(r,\theta)|^2 r^{n-3}\,dr
			\right)^{1/2}
			\left(
			\int_0^\infty
			\left|\frac{\partial\varphi}{\partial r}(r,\theta)\right|^2
			r^{n-1}\,dr
			\right)^{1/2}.
		\end{aligned}
		\]
		 Therefore,
		\[
		\int_0^\infty |\varphi(r,\theta)|^2 r^{n-3}\,dr
		\leq
		\frac{4}{(n-2)^2}
		\int_0^\infty
		\left|\frac{\partial\varphi}{\partial r}(r,\theta)\right|^2
		r^{n-1}\,dr .
		\]
		Integrating over $\mathbb S^{n-1}$ yields
		\[
		\left\|r^{-1}\varphi\right\|_{L^2}^2
		\leq
		\frac{4}{(n-2)^2}
		\int_{\mathbb S^{n-1}}\int_0^\infty
		\left|\frac{\partial\varphi}{\partial r}(r,\theta)\right|^2
		r^{n-1}\,dr\,d\theta .
		\]
		Since $
		\left|\partial\varphi/\partial r\right|
		\leq
		|\nabla\varphi|,$
		we obtain
		\[
		\left\|r^{-1}\varphi\right\|_{L^2}
		\leq
		\frac{4}{(n-2)^2}
		\|\nabla\varphi\|_{L^2} \leq 	\frac{4}{(n-2)^2}\|\varphi\|_{H^1}.
		\]
		The estimate extends by density to all $\varphi\in H^1(\mathbb R^n)$, proving
		the lemma.
	\end{proof}

			\section{A gluing formula for the Fredholm index}\label{sec:gluing}
	In this section, we establish a gluing formula for the Fredholm index of the
	Dirac operator. This formula will be used later in the computation of the
	Fredholm index of $D_B$.

		\begin{theorem}\label{thm:gluing}
		Let $f \colon \domain \to \target$ be a spin polyhedral map between two compact polyhedral manifolds of dimension $3$. For each codimension-one face $\overbar F_k$ of $\domain$, let
		$\overbar\nu_k$ denote its inward unit normal, and let $\nu_k$ be a smooth
		unit-length section of $f^*T\target$ over $\overbar F_k$. Let $B$ be the local boundary condition on $E = S(T\domain \oplus f^\ast T\target)$ given by
		$$\mathscr E \overbar c(\overbar \nu_k)c(\nu_k)\sigma=-\sigma\textup{ on }\overbar F_k.$$
		Assume that all dihedral angles of $\domain$ are less than $\pi$. If for each pair of adjacent codimension one faces $\overbar F_i$ and $ \overbar F_j$ of $\domain$, on each connected component of $\overbar F_i\cap \overbar F_j$, we have  either
		\begin{equation*}
			\langle\overbar \nu_i,\overbar \nu_j\rangle < \langle\nu_i,\nu_j\rangle  
		\end{equation*}
		or
		\begin{equation*}
			\langle\overbar \nu_i,\overbar \nu_j\rangle = \langle\nu_i,\nu_j\rangle.  
		\end{equation*} 
		Suppose that $\domain$ decomposes as $\domain = \domain_1 \cup_{\hsurface} \domain_2$, where $\domain_1$ and $\domain_2$ are polyhedral manifolds, and  $\hsurface= \domain_1 \cap \domain_2$ is a hypersurface in $\domain$ that is disjoint from all	faces of codimension greater than two, and is orthogonal to every codimension-one face $\overbar F_k$ that it intersects; see Figure \ref{fig:glue}.
			
			Let $\overbar \nu_\hsurface$ denote the unit inner normal vector field to $\hsurface$, when $\hsurface$ is viewed as a boundary hypersurface of $\domain_2$. Assume $\nu_\hsurface$ is a smooth unit-length section of $f^\ast T\target$ on $\hsurface$ such that $\nu_\hsurface$ is orthogonal to $\nu_k$ along  $\hsurface\cap \overbar F_k$. Let $B_1$ be the boundary condition for $E|_{\domain_1}$ which coincides with $B$ on $\partial \domain_1 \setminus \hsurface$ and is given on $\hsurface$ by
			\begin{equation}\label{eq:B1onSigma}
				\mathscr E \overbar c(\overbar \nu_\hsurface)c(\nu_\hsurface) \sigma = \sigma.
			\end{equation}
			Similarly, let $B_2$ be the boundary condition for $E|_{\domain_2}$ which coincides with $B$ on $\partial \domain_2 \setminus \hsurface$ and is given on $\hsurface$ by
			\begin{equation}\label{eq:B2onSigma}
				\mathscr E \overbar c(\overbar \nu_\hsurface)c(\nu_\hsurface) \sigma = -\sigma.
			\end{equation}
			Let $D^{\domain_i}$ denote the restriction of $D$ to $\domain_i$. Then
			$D^{\domain_1}_{B_1}$ and $D^{\domain_2}_{B_2}$ are essentially self-adjoint
			and Fredholm. Moreover,
			\begin{equation}\label{eq:gluingIndexFormula}
				\ind(D_B)=\ind(D^{\domain_1}_{B_1})+\ind(D^{\domain_2}_{B_2}).
			\end{equation}
		\end{theorem}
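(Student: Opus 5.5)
First I will establish that the two pieces carry operators of the same type, so that Theorem~\ref{thm:ess-saVector} applies to each. The new codimension-one face of $\domain_1$ and $\domain_2$ is $\hsurface$, and its new edges are the curves $\hsurface\cap\overbar F_k$. Since $\hsurface$ meets $\overbar F_k$ orthogonally, the dihedral angle there is $\pi/2<\pi$, and $\langle \overbar\nu_\hsurface,\overbar\nu_k\rangle=0$. The hypothesis $\nu_\hsurface\perp\nu_k$ gives $\langle\nu_\hsurface,\nu_k\rangle=0$, so the equality case \eqref{eq:angleEqual} holds along every new edge.

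On $\domain_1$ the inner normal of $\hsurface$ is $-\overbar\nu_\hsurface$. Hence condition \eqref{eq:B1onSigma} is exactly $\mathscr E\overbar c(-\overbar\nu_\hsurface)c(\nu_\hsurface)\sigma=-\sigma$, which is of the form of Definition~\ref{def:boundaryCondition} with the unit section $\nu_\hsurface$. Because $\hsurface$ avoids vertices, no new vertex types arise. Old edges and vertices keep their local models, since local essential self-adjointness is local. Theorem~\ref{thm:ess-saVector} therefore gives essential self-adjointness and Fredholmness of $D^{\domain_i}_{B_i}$, with domains $H^1(\domain_i,E;B_i)$.

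For the index formula I will use a standard deformation argument in the spirit of Bunke's gluing, rather than working with the kernels directly.

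\textbf{Step 1: the decoupled problem on $\domain$.} On $\domain_1\sqcup\domain_2$, the operator $D^{\domain_1}_{B_1}\oplus D^{\domain_2}_{B_2}$ is the operator on $\domain$ acting on $H^1(\domain_1)\oplus H^1(\domain_2)$. The traces $\sigma_1|_\hsurface$ and $\sigma_2|_\hsurface$ are constrained to lie in the $(+1)$- and $(-1)$-eigenbundles of $\gamma_\hsurface=\mathscr E\overbar c(\overbar\nu_\hsurface)c(\nu_\hsurface)$, respectively. The glued problem $D_B$ instead requires $\sigma_1|_\hsurface=\sigma_2|_\hsurface$.

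\textbf{Step 2: a rotating family of transmission conditions.} I will interpolate between the two problems by the family
\[
\cos t\,(\sigma_1-\sigma_2)+\sin t\,\gamma_\hsurface(\sigma_1+\sigma_2)\in\ \text{(a fixed half-rank constraint)},\qquad t\in[0,\pi/4],
\]
or equivalently by a family of Lagrangian subspaces $L_t\subset E|_\hsurface\oplus E|_\hsurface$. At $t=0$ this family gives the gluing condition. At $t=\pi/4$ it gives $(1-\gamma_\hsurface)\sigma_1=0$ and $(1+\gamma_\hsurface)\sigma_2=0$, which are the decoupled conditions $B_1$ and $B_2$. Doubling $\hsurface$ turns the problem into one on $\domain_1\sqcup\domain_2$ with boundary component $\hsurface\sqcup\hsurface$. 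Since $\gamma_\hsurface$ anticommutes with $\overbar c(\overbar\nu_\hsurface)$ and commutes with $\mathscr E$, each $L_t$ is a local self-adjoint elliptic boundary condition. Because $\gamma_\hsurface$ commutes with $\mathscr E$, the graded index is defined throughout the family. The family is norm-continuous in $H^1$, and $\ind$ is computed on $E^+$. Homotopy invariance of the index for a continuous family of Fredholm operators $H^1(\cdot;L_t)\to L^2$ then yields \eqref{eq:gluingIndexFormula}.

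\textbf{Main obstacle.} The delicate point is the edges $\hsurface\cap\overbar F_k$, where $L_t$ meets the condition $B$. Theorem~\ref{thm:ess-saVector} covers only the endpoints $t=\pi/4$ (and $t=0$, trivially, since there $\hsurface$ is interior). For the intermediate conditions I must re-verify the local model. I expect to handle this by reduction via Theorem~\ref{thm:reduction} to the flat quarter-space model, where the two sheets glue to a half-space. The orthogonality relations imply that $\gamma_\hsurface$ and $\gamma_k$ anticommute with each other's normal Clifford factors in the appropriate way, so that the reflection/doubling trick maps the model to the de Rham operator on $\mathbb R\times$(sector of angle $\pi$) with absolute-type conditions. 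In this model the spectral gap of $P_B$ from Lemma~\ref{lemma:ess-sa-innerproductcomparison} remains at least $1/2$ uniformly in $t$.

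If this uniform gap fails, I will instead prove the formula by a direct kernel/cokernel comparison. That route uses cutoff functions near $\hsurface$ and the product structure, obtained by first deforming the metric and $\nu_\hsurface$ to be of product type near $\hsurface$ (an index-preserving deformation), and then invoking the standard argument on the collar $\hsurface\times[-\epsilon,\epsilon]$.
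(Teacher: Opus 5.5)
Your overall architecture is the paper's. The check that each $D^{\domain_i}_{B_i}$ satisfies Theorem~\ref{thm:ess-saVector} is Lemma~\ref{lemma:piecesEssSAFredholm}, and your family $L_t$, after a sign fix, is exactly the paper's $B^s$. As written, setting $\cos t\,(\sigma_1-\sigma_2)+\sin t\,\gamma_\Sigma(\sigma_1+\sigma_2)=0$ yields $(1+\gamma_\Sigma)\sigma_1=0=(1-\gamma_\Sigma)\sigma_2$ at $t=\pi/4$, which are the complementary conditions. With $-\sin t$ instead, splitting into the $(\pm1)$-eigenbundles of $\gamma_\Sigma$ gives $Q\sigma_2=sQ\sigma_1$ and $Q^\perp\sigma_1=sQ^\perp\sigma_2$ with $s=\tan(\pi/4-t)$.

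\textbf{First gap: Fredholmness of the intermediate problems.} The step you call the main obstacle is left open, and the route you sketch does not work as stated. Lemma~\ref{lemma:ess-sa-innerproductcomparison} concerns conditions $\mathscr E\overbar c(\nu)c(\nu')\sigma=-\sigma$ on the rays of a sector. It says nothing about a transmission condition coupling two sheets, so it gives no uniform bound $|P|\geq 1/2$. Your claim that no new vertices arise is also false. $\Sigma$ only has to avoid codimension-three faces, so it may cross edges $\overbar F_i\cap\overbar F_j$; it does so in Step~4 of the proof of Theorem~\ref{thm:indexTheoremVector}. At the resulting corners $\Sigma\cap\overbar F_i\cap\overbar F_j$ the intermediate problem is a three-dimensional corner problem that your sketch never addresses. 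For the pieces themselves this is harmless, since Theorem~\ref{thm:ess-saVector} handles arbitrary vertices.

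The missing idea is separation of variables normal to $\Sigma$. By the orthogonality hypotheses, $\gamma_\Sigma$ commutes with each $\gamma_k$ along $\Sigma\cap\overbar F_k$. Hence $D^\Sigma$ with the induced condition $B_\Sigma$, after modifying the connection so that $\gamma_\Sigma$ is parallel, is self-adjoint with discrete spectrum; its vertices are covered by the sector criterion (Lemma~\ref{lemma:gluingEss-sa}). In the collar model $\overbar c(\partial_u)\partial_u+D^\Sigma$, an $L^2$ deficiency solution on each half-cylinder has the form $e^{\pm uA}\varphi_k$ with $A=\sqrt{(D^\Sigma)^2+1}$. Since $A$ and $D^\Sigma$ preserve the eigenbundles of $\gamma_\Sigma$ while $\overbar c(\partial_u)$ swaps them, the transmission condition forces $\varphi_k=0$ for every $s$. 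This works uniformly at edge points and at crossing points.

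\textbf{Second gap: the varying domains.} Saying the family is ``norm-continuous in $H^1$'' has no content until the domains $H^1(\domain_1\sqcup\domain_2,E;L_t)$ are identified. With corners present you cannot cite smooth-boundary theory for this. The paper builds an extension operator $\mathcal E_i\colon H^{1/2}(\Sigma,E;B_\Sigma)\to H^1(\domain_i,E;B_{\mathrm{ext},i})$ that is compatible with the side conditions, using $\chi(u)\Phi(uD^\Sigma_{B_\Sigma})$ on a collar. It then sets $T_{s,s'}(\sigma_1,\sigma_2)=(\sigma_1+(s'-s)\mathcal E_1Q^\perp\tau_2\sigma_2,\ \sigma_2+(s'-s)\mathcal E_2Q\tau_1\sigma_1)$, with $\tau_i$ the trace on $\Sigma$. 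This is an isomorphism from the $B^s$-domain onto the $B^{s'}$-domain, with inverse $T_{s',s}$. Consequently $D_{B^s}\circ T_{1,s}$ is a norm-continuous Fredholm family on a fixed space. This step also rests on $D^\Sigma_{B_\Sigma}$ being self-adjoint and commuting with $Q$.

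Your fallback (deform to product form near $\Sigma$, then compare kernels on the collar) is too unspecified to close either gap.
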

		
     As a preparation for the proof of Theorem \ref{thm:gluing}, we first prove a few lemmas. 
     
     \begin{figure}
     \begin{tikzpicture}[line cap=round, line join=round, scale=0.7]
     	
     	\coordinate (V) at (0,0);
     	
     	\coordinate (TL1) at (2.0, 4.2);   
     	\coordinate (BL1) at (1.8, 2.4);   
     	\coordinate (TR1) at (4.5, 3.2);   
     	\coordinate (BR1) at (4.0, 1.6);   
     	
     	\coordinate (Center1) at (3.1, 2.85);
     	
     	\def\dx{2.2}
     	\def\dy{2.0}
     	
     	\coordinate (TL2) at ($(TL1) + (\dx, \dy)$);
     	\coordinate (BL2) at ($(BL1) + (\dx, \dy)$);
     	\coordinate (TR2) at ($(TR1) + (\dx, \dy)$);
     	\coordinate (BR2) at ($(BR1) + (\dx, \dy)$);
     	
     	\coordinate (Center2) at ($(Center1) + (\dx, \dy)$);
     	
     	\coordinate (FTL) at ($(V)!1.5!(TL1) + (\dx, \dy)$);
     	\coordinate (FBL) at ($(V)!1.5!(BL1) + (\dx, \dy)$);
     	\coordinate (FTR) at ($(V)!1.5!(TR1) + (\dx, \dy)$);
     	\coordinate (FBR) at ($(V)!1.5!(BR1) + (\dx, \dy)$);
     	
     	\coordinate (ExtFTL) at ($(V)!1.65!(TL1) + (\dx, \dy)$);
     	\coordinate (ExtFBL) at ($(V)!1.65!(BL1) + (\dx, \dy)$);
     	\coordinate (ExtFTR) at ($(V)!1.65!(TR1) + (\dx, \dy)$);
     	\coordinate (ExtFBR) at ($(V)!1.65!(BR1) + (\dx, \dy)$);

     	
     	\draw[thick, densely dashed, gray!80] (V) -- (TR1);
     	
     	\draw[thick, densely dashed, gray!80] (TR2) -- (FTR);
     	\draw[thick, densely dashed, gray!80] (FTR) -- (ExtFTR);
     	\draw[thick, densely dashed, gray!80] (FTL) to[bend left=8] (FTR);
     	\draw[thick, densely dashed, gray!80] (FTR) -- (FBR);
     	
     	\fill[gray!30]
     	(TL2) to[bend left=8] (TR2)
     	-- (BR2)
     	to[bend right=8] (BL2)
     	-- cycle;
     	
     \draw[thick, densely dashed, black] (TL2) to[bend left=8] (TR2);
     \draw[thick, densely dashed, black] (TR2) -- (BR2);
     \draw[thick, black] (BL2) to[bend left=8] (BR2);
     \draw[thick, black] (TL2) -- (BL2);
     \node at (Center2) {$\Sigma$}; 
     	
     	\draw[thick, black] (TL2) -- (FTL) -- (ExtFTL);
     	\draw[thick, black] (BL2) -- (FBL) -- (ExtFBL);
     	\draw[thick, black] (BR2) -- (FBR) -- (ExtFBR);
     	
     	\draw[thick, black] (FBL) to[bend left=8] (FBR);
     	\draw[thick, black] (FTL) -- (FBL);
     	
     	\fill[gray!30]
     	(TL1) to[bend left=8] (TR1)
     	-- (BR1)
     	to[bend right=8] (BL1)
     	-- cycle;
     	
     	\draw[thick, black] (TL1) to[bend left=8] (TR1);
     	\draw[thick, black] (BL1) to[bend left=8] (BR1);
     	\draw[thick, black] (TL1) -- (BL1);
     	\draw[thick, black] (TR1) -- (BR1);
     	\node at (Center1) {$\Sigma$}; 
     
     	\draw[thick, black] (V) -- (TL1);
     	\draw[thick, black] (V) -- (BL1);
     	\draw[thick, black] (V) -- (BR1);
     	
     \end{tikzpicture}
     	\caption{$\protect\domain$ decomposes into $\protect\domain_1$ and $\protect\domain_2$ along $\protect\hsurface$.}
     	\label{fig:glue}
     \end{figure}

    \begin{lemma}\label{lemma:piecesEssSAFredholm}
    	The operators $D^{\domain_1}_{B_1}$ and $D^{\domain_2}_{B_2}$ are essentially
    	self-adjoint and Fredholm.
    \end{lemma}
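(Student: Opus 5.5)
The plan is to apply Theorem \ref{thm:ess-saVector} directly to $\domain_1$ and $\domain_2$, each with the boundary conditions $B_1$ and $B_2$ respectively. Theorem \ref{thm:ess-saVector} already allows $\nu_k$ to be an arbitrary smooth unit-length section of $f^*T\target$ rather than the actual inner normal of a target face. The condition on $\hsurface$ given by \eqref{eq:B2onSigma} is of the required form, with the pair $(\overbar\nu_\hsurface,\nu_\hsurface)$. For $\domain_1$, the inward normal of $\hsurface$ is $-\overbar\nu_\hsurface$. Writing \eqref{eq:B1onSigma} as $\mathscr E\overbar c(-\overbar\nu_\hsurface)c(\nu_\hsurface)\sigma=-\sigma$ puts it in the required form, with the pair $(-\overbar\nu_\hsurface,\nu_\hsurface)$. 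It therefore suffices to verify the hypotheses of Theorem \ref{thm:ess-saVector} for $\domain_1$ and $\domain_2$ with these data.

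I would first go through the faces of $\domain_i$. The codimension-one faces are the pieces of the original faces $\overbar F_k\cap\domain_i$ together with $\hsurface$. The edges come in two kinds. The first kind are the pieces of the original edges $\overbar F_i\cap\overbar F_j$; here the dichotomy (strict inequality or equality on each component) is inherited directly, and the dihedral angles are unchanged and less than $\pi$. The second kind are the new edges $\hsurface\cap\overbar F_k$. On these $\hsurface$ meets $\overbar F_k$ orthogonally, so the dihedral angle is $\pi/2<\pi$ and $\langle\pm\overbar\nu_\hsurface,\overbar\nu_k\rangle=0$. By hypothesis $\langle\nu_\hsurface,\nu_k\rangle=0$, so the equality case \eqref{eq:angleEqual} holds on every such edge, for both signs.

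The vertices need more care. Since $\hsurface$ is disjoint from all faces of codimension greater than two, the only new vertices are the points where $\hsurface$ crosses an original edge $\overbar F_i\cap\overbar F_j$. At such a point three faces meet: $\overbar F_i$, $\overbar F_j$ and $\hsurface$, with $\hsurface$ orthogonal to both $\overbar F_i$ and $\overbar F_j$. The tangent cone is then the product of a sector with a half-line, so it is a genuine polyhedral cone and $\domain_i$ is a polyhedral manifold. All of its dihedral angles are less than $\pi$, and the comparison conditions hold on the three edges through the vertex by the previous paragraph.

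Theorem \ref{thm:ess-saVector} imposes no condition at vertices beyond these edge conditions. Its proof at vertices goes through Theorem \ref{thm:reduction} and Corollary \ref{corollary:ess-saThreeDimension}, which require only the edge inner-product comparisons. Hence $D^{\domain_i}_{B_i}$ is essentially self-adjoint with domain $H^1(\domain_i,E;B_i)$, and it is Fredholm by the Rellich argument at the end of that proof.

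The main obstacle is checking that the cut pieces really are polyhedral manifolds in the sense of Definition \ref{def:polymanifold}. Concretely, $\hsurface$ must extend to a smooth open hypersurface that is transversal to the other $H_j$, and the new vertices must have tangent cones of the correct type. This should follow from the orthogonality of $\hsurface$ to the faces it meets, together with the fact that it avoids the original vertices.
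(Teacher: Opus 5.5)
Your proposal is correct and follows the paper's proof: apply Theorem~\ref{thm:ess-saVector} to each piece, rewrite \eqref{eq:B1onSigma} using the inward normal $-\overbar\nu_\hsurface$, and observe that each new edge $\hsurface\cap\overbar F_k$ falls into the equality case because $\langle \pm\overbar\nu_\hsurface,\overbar\nu_k\rangle = 0 = \langle \nu_\hsurface,\nu_k\rangle$. The point you single out as the main obstacle, namely that $\domain_1$ and $\domain_2$ are polyhedral manifolds, does not need to be verified, since it is already one of the hypotheses of Theorem~\ref{thm:gluing}.
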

    
    \begin{proof}
    	We prove the statement for $\domain_1$; the proof for $\domain_2$ is
    	identical. The codimension-one faces of $\domain_1$ consist of $\hsurface$ and
    	the faces
    	\[
    	\overbar F_{1,k}\coloneqq \overbar F_k\cap\domain_1
    	\]
    	coming from codimension-one faces $\overbar F_k$ of $\domain$.
    	
    	For adjacent faces $\overbar F_{1,i}$ and $\overbar F_{1,j}$ inherited from
    	$\domain$, the required inner-product comparison is exactly the corresponding
    	comparison on $\overbar F_i\cap\overbar F_j$. Now suppose that $\hsurface$ and
    	$\overbar F_{1,i}$ are adjacent. The  unit inner normal to $\hsurface$ as a
    	boundary face of $\domain_1$ is $-\overbar\nu_\hsurface$. Thus the boundary
    	condition \eqref{eq:B1onSigma} is precisely the standard condition
    	\[
    	\mathscr E\,\overbar c(-\overbar\nu_\hsurface)c(\nu_\hsurface)\sigma
    	=
    	-\sigma
    	\]
    	written using the inward normal of $\domain_1$. Moreover, by assumption,
    	\[
    	\langle -\overbar\nu_\hsurface,\overbar\nu_i\rangle
    	=
    	0
    	=
    	\langle\nu_\hsurface,\nu_i\rangle
    	\]
    	along $\hsurface\cap\overbar F_{1,i}$. Hence the equality case of the
    	inner-product comparison holds for the pair
    	$\hsurface,\overbar F_{1,i}$.
    	
    	Therefore all hypotheses of Theorem~\ref{thm:ess-saVector} hold for
    	$\domain_1$ with boundary condition $B_1$. It follows that
    	$D^{\domain_1}_{B_1}$ is essentially self-adjoint and Fredholm.
    \end{proof}
     
    With the same notation as in Theorem \ref{thm:gluing},  let $D^\hsurface$ be the restriction of  the Dirac operator $D$ on $\hsurface$. The boundary condition $B$ along the codimension one faces $\overbar F_i$ of $\domain$ restricts to a  boundary condition on the codimension one faces  $\hsurface_k\coloneqq \hsurface\cap \overbar F_k$ of $\hsurface$. Let us denote this induced  boundary condition by $B_\hsurface$. 
		\begin{lemma}\label{lemma:gluingEss-sa}
			Assume the hypotheses of Theorem~\ref{thm:gluing}. Then
			$D^\hsurface$, subject to $B_\hsurface$, is essentially self-adjoint and
			Fredholm.
		\end{lemma}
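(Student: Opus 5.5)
The plan is to recognise $D^\hsurface$ on the two-dimensional polyhedral manifold $\hsurface$ as a twisted Dirac-type operator of the kind already treated, and then run the local essential self-adjointness argument of Theorem~\ref{thm:ess-saVector} one dimension lower. By hypothesis $\hsurface$ avoids all vertices of $\domain$. Hence $\hsurface$ is a compact surface whose boundary consists of the curves $\hsurface_k=\hsurface\cap\overbar F_k$. Its only corners are the finitely many points $\hsurface\cap\overbar F_i\cap\overbar F_j$, where $\hsurface$ meets an edge of $\domain$. Because $\hsurface$ is orthogonal to every $\overbar F_k$ it meets, two facts hold along $\hsurface_k$. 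The inner normal $\overbar\nu_k$ is tangent to $\hsurface$ and is the inner normal of $\hsurface_k$ inside $\hsurface$. The dihedral angle of $\hsurface$ at a corner $\hsurface\cap\overbar F_i\cap\overbar F_j$ equals the dihedral angle of $\domain$ along $\overbar F_i\cap\overbar F_j$, so it is less than $\pi$.

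The first step is algebraic. On $\hsurface$ I split $E|_\hsurface$ using $\overbar\nu_\hsurface$, as in \eqref{eq:cylindricalClifford1}, where $\partial_r$ played this role. Since $\gamma_k=\mathscr E\overbar c(\overbar\nu_k)c(\nu_k)$ commutes with $\overbar c(\overbar\nu_\hsurface)$ and with $\mathscr E$, the condition $B_\hsurface$ decomposes compatibly. In local form it reads $\mathscr E'\overbar c(\overbar\nu_k)c(\nu_k)\sigma=-\sigma$, where $\mathscr E'$ is the induced grading. On each piece the condition has the same form as $B$, with $\overbar\nu_k$ the inner normal of $\hsurface_k$ and $\nu_k$ a unit section of $f^*T\target$. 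The vector $\nu_k$ need not be tangent; if it is not, write $\nu_k=\alpha_k\nu_\hsurface+\widehat\nu_k$, exactly as in \eqref{eq:decompositionOfVi}.

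Then I would check local essential self-adjointness point by point. At interior points of $\hsurface$ this is elliptic regularity, and at interior points of the curves $\hsurface_k$ it follows because the boundary condition is a local elliptic (self-adjoint involution) condition, as in the codimension-one case of Theorem~\ref{thm:ess-saVector}. At a corner $y\in\hsurface\cap\overbar F_i\cap\overbar F_j$, I would repeat the codimension-two argument of Theorem~\ref{thm:reduction}, now in dimension two. Straighten the two boundary curves via a planar version of Example~\ref{ex:codimtwo}, and trivialise the bundle so that the boundary subspaces become constant, using Lemma~\ref{lemma:boundaryedge}. The operator then becomes $D^\dR_{\mathbb G}+\mathcal A+\mathcal B$ on the sector $\mathbb G=\tancone_y\hsurface$, with $\mathcal A$ small near the origin. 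The inner products $\langle\overbar\nu_i,\overbar\nu_j\rangle$ and $\langle\nu_i,\nu_j\rangle$ at $y$ are the same as for $\domain$, so the hypothesis (strict inequality or equality along the edge) gives $\langle\overbar\omega_i,\overbar\omega_j\rangle\le\langle\omega_i,\omega_j\rangle$. Lemma~\ref{lemma:ess-sa-innerproductcomparison} then shows the model operator on the sector is essentially self-adjoint, and Kato--Rellich handles the perturbation.

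The main obstacle is that $\omega_i,\omega_j$ need not lie in a plane, so the model on $\mathbb G$ is not literally the one in Lemma~\ref{lemma:ess-sa-innerproductcomparison}. I would first rotate the target copy of $\mathbb R^3$ so that $\omega_i,\omega_j$ lie in a fixed plane. A further rotation identifies that plane with the plane of $\mathbb G$, so the extra normal direction acts trivially, as in the proof of Lemma~\ref{lemma:boundaryedge}. The operator then splits as the sector model tensored with a constant factor.

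With essential self-adjointness established, Fredholmness follows exactly as at the end of the proof of Theorem~\ref{thm:ess-saVector}. The domain is $H^1(\hsurface,E;B_\hsurface)$, an $H^1$-estimate in the style of Proposition~\ref{prop:smooth>=} holds (the boundary terms are bundle endomorphisms), and Rellich compactness gives a compact resolvent.
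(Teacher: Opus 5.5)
Your argument is correct, but it is organized differently from the paper's.

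\textbf{The paper's route.} The paper never localizes on the full rank-eight bundle. It splits $E|_\hsurface$ globally into the $(\pm1)$-eigenbundles of $\omega=\mathscr E\,\overbar c(\overbar\nu_\hsurface)c(\nu_\hsurface)$, each a copy of $S\bigl(T\hsurface\oplus(\mathbb R\nu_\hsurface)^\perp\bigr)$.
\begin{itemize}
\item The hypothesis $\nu_\hsurface\perp\nu_k$ of Theorem~\ref{thm:gluing} is exactly what makes $\omega$ commute with $B_\hsurface$.
\item The modified connection $\widetilde\nabla_X=\nabla_X+\frac12\omega(\nabla_X\omega)$ makes $\omega$ parallel.
\item Hence $\widetilde D^\hsurface$ becomes two copies of a genuine two-dimensional operator of the type in Theorem~\ref{thm:ess-saVector}, with a rank-two auxiliary bundle. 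The auxiliary vectors already lie in that rank-two bundle, so the sector model of Lemma~\ref{lemma:ess-sa-innerproductcomparison} appears directly.
\item Finally, $D^\hsurface-\widetilde D^\hsurface$ is zeroth order.
\end{itemize}

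\textbf{Your route.} You rerun the local analysis of Theorem~\ref{thm:ess-saVector} in dimension two, directly on $E|_\hsurface$. At each corner you rotate the target $\mathbb R^3$ so that the frozen model is the sector de Rham operator tensored with a spectator $\mathbb C^2$. This is the same device used in the codimension-two case of Theorem~\ref{thm:ess-saVector} and in Lemma~\ref{lemma:boundaryedge}, and it is valid here. Both $\overbar c(T\hsurface)$ and the boundary involutions lie in the Clifford algebra generated by the plane of the sector and its rotated image, so the complementary factor is inert.

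\textbf{What each buys.} Your route shows that the lemma does not depend on $\nu_\hsurface$ at all, since neither $D^\hsurface$ nor $B_\hsurface$ involves it. The price is redoing the corner straightening and trivialization by hand. The paper's route reuses the existing machinery. It also produces the $\omega$-splitting, which is the structure exploited afterwards in Proposition~\ref{prop:gluingMixedEss-sa}.

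\textbf{Two small remarks.}
\begin{itemize}
\item Your preliminary splitting along $\overbar\nu_\hsurface$ and the decomposition $\nu_k=\alpha_k\nu_\hsurface+\widehat\nu_k$ play no role in your corner argument. In fact $\alpha_k=0$ by the hypothesis $\nu_\hsurface\perp\nu_k$, so these steps can be dropped.
\item $\overbar c(T\hsurface)$ is not parallel for the restricted connection. So the Weitzenb\"ock identity for $D^\hsurface$ picks up first-order terms involving the second fundamental form of $\hsurface$. These are absorbed by Cauchy--Schwarz, and your $H^1$ estimate still holds.
\end{itemize}
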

		\begin{proof}
			On $\hsurface$, consider the orthogonal decompositions
			\[
			T\domain=\mathbb R\overbar\nu_\hsurface\oplus T\hsurface
			\textup{ and }
			f^*T\target=\mathbb R\nu_\hsurface\oplus
			(\mathbb R\nu_\hsurface)^\perp .
			\]
			Set
			$\omega
			=
			\mathscr E\,\overbar c(\overbar\nu_\hsurface)c(\nu_\hsurface).$
			The operator $\omega$ is a self-adjoint involution. With respect to its
			$(\pm1)$-eigenspace decomposition, the bundle $E|_\hsurface$ is identified
			with two copies of $S\bigl(T\hsurface\oplus(\mathbb R\nu_\hsurface)^\perp\bigr)$: 
			\[
			E \cong S\bigl(T\hsurface\oplus(\mathbb R\nu_\hsurface)^\perp\bigr) \oplus S\bigl(T\hsurface\oplus(\mathbb R\nu_\hsurface)^\perp\bigr).
			\]
			Since $\hsurface$ is orthogonal to every face $\overbar F_k$ that it
			intersects and $\nu_\hsurface$ is orthogonal to the corresponding $\nu_k$, a
			direct  computation shows that $\omega$ commutes with the induced boundary condition
			$B_\hsurface$ along each $\hsurface_k = \hsurface\cap \overbar F_k$. Hence $B_\hsurface$ splits to boundary conditions on each
			copy of $
			S\bigl(T\hsurface\oplus(\mathbb R\nu_\hsurface)^\perp\bigr)$. Moreover, the
			inner-product comparison at the vertices of $\hsurface$ is exactly the
			corresponding comparison along the adjacent faces of $\domain$.
			
			The endomorphism $\omega$ need not be parallel for the original connection. We
			therefore introduce the modified connection
			\[
			\widetilde\nabla_X
			=
			\nabla_X+\frac12\omega(\nabla_X\omega),
			\qquad
			X\in T\hsurface .
			\]
			Since $\omega^2=1$, this connection satisfies $
			\widetilde\nabla_X\omega=0.$
		Define
			\[
			\widetilde D^\hsurface
			=
			\sum_a \overbar c(\overbar e_a)\widetilde\nabla_{\overbar e_a},
			\]
			where $\{\overbar e_a\}$ is a local orthonormal frame of $T\hsurface$. By construction,
			$\widetilde D^\hsurface$ commutes with $\omega$, and hence restricts to each
			of the two summands above.
			
			To summarize, we see that 
			the two-dimensional essential self-adjointness criterion
			(Lemma~\ref{lemma:ess-sa-innerproductcomparison} and Theorem \ref{thm:ess-saVector}) implies that $\widetilde D^\hsurface$ with boundary
			condition $B_\hsurface$ is essentially self-adjoint and  Fredholm. Finally, $D^\hsurface-\widetilde D^\hsurface$ is a bounded zeroth-order
			operator. It follows immediately that 
			$D^\hsurface$ with boundary condition $B_\hsurface$ is also essentially
			self-adjoint and Fredholm.
		\end{proof}

		Since the operator $D^\hsurface_{B_\hsurface}$ from Lemma \ref{lemma:gluingEss-sa} is essentially self-adjoint, the domain of its closure is $H^1(\hsurface, E; B_\hsurface)$. The operator $D^\hsurface_{B_\hsurface}$ has discrete spectrum. Choose a complete orthonormal basis $\{\varphi_\lambda\}$ of $L^2(\hsurface,E)$  consisting of eigenfunctions of $D^\hsurface_{B_\hsurface}$ with 
		$D^\hsurface_{B_\hsurface}\varphi_\lambda=\lambda\varphi_\lambda. $
		\begin{definition}\label{def:gluingH1/2}
			 Define
			$$H^{1/2}(\hsurface,E;B_\hsurface)=\Big\{\varphi=\sum_\lambda a_\lambda\varphi_\lambda\in L^2(\hsurface,E):\sum_\lambda|a_\lambda|^2(1+|\lambda|)<\infty\Big\},$$
			equipped with the norm
			$$\|\varphi\|_{1/2}^2=\sum_\lambda|a_\lambda|^2(1+|\lambda|).$$
		\end{definition}

		\begin{lemma}\label{lemma:gluingExtension}
		For $i=1,2$, let
		\[
		\tau_i\colon H^1(\domain_i,E)\longrightarrow H^{1/2}(\hsurface,E)
		\]
		be the trace map. Then there exists a bounded linear  operator
		\[
		\mathcal E_i\colon
		H^{1/2}(\hsurface,E;B_\hsurface)
		\longrightarrow
		H^1(\domain_i,E;B_{\mathrm{ext},i})
		\]
		such that $	\tau_i\circ\mathcal E_i=\id.$
		Here $B_{\mathrm{ext},i}$ denotes the boundary condition $B$ on
		$\partial\domain_i\setminus\hsurface$; no boundary condition is imposed on
		$\hsurface$.
		\end{lemma}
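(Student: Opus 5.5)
The extension operator will be built spectrally from the eigenbasis $\{\varphi_\lambda\}$ of $D^\hsurface_{B_\hsurface}$ given by Lemma~\ref{lemma:gluingEss-sa}, using a collar neighborhood of $\hsurface$. Since $\hsurface$ meets every codimension-one face $\overbar F_k$ orthogonally and avoids the vertices, there is a collar $\hsurface\times[0,\varepsilon)\hookrightarrow\domain_i$, given by the normal exponential map from $\hsurface$ (possibly after a small modification so that it preserves the faces). In this collar the faces $\overbar F_k\cap\domain_i$ correspond to $\hsurface_k\times[0,\varepsilon)$. We first trivialize $E$ along the normal geodesics. This trivialization must carry the boundary condition on $\hsurface_k\times\{t\}$ to the condition $B_\hsurface$ on $\hsurface_k$. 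If the plain trivialization does not do this, because $\overbar\nu_k$ and $\nu_k$ vary in $t$, we correct it by a Lipschitz bundle isomorphism, built exactly as $\Theta$ in the proof of Theorem~\ref{thm:reduction} with projections onto the $(-1)$-eigenbundles. This reduces the problem to a product situation.

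For $\varphi=\sum_\lambda a_\lambda\varphi_\lambda\in H^{1/2}(\hsurface,E;B_\hsurface)$ we set
\[
\mathcal E_i\varphi(y,t)=\chi(t)\sum_\lambda a_\lambda e^{-(1+|\lambda|)t}\varphi_\lambda(y),
\]
where $\chi$ is a cutoff equal to $1$ near $t=0$ and supported in $[0,\varepsilon)$. Then $\tau_i\mathcal E_i\varphi=\varphi$. Each $\varphi_\lambda$ satisfies $B_\hsurface$ on $\partial\hsurface$, so $\mathcal E_i\varphi$ satisfies $B_{\mathrm{ext},i}$ on the faces of $\domain_i$ transverse to $\hsurface$; the extension by zero handles the other faces. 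The $H^1$ bound is a direct computation:
\[
\|\partial_t\mathcal E_i\varphi\|^2+\|\mathcal E_i\varphi\|^2\lesssim\sum_\lambda|a_\lambda|^2(1+|\lambda|).
\]
For the tangential derivatives we use that the closure of $D^\hsurface_{B_\hsurface}$ has domain $H^1(\hsurface,E;B_\hsurface)$ with graph norm equivalent to the $H^1$ norm. Lemma~\ref{lemma:gluingEss-sa} gives this, and one also has the uniform estimate $\|\nabla^\hsurface\psi\|\lesssim\|D^\hsurface\psi\|+\|\psi\|$ coming from the Stokes identity. We then get
\[
\int_0^\varepsilon\|\nabla^\hsurface\mathcal E_i\varphi(\cdot,t)\|^2\,dt\lesssim\sum_\lambda|a_\lambda|^2(1+|\lambda|)^2\int_0^\infty e^{-2(1+|\lambda|)t}\,dt\lesssim\|\varphi\|_{1/2}^2.
\]
The metric and frame in the collar differ from the product ones by smooth or Lipschitz bounded terms, which only change constants.

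The main obstacle is the regularity statement for the tangential part: we need $\|\nabla^\hsurface\psi\|\lesssim\|D^\hsurface_{B_\hsurface}\psi\|+\|\psi\|$ on the domain of the closure, including near the vertices of $\hsurface$ (the points of $\hsurface\cap\overbar F_i\cap\overbar F_j$). We take this from the essential self-adjointness in Lemma~\ref{lemma:gluingEss-sa}, since minimal domain $=H^1(\hsurface,E;B_\hsurface)$, combined with the closed graph theorem. A second subtlety is that the collar trivialization must not depend on $t$ in a way that breaks the boundary condition. If the $\Theta$-type correction is only Lipschitz, it still preserves $H^1$, which is all that is needed.
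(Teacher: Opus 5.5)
Your proposal is correct and is essentially the paper's proof. You reduce, via a bundle isomorphism on a collar of $\hsurface$ that is the identity on $\hsurface$ and makes the boundary condition constant in the normal direction, to the product model. You then extend spectrally using the eigenbasis of $D^\hsurface_{B_\hsurface}$, and control tangential derivatives through the equivalence of the graph norm with the $H^1$ norm on $H^1(\hsurface,E;B_\hsurface)$. The only difference is cosmetic: you use the kernel $e^{-t(1+|\lambda|)}$, whereas the paper uses $\Phi(t\lambda)$ for an even positive Schwartz function $\Phi$ with $\Phi(0)=1$, and the resulting estimates have the same structure.
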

		\begin{proof}
		It suffices to construct the extension in a collar neighborhood of
		$\hsurface$. We first consider the product model
		\[
		[0,1]\times\hsurface
		\]
		with product metric $dt^2+g_\hsurface$, and with a boundary condition $\widetilde B$
		constant in the $t$-direction along the side faces
		$[0,1]\times\hsurface_k$. In this case, the corresponding Dirac operator takes the form
		$$\slashed{D}=\overbar c(\partial_t)\frac{\partial}{\partial t}+D^\hsurface.$$
		
		Let $\chi\in C^\infty([0,1])$ satisfy $\chi(t)=1$ for
		$0\leq t\leq 1/3$ and $\chi(t)=0$ for $t\geq 2/3$. Choose an even positive
		Schwartz function $\Phi$ on $\mathbb R$ such that $\Phi(0)=1$. For
	   \[
		\varphi=\sum_\lambda a_\lambda\varphi_\lambda
		\in H^{1/2}(\hsurface,E;B_\hsurface),
		\]
		define
		\[
		\mathcal E^\sharp\varphi(t)
		=
		\chi(t)\Phi(tD^\hsurface_{B_\hsurface})\varphi
		=
		\chi(t)\sum_\lambda \Phi(t\lambda)a_\lambda\varphi_\lambda .
		\]
		Then $
		\tau(\mathcal E^\sharp\varphi)=\varphi$, 
		because $\chi(0)=1$ and $\Phi(0)=1$.
		
		We now estimate the $H^1$ norm of $\mathcal E^\sharp\varphi$. The $L^2$ norm satisfies
		\[
		\|\mathcal E^\sharp\varphi\|_{L^2([0,1]\times\hsurface)}
		\leq C\|\varphi\|_{L^2(\hsurface)} .
		\]
		For the normal derivative, we have  
		\[
		\partial_t \mathcal E^\sharp\varphi(t)
		=
		\chi'(t)\Phi(tD^\hsurface)\varphi
		+
		\chi(t)D^\hsurface
		\Phi'(tD^\hsurface)\varphi .
		\] 
		The term involving $\chi'$ is bounded by
		$C\|\varphi\|_{L^2(\hsurface)}$. The remaining term satisfies:
		\[
		\begin{aligned}
			\left\|\chi(t)D^\hsurface
			\Phi'(tD^\hsurface)\varphi \right\|_{L^2([0,1]\times\hsurface)}^2
			&\leq
			C
			\sum_\lambda
			|\lambda|\,|a_\lambda|^2
			\int_0^{|\lambda|}|\Phi'(s)|^2\,ds                         \\
			&\leq
			C\|\Phi'\|_{L^2(\mathbb R)}^2
			\|\varphi\|_{1/2}^2 .
		\end{aligned}
		\]
		Similarly, 
		\[
		\begin{aligned}
			\left\|
			\chi(t)D^\hsurface_{B_\hsurface}
			\Phi(tD^\hsurface_{B_\hsurface})\varphi
			\right\|_{L^2([0, 1]\times \hsurface)}^2
			&\leq
			C
			\sum_\lambda
			|\lambda|\,|a_\lambda|^2
			\int_0^{|\lambda|}|\Phi(s)|^2\,ds                         \\
			&\leq
			C\|\Phi\|_{L^2(\mathbb R)}^2
			\|\varphi\|_{1/2}^2 .
		\end{aligned}
		\]
		For each $t>0$,  $\Phi(tD^\hsurface_{B_\hsurface})$ maps
		$L^2(\hsurface,E)$ to $\operatorname{Dom}(D^\hsurface_{B_\hsurface}) = H^{1}(\hsurface,E;B_\hsurface)$.
		In particular, 
		$ \mathcal E^\sharp\varphi(t) = \chi(t)\Phi(tD^\hsurface_{B_\hsurface})\varphi$
		belongs to $H^1(\hsurface,E;B_\hsurface)$ for every $t>0$.
		Since the graph norm of $D^\hsurface_{B_\hsurface}$ is equivalent to the
		$H^1$ norm on $H^1(\hsurface,E;B_\hsurface)$, we have 
		\[
		\|u\|_{H^1(\hsurface)}^2
		\leq
		C\left(
		\|u\|_{L^2(\hsurface)}^2
		+
		\|D^\hsurface_{B_\hsurface}u\|_{L^2(\hsurface)}^2
		\right),
		\qquad
		u\in H^{1}(\hsurface,E;B_\hsurface),
		\]
		Applying this to $u(t)= \mathcal E^\sharp\varphi(t)$, we obtain
		\[
		\int_0^1\|\nabla^{\hsurface}u(t)\|_{L^2	(\hsurface)}^2\,dt
		\leq
		C\int_0^1
		\left(
		\|u(t)\|_{L^2(\hsurface)}^2
		+
		\|D^\hsurface_{B_\hsurface}u(t)\|_{L^2(\hsurface)}^2
		\right)\,dt .
		\]
		These
		estimates together imply that
		\[
		\mathcal E^\sharp\colon
		H^{1/2}(\hsurface,E;B_\hsurface)
		\longrightarrow
		H^1([0,1]\times\hsurface,E;B_{\mathrm{side}})
		\]
		is bounded. Here $B_{\mathrm{side}}$ denotes the product boundary condition
		on the side faces $[0,1]\times\hsurface_k$.
		
		Now we consider the general case. Since $\hsurface$ is disjoint from codimension three faces of $\domain$, by an argument similar to the codimension two case of Theorem \ref{thm:reduction}, there is a smooth bundle isomorphism $\Psi$  from a tubular neighborhood of $\hsurface$ in $\domain$ to the direct product space $(-\varepsilon, \varepsilon)\times \hsurface$ (viewed as a subspace of the normal bundle of $\hsurface$) such that  $\Psi|_\hsurface$ equals the identity bundle map, $\Psi$ maps the boundary condition $B$ for the operator $D$ to the boundary condition $\widetilde B$ for $\slashed D$ that is constant along the $\partial_t$-direction.
		
		For $\varphi=\sum_\lambda a_\lambda\varphi_\lambda$ in $H^{1/2}(\hsurface,E; B_{\hsurface})$, we define  the extension map 
		$$ \mathcal  E(\varphi)\coloneqq \Psi^* \circ \mathcal E^\sharp \circ \Psi (\varphi)  $$
		where $\mathcal E^\sharp$ is the extension map defined  using $\slashed{D}$ (subject to the boundary condition $\widetilde B$)  in the product case above. 
		
		Since $\Psi$ maps $H^1$ Sobolev space to $H^1$ Sobolev space, it follows that $\mathcal E$ is a bounded linear map $H^{1/2}(\hsurface,E;B_\hsurface)\to H^1(\domain_i,E;B_{\mathrm{ext},i})$. Furthermore, since $\Psi|_\hsurface$ is the identity, we have 
		$$ \tau\circ \mathcal{E}(\varphi) =  \tau \circ \Psi^* \circ \mathcal{E}^\sharp \circ \Psi (\varphi) = \varphi. $$ This finishes the proof. 
		\end{proof}

\begin{proposition}\label{prop:gluingMixedEss-sa}
		Assume the notation of Theorem \ref{thm:gluing}. Consider the disjoint union $\domain_1 \sqcup \domain_2$, and let $V \subset E|_\hsurface$ denote the subbundle over $\hsurface$ given by 
	$$V = \left\{\sigma \in E|_\hsurface : \mathscr{E}\overbar{c}(\overbar{\nu}_\hsurface)c(\nu_\hsurface)\sigma = \sigma\right\}.$$
	Let $Q \colon E|_\hsurface \to V$ be the orthogonal projection onto $V$, and set $Q^{\perp} = 1 - Q$. For $s \in [0,1]$, define a family of boundary conditions $B^s$ on $E$ over $\domain_1 \sqcup \domain_2$ as follows. A pair of sections $(\sigma_1, \sigma_2)$ of $E|_{\domain_1} \sqcup E|_{\domain_2}$ satisfies $B^s$ if:
	\begin{enumerate}[label=$(\roman*)$]
	\item $s Q(\sigma_1) = Q(\sigma_2)$ and $Q^\perp(\sigma_1) = s Q^\perp(\sigma_2)$ on $\hsurface$;
					
	\item $\sigma_1$ and $\sigma_2$ satisfy the original boundary condition
	$B$ on all codimension-one faces of $\domain_1$ and $\domain_2$ other
	than $\hsurface$.
	\end{enumerate}
	Then the Dirac operator $D$ on $E$ over $\domain_1 \sqcup \domain_2$, subject to the boundary condition $B^s$, is essentially self-adjoint and Fredholm.
	\end{proposition}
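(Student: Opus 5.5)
The plan is to reduce Proposition \ref{prop:gluingMixedEss-sa} to local essential self-adjointness at every point of $\domain_1\sqcup\domain_2$, as in the discussion after Definition \ref{def:ess-sa}, and then deduce the Fredholm property via Rellich exactly as at the end of the proof of Theorem \ref{thm:ess-saVector}.

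\textbf{Symmetry.} First I would verify that $B^s$ makes $D$ symmetric. The boundary term on $\hsurface$ for the pair $(\sigma_1,\sigma_2)$ is $\int_\hsurface \langle \overbar c(\overbar\nu_\hsurface)\sigma_1,\tau_1\rangle - \langle \overbar c(\overbar\nu_\hsurface)\sigma_2,\tau_2\rangle$, up to a sign convention, since $\overbar\nu_\hsurface$ is the inward normal for $\domain_2$ and the outward normal for $\domain_1$. The operator $\overbar c(\overbar\nu_\hsurface)$ anticommutes with $\omega=\mathscr E\overbar c(\overbar\nu_\hsurface)c(\nu_\hsurface)$, so it exchanges $V$ and $V^\perp$. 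Writing the term in $Q,Q^\perp$ components, condition (i) gives $\langle \overbar c\, Q^\perp\sigma_1, Q\tau_1\rangle=s\langle \overbar c\, Q^\perp\sigma_2, Q\tau_1\rangle$, and the $s$'s pair up to cancel against the $\domain_2$ contributions. The endpoints $s=0$ and $s=1$ should be checked separately. At $s=1$ the condition is transmission (gluing), and at $s=0$ it is $Q\sigma_2=0$, $Q^\perp\sigma_1=0$, i.e.\ exactly $B_2\sqcup B_1$.

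\textbf{Local essential self-adjointness.} Away from $\hsurface$, this is Lemma \ref{lemma:piecesEssSAFredholm} or Theorem \ref{thm:ess-saVector}, since $B^s$ agrees with $B$ there. At interior points of $\hsurface$, I would fold $\domain_1$ onto $\domain_2$ along a collar: reflect $\domain_1$ via the normal exponential map, so the pair becomes a section of $E\oplus E$ on one side. The Dirac operator becomes $\mathrm{diag}(-D',D')$ in suitable form, and $B^s$ becomes a local boundary condition, namely the graph of an invertible map in the $V/V^\perp$ splitting. This condition is elliptic (Lopatinskii--Shapiro), since it is a rotation of the $s=0$ condition $B_1\oplus B_2$, which is elliptic; I would check this by a unitary depending on $s$ that commutes with the tangential symbol. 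Classical theory then gives local essential self-adjointness.

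\textbf{Edges of $\hsurface$.} The main obstacle is the points of $\hsurface\cap\overbar F_k$, which are edges of both $\domain_1$ and $\domain_2$ meeting at right angles. After folding, the model is the sector of angle $\pi/2$ with boundary condition $B\oplus B$ on one side and $B^s$ on the other. I would handle it by finding an $s$-dependent constant unitary $U_s$ on $E\oplus E$ that commutes with $D^{\dR}$ (built from $\omega$ and $\overbar c(\overbar\nu_\hsurface)$-type operators, or equivalently a rotation in the doubled spinor space). It should preserve $B\oplus B$ on $\overbar F_k$; here I use that $\omega$ commutes with $\gamma_k$ because $\overbar\nu_\hsurface\perp\overbar\nu_k$ and $\nu_\hsurface\perp\nu_k$. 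It should also carry $B^0$ to $B^s$. Then the model is unitarily equivalent to the $s=0$ model, which is the direct sum of two right-angle sectors with equality in the inner-product comparison, essentially self-adjoint by Lemma \ref{lemma:ess-sa-innerproductcomparison}. The curved situation reduces to this model through Theorem \ref{thm:reduction}-type coordinates and Kato--Rellich, as in the codimension-two case of Theorem \ref{thm:ess-saVector}.

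\textbf{Fredholm property.} Once the domain is identified with $H^1$ sections satisfying $B^s$, I would use a Lichnerowicz/Stokes estimate in which the $\hsurface$-boundary terms are controlled by the trace theorem. This gives equivalence of the graph norm and the $H^1$ norm, and Rellich then yields a compact resolvent, hence the Fredholm property.
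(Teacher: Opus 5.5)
Your route differs from the paper's, and the key device works, but one case is missing from your list.

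\textbf{Comparison of routes.} The paper does not analyze points of $\hsurface$ one at a time. It conjugates a whole collar of $\hsurface$ to the product operator $\slashed{D}=\overbar c(\partial_t)\partial_t+D^\hsurface$ on $((-\infty,0]\times\hsurface)\sqcup([0,\infty)\times\hsurface)$ and applies Kato--Rellich. It then kills the deficiency spaces by separating variables in the eigenbasis of the self-adjoint operator $D^\hsurface_{B_\hsurface}$ of Lemma \ref{lemma:gluingEss-sa}. Deficiency elements have the form $e^{tA}\varphi_1$ and $e^{-tA}\varphi_2$ with $A=\sqrt{(D^\hsurface)^2+1}$, and the coupling condition forces $\varphi_1=\varphi_2=0$. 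The points where $\hsurface$ meets faces and edges of $\domain$ are thereby absorbed into the self-adjointness of $D^\hsurface_{B_\hsurface}$.

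Your rotation does check out. Set $\omega=\mathscr E\,\overbar c(\overbar\nu_\hsurface)c(\nu_\hsurface)$ and $J=\overbar c(\overbar\nu_\hsurface)$. The folded operator is $\operatorname{diag}(-J\partial_t+D^\hsurface,\,J\partial_t+D^\hsurface)$. Put $K=\begin{pmatrix}0&-\omega\\ \omega&0\end{pmatrix}$ and $U_s=\cos\vartheta+\sin\vartheta\,K$ with $\tan\vartheta=s$. Then:
\begin{enumerate}
\item $U_s$ is unitary and carries $V\oplus V^\perp$ (the condition $B^0$) onto $\{(a+sb,\,sa+b): a\in V,\ b\in V^\perp\}$ (the condition $B^s$).
\item $K$ commutes with $\operatorname{diag}(-J,J)$, because $J$ anticommutes with $\omega$.
\item $K$ commutes with tangential Clifford multiplication.
\item $K$ commutes with $\operatorname{diag}(\gamma_k,\gamma_k)$, because $[\omega,\gamma_k]=0$ by the two orthogonality hypotheses.
\end{enumerate}
So each constant-coefficient model for $B^s$ is unitarily equivalent to the model for $B^0=B_1\sqcup B_2$. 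You can then quote Theorem \ref{thm:ess-saVector} via Lemma \ref{lemma:piecesEssSAFredholm} instead of doing a new spectral computation, and you never need Lemma \ref{lemma:gluingEss-sa}. The paper's route, in exchange, needs no enumeration of local models along $\hsurface$.

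\textbf{The missing case.} Theorem \ref{thm:gluing} only keeps $\hsurface$ away from the vertices of $\domain$; $\hsurface$ may cross edges $\overbar F_i\cap\overbar F_j$. In the applications it does: for example, $\Sigma_\varepsilon$ in Step 4 of the proof of Theorem \ref{thm:indexTheoremVector} crosses every edge through the vertex. A crossing point is a corner of $\hsurface$ and a vertex of both $\domain_1$ and $\domain_2$. There $\overbar\nu_\hsurface$ points along the edge, so the folded tangent model is $[0,\infty)\times\mathbb G$ with $\mathbb G$ a sector of angle $\theta_{ij}$. It carries $B^s$ on $\{0\}\times\mathbb G$ and $B\oplus B$ on the two side faces; it is not a right-angle sector times a line.

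Your rotation handles this case verbatim, since $\omega$ commutes with both $\gamma_i$ and $\gamma_j$.
\begin{enumerate}
\item The $s=0$ model is a pair of three-dimensional cones, covered by Corollary \ref{corollary:ess-saThreeDimension}: equality holds on the two new edges, and the given comparison holds on the old edge.
\item The curved case reduces to this model via the codimension-three construction of Theorem \ref{thm:reduction} (a Lipschitz bundle map, with only a bounded zeroth-order term), followed by Kato--Rellich.
\end{enumerate}
In every reduction along $\hsurface$, you should use one and the same bundle map on both sides, restricting to the identity on $\hsurface$, so that $B^s$ is carried to the constant coupled condition.
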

	\begin{proof}
A direct computation shows that $D_{B^s}$ is formally symmetric. We will prove that $D_{B^s}$ is locally essentially self-adjoint at every point $x \in \domain$.

	By Theorem \ref{thm:ess-saVector} and its proof, $D$ is clearly  locally essential self-adjoint at any point that does not lie in $\hsurface$. 
	
	On $\hsurface$, we reduce the verification of  essential self-adjointness to a product case. Consider the orthogonal decompositions 
	\[ T\domain = \mathbb R(\overbar \nu_\hsurface) \oplus T\hsurface \textup{ and } f^\ast T\target =  \mathbb R(\nu_\hsurface) \oplus \mathbb R(\nu_\hsurface)^\perp.\] Consider the bundle $\widetilde E \coloneqq S(T\hsurface\oplus \mathbb R(\nu_\hsurface)^\perp ) \otimes S(\mathbb R(\overbar \nu_\hsurface)\oplus \mathbb R(\nu_\hsurface))$ over the  product space $(-\varepsilon, \varepsilon) \times \hsurface$ obtained by trivially extending the bundle $E$ over $\{0\}\times \hsurface$ along the interval direction $(-\varepsilon, \varepsilon)$. The boundary condition $B$ for codimension one face $\overbar F_k$ of $\domain$ restricts to a boundary condition $B_{\hsurface}$ along the codimension one face $\hsurface_k = \hsurface \cap \overbar F_k$ of $\hsurface$. We extend $B_{\hsurface}$ constantly along the interval direction to obtain a boundary condition $\widetilde B$ along the codimension one face $(-\varepsilon, \varepsilon)\times \hsurface_k$ of $(-\varepsilon, \varepsilon)\times \hsurface$. 
	
	On the disjoint union $(-\varepsilon, 0]\times \Sigma \sqcup [0, \varepsilon)\times \Sigma$, we say  a pair of sections $(\sigma_1, \sigma_2)$ of $\widetilde E|_{(-\varepsilon, 0]\times \Sigma} \sqcup \widetilde E|_{[0, \varepsilon)\times \Sigma}$ satisfies $\widetilde B^s$ if $s Q(\sigma_1) = Q(\sigma_2)$ and $Q^\perp(\sigma_1) = s Q^\perp(\sigma_2)$ on $\hsurface$.

	 By an argument similar to the codimension-two case of Theorem \ref{thm:reduction}, there exist a tubular neighborhood $U_1$ (resp. $U_2$) of $\hsurface$ in $\domain_1$ (resp. $\domain_2$) and a smooth bundle isomorphism from  the spinor bundle $\widetilde E = S(T\hsurface\oplus R(\nu_\hsurface)^\perp ) \otimes S(\mathbb R\oplus \mathbb R)$ on the product space $(-\varepsilon, 0] \times \hsurface$ (resp. $[0, \varepsilon)\times \hsurface $) to the spinor bundle $E$ over $U_1$ (resp. $U_2$)  such that 
	\begin{enumerate}
		\item Restricted to $\hsurface$, $\Psi$ equals the identity bundle map:
		$$\Psi|_\hsurface = \operatorname{id}: E|_\hsurface \to \widetilde{E}|_{\{0\} \times \hsurface};$$
		\item $\Psi$ maps the codimension one faces of $(-\varepsilon, 0] \times \hsurface$ (resp. $[0, \varepsilon)\times \hsurface $) to the corresponding codimension one faces of $U_1$ (resp. $U_2$), and the boundary condition $\widetilde B^s$ for sections of $\widetilde E$ to the boundary condition $B^s$ for sections of $E$;
		\item and $\Psi^* D\Psi = \slashed D + \mathcal A + \mathcal B$
		where 
		$$\slashed{D}=\overbar c(\partial_t)\frac{\partial}{\partial t}+D^\hsurface,$$ 
		and $\mathcal{A}$ is a first-order differential operator whose coefficients are smooth and vanish on $\{0\}\times \hsurface$, and   $\mathcal{B}$ is a smooth zeroth-order operator.
	\end{enumerate}   
	 Similar to the proof of Theorem \ref{thm:ess-saVector}, by applying the Kato-Rellich perturbation theorem, it suffices to show that $\slashed  D$ subject to the boundary condition $\widetilde B^s$ is essentially self-adjoint on $((-\infty, 0] \times \hsurface)\sqcup ([0, \infty)\times \hsurface )$ . 
		
	 
	  To verify the essential self-adjointness of $\slashed D_{\widetilde B^s}$, it suffices to show that  the deficiency spaces
	  $$ \mathcal N_\pm \coloneqq \left\{ \sigma \in \operatorname{Dom}_{\max}(\slashed  D_{B^s})  \mid \slashed{D} \sigma = \pm i \sigma \right\}$$ are zero. We give the
	  argument for $\mathcal N_+$;	  the proof for $\mathcal N_-$ is identical.
	
	   Let  $\sigma = (\sigma_1, \sigma_2) \in \mathcal N_+$, that is, 
	 \begin{equation}\label{eq:defi}
	 	\slashed D \sigma_k - i\sigma_k = 0. 
	 \end{equation} Applying $(\slashed{D} + i)$ to the above equation yields  
	 \[ (-\partial_t^2 + (D^\hsurface)^2 + 1)\sigma_k = 0.\] 
	 The $L^2$ condition on the half-cylinders implies
	 \[
	 \sigma_1(t)=e^{tA}\varphi_1\quad(t\leq0),
	 \textup{ and } 
	 \sigma_2(t)=e^{-tA}\varphi_2\quad(t\geq0),
	 \]where $A = \sqrt{(D^\hsurface)^2 + 1}$ and $\varphi_k\in L^2(\hsurface, \widetilde E)$. Substituting
	 these expressions into equation \eqref{eq:defi} gives
	 \begin{equation}\label{eq:gluingDefBoundary}
	 	(JA+D^\hsurface)\varphi_1=i\varphi_1
	 	\textup{ and }
	 	(-JA+D^\hsurface)\varphi_2=i\varphi_2 .
	 \end{equation}
	 where $J = \overbar c(\partial_t)$.
	 Decompose
	 \[
	 \varphi_k=a_k+b_k,
	 \qquad
	 a_k\in V,\quad b_k\in V^\perp .
	 \]
	 Since $D^\hsurface$ and $A$ commute with\footnote{Under the identification $\Psi\colon E\xrightarrow{\cong} \widetilde E$, the operator $\mathscr E\overbar c(\overbar \nu_\hsurface) c(\nu_\hsurface)$ becomes $\mathscr E\overbar c(\partial_t) c(\partial_t)$. } $\mathscr E\overbar c(\partial_t) c(\partial_t)$, they preserve $V$ and
	 $V^\perp$. Since $J$ anticommutes with $\mathscr E\overbar c(\partial_t) c(\partial_t)$, it interchanges $V$ and
	 $V^\perp$. The boundary condition $\widetilde B^s$ gives
	 \begin{equation}\label{eq:gluingBoundaryComponents}
	 	a_2=s a_1,
	 	\qquad
	 	b_1=s b_2 .
	 \end{equation}
	 Taking the $V$-component of the first equation in
	 \eqref{eq:gluingDefBoundary}, multiplying it by $s$, and using
	 \eqref{eq:gluingBoundaryComponents}, gives
	 \[
	 s^2 JA b_2+sD^\hsurface a_1=is a_1 .
	 \]
	 Taking the $V$-component of the second equation in
	 \eqref{eq:gluingDefBoundary} gives
	 \[
	 -JA b_2+sD^\hsurface a_1=is a_1 .
	 \]
	 Subtracting the two equations yields
	 \[
	 (1+s^2)JA b_2=0.
	 \]
	 Since $J$ is invertible and $A\geq1$, we get $b_2=0$, and hence $b_1=0$. Now
	 the $V^\perp$-component of the first equation in
	 \eqref{eq:gluingDefBoundary} gives
	 \[
	 JA a_1=0,
	 \]
	 so $a_1=0$, and therefore $a_2=0$. Hence $\varphi_1=\varphi_2=0$, and so
	 $\sigma_1=\sigma_2=0$. Thus $\mathcal N_+=0$. Similarly,
	 $\mathcal N_-=0$.
	 
	 Therefore, $\slashed{D}_{\widetilde{B}^s}$ is essentially self-adjoint. By the Kato-Rellich perturbation theorem,$\Psi^* D\Psi = \slashed D + \mathcal A + \mathcal B$, subject to boundary condition $\widetilde B^s$,  is essentially self-adjoint; consequently, so is $D_{B^s}$. The domain of the self-adjoint closure of $D_{B^s}$ is thus $H^1(\domain_1 \sqcup \domain_2, E; B^s)$, from which it follows that $D_{B^s}$ is Fredholm.
			\end{proof}
			
	Now let us  prove Theorem \ref{thm:gluing}.		
			\begin{proof}[Proof of Theorem \ref{thm:gluing}]
				By Proposition \ref{prop:gluingMixedEss-sa}, for each $s\in [0, 1]$, we have a Fredholm operator
				$$D_{B^s} \colon H^1(\domain_1\sqcup \domain_2,E;B^s)\to L^2(\domain_1\sqcup  \domain_2,E),$$
				where $B^s$ is the boundary condition  defined in Proposition \ref{prop:gluingMixedEss-sa}.
				
				For $i=1,2$, let $\tau_i$ be the trace map from $H^1(\domain_i,E)$ to $H^{1/2}(\hsurface,E)$, and $\mathcal E_i$ from $H^{1/2}(\hsurface,E;B_\hsurface)$ to $H^{1}(\domain_i,E)$ be the extension map as in Lemma \ref{lemma:gluingExtension}. For $s, t\in [0,1 ]$, define the linear map
				\begin{align*}
					T_{s,t}\colon H^1(\domain_1\sqcup  \domain_2,E;B^s)&\to H^1(\domain_1\sqcup \domain_2,E;B^t)\\
					( \sigma_1, \sigma_2)&\mapsto ( \sigma_1+(t-s)\mathcal E_1Q^\perp\tau_2 \sigma_2,  \sigma_2+(t-s)\mathcal E_2Q\tau_1  \sigma_1 ),
				\end{align*}
				where $Q$ is the projection $E|_\hsurface \to V$ defined in Proposition \ref{prop:gluingMixedEss-sa}. 
				Note that $T_{s,t}$ is invertible with its inverse given by $T_{t,s}$, and is  continuous in both $s$ and $t$ with respect to the operator norm. Therefore the family of Fredholm operators
				\[
				D_{B^s}\circ T_{1,s}\colon
				H^1(\domain_1\sqcup\domain_2,E;B^1)
				\longrightarrow
				L^2(\domain_1\sqcup\domain_2,E)
				\]
				depends continuously on $s$ in operator norm. By the homotopy invariance of the Fredholm index, $\ind(D_{B^s}\circ T_{1,s})$ is independent of $s$. Since $T_{1, s}$ is invertible,
				\[ \ind(D_{B^s}\circ T_{1, s})=\ind(D_{B^s}) . \] 
				Hence $\ind(D_{B^s})$ is independent of  $s\in [0, 1]$. 
				
				When $s=1$, the boundary condition $B^1$ is the matching condition across
				$\hsurface$, so $D_{B^1}$ is the original glued operator $D_B$. When $s=0$, the boundary condition $B^0$ becomes
				\[
				Q(\sigma_2)=0
				\textup{ and }
				Q^\perp(\sigma_1)=0.
				\]
				Since $V$ is the $+1$-eigenspace of
				$\mathscr E\,\overbar c(\overbar\nu_\hsurface)c(\nu_\hsurface)$, these
				are exactly the boundary conditions $B_1$ on $\domain_1$ and $B_2$ on
				$\domain_2$. Therefore
				\[
				\ind(D_{B^1})=\ind(D_B),
				\qquad
				\ind(D_{B^0})
				=
				\ind(D^{\domain_1}_{B_1})+\ind(D^{\domain_2}_{B_2}).
				\]
				Since the index is constant in $s$, it follows that 
				\[ 	\ind(D_B)=\ind(D^{\domain_1}_{B_1})+\ind(D^{\domain_2}_{B_2}).\] \end{proof}
			
\section{An approximation lemma and its consequences} \label{sec:approximation}
In this section, we prove an approximation lemma for boundary conditions. The
lemma allows us to pass to the limit from solutions satisfying a family of
approximating boundary conditions $B_t$ to a limiting solution satisfying the limiting boundary condition 
$B_0$, provided that  $B_0$ is extremal in the sense of Definition \ref{def:extremalBoundaryCondition} below.

\begin{definition} \label{def:extremalBoundaryCondition}
	Let $(\domain,\overbar g)$ and $(\target,g)$ be polyhedral manifolds and $f\colon \domain\to \target$ a spin polyhedral map. Let $D$ be the Dirac operator associated with  the spinor bundle $E=S(T\domain\oplus f^*T\target)$ over $\domain$. Let $B$ be the boundary condition defined on each codimension-one face $\overbar F_k$ of $\domain$ by
	$$\mathscr E\overbar c(\overbar \nu_k)c(\nu_k)\sigma=-\sigma,$$ 
	where $\overbar \nu_k$ is the unit inner normal vector of $\overbar F_k$ and $\nu_k$ is a Lipschitz unit-length section over $\overbar F_k$ with values in $f^*T\target$.	
	Let $\mathscr R$ and $\mathscr A$ be the endomorphisms on $E$ that appears in the Stokes formula for $D$ (see \eqref{eq:Stokes2}):
	$$\int_{\domain} |D\sigma|^2=\int_{\domain}|\nabla\sigma|^2+ \int_{\domain}\langle\mathscr R\sigma,\sigma\rangle+\sum_{k}\int_{\overbar F_k}\langle \mathscr A\sigma,\sigma\rangle$$
	for  $\sigma\in C^\infty_{00}(\domain,E;B)$. We say that the boundary condition $B$ is \emph{extremal} if we have $\mathscr R\geq 0$  on $\domain$ and $\mathscr A\geq 0$  on each $\overbar F_k$.
\end{definition}
		
We will use the approximation lemma only in the case where the target bundle is trivial. We therefore state the lemma in that form.

\begin{lemma}\label{lemma:approximation}
	Let $(\domain,\overbar g)$ be a $3$-dimensional polyhedral manifold and $D$ be the Dirac operator associated with $E=S(T\domain\oplus \underline{\mathbb{R}}^3)$ over $\domain$, where $\underline{\mathbb{R}}^3$ is the trivial flat bundle. Consider a family of boundary conditions $\{B_t\}_{t \in [0, \varepsilon]}$ defined on each codimension-one face $\overbar F_k$ by
	$$\mathscr E\overbar c(\overbar \nu_k)c(\nu_{k,t})\sigma=-\sigma,$$
	where $\overbar \nu_k$ is the unit inner normal vector of $\overbar F_k$ and $\nu_{k,t}$ is a Lipschitz section over $\overbar F_k$ taking values in $\underline{\mathbb{R}}^3$. Assume that $B_t$ depends continuously on $t$ in Lipschitz norm;  that is, for each codimension one  face $\overbar F_k$,  the maps 
	\[t \mapsto \nu_{k,t} \textup{ and } t \mapsto \nabla\nu_{k,t}\] are continuous with respect to the $L^\infty$-norm. If
	\begin{enumerate}
		\item $B_0$ is extremal in the sense of Definition \ref{def:extremalBoundaryCondition}, and
		\item for any $t\in(0,\varepsilon]$, there exists a non-zero $\sigma_t\in H^1(\domain,E;B_t)$ such that $D\sigma_t=0$,		
	\end{enumerate}
	then there exists a non-zero section $\sigma_0\in H^1(\domain,E;B_0)$ such that $\nabla\sigma_0=0$.
\end{lemma}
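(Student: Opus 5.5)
Normalize $\|\sigma_t\|_{L^2}=1$ and take a sequence $t_n\to0$. The aim is to bound the $\sigma_{t_n}$ uniformly in $H^1$, extract a limit, and show the limit is parallel.

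\textbf{Stokes formula for $B_t$.} The Stokes formula \eqref{eq:Stokes2} holds for each $B_t$ (it extends to $H^1(\domain,E;B_t)$ by the capacity remark). Since $D\sigma_t=0$,
\[
0=\int_{\domain}|\nabla\sigma_t|^2+\int_{\domain}\langle\mathscr R\sigma_t,\sigma_t\rangle+\sum_k\int_{\overbar F_k}\langle\mathscr A_t\sigma_t,\sigma_t\rangle .
\]
Because the target bundle is trivial and flat, $\mathscr R=\Sc_{\overbar g}/4$ does not depend on $t$, and $\mathscr R\ge0$ by extremality of $B_0$. The boundary endomorphism has the form
\[
\mathscr A_t=\tfrac{H_{\overbar g}}{2}+\tfrac12\sum_\mu\overbar c(\overbar\nu_k)\overbar c(\overbar e_\mu)c(\nu_{k,t})c(\nabla_{\overbar e_\mu}\nu_{k,t}),
\]
with the same computation as before Lemma~\ref{lemma:secondff>=}. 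It depends only on $\nu_{k,t}$ and $\nabla\nu_{k,t}$, so the Lipschitz continuity hypothesis gives $\|\mathscr A_t-\mathscr A_0\|_{L^\infty(\overbar F_k)}=:\epsilon_t\to0$. Since $\mathscr A_0\ge0$, we get
\[
\|\nabla\sigma_t\|^2\le \epsilon_t\sum_k\|\sigma_t\|^2_{L^2(\overbar F_k)}.
\]

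\textbf{Trace estimate and $H^1$ bound.} Next use a trace inequality on the compact Lipschitz domain $\domain$: for every $\eta>0$,
\[
\|\sigma\|^2_{L^2(\partial\domain)}\le \eta\|\nabla\sigma\|^2+C_\eta\|\sigma\|^2 .
\]
With $\eta=1$ this gives $(1-\epsilon_t)\|\nabla\sigma_t\|^2\le C\epsilon_t$. Hence $\|\nabla\sigma_t\|\to0$, and the family $\sigma_t$ is bounded in $H^1$.

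\textbf{Passing to the limit.} By Rellich we extract $\sigma_{t_n}\to\sigma_0$ strongly in $L^2$ and weakly in $H^1$. Then $\|\sigma_0\|=1$, and weak lower semicontinuity gives $\nabla\sigma_0=0$. It remains to check that $\sigma_0\in H^1(\domain,E;B_0)$. The trace map $H^1\to L^2(\partial\domain)$ is compact (this can also be read off the trace inequality above), so $\sigma_{t_n}|_{\overbar F_k}\to\sigma_0|_{\overbar F_k}$ in $L^2$. Write $\gamma_{k,t}=\mathscr E\overbar c(\overbar\nu_k)c(\nu_{k,t})$. Since $\gamma_{k,t}\to\gamma_{k,0}$ uniformly,
\[
\gamma_{k,0}\sigma_0+\sigma_0=\lim_n\big(\gamma_{k,t_n}\sigma_{t_n}+\sigma_{t_n}\big)=0 \quad\text{in } L^2(\overbar F_k).
\]
By the identification of $H^1(\domain,E;B)$ with the $H^1$ sections whose traces satisfy $B$, this shows $\sigma_0\in H^1(\domain,E;B_0)$.

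\textbf{Main obstacle.} The delicate point is the trace inequality and trace compactness on a polyhedral (Lipschitz, corner) domain, together with using Stokes for $H^1$ sections under the moving conditions $B_t$. I would handle both by the codimension-two capacity remark and standard Lipschitz-domain trace theory, localizing with the charts of Theorem~\ref{thm:reduction}.
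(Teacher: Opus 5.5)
Your proposal is correct and follows essentially the same route as the paper's proof. Both normalize $\sigma_t$, use the Stokes formula with $\mathscr R=\Sc_{\overbar g}/4\geq 0$ independent of $t$ and $\mathscr A_t\geq -\epsilon_t$, absorb the boundary term via the trace inequality to obtain $\|\nabla\sigma_t\|\to 0$, extract a limit by Rellich, and pass the boundary condition to the limit through the trace. The only difference is in the last step: the paper notes that the Rellich subsequence is Cauchy in $H^1$ (since the gradients tend to zero) and then uses only boundedness of the trace, whereas you use weak $H^1$ convergence with lower semicontinuity and compactness of the trace; both work.
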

\begin{proof}
	Without loss of generality, assume that $\|\sigma_t\|_{L^2(\domain)} = 1$ for each $t \in (0, \varepsilon]$. Let $\mathscr R_t$ and $\mathscr A_t$ denote the endomorphisms appearing in the Stokes formula of $D$ (see \eqref{eq:Stokes2}) for the boundary condition $B_t$. Note that $\mathscr R_t \equiv \mathscr R$ is independent of $t$. Since $B_0$ is extremal, we have $\mathscr R \ge 0$ on $\domain$ and $\mathscr A_0 \ge 0$ on $\partial \domain$. Moreover, since $B_t \to B_0$  in Lipschitz norm, $\mathscr A_t$ converges to $\mathscr A_0$ in $L^\infty$ norm as $t \to 0$. Hence there exist constants $\alpha_t\geq0$, with $\alpha_t\to0$ as
	$t\to0$, such that, 
	$$ \langle \mathscr A_t \sigma(x), \sigma(x) \rangle \ge -\alpha_t |\sigma(x)|^2 \quad \text{for a.e. } x\in \overbar F_k \textup{ and }  \sigma \in C_{00}^\infty(\domain, E). $$
	
	Applying the Stokes formula (see \eqref{eq:Stokes2}) and using the fact that $D\sigma_t = 0$, we have
	$$ 0 = \int_{\domain} |D\sigma_t|^2 = \int_{\domain} |\nabla \sigma_t|^2 + \int_{\domain} \langle \mathscr R \sigma_t, \sigma_t \rangle + \int_{\partial \domain} \langle \mathscr A_t \sigma_t, \sigma_t \rangle. $$
	Using $\mathscr R\geq 0$ and the lower bound for $\mathscr A_t$, we obtain
	$$ \int_{\domain} |\nabla \sigma_t|^2 \le - \int_{\domain} \langle \mathscr R \sigma_t, \sigma_t \rangle - \int_{\partial \domain} \langle \mathscr A_t \sigma_t, \sigma_t \rangle \le \alpha_t \int_{\partial \domain} |\sigma_t|^2. $$
	Let $\tau \colon H^1(\domain, E) \to L^2(\partial \domain, E)$ be the trace operator. Since $\tau$ is bounded, there exists a constant $C > 0$ such that $\|\tau\sigma\|_{L^2(\partial\domain)}
	\leq
	C\|\sigma\|_{H^1(\domain)}$ for all $\sigma\in H^1(\domain, E)$. Therefore
	$$ \|\nabla \sigma_t\|^2_{L^2(\domain)} \le \alpha_t \|\tau \sigma_t\|^2_{L^2(\partial\domain)} \le C^2 \alpha_t \|\sigma_t\|^2_{H^1(\domain)} = C^2 \alpha_t (\|\sigma_t\|^2_{L^2(\domain)} + \|\nabla \sigma_t\|_{L^2(\domain)}^2). $$
	Rearranging the terms yields
	$$ (1 - C^2 \alpha_t) \|\nabla \sigma_t\|_{L^2(\domain)}^2 \le C^2 \alpha_t \|\sigma_t\|_{L^2(\domain)}^2. $$
	For sufficiently small $t$, we have $1 - C^2 \alpha_t > 0$. Since $\|\sigma_t\|_{L^2(\domain)} = 1$, it follows that
	$$ \|\nabla \sigma_t\|_{L^2(\domain)}^2 \le \frac{C^2 \alpha_t}{1 - C^2 \alpha_t}. $$
	This implies that $\|\nabla \sigma_t\|_{L^2(\domain)} \to 0$ as $t \to 0$.
	
	The family $\{\sigma_t\}$ is bounded in $H^1(\domain, E)$. By the Rellich compact embedding theorem, there exists a sequence $\{\sigma_{t_n}\}$ that converges strongly in  $L^2(\domain, E)$ to some limit $\sigma_0$, as $t_n\to 0$. Furthermore, since $\|\nabla \sigma_{t_n}\| \to 0$, the sequence $\{\sigma_{t_n}\}$ is a Cauchy sequence in $H^1(\domain, E)$, and consequently converges strongly to $\sigma_0$ in $H^1(\domain, E)$.
     In particular, we have $\|\sigma_0\| = 1$ (so $\sigma_0 \ne 0$) and $\nabla \sigma_0 = 0$. 
     
     Finally, by the boundedness  of the trace operator and the convergence of the boundary conditions $B_t \to B_0$, the limit section $\sigma_0$ satisfies the boundary condition $B_0$.
\end{proof}

We have the following immediate corollary of Lemma \ref{lemma:approximation}.

\begin{corollary}\label{coro:approximation}
		Let $(\domain,\overbar g)$ be a $3$-dimensional polyhedral manifold and $D$ be the Dirac operator associated with $E=S(T\domain\oplus \underline{\mathbb{R}}^3)$ over $\domain$. Consider a family of boundary conditions $\{B_t\}_{t \in [0, \varepsilon]}$ defined on each codimension-one face $\overbar F_k$ by
		$$\mathscr E\overbar c(\overbar \nu_k)c(\nu_{k,t})\sigma=-\sigma,$$
		where $\overbar \nu_k$ is the unit inner normal vector of $\overbar F_k$ and $\nu_{k,t}$ is a Lipschitz section over $\overbar F_k$ taking values in the trivial bundle $\underline{\mathbb{R}}^3$. Assume that $B_t$ depends continuously on $t$ in Lipschitz norm. If
	\begin{enumerate}
		\item $B_0$ is extremal in the sense of Definition \ref{def:extremalBoundaryCondition}, and
		\item there is no non-zero solution $\sigma\in H^1(\domain,E;B_0)$ of $D\sigma=0$,	
		\end{enumerate}
		then there exists $\delta>0$ such that for all $t\in[0,\delta]$, $D$ subject to the boundary condition $B_t$ does not admit a nonzero solution in $H^1(\domain,E;B_t)$.
\end{corollary}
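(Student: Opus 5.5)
The plan is to argue by contradiction and reduce directly to Lemma \ref{lemma:approximation}. Suppose no such $\delta>0$ exists. Then there is a sequence $t_n\in(0,\varepsilon]$ with $t_n\to 0$ such that, for each $n$, $D$ admits a nonzero solution $\sigma_{t_n}\in H^1(\domain,E;B_{t_n})$ with $D\sigma_{t_n}=0$. (If some $t_n=0$ we already contradict hypothesis (2), so we may assume $t_n>0$.)

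Lemma \ref{lemma:approximation} is phrased for all $t\in(0,\varepsilon]$. Its proof, however, only uses a sequence $t_n\to0$: it uses the lower bound $\mathscr A_{t_n}\geq-\alpha_{t_n}$ with $\alpha_{t_n}\to0$, the Stokes formula \eqref{eq:Stokes2}, the trace inequality, Rellich compactness, and the convergence of boundary conditions. So we rerun that argument along the sequence $\{t_n\}$. We normalize $\|\sigma_{t_n}\|_{L^2}=1$, obtain $\|\nabla\sigma_{t_n}\|_{L^2}\to0$, and extract a subsequence converging in $H^1$ to some $\sigma_0$. This limit satisfies $\|\sigma_0\|=1$ and $\nabla\sigma_0=0$, and it lies in $H^1(\domain,E;B_0)$ by continuity of the trace and the Lipschitz convergence $\nu_{k,t_n}\to\nu_{k,0}$.

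Finally, $\nabla\sigma_0=0$ implies $D\sigma_0=\sum_i\overbar c(\overbar e_i)\nabla_{\overbar e_i}\sigma_0=0$. So $\sigma_0$ is a nonzero element of $H^1(\domain,E;B_0)$ in the kernel of $D$, contradicting hypothesis (2). Hence some $\delta>0$ works.

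The main point to check is the restriction of Lemma \ref{lemma:approximation} to a sequence, which is immediate from its proof. A minor subtlety is that the limit boundary condition holds in the trace sense. This follows because $\tau\sigma_{t_n}\to\tau\sigma_0$ in $L^2(\partial\domain)$ and the projections defined by $\mathscr E\overbar c(\overbar\nu_k)c(\nu_{k,t_n})$ converge uniformly to the projection defined by $\mathscr E\overbar c(\overbar\nu_k)c(\nu_{k,0})$.
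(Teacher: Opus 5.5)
Your argument is correct and is the one the paper intends: the paper records this corollary as an immediate consequence of Lemma \ref{lemma:approximation} by contraposition, and the resulting parallel, nonzero $\sigma_0\in H^1(\domain,E;B_0)$ satisfies $D\sigma_0=0$, contradicting hypothesis (2). You also make explicit the one point the paper leaves tacit. Negating the conclusion gives nonzero solutions only along a sequence $t_n\to 0$, not for every $t\in(0,\varepsilon]$ as the lemma's hypothesis requires, so you correctly rerun the lemma's proof along that sequence.
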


We emphasize that in Lemma~\ref{lemma:approximation} and Corollary~\ref{coro:approximation}, the operator $D$ subject to the boundary condition $B_t$ is \emph{not} required to be essentially self-adjoint. In the
applications below, $D$ with boundary condition $B_t$ will typically be
essentially self-adjoint for $t>0$, but essential self-adjointness of $D$ with boundary condition $B_0$ may be unknown and, in any case, is not needed.

The following lemma and its corollary provide concrete examples of extremal boundary conditions for which the associated Dirac operator is invertible. 

\begin{lemma}\label{lemma:modelMixedSmooth}
	Let $\mathbb G \subset \R^3$ be a three dimensional compact convex polyhedral manifold.  Assume that the boundary of $\mathbb G$ consists of codimension one 
	faces
	\[
	F_0,F_1,\ldots,F_K,
	\]
	where the second fundamental form of each face is
	nonnegative, $F_0$ meets all other faces orthogonally, and moreover the second fundamental form of $F_0$ is strictly positive in
	a neighborhood of some point $x_0\in F_0$. Let $B_{\textup{mix}}$ be the boundary condition  on $\Bigwedge^*T\mathbb G$ given by
	$$ \mathscr E \overbar c(\nu_k)c(\nu_k)\omega = -\omega \quad \textup{on } F_k \textup{ for } k > 0, $$
	and
	$$ \mathscr E \overbar c(\nu_0)c(\nu_0)\omega = \omega \quad \textup{on } F_0, $$
	where $\nu_k$ is the unit inner normal vector field of $F_k$ for $k \geq 0$. Then $B_{\textup{mix}}$ is extremal, and  the de Rham operator $D^\dR$ subject to $B_{\textup{mix}}$ is essentially self-adjoint and invertible. In particular, there is no non-zero parallel section in $H^1(\mathbb G, \Bigwedge^*T\mathbb G; B_{\textup{mix}})$.
\end{lemma}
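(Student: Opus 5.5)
The plan has four parts: establish extremality of $B_{\textup{mix}}$, obtain essential self-adjointness from Theorem~\ref{thm:ess-saVector}, rule out a kernel using the Stokes formula, and conclude invertibility from the Fredholm property.

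\textbf{Extremality.} Here $\target = \mathbb G$, $f=\id$, and the metric is flat, so the curvature term in \eqref{eq:lichnerowicz} vanishes: $\mathscr R = 0$. On $F_k$ with $k>0$ the boundary condition is the standard one of Definition~\ref{def:boundaryCondition}. Lemma~\ref{lemma:secondff>=} with $f=\id$ gives
\[
\mathscr A \geq \tfrac{H}{2} - \tfrac{H}{2} = 0
\]
on those faces. More precisely, the singular value argument with $\lambda_\mu \geq 0$ gives $\mathscr A = \tfrac{H}{2} + \tfrac12\sum_\mu \lambda_\mu\, \overbar c(\nu)\overbar c(e_\mu)c(\nu)c(e_\mu) \geq 0$.

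On $F_0$ the sign of the boundary condition is reversed. Writing the condition as $\mathscr E\overbar c(\nu_0)c(-\nu_0)\omega=-\omega$, the endomorphism $\mathscr A$ on $F_0$ is computed with $-\nu_0$ in the target slot. The second fundamental form of the target face then enters with the opposite sign, so
\[
\mathscr A = \tfrac{H}{2} + \tfrac12\sum_\mu \overbar c(\nu_0)\overbar c(e_\mu)c(\nu_0)c(A e_\mu).
\]
I would diagonalize $A$ with eigenvalues $\kappa_\mu \geq 0$. The operators $\overbar c(\nu_0)\overbar c(e_\mu)c(\nu_0)c(e_\mu)$ are self-adjoint involutions, so $\mathscr A \geq \tfrac12\sum_\mu\kappa_\mu - \tfrac12\sum_\mu\kappa_\mu = 0$. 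This is the same estimate as in Lemma~\ref{lemma:secondff>=}. Moreover, near $x_0$ the two involutions (for the two principal directions) commute. On their joint eigenspaces with eigenvalues $(\epsilon_1,\epsilon_2)$, one gets $\mathscr A = \tfrac12\bigl((1+\epsilon_1)\kappa_1 + (1+\epsilon_2)\kappa_2\bigr)$. This vanishes only on the $(-1,-1)$ eigenspace. Thus $B_{\textup{mix}}$ is extremal.

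\textbf{Essential self-adjointness.} I would apply Theorem~\ref{thm:ess-saVector}, rewriting the condition on $F_0$ with target vector $-\nu_0$. For pairs $F_i,F_j$ with $i,j>0$, equality of the inner products holds trivially. For pairs involving $F_0$, the faces are orthogonal, so both inner products equal $0$ and equality holds again. Dihedral angles are less than $\pi$ by convexity, with the $F_0$ angles equal to $\pi/2$. I need $\mathbb G$ to have no vertices where $F_0$ meets two faces, or else to handle them via Corollary~\ref{corollary:ess-saThreeDimension}, which only requires the non-strict comparisons. The theorem then gives that $D^\dR_{B_{\textup{mix}}}$ is self-adjoint and Fredholm on $H^1$. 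Since the operator is self-adjoint, invertibility reduces to triviality of the kernel.

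\textbf{Trivial kernel.} Suppose $D\omega=0$ with $\omega \in H^1(\mathbb G,\Bigwedge^*T\mathbb G;B_{\textup{mix}})$. The Stokes formula \eqref{eq:Stokes2}, together with $\mathscr R=0$ and $\mathscr A\geq0$, forces $\nabla\omega=0$. Hence $\omega$ is a constant form. The same identity also forces $\langle\mathscr A\omega,\omega\rangle=0$ on $F_0$.

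The main obstacle is extracting a contradiction from constancy and the boundary conditions. Near $x_0$, $\omega$ must lie in the $(-1,-1)$ joint eigenspace of the involutions $\overbar c(\nu_0)\overbar c(e_\mu)c(\nu_0)c(e_\mu)$ at every point. Since $\nu_0$ varies because $A>0$, I would use the following argument. Combining the boundary condition on $F_0$ with the eigenspace condition, a constant form satisfying $\mathscr E\overbar c(\nu)c(\nu)\omega=\omega$ lies in the $+1$ eigenspace. Via Example~\ref{example:deRham} this is a relative-type condition: $\omega = \nu^*\wedge\beta$ with $\beta$ tangential. Imposing this for an open set of directions $\nu$ forces $\omega=0$. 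Indeed, $\iota_\nu\omega$ determines $\omega$, and $\nu^*\wedge\omega=0$ for all $\nu$ in an open subset of $\mathbb S^2$ implies $\omega=0$. Hence the kernel is trivial, so $D^\dR_{B_{\textup{mix}}}$ is invertible and admits no nonzero parallel section.
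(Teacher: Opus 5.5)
Your extremality and essential self-adjointness steps are fine: the equality case of the inner-product comparison holds along every edge, and Theorem~\ref{thm:ess-saVector} already covers vertices, so no extra care is needed there.

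The gap is in the trivial-kernel step. You claim that $\nu^*\wedge\omega=0$ for all $\nu$ in an open subset of $\mathbb S^2$ forces $\omega=0$. This is false: the constant top-degree form $\omega=dx_1\wedge dx_2\wedge dx_3$ satisfies $v\wedge\omega=0$ for every $v\in\mathbb R^3$.
\begin{itemize}
\item It is parallel and lies in the kernel of $D^\dR$.
\item It satisfies the $F_0$ condition $\mathscr E\,\overbar c(\nu_0)c(\nu_0)\omega=\omega$ at every point of $F_0$.
\item Its boundary term $\langle\mathscr A\omega,\omega\rangle=\langle D^\partial\omega,\omega\rangle$ on $F_0$ vanishes identically, since $\nabla\omega=0$.
\end{itemize}
The last point also shows that your formula for $\mathscr A$ on $F_0$ has the wrong sign. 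Replacing $\nu_0$ by $-\nu_0$ in the target slot flips both $c(\nu')$ and $c(\nabla\nu')$, so the curvature term is the same as in Lemma~\ref{lemma:secondff>=}. This does not affect $\mathscr A\geq 0$. However, where $A>0$, the null space of $\mathscr A$ inside the $(+1)$-eigenspace of $\mathscr E\,\overbar c(\nu_0)c(\nu_0)$ is spanned by the volume form, not the $(-1,-1)$ joint eigenspace you single out.

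Consequently, no argument using only the face $F_0$ can show the kernel is trivial. Indeed, if $K=0$ the volume form is a nonzero element of the kernel and the conclusion of the lemma fails.

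What the $F_0$ condition on an open set of normals does give, by linearity, is $\overbar c(v)\omega=\mathscr E\,c(v)\omega$ for every $v\in\mathbb R^3$. In other words, $\omega$ is a multiple of the volume form. You must then use one of the faces $F_k$ with $k\geq 1$, where the condition reads $\overbar c(\nu_k)\omega=-\mathscr E\,c(\nu_k)\omega$. Combining this with the previous identity at $v=\nu_k$ gives $\overbar c(\nu_k)\omega=0$, hence $\omega=0$ because $\overbar c(\nu_k)$ is invertible. In form language, $\iota_{\nu_k}(dx_1\wedge dx_2\wedge dx_3)\neq 0$ violates the absolute condition on $F_k$. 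This is the step of the paper's proof that your argument is missing.
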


\begin{proof}
It is clear that the inner-product comparison conditions as in Theorem \ref{thm:ess-saVector} are satisfied.	The essential self-adjointness of $D^\dR$ follows from Theorem \ref{thm:ess-saVector}. Since $\mathbb G$ is flat and all faces have nonnegative second fundamental form,  $B_{\textup{mix}}$ is extremal by Lemma \ref{lemma:secondff>=} and Proposition \ref{prop:smooth>=}.
	
	Let  $\omega \in H^1(\mathbb G, \Bigwedge^*T\mathbb G; B_{\textup{mix}})$ satisfy  $D^\dR\omega = 0$. It follows from Proposition \ref{prop:smooth>=}  that $\nabla \omega = 0$. Thus $\omega$ is a parallel, hence constant, differential form. On $F_0$, the boundary condition gives 
	$$  \mathscr E\overbar c(\nu_0) c(\nu_0)\omega = \omega,$$
	equivalently,  
	$$  \overbar c(\nu_0)\omega = \mathscr E c(\nu_0)\omega. $$ Since the second fundamental form of $F_0$ is strictly positive near $x_0$, the Gauss map of $F_0$ has image containing an open subset of the unit sphere.
	Thus the preceding identity holds for all unit vectors in an open subset of
	$\mathbb S^2$. By linearity, it follows that
	\begin{equation}\label{eq:mixboundary}
			\overbar c(v)\omega
		=
		\mathscr E\,c(v)\omega
	\end{equation}
	for every $v\in\mathbb R^3$.
	
	On the other hand, on any face $F_k$ with $k>0$, the boundary condition is
	\[
	\mathscr E\,\overbar c(\nu_k)c(\nu_k)\omega=-\omega,
	\]
	which is equivalent to
	\[
	\overbar c(\nu_k)\omega
	=
	-\mathscr E\,c(\nu_k)\omega .
	\]
	Combining this with the identity \eqref{eq:mixboundary} with $v=\nu_k$ gives
	\[
	\overbar c(\nu_k)\omega=0.
	\]
	Since the operator $c(\nu_k)$ is invertible, it follows that 
	$\omega=0$.

	 Thus the kernel of $D^{\dR}_{B_{\mathrm{mix}}}$ is trivial. Since the operator
	 is self-adjoint and Fredholm, it is invertible.
\end{proof}

\begin{example}\label{example:hypersurface}
	Let $\mathcal C \subset \mathbb{R}^3$ be a closed polyhedral cone with its vertex at the origin, bounded by  planes $H_1, \dots, H_\ell$ passing through the origin. For each plane $H_j$, let $\nu_j$ denote the  unit inner  normal vector. Let
	$w_1,\ldots,w_J$ be unit vectors along the edges of the cone, pointing away
	from the origin.
	
	We construct a smooth hypersurface $\Sigma \subset \mathcal C$ with the following conditions:
	\begin{enumerate}
		\item the second fundamental form of $\Sigma$ is nonnegative everywhere;
		\item $\Sigma$ is strictly convex away from neighborhoods of the edges of
		$\mathcal C$;
		\item near each edge, $\Sigma$ coincides with a flat plane;
		\item $\Sigma$ intersects each boundary face $H_j$ orthogonally.
	\end{enumerate}
	Choose $0<a\ll b\ll 1$ and let $m_a\colon \mathbb{R} \to \mathbb{R}$ be smooth and nonneaative with 
	\begin{enumerate}[label=$(\roman*)$]
		\item $m_a(t) = 0$ for $t \leq -a$,
		\item $m_a(t) = t$ for $t \geq a$,
		\item $0 \leq m_a'(t) \leq 1$ for all $t \in \mathbb{R},$
		\item $m_a''(t) > 0$ for $t \in (-a, a)$.\end{enumerate}
	For each edge vector $w_i$, we define a linear function $L_i \colon \mathbb{R}^3 \to \mathbb{R}$:$$L_i(x) = \frac{x \cdot w_i}{1 - b}.$$
	Define a function $\varphi \colon \mathbb{R}^3 \to \mathbb{R}$:$$\varphi(x) = \|x\| + \sum_{i=1}^J m_a\big(L_i(x) - \|x\|\big).$$ Set $$\Sigma \coloneqq  \{ x \in \mathcal C \mid \varphi (x) = 1 \}$$
	and $$ \mathcal R_i = \{ x \in \Sigma \mid L_i(x) - \|x\| \geq -a \}.$$
	We may choose $0<a\ll b\ll1$ so that the sets $\mathcal R_i$ are pairwise disjoint and so that each $\mathcal R_i$ meets only those faces which contain $w_i$. Then  near any point $x \in \Sigma$, at most one summand in the summation $\sum_{i=1}^J m_a \big(L_i(x) - \|x\|\big)$ is non-zero. 
	Now a direct computation shows that, for sufficiently small $a$ and $b$, the hypersurface $\Sigma$ satisfies the required properties (1) through (4).
\end{example}
We have the following immediate consequence of Lemma \ref{lemma:modelMixedSmooth}.
\begin{corollary}\label{lemma:modelMixedCorner}
	Let $\fiber \subset \R^3$ be the compact convex polyhedral manifold bounded by the planes $H_1, \cdots, H_\ell$ and  $\Sigma$ as in Example \ref{example:hypersurface}.  Let $\nu_j$ be the 
	unit inner normal to $H_j$, and let $\nu_0$ be the unit inner normal to $\Sigma$.
	Consider the boundary condition $B_{\mathrm{mix}}$ on
	$\Bigwedge^* T\fiber$ given by
	\[
	\mathscr E\,\overbar c(\nu_j)c(\nu_j)\omega=-\omega
	\qquad
	\text{on }H_j,
	\]
	and
	\[
	\mathscr E\,\overbar c(\nu_0)c(\nu_0)\omega=\omega
	\qquad
	\text{on }\Sigma .
	\]
	Then $B_{\textup{mix}}$ is extremal, and  the de Rham operator $D^\dR$ subject to $B_{\textup{mix}}$ is essentially self-adjoint and invertible. In particular, there is no non-zero parallel section in $H^1(\fiber, \Bigwedge^*T\fiber; B_{\textup{mix}})$.
\end{corollary}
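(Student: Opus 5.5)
The plan is to verify that $\fiber$, with its faces $H_1,\dots,H_\ell$ (truncated) and $F_0\coloneqq\Sigma$, satisfies every hypothesis of Lemma~\ref{lemma:modelMixedSmooth}, and then quote that lemma.

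\textbf{Step 1: geometry of the faces.} The planar faces $H_j\cap\fiber$ are totally geodesic, so their second fundamental forms vanish and are in particular nonnegative. By properties (1)--(4) of Example~\ref{example:hypersurface}:
\begin{itemize}
\item[(a)] $\Sigma$ has nonnegative second fundamental form everywhere;
\item[(b)] $\Sigma$ meets each $H_j$ orthogonally;
\item[(c)] $\Sigma$ is strictly convex away from neighborhoods of the edges of $\mathcal C$.
\end{itemize}
By (c), we may pick $x_0\in\Sigma$ far from all edges, for instance near the ray through the interior of the link, where $\varphi=\|x\|$ and $\Sigma$ is locally a piece of the unit sphere. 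Then the second fundamental form of $\Sigma$ is strictly positive near $x_0$.

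\textbf{Step 2: convexity and polyhedral structure.} I must check that $\fiber=\{x\in\mathcal C:\varphi(x)\le 1\}$ is a compact convex polyhedral manifold. Compactness holds because $\varphi\ge\|x\|$. For convexity, $\|x\|$ is convex, and each $m_a(L_i(x)-\|x\|)$ is only nonzero near edges, where $\Sigma$ is flat by (3). So I argue convexity through the nonnegative second fundamental form of $\Sigma$ together with the convexity of $\mathcal C$: a connected region whose boundary pieces all have nonnegative second fundamental form and whose dihedral angles are at most $\pi$ is convex.

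The dihedral angles are as follows. Along $\Sigma\cap H_j$ the angle is $\pi/2$ by orthogonality. Along the edges of $\mathcal C$ the angles are those of the convex cone, hence less than $\pi$. The vertices of $\fiber$ are the points where edges of $\mathcal C$ meet $\Sigma$; there $\Sigma$ is a flat plane orthogonal to the faces, so the tangent cones are honest polyhedral cones. The origin remains a vertex of the cone $\mathcal C$. Orthogonality of $F_0$ to all other faces is exactly (b).

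\textbf{Step 3: apply the lemma.} With the boundary condition $B_{\mathrm{mix}}$ as stated, on $H_j$ with $-1$ and on $\Sigma$ with $+1$, all hypotheses of Lemma~\ref{lemma:modelMixedSmooth} hold with $F_0=\Sigma$ and $F_k=H_k\cap\fiber$. That lemma then gives the conclusions:
\begin{itemize}
\item[(i)] $B_{\mathrm{mix}}$ is extremal;
\item[(ii)] $D^{\dR}$ is essentially self-adjoint and invertible;
\item[(iii)] there is no nonzero parallel section.
\end{itemize}
Essential self-adjointness in (ii) relies on Theorem~\ref{thm:ess-saVector}. Its inner-product comparison hypotheses hold with equality on every edge. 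On edges of $\mathcal C$ we have $\nu_k'=\nu_k$. On the new edges $\Sigma\cap H_j$, both pairs are orthogonal; the $+1$ sign is just the standard condition for the normal $-\nu_0$, and $\langle-\nu_0,\nu_j\rangle=0=\langle\nu_0,\nu_j\rangle$.

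\textbf{Main obstacle.} The only step needing care is Step 2, namely confirming that the smoothing in Example~\ref{example:hypersurface} really yields a convex polyhedral manifold with strict positivity somewhere, for suitably small $0<a\ll b\ll1$. Everything else is a direct citation. I expect this to be routine: properties (1)--(4) of the example are precisely tailored to it, so I would simply invoke them.
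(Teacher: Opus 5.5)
Your proposal is correct and follows the paper's route: the paper records this statement as an immediate consequence of Lemma~\ref{lemma:modelMixedSmooth}. Properties (1)--(4) of Example~\ref{example:hypersurface} supply the nonnegative second fundamental forms, the orthogonality of $\Sigma$ to each $H_j$, and a strictly convex spherical point $x_0\in\Sigma$ away from the edges, exactly as you check. One small wording fix: on $\Sigma$ the $+1$ condition is the standard one with inward normal $\nu_0$ and auxiliary vector $-\nu_0$, rather than ``normal $-\nu_0$''; this still gives the equality case $0=0$ of the inner-product comparison that you found.
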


\section{An index theorem for  manifolds with polyhedral boundary} \label{sec:index-poly}

In this section, we prove our main theorem, Theorem~\ref{thm:main2}. As
explained in the introduction, we do not work directly with the Dirac operator
$D_B$ arising from the geometric assumptions of Theorem~\ref{thm:main2}.
Instead, we consider a sequence of Dirac operators with approximating boundary
conditions. We show that each approximating operator is Fredholm with nonzero
Fredholm index, and hence has a nontrivial kernel. We then apply the
approximation lemma, Lemma~\ref{lemma:approximation}, to obtain a nontrivial
solution for the limiting boundary condition $B$. Finally, applying
Lemma~\ref{lemma:angleRigidity} to this limiting solution completes the proof
of Theorem~\ref{thm:main2}.

\subsection{An algebraic angle enlarging lemma}

 Our strategy for computing the Fredholm index combines continuous deformation
 with the gluing formula from Section~\ref{sec:gluing}. One of the main
 difficulties is to determine whether the polyhedral manifolds in
 Theorem~\ref{thm:main2} admit a deformation that preserves the dihedral-angle
 comparison, or equivalently the corresponding inner-product comparison. This
 preservation is crucial for Fredholmness, in view of
 Theorem~\ref{thm:ess-saVector}. See also Gromov's
 $\angle$-shrinking conjecture \cite[Section~7]{Gromov:2022tr}.
 
 Theorem~\ref{thm:ess-saVector} provides an additional flexibility: the sections $\nu_k$ appearing in the boundary condition need not be the inward unit
 normals of the faces of the target polyhedral manifold $\target$. Instead,
 they may be prescribed independently as unit vector fields in the relevant
 auxiliary bundle, provided that the required inner-product comparisons are
 satisfied. This observation removes the geometric and combinatorial
 restrictions imposed by actual polyhedral deformations and reduces the problem
 to an algebraic deformation theorem for vector fields, as stated below.
 
 \begin{remark}
 	The main reason for using the algebraic deformation approach, namely
 	Lemma~\ref{lemma:algAngle} and
 	Proposition~\ref{thm:algebraicDeformation}, is that Gromov's
 	$\angle$-shrinking conjecture remains open in higher dimensions. Since the
 	present paper focuses on the three-dimensional case, one could instead use a
 	more geometric deformation argument, taking advantage of the fact that
 	Gromov's $\angle$-shrinking conjecture is known in dimension three; see
 	\cite[Appendix~B]{Wang:2021tq}. However, our aim is also to highlight several
 	of the key ingredients in our proof \cite{Wang:2021tq} of Gromov's dihedral
 	rigidity conjecture in arbitrary dimensions. For this reason, we use the
 	algebraic deformation framework here, which is closer to the higher-dimensional
 	strategy.
 \end{remark}

\begin{lemma}\label{lemma:algAngle}
	Let $\{v_i\}_{1 \leq i \leq q} \subset \sph^n$ be a set of distinct points\footnote{In typical geometric settings, each $v_i$ arises as the unit inner normal vector of a hyperplane in $\mathbb{R}^{n+1}$.} in the open upper hemisphere, and let $\normal$ denote the north pole. Then, there exists a family of continuous paths $\{v_i(t)\}_{1 \leq i \leq q}$ for $t \in [0, 1]$ in the open upper hemisphere such that:
	\begin{itemize}
		\item $v_i(0) = v_i$ and $v_i(1) = \normal$ for all $i$,
		\item $\langle v_i(t), \normal \rangle > 0$ for all $i$ and $t \in [0, 1]$,
		\item $\langle v_i(t), v_j(t) \rangle$ is non-decreasing with respect to $t$ for all $i, j$, and
		\item $\langle v_i(t), v_j(t) \rangle > \langle v_i, v_j \rangle$ for all $i \neq j$ and $t \in (0, 1]$.
	\end{itemize}
\end{lemma}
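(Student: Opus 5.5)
We will write down an explicit deformation: fix the vertical component's role and shrink the horizontal components linearly. Write $\mathbb R^{n+1}=\mathbb R^n\oplus\mathbb R\normal$, and decompose each point as $v_i=(x_i,h_i)$ with $x_i\in\mathbb R^n$, $h_i=\langle v_i,\normal\rangle>0$. Since $|x_i|^2+h_i^2=1$ and $h_i>0$, we have $a_i\coloneqq|x_i|^2<1$. For $t\in[0,1]$ set $\lambda=1-t$ and define
\[
v_i(t)\coloneqq\Big(\lambda x_i,\ \sqrt{1-\lambda^2 a_i}\Big).
\]
This is a continuous path of unit vectors with $v_i(0)=v_i$ and $v_i(1)=\normal$. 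The height $\langle v_i(t),\normal\rangle=\sqrt{1-\lambda^2a_i}\geq h_i>0$, so the path stays in the open upper hemisphere. This disposes of the first two bullet points.

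For the monotonicity we compute, with $\mu=\lambda^2\in[0,1]$ and $b=a_j$, $a=a_i$,
\[
\langle v_i(t),v_j(t)\rangle = g(\mu)\coloneqq \mu\,\langle x_i,x_j\rangle+\sqrt{(1-\mu a)(1-\mu b)} .
\]
Since $\mu$ decreases as $t$ increases, it suffices to show $g'(\mu)\leq 0$ on $[0,1]$, with strict inequality when $i\neq j$. Writing $P=\sqrt{(1-\mu a)(1-\mu b)}>0$, we get
\[
g'(\mu)=\langle x_i,x_j\rangle-\frac{a(1-\mu b)+b(1-\mu a)}{2P}.
\]
By the AM--GM inequality, $\tfrac12\big(a(1-\mu b)+b(1-\mu a)\big)\geq \sqrt{ab}\,P$. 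Hence the second term is at least $\sqrt{ab}=|x_i||x_j|\geq\langle x_i,x_j\rangle$ by Cauchy--Schwarz, so $g'(\mu)\leq 0$, which gives the third bullet.

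The main point to check is strictness, which gives the fourth bullet. Equality $g'(\mu)=0$ forces equality in both inequalities. Equality in AM--GM forces $a(1-\mu b)=b(1-\mu a)$, i.e. $a=b$. Equality in Cauchy--Schwarz forces $x_i$ and $x_j$ to be nonnegatively proportional. Together these give $x_i=x_j$, hence $h_i=h_j$ and $v_i=v_j$, contradicting distinctness. (The degenerate case where one of $x_i,x_j$ vanishes is covered as well: then $a=b$ forces both to vanish.) Thus for $i\neq j$ we have $g'<0$ on $[0,1]$. Consequently $\langle v_i(t),v_j(t)\rangle$ is strictly increasing in $t$, and in particular $\langle v_i(t),v_j(t)\rangle>\langle v_i,v_j\rangle$ for all $t\in(0,1]$.
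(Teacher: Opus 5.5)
Your proof is correct, and it differs from the paper's in parametrization and in the inequality used. Your path $v_i(t)=(\lambda x_i,\sqrt{1-\lambda^2a_i})$ lies in the plane spanned by $v_i$ and $\normal$ with positive height. It therefore traverses exactly the same minor great-circle arc from $v_i$ to $\normal$ as the paper's geodesic $\gamma_i$. The parametrization matters, because the inner products are compared at equal times.

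The paper moves at constant speed, $\dist(v_i(t),\normal)=(1-t)\theta_i$. It writes $\langle v_i(t),v_j(t)\rangle$ via the spherical law of cosines and checks the sign of a trigonometric derivative using $\theta_i,\theta_j<\pi/2$. It treats the case $v_i=\normal$ and the endpoint $t=1$ separately.

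You instead contract the horizontal projection linearly, which amounts to $\sin\dist(v_i(t),\normal)=(1-t)\sin\theta_i$. This turns the inner product into $\mu\langle x_i,x_j\rangle+\sqrt{(1-\mu a)(1-\mu b)}$, and the sign of $g'$ reduces to AM--GM plus Cauchy--Schwarz. The equality analysis is transparent: it forces $a=b$ and $x_i,x_j$ nonnegatively proportional, and it covers the degenerate case $x_i=0$ without a separate argument.

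A side benefit is that your formula is manifestly smooth in the data $(v_i,\normal)$. That is what Proposition~\ref{thm:algebraicDeformation} needs when the vectors vary with the base point. With the constant-speed geodesic, smoothness at points where $\nu_k=\normal$ (where the geodesic direction is undefined) requires a small extra check.

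Since your paths also stay in $\operatorname{span}(v_i,\normal)$, the tangency property that the paper later extracts from the explicit deformation (in the proof of Claim~\ref{claim:indexzero}) would survive if your parametrization were substituted.
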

\begin{proof}Let$$\theta_i=\dist(v_i,\normal)\in [0,\pi/2).$$For each $i$, let $\gamma_i:[0,\theta_i]\to \sph^n$ be the unit-speed geodesic from $\normal$ to $v_i$, so that$$\gamma_i(0)=\normal,\qquad \gamma_i(\theta_i)=v_i.$$Define$$v_i(t)=\gamma_i((1-t)\theta_i),\qquad t\in[0,1].$$Then $v_i(0)=v_i$, $v_i(1)=\normal$, and$$\langle v_i(t),\normal\rangle=\cos((1-t)\theta_i)>0,$$so each path stays in the open upper hemisphere. It remains to prove the monotonicity of the pairwise inner products. The case where $i=j$ is trivial. Now fix
	$i\neq j$. If one of the two points is already $\normal$, say
	$v_i=\normal$, then $v_i(t)=\normal$, and
	$$
	\langle v_i(t),v_j(t)\rangle
	=
	\cos((1-t)\theta_j),
	$$
	which is non-decreasing in $t$, and is strictly larger than
	$\cos\theta_j=\langle v_i,v_j\rangle$ for every $t>0$, since
	$\theta_j>0$.
	
	Now assume $\theta_i,\theta_j>0$. Let $\phi\in[0,\pi]$ be the angle at
	$\normal$ between the two geodesics from $\normal$ to $v_i$ and $v_j$.
	Put $\lambda=1-t$. By the spherical law of cosines,
	$$
	F(\lambda):=\langle v_i(t),v_j(t)\rangle
	=
	\cos(\lambda\theta_i)\cos(\lambda\theta_j)
	+
	\sin(\lambda\theta_i)\sin(\lambda\theta_j)\cos\phi.
	$$
	We show that $F(\lambda)$ is strictly decreasing for $\lambda\in (0, 1]$, which is equivalent 
	to proving $\langle v_i(t),v_j(t)\rangle$ is strictly increasing for  $t\in [0, 1)$.
	
	Differentiating gives
	$$
	\begin{aligned}
		F'(\lambda)
		&=
		-(\theta_i-\theta_j)\sin(\lambda(\theta_i-\theta_j))  \\
		&\quad
		-
		(1-\cos\phi)
		\left[
		\theta_i\cos(\lambda\theta_i)\sin(\lambda\theta_j)
		+
		\theta_j\sin(\lambda\theta_i)\cos(\lambda\theta_j)
		\right].
	\end{aligned}
	$$
	Since $\theta_i,\theta_j\in[0,\pi/2)$, we have
	$$
	(\theta_i-\theta_j)\sin(\lambda(\theta_i-\theta_j))\ge  0.
	$$
	Also $1-\cos\phi\ge 0$, and the term in the brackets is nonnegative. Hence
	$$
	F'(\lambda)\le 0.
	$$
	Moreover, for $\lambda \in (0, 1]$, equality can occur only if $\theta_i=\theta_j$ and
	$\phi=0$, which would imply $v_i=v_j$, contradicting the assumption that
	the points are distinct. Therefore $F'(\lambda)<0$ for $\lambda\in(0,1]$.
	
	Thus, for every $t\in(0,1)$,
	$$
	\langle v_i(t),v_j(t)\rangle>\langle v_i,v_j\rangle.
	$$
	At $t=1$, we have
	$$
	\langle v_i(1),v_j(1)\rangle=1>\langle v_i,v_j\rangle,
	$$
	since $v_i\neq v_j$. This proves both the monotonicity and the strict
	inequality.
\end{proof}

\begin{proposition}\label{thm:algebraicDeformation}
	Let $(\domain,\overbar g)$ be a three-dimensional polyhedral manifold. Let $\normal$ be a smooth\footnote{More precisely, $\normal$ is the restriction to $\partial \domain$ of a smooth section defined on a tubular neighborhood of $\partial \domain$.} unit length section of the trivial bundle $\mathbb R^3$ on $\partial \domain$. For each codimension one face $\overbar F_k$ of $\domain$, let $\nu_k$ be a smooth unit length section of the trivial bundle $\mathbb R^3$ on $\overbar F_k$ such that $
	\langle \normal,\nu_k\rangle>0.$
	Assume moreover that, for every $i\neq j$ and every $x\in \overbar F_i\cap \overbar F_j$, one has
	$$
	\nu_i(x)\neq \nu_j(x).
	$$
	Then there exists a smooth family $\nu_{k,t}$ of unit length sections of the trivial bundle $\mathbb R^3$ on $\overbar F_k$, with $t\in[0,1]$, such that
	\begin{itemize}
		\item $\nu_{k,0}=\nu_k$ and $\nu_{k,1}=\normal$;
		\item $\langle\nu_{k,t},\normal\rangle>0$ for all $t\in [0,1]$ and every $k$;
		\item along $\overbar F_i\cap \overbar F_j$, the function $
		t\mapsto \langle \nu_{i,t},\nu_{j,t}\rangle$ 
		is non-decreasing, and
		$$
		\langle \nu_{i,t},\nu_{j,t}\rangle>\langle \nu_i,\nu_j\rangle
		$$
		for every $i\neq j$ and every $t\in(0,1]$.
	\end{itemize}
\end{proposition}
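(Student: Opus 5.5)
The plan is to apply Lemma~\ref{lemma:algAngle} pointwise, with $\normal(x)$ as the north pole and the vectors $\nu_k(x)$ as the points $v_i$, and then check that the resulting paths are smooth and compatible across faces. For a point $x\in\overbar F_k$, set $\theta_k(x)=\dist(\nu_k(x),\normal(x))\in[0,\pi/2)$; this is smaller than $\pi/2$ because $\langle\normal,\nu_k\rangle>0$. The natural candidate is the geodesic interpolation
\[
\nu_{k,t}(x)=\gamma_{k,x}\big((1-t)\theta_k(x)\big),
\]
where $\gamma_{k,x}$ is the unit-speed geodesic from $\normal(x)$ to $\nu_k(x)$. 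Equivalently, $\nu_{k,t}$ is the normalization of the slerp between $\nu_k$ and $\normal$:
\[
\nu_{k,t}=\frac{\sin(t\theta_k)}{\sin\theta_k}\,\normal+\frac{\sin((1-t)\theta_k)}{\sin\theta_k}\,\nu_k .
\]
The key point is that this formula depends only on the pair $(\nu_k(x),\normal(x))$. It is therefore canonical, and it requires no choices that might conflict on the edges $\overbar F_i\cap\overbar F_j$ or at vertices.

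The first step is smoothness. The coefficient functions $\sin(s\theta)/\sin\theta$ extend to real-analytic even functions of $\theta$ near $\theta=0$, so they are smooth functions of $\cos\theta=\langle\nu_k,\normal\rangle$. Hence $\nu_{k,t}$ is smooth in $(x,t)$, even where $\nu_k=\normal$. Since $\normal$ is smooth on a tubular neighborhood, as stipulated in the footnote, and $\nu_k$ is smooth on $\overbar F_k$, the family $\nu_{k,t}$ is smooth on $\overbar F_k\times[0,1]$.

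Next come the endpoint conditions and positivity. We have $\nu_{k,0}=\nu_k$ and $\nu_{k,1}=\normal$, and $\langle\nu_{k,t},\normal\rangle=\cos((1-t)\theta_k)>0$.

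The last step is the inner-product monotonicity along edges. Fix $x\in\overbar F_i\cap\overbar F_j$ and apply Lemma~\ref{lemma:algAngle} at the single point $x$ with the two distinct points $v_1=\nu_i(x)$ and $v_2=\nu_j(x)$ in the open upper hemisphere about $\normal(x)$. They are distinct by hypothesis. The paths constructed in that proof are exactly our $\nu_{i,t}(x)$ and $\nu_{j,t}(x)$, so $t\mapsto\langle\nu_{i,t}(x),\nu_{j,t}(x)\rangle$ is non-decreasing, with strict inequality over the value at $t=0$ for $t\in(0,1]$.

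I expect the main obstacle to be smoothness and well-definedness at degenerate points, not the monotonicity. Where $\theta_k(x)=0$, the geodesic direction is undefined; the analytic slerp formula above resolves this. One must also verify that, at a vertex where three or more faces meet, each $\nu_{k,t}$ is still defined face by face, which holds since the formula uses only $\nu_k$ and $\normal$. The monotonicity then follows with no additional work from the pointwise lemma.
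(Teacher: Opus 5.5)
Your proposal is correct and follows the paper's proof. Both use the same pointwise geodesic interpolation from $\nu_k(x)$ to $\normal(x)$ and read off the monotonicity and strict inequality from Lemma~\ref{lemma:algAngle} at each point of $\overbar F_i\cap\overbar F_j$. Your explicit slerp formula, with coefficients smooth in $\langle\nu_k,\normal\rangle$ even where $\nu_k=\normal$, justifies the smoothness that the paper simply asserts.
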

\begin{proof}
	For each $x\in \overbar F_k$, let
	$$
	\theta_k(x)=\dist_{\sph^2}(\nu_k(x),\normal(x)).
	$$
	Since $\langle \nu_k(x),\normal(x)\rangle>0$, we have $\theta_k(x)<\pi/2$. Let $\eta_k(x,s)$ be the unit-speed minimizing geodesic in $\sph^2$ from $\nu_k(x)$ to $\normal(x)$, with $
	0\leq s\leq \theta_k(x).$
	Define
	$$
	\nu_{k,t}(x)=\eta_k(x,t\theta_k(x)).
	$$
 Hence $\nu_{k,t}$ is a smooth family of unit length sections with $\nu_{k,0}=\nu_k$ and $	\nu_{k,1}=\normal.$
The proposition now follows from  Lemma $\ref{lemma:algAngle}$.
\end{proof}

\subsection{Index theorem for polyhedral manifolds}

In this subsection, we prove the index theorem for three-dimensional
polyhedral manifolds. We isolate the index calculation from the scalar
curvature and mean curvature hypotheses used later in the rigidity argument.
The curvature assumptions will enter only after the index theorem has produced
a nonzero harmonic spinor.

Let $(\target, g)$ be a compact convex polyhedron in Euclidean space $\mathbb{R}^3$, and let $(\domain,\overline g)$ be a spin polyhedral manifold. Let $f \colon \domain \to \target$ be a polyhedral map of nonzero degree. We assume that the dihedral angles of $\domain$ and $\target$ satisfy
\[
\theta_{ij}(\overline g)\leq f^{*}\theta_{ij}(g)
\quad \text{on } \overline F_{ij} = \overline F_i \cap \overline F_j,
\]
for each pair of adjacent codimension-one faces $\overline F_i$ and $\overline F_j$ of $\domain$. 

We will use the following auxiliary vector field. Choose a point
$p_0\in\operatorname{int}(\target)$ and define
\begin{equation}\label{eq:auxiliaryvector}
	\normal(y)=\frac{p_0-y}{\|p_0-y\|},
	\qquad
	y\in\partial\target .
\end{equation}
Since $\target$ is convex, $\normal$ is a smooth unit vector field on each
face of $\partial\target$ and satisfies
\[
\langle\normal,\nu_k\rangle>0
\]
on every codimension-one face $F_k$ of $\target$, where $\nu_k$ is the unit inner normal vector field of $F_k$.

By Proposition~\ref{thm:algebraicDeformation},
$\nu_k$ can be deformed through unit vector fields $\nu_{k,t}$,
$t\in[0,1]$, with $
\nu_{k,0}=\nu_k$ and $\nu_{k,1}=\normal$, 
such that the inner products $\langle\nu_{i,t},\nu_{j,t}\rangle$ are
non-decreasing in $t$. Moreover, for every
$t>0$,
\[
\langle\overbar\nu_i,\overbar\nu_j\rangle
<
f^*\langle\nu_{i,t},\nu_{j,t}\rangle
\]
on each edge of $\domain$.

In what follows, we replace the original normals $\nu_k$ by the deformed vector fields $\nu_{k,t}$. This leads us to formulate the following theorem.

\begin{theorem}\label{thm:indexTheoremVector}
	Let $(\target, g)$ be a compact convex polyhedron in $\mathbb{R}^3$ with $g$ the Euclidean metric, and let $(\domain,\overline g)$ be a spin polyhedral manifold. Let $f \colon \domain \to \target$ be a polyhedral map of nonzero degree. Set
	\[
	E=S(T\domain\oplus f^*T\target)
	\simeq
	S(T\domain\oplus\underline{\mathbb R}^3),
	\]
	where $\underline{\mathbb R}^3$ is the trivial flat bundle,
	and let $D$ be the associated Dirac operator.
	
	Assume that all dihedral angles of $\domain$ are less than $\pi$. For each
	codimension-one face $\overbar F_k$ of $\domain$, let $\overbar\nu_k$ be
	its unit inner normal and $\nu_k$ a smooth unit length section of the trivial
	bundle $\underline{\mathbb R}^3$ over $\overbar F_k$.  Let $\normal$ be the vector field over $\partial\target$ in \eqref{eq:auxiliaryvector}. 	
	Assume that
	\[
	\langle f^*\normal,\nu_k\rangle>0
	\qquad\text{on }\overbar F_k,
	\]
	where $f^*\normal$ is the pull-back of $\normal$, viewed as a unit-length section of $\underline{\R}^3$ over $\partial \domain$.
	Let $B$ be the boundary condition
	\[
	\mathscr E\,\overbar c(\overbar\nu_k)c(\nu_k)\sigma=-\sigma
	\qquad\text{on }\overbar F_k .
	\]
	Assume that, for every adjacent pair $\overbar F_i,\overbar F_j$, we have 
	\begin{equation}\label{eq:strictInnerProductIndex}
		\langle\overbar\nu_i,\overbar\nu_j\rangle
		<
		\langle\nu_i,\nu_j\rangle
	\end{equation}
	on each connected component of $\overbar F_i\cap\overbar F_j$. Then $D$ with
	boundary condition $B$ is essentially self-adjoint and Fredholm. Moreover,
	\[
	\mathrm{ind}(D_B) = \deg(f),
	\]
	where $\deg(f)$ denotes the degree of $f$.
\end{theorem}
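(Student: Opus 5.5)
Essential self-adjointness and Fredholmness follow directly from Theorem~\ref{thm:ess-saVector}, since \eqref{eq:strictInnerProductIndex} is the strict case \eqref{eq:angleStrict} on every edge and all dihedral angles of $\domain$ are less than $\pi$. The content of the theorem is the index formula. The plan follows the cut-and-paste scheme of the introduction, and relies on the fact that the index is invariant under any deformation of the data $(\overbar g,\partial\domain,\{\nu_k\})$ that keeps the hypotheses of Theorem~\ref{thm:ess-saVector} in force.

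\textbf{Step 1: deformation near vertices.} First, on each face, deform $\nu_k$ toward $f^*\normal$ along the geodesic paths of Proposition~\ref{thm:algebraicDeformation}. Along these paths the inner products $\langle\nu_{i,t},\nu_{j,t}\rangle$ are non-decreasing, so the strict comparison \eqref{eq:strictInnerProductIndex} is preserved. Moreover, $\langle f^*\normal,\nu_{k,t}\rangle>0$ holds throughout. The path is continuous in $t$ in Lipschitz norm. Conjugating by $H^1$-isomorphisms, as in the proof of Theorem~\ref{thm:gluing}, turns it into a norm-continuous family of Fredholm operators, so the index does not change.

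Next, in a small ball around each vertex $v$, homotope the metric and the faces to the flat tangent cone $\tancone_v\domain$ using the map of Example~\ref{ex:codimthree}. This fixes the dihedral angles at $v$, and since strict inequalities are open, the comparison survives on nearby edges. Once the picture is flat near $v$, further deform the $\nu_k$ near $v$ to constants $\nu_k'$ with $\langle\nu_k',\normal(f(v))\rangle>0$, again with strict comparison.

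\textbf{Step 2: excising vertices and edges with the gluing formula.} Around each vertex, cut along the hypersurface $\Sigma$ of Example~\ref{example:hypersurface}. This $\Sigma$ meets all faces orthogonally and is disjoint from the edges. On $\Sigma$ choose $\nu_\Sigma$ orthogonal to the relevant $\nu_k'$ along $\Sigma\cap\overbar F_k$. Such a choice exists after the deformation, for instance by taking $\nu_\Sigma$ equal to the radial unit vector, once $\nu_k'$ has been rotated into the plane of $\overbar F_k$'s normal.

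Theorem~\ref{thm:gluing} then splits the index into the index on $\domain'$ plus the index on the cone piece $\fiber$. On $\fiber$, homotope the boundary vectors (with strict comparison at all times, through Lemma~\ref{lemma:algAngle}-type paths) to the extremal data $\nu_k'=\overbar\nu_k$, $\nu_\Sigma=\overbar\nu_\Sigma$ of Corollary~\ref{lemma:modelMixedCorner}. That operator is invertible, and Corollary~\ref{coro:approximation} shows that for small $t>0$ along this homotopy the operators still have trivial kernel. Since these operators are self-adjoint with vanishing kernel, the index on $\fiber$ is $0$.

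The same argument applies to each edge neighbourhood $\mathbb G\times I$. It is then replaced by $U\times I$, which also has index zero by Lemma~\ref{lemma:modelMixedSmooth} and Corollary~\ref{coro:approximation}, and the piece is reglued by Theorem~\ref{thm:gluing}. Repeating this for every edge, and then for the remaining closed codimension-two curves, yields a manifold $\domain_s$ with smooth boundary, a boundary condition of the same form, and the same index.

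\textbf{Step 3: smooth boundary case.} On $\domain_s$ the boundary condition is $\mathscr E\overbar c(\overbar\nu)c(\nu)\sigma=-\sigma$, with $\nu$ homotopic through $\langle\nu,f^*\normal\rangle>0$ to $f^*\normal$. For smooth boundary this homotopy is through local elliptic boundary conditions and preserves the index. The classical Atiyah--Bott computation (cf.\ \cite{Lottboundary}) then gives the index as the degree of the boundary map $\partial\domain_s\to\sph^2$, $x\mapsto\normal(f(x))$, which equals $\deg(f)$ because $\normal$ has degree one on $\partial\target$. The interior characteristic-class contribution vanishes since the twisting bundle $f^*T\target$ is trivial and flat.

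\textbf{Main obstacle.} I expect the hardest step to be the vanishing of the index on the excised cone and edge pieces. One has to connect the given boundary data to the extremal mixed condition while keeping strict comparison, or at least extremality at the endpoint, so that Corollary~\ref{coro:approximation} applies. The delicate point is the compatibility between $\nu_\Sigma$ and the $\nu_k$ along $\Sigma\cap\overbar F_k$, which the gluing theorem requires to be orthogonal. I would first try to handle this by deforming all $\nu_k$ near $v$ to lie in the tangent planes orthogonal to the radial direction.
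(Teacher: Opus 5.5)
Your outline is the paper's own: deform the auxiliary fields to $\normal$, flatten metric and faces near the vertices, excise vertex and edge neighbourhoods with Theorem~\ref{thm:gluing}, show each excised piece has index zero by connecting it to the extremal mixed condition and applying Corollary~\ref{coro:approximation}, and finish with the smooth-boundary theorem. The gap is the step you yourself flag as the main obstacle, and the fix you sketch does not work.

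First, $\Sigma$ is \emph{not} disjoint from the edges. It meets every edge through $v$ in a point, which becomes a vertex of both pieces. There $\nu_\Sigma$ must be orthogonal to both $\nu_i$ and $\nu_j$, while the old edge keeps its strict comparison.

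Second, take constant $\nu_k'$ and radial $\nu_\Sigma$. The radial directions along the curve $\Sigma\cap\overbar F_k$ span the plane of $\overbar F_k$, so orthogonality forces $\nu_k'=\pm\overbar\nu_k$. For $\nu_k'=\overbar\nu_k$, the old edges near $v$ fall into the equality case while the same edge component is strict farther out, and Theorem~\ref{thm:ess-saVector} does not allow that. Other sign choices would need alternating signs and acute dihedral angles all around $v$, which the angle sum $>(n-2)\pi$ of the link polygon rules out.

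Third, suppose you instead take non-constant $\nu_k\perp\partial_r$. The endpoint of your homotopy on $\fiber$ must still have $\nu_\Sigma=\overbar\nu_\Sigma$: the radial field is not parallel on the flat parts of $\Sigma$ near the edges, so it is not extremal there. Hence $\nu_\Sigma$ has to move as well. Lemma~\ref{lemma:algAngle}-type paths for the $\nu_k$ alone give no control of $\langle\nu_{k,t},\nu_{\Sigma,t}\rangle$ on the new edges. Those new edges must in fact stay in the equality case for every $t$, not the strict case you state. The reason is that $\dim(V_1\cap V_2)$ in Lemma~\ref{lemma:boundaryedge} jumps between the two cases, so the deformation argument cannot cross from one to the other.

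Your closing suggestion points the right way, but it must use $T\Sigma$ in place of $\partial_r^\perp$, drop the constants, and add one observation. Keep $\nu_\Sigma=\overbar\nu_\Sigma$ fixed throughout. Keep a single continuous auxiliary field $\normal$ on all faces near $v$, so every old edge carries $\langle\normal,\normal\rangle=1>\langle\overbar\nu_i,\overbar\nu_j\rangle$ automatically. Before cutting, deform $\normal$ so that it is tangent to $\Sigma$ along each $\Sigma\cap\overbar F_k$ and satisfies $\langle\normal,\overbar\nu_k\rangle>0$ on the flat faces. At a corner, take it in the plane orthogonal to the edge, between $\overbar\nu_i$ and $\overbar\nu_j$. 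This deformation is index-preserving as in Step~1.

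On the vertex piece, interpolate from $\overbar\nu_k$ to $\normal$ by Proposition~\ref{thm:algebraicDeformation}. At each $p\in\Sigma\cap\overbar F_k$ both endpoints lie in the $2$-plane $T_p\Sigma$, so the minimizing geodesic does too, and $\langle\nu_{k,t},\overbar\nu_\Sigma\rangle\equiv0$ for all $t$. Lemma~\ref{lemma:algAngle} gives strict comparison on the old edges for $t>0$, and $t=0$ is exactly Corollary~\ref{lemma:modelMixedCorner}. The edge pieces need the analogous arrangement ($\normal=\overbar\nu_k$ near $C_\varepsilon\cap\overbar F_k$, tangent to the end faces), after first deforming to a genuine product $I\times\mathbb G$.

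Three smaller points:
\begin{itemize}
\item The $H^1$-isomorphisms in your Step~1 should be the bundle maps built as in Theorem~\ref{thm:reduction}, not the maps $T_{s,t}$ of the gluing proof. Their derivatives grow like $r^{-1}$ at vertices and are controlled by Lemma~\ref{lemma:1/rEmbedding}.
\item In your Step~3, $x\mapsto\normal(f(x))$ is undefined on the boundary pieces created by cutting, which lie in the interior of $\domain$. The degree of the auxiliary field therefore has to be tracked through each cut and paste.
\item With the boundary orientation, the inward field $\normal$ has degree $-1$ on $\partial\target$, not $1$. This is why the paper writes the answer as $\deg(-\normal)$.
\end{itemize}
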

\begin{proof}
	The essential self-adjointness and Fredholmness of $D_B$ follows from Theorem \ref{thm:ess-saVector}. So it remains to compute the Fredholm index of $D_B$. The proof is
	by a sequence of index-preserving deformations and cut-and-paste operations,
	reducing the problem to the classical smooth-boundary index theorem.
	
	\textbf{Step 1. Deforming the vector fields $\nu_k$ to $\normal$. }
	
	Let $\normal$ be the smooth unit vector field over $\partial\target$ in \eqref{eq:auxiliaryvector}. 
	For simplicity, we still denote its pull-back via $f$ by $\normal$ , which is a unit-length section of the trivial bundle $\underline{\R}^3$ over $\partial\domain$.

	By Proposition~\ref{thm:algebraicDeformation}, there are smooth unit-length sections $\nu_{k,t}$, $t\in[0,1]$, of the trivial bundle $\underline{\mathbb R}^3$ over $\overbar F_k$ such that
	\[
	\nu_{k,0}=\nu_k,
	\qquad
	\nu_{k,1}=\normal
	\quad \textup{ and } \quad 
	\langle\nu_{k,t},\normal\rangle>0 .
	\]
	Let $B_t$ be the boundary condition
	\[
	\mathscr E\,\overbar c(\overbar\nu_k)c(\nu_{k,t})\sigma=-\sigma
	\qquad\text{on }\overbar F_k .
	\]
	The assumption \eqref{eq:strictInnerProductIndex} and the monotonicity from Proposition \ref{thm:algebraicDeformation} imply that, for every adjacent pair of codimension-one faces $\overbar F_i$ and
	$\overbar F_j$, 
	\[
	\langle\overbar\nu_i,\overbar\nu_j\rangle
	<
	\langle\nu_{i,t},\nu_{j,t}\rangle
	\]
	along $\overbar F_i\cap \overbar F_j$ for all $t\in[0,1]$. Hence $D_{B_t}$ is essentially self-adjoint and Fredholm
	for all $t\in [0, 1]$ by Theorem~\ref{thm:ess-saVector}.
	
	We now prove that the Fredholm index is independent of $t$. The idea is to
	construct a norm-continuous family of bundle isomorphisms
	\[
	\Theta_t\colon E\longrightarrow E
	\]
	such that $\Theta_t$ maps the boundary condition $B_0$ to $B_t$.
	
	We first construct local isomorphisms mapping $B_s$ to $B_t$, for $s,t$ close
	to each other. The construction is the same as the construction of the
	bundle isomorphism in the proof of Theorem~\ref{thm:reduction}.
	
	\begin{enumerate}
		\item At interior points of $\domain$, we take the local isomorphism to be
		the identity.
		
		\item At points in the interior of a codimension-one face, we choose a
		local unitary bundle map carrying the subbundle determined by $B_s$ to
		the corresponding subbundle determined by $B_t$, and carrying the
		orthogonal complement to the orthogonal complement.
		
		\item At points in the interior of an edge
		$\overbar F_i\cap\overbar F_j$, we use the explicit construction from the
		codimension-two case in the proof of Theorem~\ref{thm:reduction}, using the fact that
		the inner product comparison is strict for all $t\in[0,1]$.
		
		\item At a vertex, we use the same partition-of-unity construction as in
		the codimension-three case of Theorem~\ref{thm:reduction}.  Namely, we use an open cover $\{W_\alpha\}$ of the link and a subordinate partition of unity $\{\rho_\alpha\}$ to define the local bundle isomorphism near a vertex by:  
		\[
		\Theta_{s,t}^{\mathrm{loc}}(z)
		=
		\sum_\alpha \rho_\alpha(\sigma)\Theta_{\alpha,s,t}(z),
		\]
		where $\Theta_{\alpha,s,t}$ is a local bundle isomorphism constructed as in the codimension two case. See equation \eqref{eq:bundlegluemap} and its construction for more details. 
	\end{enumerate}

	To pass from local maps to a  bundle isomorphism over $\domain$, we use a partition of unity
	subordinate to a finite open cover of $\domain$. Since the local maps are
	uniformly close to the identity whenever $s$ and $t$ are sufficiently close,
	a linear combination  remains pointwise invertible. Thus, after
	subdividing $[0,1]$ into sufficiently small intervals
	\[
	0=t_0<t_1<\cdots<t_N=1,
	\]
	we obtain global bundle isomorphisms
	\[
	\Theta_{s,t}\colon E\longrightarrow E
	\]
	for $s,t$ in the same subinterval, with $\Theta_{s,t}$ mapping $B_s$ to
	$B_t$. For $t\in[t_\ell,t_{\ell+1}]$, define
	\[
	\Theta_t
	=
	\Theta_{t_\ell,t}\circ
	\Theta_{t_{\ell-1},t_\ell}\circ\cdots\circ
	\Theta_{t_0,t_1}.
	\]
	Then $\Theta_t$ maps $B_0$ to $B_t$.
	
	We next record the Sobolev estimates needed below. Away from the vertices of
	$\domain$, $\Theta_t$ is smooth and depends smoothly on $t$. Near a vertex
	$x$, write $z=r\sigma$ in polar coordinates centered at $x$, where $r = \dist(z, x)$. In the local
	construction above,
	\[
	\Theta_{s,t}^{\mathrm{loc}}(z)
	=
	\sum_\alpha \rho_\alpha(\sigma)\Theta_{\alpha,s,t}(z).
	\]
	Since$
	|\nabla \rho_\alpha(\sigma)|\leq Cr^{-1}$
	and  the maps $\Theta_{\alpha,s,t}$ depend smoothly on $s$ and $t$, we
	obtain, for $s,t$ in the same small subinterval,
	\begin{equation}\label{eq:ThetaDiffEstimate}
		|\Theta_{s,t}(z)-I|
		\leq
		C|s-t|,
		\qquad
		|\nabla\Theta_{s,t}(z)|
		\leq
		C|s-t|\,r^{-1}
	\end{equation}
	near every vertex. More generally, for the composed family $\Theta_t$, we have
	\[
	|\nabla\Theta_t(z)|\leq C' r^{-1}
	\]
	near each vertex.
	
	By Hardy's inequality, Lemma~\ref{lemma:1/rEmbedding}, multiplication by
	$r^{-1}$ maps $H^1$ boundedly into $L^2$ in dimension three. Therefore
	multiplication by $\nabla\Theta_t$ defines a bounded map
	\[
	\nabla\Theta_t\colon H^1(\domain,E)\longrightarrow L^2(\domain,E).
	\]
	Using the product rule $
	\nabla(\Theta_t\sigma)
	=
	(\nabla\Theta_t)\sigma+\Theta_t\nabla\sigma$,
	we conclude that
	\[
	\Theta_t\colon H^1(\domain,E;B_0)
	\longrightarrow
	H^1(\domain,E;B_t)
	\]
	is bounded. The same argument applied to $\Theta_t^{-1}$ shows that
	$\Theta_t$ is a bounded isomorphism between these Sobolev spaces.
	
	Moreover, \eqref{eq:ThetaDiffEstimate} gives, for $s,t$ in the same small
	subinterval,
	\[
	\|(\Theta_t-\Theta_s)\sigma\|_{H^1}
	\leq
	C|t-s|\,\|\sigma\|_{H^1},
	\qquad
	\sigma\in H^1(\domain,E;B_0).
	\]
	Since the interval $[0,1]$ is covered by finitely many such subintervals, the
	family $\Theta_t$ is continuous in operator norm as a map
	\[
	H^1(\domain,E;B_0)\longrightarrow H^1(\domain,E).
	\]
	
	Define
	\[
	\mathcal D_t
	=
	D_{B_t}\circ \Theta_t
	\colon
	H^1(\domain,E;B_0)
	\longrightarrow
	L^2(\domain,E).
	\]
	For $\sigma\in H^1(\domain,E;B_0)$,  we have 
	\[
	\begin{aligned}
		\|\mathcal D_t\sigma-\mathcal D_s\sigma\|_{L^2}
		&=
		\|D((\Theta_t-\Theta_s)\sigma)\|_{L^2}   
		&\leq
		C\|(\Theta_t-\Theta_s)\sigma\|_{H^1}.
	\end{aligned}
	\]
	Hence $\mathcal D_t$ is a norm-continuous family of Fredholm operators on the
	fixed domain $H^1(\domain,E;B_0)$. By homotopy invariance of the Fredholm
	index, $
	\ind(\mathcal D_t)$ 
	is independent of $t$. Since $\Theta_t$ is an isomorphism,
	\[
	\ind(\mathcal D_t)=\ind(D_{B_t}).
	\]
	Therefore
	$\ind(D_{B_t})$
	is constant for $t\in[0,1]$. We may therefore replace the original auxiliary fields
	$\nu_k$ by $\normal$ without changing the index.

	\textbf{Step 2. Metric deformation near vertices.}

We next deform the metric near the vertices. Let $h_0=\overbar g$. For each
vertex $x$ of $\domain$, choose a sufficiently small neighborhood of $x$ in
the ambient open manifold $X$ and choose a flat metric $h^x$ on that
neighborhood such that
\[
h^x(x)=h_0(x).
\]
Using a partition of unity, and choosing the vertex neighborhoods pairwise
disjoint, we glue these local flat metrics to $h_0$ away from the vertices.
This gives a smooth metric $h_1$ on $\domain$ which is flat in a small
neighborhood of every vertex and agrees with $h_0$ away from these
neighborhoods. Define
\[
h_t=(1-t)h_0+t h_1,
\qquad
0\leq t\leq1.
\]
 
Let $\overbar\nu_{k,t}$ denote the unit inner normal to $\overbar F_k$ with
respect to $h_t$. During this step, we have  the auxiliary vector field $\nu_k = \normal$. The corresponding $h_t$-dependent boundary condition $B^{h_t}$ on the spinor bundle $E_{h_t} = S(T\domain_{h_t} \oplus \mathbb R^3)$ is defined by:
\[
\mathscr E\,
\overbar c(\overbar\nu_{k,t})c(\normal)\sigma=-\sigma
\qquad
\text{on }\overbar F_k.
\]

For every adjacent pair of codimension-one faces $\overbar F_i$ and
$\overbar F_j$, the faces remain transverse for all $t$. Hence their
$h_t$-unit inner normals remain distinct, and therefore
\[
\langle\overbar\nu_{i,t},\overbar\nu_{j,t}\rangle_{h_t}
<
1
=
\langle\normal,\normal\rangle .
\]
Thus the strict inner-product comparison in
Theorem~\ref{thm:ess-saVector} holds for each $t\in[0,1]$. Consequently,
$D^{h_t}$ with boundary condition $B^{h_t}$ is essentially self-adjoint and
Fredholm for all $t$.

Because $h_t$ is a smooth family of metrics, there exists a smooth family of bundle isometries $\Phi_t \colon E_{h_t} \to E_{h_0}$ covering the identity map on $\domain$. Define 
\[ \widetilde D_t \coloneqq \Phi_t \circ D^{h_t} \circ \Phi_t^{-1}\] acting on sections of $E_{h_0}$. Similarly, the boundary condition $B^{h_t}$ pulls back to a boundary condition $\widetilde B_t$ on $E_{h_0}$. Then $\widetilde B_t$ and  the  coefficients of $\widetilde D_t$ depend continuously on $t$. 

Similar to 
\textbf{Step 1}, we construct bundle isomorphisms $\widetilde \Theta_t \colon E_{h_0} \to E_{h_0}$ mapping the initial boundary condition $B^{h_0}$ to the pulled-back boundary condition $\widetilde B_t$. Moreover, the same argument from \textbf{Step 1} shows that  $\ind(D^{h_t}_{B^{h_t}})$ is independent of $t\in [0, 1]$. 
	
	\textbf{Step 3. Flattening the codimension one faces near vertices.}
	
	The goal of this step is to flatten the codimension-one faces in small neighborhoods of the vertices of $(\domain, h_1)$. Let $x$ be a vertex of $\domain$. We will deform each codimension-one face $\overline{F}_j$ passing through $x$ to its tangent plane at $x$, within a small neighborhood of $x$. To streamline the discussion, we perform this deformation on one codimension-one face at a time. For each individual deformation, we will show that the index of the associated Dirac operator remains constant. Since there are only finitely many faces and vertices,
	performing these deformations successively gives a new polyhedral manifold
	$(\domain',\overbar h)$ for which the metric $\overbar h$ and all
	codimension-one faces are flat in sufficiently small neighborhoods of the
	vertices.
	
    Recall that $h_1$ is flat near each vertex. We identify a sufficiently small
    neighborhood of $x$ with a neighborhood of the origin in $\mathbb R^3$. Let
    $\overbar F_0$ be one codimension-one face passing through $x$, and let
    $\Sigma$ be the corresponding smooth surface near the origin. After a
    rotation on $\mathbb R^3$, we may assume that
    \[
    T_0\Sigma=\{z=0\}.
    \] Thus $\Sigma$ is the graph of a smooth function $f(x,y)$ satisfying
    \[
    f(0,0)=0,
    \qquad
    \nabla f(0,0)=0.
    \] Choose a smooth cutoff function $\rho$ supported in $B_{r_0}(0)$ and equal to
    $1$ on $B_{r_0/2}(0)$, where $B_{r_0}(0)$ is the ball of radius $r_0$ centered at the origin.  The gradient of $\rho$ satisfies $\|\nabla \rho(r)\| \leq C / r$ for some constant $C>0$. Define a family of surfaces $\Sigma_t$, $t\in[0,1]$, by
    \begin{equation}\label{eq:surfacedeformation}
    	\Sigma_t
    	=
    	\left\{
    	(x,y,z): z=(1-t\rho(x,y))f(x,y)
    	\right\}.
    \end{equation}
    Then $\Sigma_0=\Sigma$, the deformation is supported in $B_{r_0}(0)$, and
    $\Sigma_1$ agrees with the tangent plane $\{z=0\}$ on $B_{r_0/2}(0)$. Let us denote the corresponding surface at time $t$ under this deformation by $\Sigma_t$, and the corresponding new codimension-one face of the manifold by $\overbar F_0^t$. Let
    $\domain^{(t)}$ denote the space obtained from $\domain$ by
    replacing $\overbar F_0$ with the corresponding deformed face
    $\overbar F_0^{(t)}$.
    
  We first check that $\domain^{(t)}$ is a polyhedral manifold in the sense of Definition \ref{def:polymanifold}. 
 A normal to $\Sigma_t$ is
  \[
  \overbar N_t
  =
  \bigl(-(1-t\rho)\nabla f+t f\nabla\rho,\,1\bigr).
  \]
  Since $
  f=O(r^2),  \nabla f=O(r)$
  and 
  \[
  |f\nabla\rho|=O(r)
  \]
  on the support of $\nabla\rho$, we have
  \[
  \overbar N_t=(0,0,1)+O(r)
  \]
  uniformly in $t$. Therefore, after shrinking $r_0$ if necessary, the normal to
  $\overbar F_0^{(t)}$ remains linearly independent from the normals to all
  adjacent fixed faces along their intersections. At the vertex itself, the
  normal is fixed and equal to the normal of $T_0\Sigma$. It follows that $\domain^{(t)}$ satisfies conditions of Definition \ref{def:polymanifold}, hence is a
  polyhedral manifold, for every $t\in[0,1]$. Moreover, after shrinking $r_0$ if necessary,
  all dihedral angles of $\domain^{(t)}$ also remain less than $\pi$. 
   
   To show the invariance of the index of the associated Dirac operator, We construct a
   polyhedral $C^1$-diffeomorphism
   \[
   \Phi_t\colon \domain^{(1)}\longrightarrow \domain^{(t)}
   \]
   by adapting the constructions in Examples~\ref{ex:codimtwo} and
   \ref{ex:codimthree}.
   
    We first construct local maps near the vertex $x$ (which has been identified with the origin of $\mathbb R^3$).  Let
    $\Sigma_t$ be the deformed face, and let $\Sigma_k$ be an adjacent face that
    is kept fixed. We choose Euclidean coordinates near the vertex $x$, with $x$ identified
    with the origin in $\mathbb R^3$, so that
    \[
    T_0\Sigma_t=\Pi_1=\{z=0\},
    \qquad
    T_0\Sigma_k=\Pi_k=\{y\sin\theta-z\cos\theta=0\}.
    \]
    Let
    \[
    g_{t,1}(x,y,z)
    =
    z-(1-t\rho(x,y))f(x,y)
    \]
    be a defining function for $\Sigma_t$, and let $g_k$ be a defining function
    for $\Sigma_k$. We normalize these defining functions so that
    \[
    \nabla g_{t,1}(0)=(0,0,1),
    \qquad
    \nabla g_k(0)=(0,\sin\theta,-\cos\theta).
    \]
    Define
    \[
    \widetilde\Phi_{t,k}(x,y,z)
    =
    \left(
    x,\,
    \frac{g_k(x,y,z)+g_{t,1}(x,y,z)\cos\theta}{\sin\theta},\,
    g_{t,1}(x,y,z)
    \right).
    \]
    Then $\widetilde\Phi_{t,k}$ maps $\Sigma_t$ to $\Pi_1$, maps $\Sigma_k$ to
    $\Pi_k$, and satisfies
    \[
    d\widetilde\Phi_{t,k}|_0=I.
    \]
    Consequently,
    \[
    \Psi_{t,k}
    =
    \widetilde\Phi_{t,k}^{-1}\circ\widetilde\Phi_{1,k}
    \]
    is a local $C^1$-diffeomorphism which maps the flattened face $\Sigma_1$ to
    $\Sigma_t$, preserves the fixed face $\Sigma_k$, satisfies
    \[
    d\Psi_{t,k}|_0=I,
    \]
    and depends smoothly on $t$. 
    
    For each pair of adjacent faces $\Sigma_i$ and $\Sigma_j$ through the vertex,
    we define the corresponding local $C^1$-diffeomorphism as follows:
    \begin{enumerate}
    	\item if one of the two faces is the deforming face $\Sigma_1$ and the
    	other is a fixed face $\Sigma_k$, we use the map $
    	\Psi_{t,k};$
    	
    	\item if neither face is the deforming face $\Sigma_1$, so that both faces are fixed,
    	we use the identity map.
    \end{enumerate}
   Using these local $C^1$-diffeomorphisms, 
   the construction in Example \ref{ex:codimthree} gives a local
  $C^1$-diffeomorphism $
   \Phi_t^{\mathrm{loc}}$   defined in a sufficiently small neighborhood of the vertex, such that
   \[
   \Phi_t^{\mathrm{loc}}(\Sigma_1)=\Sigma_t
   \]
   for the deforming face, while every fixed face is mapped to itself. Moreover,
   \[
   d\Phi_t^{\mathrm{loc}}|_0=I,
   \]
   and the family $\Phi_t^{\mathrm{loc}}$ depends continuously on $t$.
   
   Finally, we extend $\Phi_t^{\mathrm{loc}}$ to a global polyhedral
   $C^1$-diffeomorphism
   \[
   \Phi_t\colon \domain^{(1)}\longrightarrow\domain^{(t)}.
   \]
   Choose the vertex neighborhood slightly larger than the support of the
   surface deformation. Since the deformation of the face is supported in
   $B_{r_0}(0)$, and the faces are fixed outside this ball, 
   $\Phi_t^{\mathrm{loc}}$ may be chosen to be the identity near the boundary of
   a slightly larger ball. We then extend it by the identity outside that ball.
   
   Let $
   E_t=S(T\domain^{(t)}\oplus\underline{\mathbb R}^3).$
   Because $\Phi_t$ is the identity map outside of the flat ball $B_{r_0}(0)$, the pullback bundle $\Phi_t^* E_t$ and  $E_1 = S(T\domain^{(1)} \oplus \underline{\mathbb R}^3)$ are naturally identified. Let $U_t$ be the induced unitary map 
   \[
   U_t\colon L^2(\domain^{(t)},E_t)
   \longrightarrow
   L^2(\domain^{(1)},E_1).
   \]
   Let $D_t$ be the Dirac operator on $\domain^{(t)}$, and set
   \[
   \widetilde D_t=U_tD_tU_t^{-1}.
   \]
   Then $\widetilde D_t$ is a first-order elliptic operator on 
   $E_1$ over $\domain^{(1)}$. Its coefficients are continuous at the vertex,
   smooth away from the vertex, and depend continuously on $t$.

   It remains to track the boundary condition. Extend the auxiliary unit-length section $\normal$ with value in $\underline{\R}^3$ to a smooth unit-length section $\widetilde\normal$ on a tubular
   neighborhood of $\partial\domain$ in the ambient manifold. After shrinking
   $r_0$ if necessary, all deformed faces remain in this tubular neighborhood. Define
   \begin{equation}\label{eq:deformnormal}
   	\normal_t=\widetilde\normal|_{\partial\domain^{(t)}} .
   \end{equation}
   Let $B_t$ be the boundary condition on $\domain^{(t)}$ determined by the
    unit inner normals to the faces of $\domain^{(t)}$ and the auxiliary
   field $\normal_t$. Let $\widetilde B_t$ be its pullback to $\domain^{(1)}$
   under $\Phi_t$. The boundary conditions $\widetilde B_t$ vary continuously in $t$, and the
   construction from Theorem~\ref{thm:reduction} gives Lipschitz bundle
   isomorphisms
   \[
   \Theta_t\colon E_1\longrightarrow E_1
   \]
   mapping $B_1$ to $\widetilde B_t$. By the same argument from \textbf{Step 1} and \textbf{Step 2},  $\ind((\widetilde D_t)_{\widetilde B_t})$ is independent of $t$; equivalently, $\ind((D_t)_{B_t})$ is is independent of $t$.
  
  Applying this deformation successively to all codimension-one faces through
  all vertices gives a polyhedral manifold $(\domain',\overbar h)$ whose metric
  and codimension-one faces are flat near the vertices, and whose associated
  Dirac operator has the same Fredholm index as the original one at the
  beginning of this step. See Figure \ref{fig:deformation-near-vertices} in Section \ref{sec:intro}.

	\textbf{Step 4: Cutting off small neighborhoods of the vertices.}  
	
	At this point, we have reduced to the case in which the metric $\overbar g$ of $(\domain, \overbar g)$ is
	flat in a small neighborhood of each vertex, and all codimension-one faces
	meeting a given vertex are flat in that neighborhood.
	
	Let $x$ be a vertex of $\domain$, and choose a sufficiently small flat
	neighborhood
	\[
	U_x=B_\delta(x)\cap\domain .
	\]
	Using the flat metric on $U_x$, we identify
	\[
	T\domain|_{U_x}\simeq U_x\times\mathbb R^3.
	\]
	Thus the auxiliary trivial bundle $\underline{\mathbb R}^3$ may be identified
	with $T\domain|_{U_x}$ over $U_x$. Under this identification,  the section $\normal$ of the auxiliary bundle $\underline{\mathbb R}^3$ can be
	equivalently viewed as a tangent vector field on $U_x$.
	
	Choose a hypersurface $\Sigma_\varepsilon$ as in
	Example~\ref{example:hypersurface}, contained in
	\[
	B_\varepsilon(x)\cap\domain,
	\qquad
	0<\varepsilon<\delta .
	\]
	Before cutting along $\Sigma_\varepsilon$, we make a local deformation of the
	auxiliary unit-length section $\normal$ near $\Sigma_\varepsilon$. We choose this
	deformation so that:
	\begin{enumerate}[label=$(\roman*)$]
		\item $\normal$ is tangent to $\Sigma_\varepsilon$ along each curve
		$\Sigma_\varepsilon\cap\overbar F_k$;
		
		\item  and $
		\langle\normal,\overbar\nu_k\rangle>0$
		along each flat face $\overbar H_k = \overbar F_k\cap B_\delta(x)$, where $\overbar\nu_k$ is the unit inner normal of $\overbar F_k$.
	\end{enumerate}
	 This deformation can be chosen through smooth unit-length sections
	supported in $U_x$. By the same argument as in \textbf{Step 1}, this local deformation does not change the Fredholm
	index.
	
	We now cut $\domain$ along $\Sigma_\varepsilon$. This gives two pieces,
	\[
	\domain=\domain_1\cup_{\Sigma_\varepsilon}\domain_2.
	\]
	Here $\domain_1$ is the piece containing the vertex, while $\domain_2$ is the truncated remainder.
	
	Let $\overbar \nu_\Sigma$ be the unit normal vector of $\Sigma_\varepsilon$ pointing out of
	$\domain_1$. Equivalently, $\overbar \nu_\Sigma$ is the  unit inner normal vector of
	$\Sigma_\varepsilon$ when $\Sigma_\varepsilon$ is viewed as a boundary face of
	$\domain_2$. We use the same vector $\nu_\Sigma = \overbar \nu_\Sigma$, under the Euclidean
	identification above, as the auxiliary vector of $\underline{\mathbb R}^3$ on $\Sigma_\varepsilon$.
	Because $\normal$ is tangent to $\Sigma_\varepsilon$ along
	$\Sigma_\varepsilon\cap\overbar F_k$, we have
	\[
	\langle\normal,\nu_\Sigma\rangle=0
	\qquad
	\text{along }\Sigma_\varepsilon\cap\overbar F_k .
	\]
	Thus the hypotheses of the gluing theorem, Theorem~\ref{thm:gluing}, are
	satisfied. Hence
	\[
	\ind(D_B^{\domain})
	=
	\ind(D_{B_1}^{\domain_1})
	+
	\ind(D_{B_2}^{\domain_2}),
	\]
	where $B_1$ and $B_2$ are the boundary conditions defined by
	Theorem~\ref{thm:gluing}. In particular, along $\Sigma_\varepsilon$, the
	condition on $\domain_2$ is the absolute condition, while the condition on
	$\domain_1$ is the complementary condition.
	
	\begin{claim}\label{claim:indexzero}
	 We have $
		\ind(D_{B_1}^{\domain_1})=0.$
	\end{claim}
	
	\begin{proof}
		The codimension-one faces of $\domain_1$ consist of the flat faces
		\[
		\overbar H_1,\ldots,\overbar H_\ell
		\]
		together with $\Sigma_\varepsilon$. Let $\overbar\nu_k$ be the  unit inner 
		normal to $\overbar H_k$. We also regard $\overbar\nu_k$ as a section of the
		auxiliary trivial bundle $\underline{\mathbb R}^3$, and denote this auxiliary section by
		\[
		\nu_k=\overbar\nu_k .
		\]
		
		By the local deformation arranged above, the auxiliary field $\normal$ satisfies
		\[
		\langle\normal,\nu_k\rangle>0
		\]
		on each flat face $\overbar H_k$. Therefore Proposition~\ref{thm:algebraicDeformation}
		applies to the collection $\{\nu_k\}$ with north pole $\normal$. We obtain
		smooth unit-length sections $\nu_{k,t}$, $t\in[0,1]$, such that
		\[
		\nu_{k,0}=\nu_k,
		\qquad
		\nu_{k,1}=\normal,
		\]
		and, for $t>0$,
		\[
		\langle\nu_{i,t},\nu_{j,t}\rangle
		>
		\langle\nu_i,\nu_j\rangle
		=
		\langle\overbar\nu_i,\overbar\nu_j\rangle
		\]
		along every intersection $\overbar H_i\cap\overbar H_j$.
		
		We keep the auxiliary vector $\nu_\Sigma$ on $\Sigma_\varepsilon$ fixed during
		this deformation. Since $\Sigma_\varepsilon$ meets each flat face
		$\overbar H_k$ orthogonally, the vector $\nu_k=\overbar\nu_k$ is tangent to
		$\Sigma_\varepsilon$ along $\Sigma_\varepsilon\cap\overbar H_k$. By construction,
		$\normal$ is also tangent to $\Sigma_\varepsilon$ along the same curve. The
		explicit formula in Proposition~\ref{thm:algebraicDeformation}
		therefore shows that $\nu_{k,t}$ remains tangent to $\Sigma_\varepsilon$ along
		$\Sigma_\varepsilon\cap\overbar H_k$ for all $t$. Hence
		\[
		\langle\nu_{k,t},\nu_\Sigma\rangle=0
		=
		\langle\overbar\nu_k,\overbar\nu_\Sigma\rangle
		\]
		along $\Sigma_\varepsilon\cap\overbar H_k$.
		
		Let $\widetilde B_t$ be the boundary condition on $\domain_1$ determined by
		the  normals $\overbar\nu_k$, the auxiliary fields $\nu_{k,t}$ on the
		flat faces, and $\overbar\nu_\Sigma$ and $\nu_\Sigma$ along $\Sigma_\varepsilon$. For
		$t>0$, the strict inner product comparison holds along intersections of flat faces $\overbar H_i \cap \overbar H_j$, while
		the equality case holds along intersections  $\Sigma_\varepsilon\overbar H_k$.
		Thus Theorem~\ref{thm:ess-saVector} applies, and
		$D_{\widetilde B_t}^{\domain_1}$ is Fredholm for every $t>0$.
		
		Moreover, the same boundary-condition deformation argument used in \textbf{Step~1}
		shows that
		$\ind(D_{\widetilde B_t}^{\domain_1})$
		is constant for $t\in(0,1]$.
		
		At $t=0$, the boundary condition $\widetilde B_0$ is the mixed boundary
		condition of Lemma~\ref{lemma:modelMixedCorner}. Therefore $D_{\widetilde B_0}^{\domain_1}$ is invertible,
		and $\widetilde B_0$ is extremal in the sense of
		Definition~\ref{def:extremalBoundaryCondition}. Since
		$\widetilde B_t\to \widetilde B_0$ in Lipschitz norm, Corollary~\ref{coro:approximation}
		implies that $D_{\widetilde B_t}^{\domain_1}$ is invertible for all sufficiently
		small $t>0$. Hence
		\[
		\ind(D_{\widetilde B_t}^{\domain_1})=0
		\]
		for all sufficiently small $t>0$. Since the index is constant on $(0,1]$, we
		obtain
		\[
		\ind(D_{B_1}^{\domain_1})
		=
		\ind(D_{\widetilde B_1}^{\domain_1})
		=
		0.
		\]
		This proves the claim.
	\end{proof}
	
	Applying this argument at every vertex of $\domain$, we obtain a truncated
	manifold $\domain'$ and a boundary condition $B'$ such that
	\[
	\ind(D_B^{\domain})=\ind(D_{B'}^{\domain'}).
	\]
	See Figure \ref{fig:cutting-and-pasting-vertices} and \ref{fig:cutting-and-pasting-vertices-2} in Section \ref{sec:intro}. The codimension-one faces of $\domain'$ are of two types:
	\begin{enumerate}
		\item If a face $\overbar F_k'$ is contained in an original face
		$\overbar F_k$ of $\domain$, then the boundary condition is
		\[
		\mathscr E\,\overbar c(\overbar\nu_k)c(\normal)\sigma=-\sigma,
		\]
		where $\overbar\nu_k$ is the unit inner normal to $\overbar F_k'$.
		
		\item\label{item:bdalongcut}
		If a face is one of the cutting hypersurfaces
		$\Sigma_\varepsilon(x_j)$, then the boundary condition is
		\[
		\mathscr E\,\overbar c(\overbar\nu_\Sigma)c(\nu_\Sigma)\sigma=-\sigma,
		\]
		where, using the local Euclidean identification,
		\[
		\overbar\nu_\Sigma=\nu_\Sigma
		\]
		is the  unit normal inner of the cutting hypersurface  $\domain'$.
	\end{enumerate}
	
	\textbf{Step 5. Product deformation along the original edges of $\domain$.}
	
	Cutting off the vertex neighborhoods in \textbf{Step~4} creates new edges and truncates
	the original edges of $\domain$. To avoid ambiguity, let
	$\mathcal E_{\domain}$ denote the collection of edge segments in the truncated
	manifold $\domain'$ in \textbf{Step~4} that arise from the original edges of $\domain$. The new
	edges contained in the cutting hypersurfaces
	$\Sigma_\varepsilon(x_j)$ are not included in $\mathcal E_{\domain}$.
	
	Let $\Gamma\in\mathcal E_{\domain}$. We deform a sufficiently small tubular
	neighborhood of $\Gamma$ into a genuine Riemannian product
	\[
	I\times\sector,
	\]
	where $I$ is a compact interval and $\sector\subset\mathbb R^2$ is a flat
	sector. At the end of the deformation, $\Gamma$ corresponds to
	$I\times\{0\}$, the two original faces meeting along $\Gamma$ correspond to
	the two side faces $I\times\partial\sector$, and the cutting hypersurfaces
	at the endpoints correspond to the end faces
	$\partial I\times\sector$.
	
	Suppose that
	\[
	\Gamma=\overbar F_1\cap\overbar F_2
	\]
	and that its endpoints lie on the cutting hypersurfaces
	$\Sigma_\varepsilon(x_i)$ and $\Sigma_\varepsilon(x_j)$. We perform the
	deformation in two stages.
	
	\begin{enumerate}
		\item We first deform the metric in a sufficiently small tubular
		neighborhood of $\Gamma$ to a flat product metric. The deformation is
		chosen to be fixed near the two end faces, where the metric is already
		flat. The faces remain transverse throughout the deformation, and the
		auxiliary section on both side faces is the same unit-length section $\normal$.
		 The deformation argument from
		\textbf{Step~2} therefore shows that the Fredholm index remains unchanged.
		
		\item We next deform the faces $\overbar F_1$ and $\overbar F_2$ so that
		the neighborhood becomes a genuine product $I\times \mathbb G$.
		
		By Example~\ref{example:hypersurface}, the end faces
		$\Sigma_\varepsilon(x_i)$ and $\Sigma_\varepsilon(x_j)$ are flat near
		their intersections with $\Gamma$ and are orthogonal to $\Gamma$. Let
		$\alpha_i$ and $\alpha_j$ denote the dihedral angles between
		$\overbar F_1$ and $\overbar F_2$ near the two endpoints of $\Gamma$. If
		$\alpha_i\neq\alpha_j$, we first deform one of the side faces in a small
		collar of one endpoint so that the two end angles agree. This deformation
		may be realized by rotating the corresponding cross-sectional ray about
		$\Gamma$. It fixes $\Gamma$ and the end face, preserves the
		orthogonality to the end face, and may be chosen so that the dihedral
		angle remains in $(0,\pi)$ throughout the deformation.
		
		Once the two end angles agree, we flatten $\overbar F_1$ and
		$\overbar F_2$ within a small  tubular neighborhood of $\Gamma$ by the same  procedure 
		used in \textbf{Step~3}. The resulting neighborhood is isometric to
		$I\times\sector$.
		
		During this face deformation, the auxiliary section $\normal$ is
		deformed as in \textbf{Step~3}; see \eqref{eq:deformnormal}.
		The deformation may be chosen  so that
		$\normal_t$ remains tangent to
		$\Sigma_\varepsilon(x_i)$ and $\Sigma_\varepsilon(x_j)$ along their
		intersections with the side faces. Thus, for $a=1,2$ and
		$\ell\in\{i,j\}$,
		\[
		\langle\overbar\nu_{a,t},
		\overbar\nu_{\Sigma_\varepsilon(x_\ell)}\rangle
		=
		0
		=
		\langle\normal_t,\nu_{\Sigma_\varepsilon(x_\ell)}\rangle
		\]
		along
		$\overbar F_a\cap\Sigma_\varepsilon(x_\ell)$.
		Consequently, the strict inner product comparison holds along $\Gamma$, while
		the equality case holds along the edges $\overbar F_a\cap\Sigma_\varepsilon(x_\ell)$.
		Theorem~\ref{thm:ess-saVector} therefore applies throughout the
		deformation.
		
		Finally, the same argument from \textbf{Step~3} shows that  the Fredholm index remains unchanged during this
		face-flattening deformation.
	\end{enumerate}
	
	Choosing the tubular neighborhoods sufficiently small, and performing the
	construction successively for all $\Gamma\in\mathcal E_{\domain}$, we obtain
	a manifold for which a neighborhood of every edge in $\mathcal E_{\domain}$ is a genuine
	product $I\times\sector$, without changing the Fredholm index.

\textbf{Step 6. Smoothing the edges of $\domain$}

Let $\Gamma\in \mathcal E_{\domain}$ be one of the  edges treated in \textbf{Step~5}. A sufficiently
small neighborhood of $\Gamma$ is now a genuine Riemannian product $
I\times\sector,$
where $\Gamma=I\times\{0\}$ and $\sector\subset\mathbb R^2$ is a flat sector. Choose $\varepsilon>0$ sufficiently small, and set
\[
K_\varepsilon=B_\varepsilon(0)\cap\sector,
\qquad
R_\varepsilon=\partial B_\varepsilon(0)\cap\sector,
\qquad
C_\varepsilon=I\times R_\varepsilon .
\]
We will cut along the hypersurface $C_\varepsilon$, following the same
strategy as in \textbf{Step~4}.

Let $\overbar F_1$ and $\overbar F_2$ be the two side faces meeting along
$\Gamma$. Before cutting, we make a local deformation of the auxiliary field
$\normal$ on these two faces. The deformation is supported in small
neighborhoods of
\[
C_\varepsilon\cap\overbar F_1
\qquad\text{and}\qquad
C_\varepsilon\cap\overbar F_2
\]
and is disjoint from the edge $\Gamma$. We arrange that, near these
intersections, $\normal$ is deformed to become the unit inner normal $\overbar\nu_k$ of  $\overbar F_k$. Moreover, the deformation is chosen relative to the two end faces
\[
\Sigma_\varepsilon(x_i)=\{0\}\times K_\varepsilon,
\qquad
\Sigma_\varepsilon(x_j)=\{1\}\times K_\varepsilon,
\]
so that $\normal$ remains tangent to these end faces along their intersections
with $\overbar F_1$ and $\overbar F_2$. The auxiliary vector $\nu_{\Sigma}$ (see item \eqref{item:bdalongcut} at the end of \textbf{Step 4})on each of the end faces $\Sigma_\varepsilon(x_i)$ and $\Sigma_\varepsilon(x_j)$ is kept fixed. The same  argument from \textbf{Step 1} and \textbf{Step 2} shows that the index of the associated Dirac operator (subject to the corresponding boundary condition) remains  constant throughout the deformation.

\begin{figure}

\begin{tikzpicture}[line cap=round, line join=round, thick, scale=0.65,  >={Latex[scale=0.5]}]
	
	\def\R{3.5}          
	\def\angTop{42}      
	\def\angBot{-18}     
	\def\vx{8}           
	\def\vy{4.5}         
	
	\coordinate (H) at (0,0);
	\coordinate (T) at (\angTop:\R);
	\coordinate (B) at (\angBot:\R);
	
	\coordinate (Hp) at (\vx, \vy);
	\coordinate (Tp) at ($(T) + (\vx, \vy)$);
	\coordinate (Bp) at ($(B) + (\vx, \vy)$);
	
	\draw[dashed] (Hp) -- (Tp);
	\draw[dashed] (Hp) -- (Bp);
	\draw (Tp) arc (\angTop:\angBot:\R);
	
	\draw[dashed] (H) -- (Hp) node[pos=0.6, below] {$\Gamma$};   
	\draw (T) -- (Tp);   
	\draw (B) -- (Bp);   
	
	\draw (H) -- (T);
	\draw (H) -- (B);
	\draw (T) arc (\angTop:\angBot:\R);
	
	
	\coordinate (CenterF1) at ($0.25*(H) + 0.25*(T) + 0.25*(Hp) + 0.25*(Tp)$);
	\node at (CenterF1) {$\overbar F_1$};
	
	\coordinate (CenterF2) at ($0.25*(H) + 0.25*(B) + 0.25*(Hp) + 0.25*(Bp)$);
	\node[below] at (CenterF2) {$\overbar F_2$};
	
	\pgfmathsetmacro{\midAng}{(\angTop+\angBot)/2}
	\node at ($(H) + (\midAng:0.65*\R)$) {$\Sigma_\varepsilon(x_i)$};
	\node at ($(Hp) + (\midAng:0.65*\R)$) {$\Sigma_\varepsilon(x_j)$};
	
	
	\foreach \frac/\angShift in {0.95/0, 0.85/0, 0.75/0, 0.62/15, 0.48/30, 0.34/45, 0.20/55, 0.08/60} {
		\coordinate (PF1) at (\angTop:\frac*\R);
		\draw[->, blue, thick] (PF1) -- ++(\angTop-90+\angShift:0.6);
		
		\coordinate (PF2) at (\angBot:\frac*\R);
		\draw[->, blue, thick] (PF2) -- ++(\angBot+90-\angShift:0.6);
	}
	\node[blue, below] at ($(\angTop:0.95*\R) + (\angTop-90:0.6)$) {$\normal$};
	\node[blue, above left] at ($(\angBot:0.95*\R) + (\angBot+90:0.6)$) {$\normal$};
	
	\foreach \frac/\angShift in {0.95/0, 0.85/0, 0.75/0, 0.62/15, 0.48/30, 0.34/45, 0.20/55, 0.08/60} {
		\coordinate (PF1p) at ($(Hp) + (\angTop:\frac*\R)$);
		\draw[->, blue, thick] (PF1p) -- ++(\angTop-90+\angShift:0.6);
		
		\coordinate (PF2p) at ($(Hp) + (\angBot:\frac*\R)$);
		\draw[->, blue, thick] (PF2p) -- ++(\angBot+90-\angShift:0.6);
	}
	\node[blue, below] at ($(Hp) + (\angTop:0.95*\R) + (\angTop-90:0.6)$) {$\normal$};
	\node[blue, above left] at ($(Hp) + (\angBot:0.95*\R) + (\angBot+90:0.6)$) {$\normal$};
	
	\foreach \frac in {0.15, 0.38, 0.61} {
		\coordinate (PT) at ($(T)!\frac!(Tp)$);
		\draw[->, blue, thick] (PT) -- ++(\angTop-90:0.6);
		
		\coordinate (PB) at ($(B)!\frac!(Bp)$);
		\draw[->, blue, thick] (PB) -- ++(\angBot+90:0.6);
	}
	
	\node[blue, above right] at ($(T)!0.38!(Tp) + (\angTop-90:0.6)$) {$\normal$};
	\node[blue, above right] at ($(B)!0.38!(Bp) + (\angBot+90:0.6)$) {$\normal$};
	
\end{tikzpicture}
\caption{A product neighborhood $I\times\sector$ of an edge $\Gamma$.
	The blue arrows indicate the auxiliary vector $\protect\normal$ along
	the side faces $\protect\overbar F_1$ and $\protect\overbar F_2$ after the
	local deformation near $I\times R_\varepsilon$.}
\label{fig:normal}
\end{figure}

We now cut $\domain'$ along $C_\varepsilon$. This gives $\domain' = \domain'_1 \cup_{C_\varepsilon} \domain'_2$, where 
\[
\domain'_1=I\times K_\varepsilon 
\textup{ and } 
\domain'_2
=
\domain'\setminus\operatorname{int}(\domain'_1).
\]
Let $\overbar \nu_R$ denote the unit normal to $C_\varepsilon$ pointing out of
$\domain'_1$. Equivalently, $\overbar \nu_R$ is the  unit inner normal to
$C_\varepsilon$ as a boundary face of $\domain'_2$. Using the natural identification of the auxiliary trivial bundle $\underline{\mathbb R}^3$ with
the flat tangent bundle $T\domain'_1$, we use the same vector $\nu_R = \overbar \nu_R$  as the auxiliary vector on $C_\varepsilon$. In particular, we have 
\[
\langle\normal,\nu_R\rangle=0
\qquad
\textup{ along }C_\varepsilon\cap\overbar F_k.
\]
Moreover, the auxiliary vector $\nu_\Sigma$ on each end face is its geometric inner normal, which is also orthogonal to $\nu_R$.  Hence all hypotheses of
Theorem~\ref{thm:gluing} are satisfied, and
\[
\ind(D_{B'}^{\domain'})
=
\ind(D_{B'_1}^{\domain'_1})
+
\ind(D_{B'_2}^{\domain'_2}).
\]

\begin{claim}\label{claim:indexzero-cylinder}
	$\ind(D^{\domain'_1}_{B'_1}) = 0$.
\end{claim}
\begin{proof}
	The codimension-one faces of $\domain'_1=I\times K_\varepsilon$ are the two
	side faces $\overbar F_1$ and $\overbar F_2$, the two end faces
	\[
	\{0\}\times K_\varepsilon,
	\qquad
\{1\}\times K_\varepsilon,
	\]
	and the cylindrical face $C_\varepsilon=I\times R_\varepsilon$.
	Let $\overbar \nu_k$ denote the unit inner normal vector of $\overbar F_k$. Let $\nu_k$ also denote the unit inner normal vector of $\overbar F_k$, but viewed as a section of the auxiliary trivial bundle $\underline{\mathbb{R}}^3$ (which has been identified with $T\domain'_1$). By the arrangement made in \textbf{Steps~4} and \textbf{~5}, and preserved by the preliminary
	deformation above, the current auxiliary field $\normal$ satisfies
	\[
	\langle\normal,\overbar\nu_k\rangle>0
	\]
	on each side face. Proposition  \ref{thm:algebraicDeformation} therefore gives smooth unit vector fields $\nu_{k, t}$, $t \in [0, 1]$ on $\overbar{F}_k$ such that:\begin{itemize}\item $\nu_{k,0}=\nu_k$ and $\nu_{k,1}=\normal$,\item $\langle\nu_{k,t},\normal\rangle>0$ for all $t\in [0,1]$ and every $k \in \{1, 2\}$, and\item $\langle \nu_{1,t},\nu_{2,t}\rangle>\langle \nu_{1},\nu_{2}\rangle$ along $\overbar F_1\cap  \overbar F_2$ for all $t\in (0,1]$.\end{itemize}
	The auxiliary vectors on the end faces $\{0\}\times K_\varepsilon$ and 
	$\{1\}\times K_\varepsilon$,  and $C_\varepsilon$ are kept fixed. By construction, for every $t>0$, the strict inner product comparison holds along
	the edge $\Gamma = \overbar F_1\cap \overbar F_2$, while the equality case holds along all remaining edges of $\domain'_1$.  By Theorem~\ref{thm:ess-saVector}, the corresponding
	operators $D_{\widetilde B_t}^{\domain'_1}$ are essentially self-adjoint and
	Fredholm for $t>0$. Now the same argument used in the proof of Claim \ref{claim:indexzero} shows that $\ind(D^{\domain'_1}_{B'_1}) = 0$.
\end{proof}

Repeating this construction for every edge segment in
$\mathcal E_{\domain}$ gives a manifold $\domain''$, obtained by deleting the
interiors of the product neighborhoods $I\times K_\varepsilon$, with boundary
condition $B''$, such that
\[
\ind(D_{B'}^{\domain'})
=
\ind(D_{B''}^{\domain''}).
\]

We now fill each newly created cylindrical boundary component $I \times R_\varepsilon$  as follows. Choose a
smooth convex domain $U\subset\sector$ whose boundary consists of the circular
arc $R_\varepsilon$ and a smooth convex curve $\Lambda$. See the lower left picture in Figure \ref{fig:hownormalchanges} or the right picture in  Figure \ref{fig:edgecut}.  We require that
 $R_\varepsilon$ and
$\Lambda$ meet orthogonally at their endpoints. The curve $\Lambda$ replaces
the corner at the origin of $\mathbb G$ by a smooth convex arc.

Form the product $I\times U$ and glue it to $\domain''$ along the common
boundary hypersurface $I\times R_\varepsilon$. Let $\domain^U$ denote the
resulting manifold. Extend the auxiliary field $\normal$ from the adjacent side faces
to a smooth unit vector field $\widetilde\normal$ on the new face
$I\times\Lambda$. We choose this extension so that (see Figure \ref{fig:hownormalchanges}):
\begin{enumerate}[label=$(\roman*)$]
	\item it is tangent to the end faces $\{0\}\times U$ and
	$\{1\}\times U$ along $\{0\}\times \Lambda  $ and $\{1\}\times \Lambda  $;
	\item it has positive inner product with the unit inner normal of
	$I\times\Lambda$.
\end{enumerate}

\begin{figure}[h]
	\begin{tikzpicture}[scale=2]
		
		\begin{scope}[shift={(0,0)}]
			\draw[thick] (0,0) -- ({2*cos(40)},{2*sin(40)});
			\draw[thick] (0,0) -- ({-2*cos(40)},{2*sin(40)});
			
			\draw[dashed, thick] ({0.9*cos(40)},{0.9*sin(40)}) arc (40:140:0.9);
			
			\node[above] at (0,0.9) {$R_\varepsilon$};
			
			\draw[->,blue] (0,0) -- (0,0.3);
			
			\foreach \r in {0.15, 0.3, 0.45, 0.6, 0.75} {
				\pgfmathsetmacro{\ang}{90 - (\r/0.9)*40}
				\draw[->,blue] ({\r*cos(40)},{\r*sin(40)}) -- ({\r*cos(40)-0.3*cos(\ang)},{\r*sin(40)+0.3*sin(\ang)});
				\draw[->,blue] ({-\r*cos(40)},{\r*sin(40)}) -- ({-\r*cos(40)+0.3*cos(\ang)},{\r*sin(40)+0.3*sin(\ang)});
			}
			
			\foreach \r in {0.9, 1.05, 1.2, 1.35, 1.5, 1.65, 1.8} {
				\draw[->,blue] ({\r*cos(40)},{\r*sin(40)}) -- ({\r*cos(40)-0.3*cos(50)},{\r*sin(40)+0.3*sin(50)});
				\draw[->,blue] ({-\r*cos(40)},{\r*sin(40)}) -- ({-\r*cos(40)+0.3*cos(50)},{\r*sin(40)+0.3*sin(50)});
			}
		\end{scope}

		\begin{scope}[shift={(4.5,0)}]
			\def\sep{0.1}
			
			\begin{scope}[shift={(0,\sep)}]
				\draw[thick] ({0.9*cos(40)},{0.9*sin(40)}) -- ({2*cos(40)},{2*sin(40)});
				\draw[thick] ({-0.9*cos(40)},{0.9*sin(40)}) -- ({-2*cos(40)},{2*sin(40)});
				\draw[thick] ({0.9*cos(40)},{0.9*sin(40)}) arc (40:140:0.9);
				
				\foreach \r in {0.9, 1.05, 1.2, 1.35, 1.5, 1.65, 1.8} {
					\draw[->,blue] ({\r*cos(40)},{\r*sin(40)}) -- ({\r*cos(40)-0.3*cos(50)},{\r*sin(40)+0.3*sin(50)});
					\draw[->,blue] ({-\r*cos(40)},{\r*sin(40)}) -- ({-\r*cos(40)+0.3*cos(50)},{\r*sin(40)+0.3*sin(50)});
				}
			\end{scope}
			
			\begin{scope}[shift={(0,-\sep)}]
				\draw[thick] (0,0) -- ({0.9*cos(40)},{0.9*sin(40)});
				\draw[thick] (0,0) -- ({-0.9*cos(40)},{0.9*sin(40)});
				\draw[thick] ({0.9*cos(40)},{0.9*sin(40)}) arc (40:140:0.9);
				
				\draw[->,blue] (0,0) -- (0,0.3);
				
				\foreach \r in {0.15, 0.3, 0.45, 0.6, 0.75, 0.9} {
					\pgfmathsetmacro{\ang}{90 - (\r/0.9)*40}
					\draw[->,blue] ({\r*cos(40)},{\r*sin(40)}) -- ({\r*cos(40)-0.3*cos(\ang)},{\r*sin(40)+0.3*sin(\ang)});
					\draw[->,blue] ({-\r*cos(40)},{\r*sin(40)}) -- ({-\r*cos(40)+0.3*cos(\ang)},{\r*sin(40)+0.3*sin(\ang)});
				}
			\end{scope}
		\end{scope}

		\begin{scope}[shift={(0,-2.5)}]
			\def\sep{0.1}
			
			\begin{scope}[shift={(0,\sep)}]
				\draw[thick] ({0.9*cos(40)},{0.9*sin(40)}) -- ({2*cos(40)},{2*sin(40)});
				\draw[thick] ({-0.9*cos(40)},{0.9*sin(40)}) -- ({-2*cos(40)},{2*sin(40)});
				\draw[thick] ({0.9*cos(40)},{0.9*sin(40)}) arc (40:140:0.9);
				
				\foreach \r in {0.9, 1.05, 1.2, 1.35, 1.5, 1.65, 1.8} {
					\draw[->,blue] ({\r*cos(40)},{\r*sin(40)}) -- ({\r*cos(40)-0.3*cos(50)},{\r*sin(40)+0.3*sin(50)});
					\draw[->,blue] ({-\r*cos(40)},{\r*sin(40)}) -- ({-\r*cos(40)+0.3*cos(50)},{\r*sin(40)+0.3*sin(50)});
				}
			\end{scope}
			
			\begin{scope}[shift={(0,-\sep)}]
				\pgfmathsetmacro{\R}{0.9/tan(40)}
				\pgfmathsetmacro{\Yc}{0.9/sin(40)}
				
				\draw[thick] ({0.9*cos(40)},{0.9*sin(40)}) arc (40:140:0.9);
				\draw[thick] ({-0.9*cos(40)},{0.9*sin(40)}) arc (-130:-50:\R);
				
				\node[below, yshift=-1pt] at (0, {\Yc - \R}) {$\Lambda$};
				
				\foreach \ang in {-130, -120, -110, -100, -90, -80, -70, -60, -50} {
					\draw[->,blue] ({\R*cos(\ang)}, {\Yc + \R*sin(\ang)}) 
					-- ({\R*cos(\ang) + 0.3*cos(\ang+180)}, {\Yc + \R*sin(\ang) + 0.3*sin(\ang+180)});
				}
			\end{scope}
		\end{scope}

		\begin{scope}[shift={(4.5,-2.5)}]
			\pgfmathsetmacro{\R}{0.9/tan(40)}
			\pgfmathsetmacro{\Yc}{0.9/sin(40)}
			
			\draw[thick] ({0.9*cos(40)},{0.9*sin(40)}) -- ({2*cos(40)},{2*sin(40)});
			\draw[thick] ({-0.9*cos(40)},{0.9*sin(40)}) -- ({-2*cos(40)},{2*sin(40)});
			
			\draw[thick] ({-0.9*cos(40)},{0.9*sin(40)}) arc (-130:-50:\R);
			\draw[dashed, thick] ({0.9*cos(40)},{0.9*sin(40)}) arc (40:140:0.9);
			
			\foreach \r in {0.9, 1.05, 1.2, 1.35, 1.5, 1.65, 1.8} {
				\draw[->,blue] ({\r*cos(40)},{\r*sin(40)}) -- ({\r*cos(40)-0.3*cos(50)},{\r*sin(40)+0.3*sin(50)});
				\draw[->,blue] ({-\r*cos(40)},{\r*sin(40)}) -- ({-\r*cos(40)+0.3*cos(50)},{\r*sin(40)+0.3*sin(50)});
			}
			
			\foreach \ang in {-130, -120, -110, -100, -90, -80, -70, -60, -50} {
				\draw[->,blue] ({\R*cos(\ang)}, {\Yc + \R*sin(\ang)}) 
				-- ({\R*cos(\ang) + 0.3*cos(\ang+180)}, {\Yc + \R*sin(\ang) + 0.3*sin(\ang+180)});
			}
		\end{scope}
		
	\end{tikzpicture}
	\caption{Cross-sections of the cutting and pasting construction.
		Upper left: the product sector with cutting arc $R_\varepsilon$. Upper right:
		the two pieces obtained after cutting. Lower left: the smoothing domain $U$,
		bounded by $R_\varepsilon$ and $\Lambda$, together with the extension
		$\protect\widetilde\normal$. Lower right: the cross-section after gluing
		$I\times U$ to the truncated manifold.}
	\label{fig:hownormalchanges}
\end{figure}

The auxiliary vector fields on the end faces are simply chosen to be the unit inner normal of the end faces, by using the natural identification of the auxiliary trivial bundle $\underline{\mathbb R}^3$ with the flat tangent bundle $T(I\times U)$. The gluing formula (Theorem \ref{thm:gluing}) gives 
$$\ind(D^{\domain^U}_{B}) = \ind(D^{I\times U}_{B_U}) + \ind(D^{\domain''}_{B''}),$$
where $B$, $B_U$, and $B''$ are the corresponding boundary conditions.

The same argument from the proof of Claim \ref{claim:indexzero-cylinder} shows that $\ind(D^{I\times U}_{B_U}) = 0$. Therefore, we have$$\ind(D^{\domain^U}_{B}) =  \ind(D^{\domain''}_{B''}).$$
Performing this gluing construction at every cylindrical boundary component
produces a manifold $\domain'''$ with boundary condition $B'''$ such that
\[
\ind(D_{B'''}^{\domain'''})
=
\ind(D_{B''}^{\domain''}).
\]

 \textbf{Step 7. Resolving the remaining singularities.}
 
 By construction, $\domain'''$ has no codimension-three strata. Consequently,
 each connected component of its codimension-two faces is a compact
 one-dimensional manifold without boundary, and hence is a circle.
 
 Let $\gamma$ be one such component. After choosing a trivialization of its
 oriented normal two-plane bundle, a sufficiently small tubular neighborhood of
 $\gamma$ may be identified with a sector bundle over $\mathbb S^1$. We first
 apply the metric and face deformations from \textbf{Step~5}, with $\mathbb S^1$ in
 place of the interval $I$, to deform this neighborhood into a genuine product
 \[
 \mathbb S^1\times\sector .
 \]
 We then repeat the cutting and pasting construction of \textbf{Step~6}. The auxiliary
 unit-length section $\normal$ is deformed and extended similarly as in that step.
 
 The same argument as in \textbf{Step~6} shows that this construction does not change the Fredholm index. Performing the construction
 in pairwise disjoint tubular neighborhoods of all remaining codimension-two
 faces  produces a manifold $\domain^s$ with smooth boundary, together with an
 auxiliary unit-length field $\normal^s$, such that
 \[
 \ind(D_{B'''}^{\domain'''})
 =
 \ind(D_{B^s}^{\domain^s}).
 \]

 \textbf{Conclusion.}
 Throughout \textbf{Steps 1--7}, the auxiliary vector field $\normal$ is deformed, extended, and modified in a manner that preserves the degree of the associated boundary map $\normal\colon \partial \domain \to \sph^2$. Consequently,
 \[
 \deg(\normal^s)=\deg(\normal).
 \]
 Recall that $\normal$ is the pullback under $f$ of the unit vector field defined in \eqref{eq:auxiliaryvector}, regarded as a section of the trivial bundle
 \[
 \partial\domain\times\mathbb{R}^3.
 \]
 By construction,
 \[
 \deg(-\normal)=\deg(f).
 \]
 Hence,
 \[
 \deg(-\normal^s)=\deg(-\normal)=\deg(f).
 \]
 
 Since $\domain^s$ has smooth boundary, the classical index theorem for manifolds with smooth boundary gives
 \[
 \ind\bigl(D^{\domain^s}_{B^s}\bigr)=\deg(-\normal^s).
 \]
 Therefore,
 \[
 \ind(D_B)
 =
 \ind\bigl(D^{\domain^s}_{B^s}\bigr)
 =
 \deg(-\normal^s)
 =
 \deg(-\normal)
 =
 \deg(f).
 \]
 This completes the proof of the theorem.
\end{proof}

With all the necessary ingredients now established, we are ready to prove Theorem \ref{thm:main2}.

\begin{theorem}[Theorem \ref{thm:main2}]\label{thm:main-rigidity}
	Let $(\target, g)$ be a convex polyhedron in the Euclidean space $\R^3$, where $g$ is the Euclidean metric. Let $(\domain,\overbar g)$ be a spin polyhedral manifold and $f\colon \domain \to \target$ be a polyhedral map with non-zero degree, such that the scalar curvature, mean curvature, and dihedral angles satisfies$$\Sc_{\overbar g}\geq 0,~H_{\overbar g}\geq 0,~\theta_{\overbar g}\leq f^*\theta_{g},$$  then $\Sc_{\overbar g}=0$, $H_{\overbar g}=0$, and $\theta_{\overbar g}=f^*\theta_{g}$. Moreover, $(\domain,\overbar g)$ is flat.
\end{theorem}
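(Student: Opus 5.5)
The plan is to combine Theorem~\ref{thm:indexTheoremVector} with the approximation Lemma~\ref{lemma:approximation} and then conclude with Lemma~\ref{lemma:angleRigidity}. Let $\nu_k$ be the unit inner normals of the faces $F_k$ of $\target$, pulled back by $f$ as sections of $\underline{\mathbb R}^3$ over $\overbar F_k$. Let $B_0$ be the boundary condition of Definition~\ref{def:boundaryCondition}. The dihedral angle hypothesis $\theta_{ij}(\overbar g)\le f^*\theta_{ij}(g)$ is equivalent to $\langle\overbar\nu_i,\overbar\nu_j\rangle\le\langle\nu_i,\nu_j\rangle$ along each edge. This comparison is in general only non-strict, so Theorem~\ref{thm:indexTheoremVector} does not apply to $B_0$ directly. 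If some dihedral angle of $\domain$ were $\geq\pi$, the comparison and convexity of $\target$ (whose angles are $<\pi$) would already exclude it, so we may assume all angles of $\domain$ are less than $\pi$.

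\textbf{Step 1: strict approximations.} Take $\normal$ from \eqref{eq:auxiliaryvector}. By convexity, $\langle\normal,\nu_k\rangle>0$ on each face. For adjacent faces, $\nu_i\ne\nu_j$ at points of $\overbar F_i\cap\overbar F_j$, because adjacent faces of $\target$ have distinct normals and $\varphi$ maps adjacent faces to adjacent faces. Proposition~\ref{thm:algebraicDeformation} therefore gives smooth families $\nu_{k,t}$ with $\nu_{k,0}=\nu_k$, $\langle f^*\normal,\nu_{k,t}\rangle>0$, and $\langle\nu_{i,t},\nu_{j,t}\rangle>\langle\nu_i,\nu_j\rangle\ge\langle\overbar\nu_i,\overbar\nu_j\rangle$ for $t>0$. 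Let $B_t$ be the corresponding boundary conditions. For each $t\in(0,1]$, the strict comparison \eqref{eq:strictInnerProductIndex} holds, so Theorem~\ref{thm:indexTheoremVector} gives $\ind(D_{B_t})=\deg(f)\neq 0$. Since $D_{B_t}$ is self-adjoint, its kernel is nonzero, giving a nonzero $\sigma_t\in H^1(\domain,E;B_t)$ with $D\sigma_t=0$.

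\textbf{Step 2: limit.} Because the geodesic interpolation is explicit and smooth in $(x,t)$, the maps $t\mapsto\nu_{k,t}$ and $t\mapsto\nabla\nu_{k,t}$ are continuous in $L^\infty$, so $B_t$ depends continuously on $t$ in Lipschitz norm. Next, $B_0$ is extremal. Since $\target$ is flat, $\mathscr R=\Sc_{\overbar g}/4\ge0$ by \eqref{eq:lichnerowicz}. The faces of the convex polyhedron $\target$ are flat, so $A^g=0$ and Lemma~\ref{lemma:secondff>=} gives $\mathscr A\ge H_{\overbar g}/2\ge 0$. Lemma~\ref{lemma:approximation} then yields a nonzero $\sigma_0\in H^1(\domain,E;B_0)$ with $\nabla\sigma_0=0$, and in particular $D\sigma_0=0$.

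\textbf{Step 3: conclusion.} Lemma~\ref{lemma:angleRigidity} applied to $\sigma_0$ gives $\Sc_{\overbar g}=0$, $H_{\overbar g}=0$, $\theta_{\overbar g}=f^*\theta_g$, and flatness of $(\domain,\overbar g)$.

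The main obstacle I expect is verifying the hypotheses of Theorem~\ref{thm:indexTheoremVector} and Lemma~\ref{lemma:approximation} carefully. The points to check are the Lipschitz regularity of $f^*\nu_k$ near vertices, where $f$ need not be smooth, and the requirement $\nu_i\neq\nu_j$ along edges. For the regularity issue, I would first try using that $\nu_k$ is constant on each flat face of $\target$, so $f^*\nu_k$ is in fact constant and the problem disappears.
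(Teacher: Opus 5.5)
Your proposal is correct and is essentially the paper's argument. You deform the pulled-back target normals toward $\normal$ via Proposition~\ref{thm:algebraicDeformation} to make the comparison strict, obtain nonzero harmonic sections $\sigma_t$ from Theorem~\ref{thm:indexTheoremVector}, pass to a nonzero parallel section with Lemma~\ref{lemma:approximation} using extremality of $B_0$, and conclude with Lemma~\ref{lemma:angleRigidity}. Your extra checks — dihedral angles of $\domain$ below $\pi$, $\nu_i\neq\nu_j$ along edges, and Lipschitz continuity of $t\mapsto\nu_{k,t}$ — are details the paper leaves implicit, and you handle them correctly.
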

\begin{proof}
	Let $E=S(T\domain\oplus f^*T\target)=S(T\overbar \target\oplus \underline{\R}^3)$ and let $D$ be the Dirac operator associated with $E$. Let $B$ be the boundary condition given by$$\mathscr E \overbar c(\overbar \nu_k)c(\nu_k)\sigma=-\sigma \quad \textup{on } \overbar F_k,$$ where $\overbar \nu_k$ is the unit inner normal vector field of each codimension-one face $\overbar F_k$ of $\domain$, and $\nu_k$ is the unit inner normal vector field of the corresponding face $F_k$ of $\target$. 
	
	Let $\normal$ the be auxiliary vector field given in \eqref{eq:auxiliaryvector}. By Proposition~\ref{thm:algebraicDeformation}, there exists, on each face
	$F_k$, a smooth family of unit-length sections $\nu_{k,t}$ taking value in $\underline{\R}^3$ for $t\in[0,1]$, such
	that
	\[
	\nu_{k,0}=\nu_k,
	\qquad
	\nu_{k,1}=\normal,
	\]
	and the pairwise inner products
	\[
	\langle\nu_{i,t},\nu_{j,t}\rangle
	\]
	are non-decreasing in $t$. Pulling these sections back by $f$, define the
	boundary condition $B_t$ on $\domain$ by
	\[
	\mathscr E\,
	\overbar c(\overbar\nu_k)c(\nu_{k,t})\sigma
	=
	-\sigma
	\qquad
	\text{on }\overbar F_k.
	\]

	The assumption that $\theta_{\overbar g}\leq f^*\theta_{g}$ implies that
	$$\langle \overbar \nu_i(x),\overbar \nu_j(x)\rangle\leq\langle \nu_{i}(f(x)),\nu_{j}(f(x))\rangle$$
	at every $x \in \overbar F_i\cap\overbar F_j$ for each pair of adjacent codimension-one faces $\overbar F_i$ and $\overbar F_j$. Proposition~\ref{thm:algebraicDeformation} therefore imply that, for every
	$t>0$,
	\[
	\langle\overbar\nu_i(x),\overbar\nu_j(x)\rangle
	<
	\langle\nu_{i,t}(f(x)),\nu_{j,t}(f(x))\rangle
	\]
	along each  edge $\overbar F_i\cap\overbar F_j$.   
	
	It follows from Theorem~\ref{thm:ess-saVector} that, for every $t>0$, the
	operator $D$ with boundary condition $B_t$ is essentially self-adjoint and
	Fredholm. Moreover, Theorem~\ref{thm:indexTheoremVector} gives
	\[
	\ind(D_{B_t})=\deg(f).
	\]
   Since $\deg(f)\neq 0$, it follows that $\ind(D_{B_t})$ is non-zero for all $t\in (0, 1]$. Therefore, for each $t\in (0, 1]$, there exists a non-zero $\sigma_t\in H^1(\domain,E;B_t)$ such that $D\sigma_t=0$. 
	
	The family $B_t$ converges to $B_0=B$ in Lipschitz norm as $t\to0$. Moreover, by Proposition \ref{prop:smooth>=}, the boundary condition $B=B_0$ is extremal in the sense of Definition \ref{def:extremalBoundaryCondition}. By Lemma \ref{lemma:approximation}, there exists a non-zero $\sigma\in H^1(\domain,E;B)$ such that $\nabla\sigma=0$. In particular, $D\sigma = 0$. The conclusion of the theorem now follows immediately from Lemma \ref{lemma:angleRigidity}.
\end{proof}

%

\end{document}